\documentclass[11pt,a4paper]{amsbook}
\usepackage[latin1]{inputenc}
\usepackage{amsmath}
\usepackage{amsfonts}
\usepackage{amssymb}
\usepackage{amsthm}
\usepackage{xfrac}
\usepackage{graphicx}
\usepackage{enumitem}
\usepackage{nicefrac}

\usepackage{color,hyperref}
\definecolor{darkblue}{rgb}{0.0,0.0,0.3}
\hypersetup{colorlinks,breaklinks,
            linkcolor=darkblue,urlcolor=darkblue,
            anchorcolor=darkblue,citecolor=darkblue}

\usepackage{afterpage}

\numberwithin{section}{chapter}

\theoremstyle{plain}
\newtheorem{theorem}{Theorem}[chapter]
\newtheorem{prop}{Proposition}[section]
\newtheorem*{theorem*}{Theorem}
\newtheorem*{prop*}{Proposition}
\newtheorem{lem}[prop]{Lemma}
\newtheorem{lemma}[prop]{Lemma}
\theoremstyle{definition}
\newtheorem{definition}[prop]{Definition}

\theoremstyle{remark}
\newtheorem{remark}[prop]{Remark}
\newtheorem*{remark*}{Remark}
\newtheorem{corollary}[prop]{Corollary}
\newtheorem*{corollary*}{Corollary}

\newtheoremstyle{named}{}{}{\itshape}{1.5em}{\scshape}{.}{5pt plus 1pt minus 1pt}{\thmnote{#3}}
\theoremstyle{named}
\newtheorem*{namedtheorem}{}

\numberwithin{equation}{chapter}
\numberwithin{figure}{chapter}

\newcommand{\Cc}{{\mathcal C}}
\newcommand{\Ee}{{\mathcal E}}
\newcommand{\Hh}{{\mathcal H}}

\author{Latif Eliaz}
\title[The Essential Spectrum on Trees]{On the Essential Spectrum of Schr\"odinger Operators on Graphs \vspace{160pt} \\ 
\fontsize{12pt}{16pt}\selectfont\textsc{Thesis for the degree of \\ ``Doctor of Philosophy" \vspace{12pt} \\ By} \vspace{-12pt}}

\begin{document}
\sloppy

\begin{titlepage}
    \begin{center}
        \vspace*{1cm}
 
        \Huge
        \textbf{On the Essential Spectrum of Schr\"odinger Operators on Graphs}
 
        \vspace{1.5cm}
        \Large
        Thesis for the degree of \\ ``Doctor of Philosophy" \vspace{12pt} \\ By
 
        \vspace{0.5cm}
 
        \textbf{Latif Eliaz}
 
        \vfill

        \vspace{0.8cm}

        \large
        Submitted to the Senate of the Hebrew University of Jerusalem \\
        February 2019\\

        \vspace{0.2cm}

    \end{center}

\end{titlepage}

\thispagestyle{plain}

\begin{center}
        \large
        \vspace{1.2cm}
        This work was carried out under the supervision of\\
        Prof.~Jonathan Breuer
\end{center}
\newpage

\begin{center}

        \Large\bf
        Acknowledgements
        \vspace{1.2cm}
\end{center}
        
\normalsize {\setlength{\parindent}{0pt}
I would like to thank my advisor Jonathan Breuer, who has been a source of inspiration for me in his mathematical thought. I am grateful for his patience and attention. This thesis would not have been possible without his dedication.

\vspace{6pt}
I am indebted to my colleagues for supporting me. 
I would like to thank Siegfried Beckus and Mathias Keller for interesting discussions and inspiration. Parts of the last chapter of this work are due to the collaboration with Siegfried, for which I am grateful.
Many thanks to my officemates for the enjoyable time together. Especially I would like to thank Amitai Yuval for the helpful conversations.

\vspace{6pt}
I would like to thank Yoram Last and Yehuda Pinchover for their involvement and useful suggestions.
Also, I would like to thank the anonymous referees for the useful comments and especially for pointing out the reference \cite{LenzTeplyaev}.

\vspace{6pt}
It is an honor to thank those who have established the roots of my mathematical education and research: my teachers, and the mathematicians whose research is at the foundation of this project. It is also a pleasure to thank the Hebrew University's Mathematics Department for providing me appropriate conditions for research.

\vspace{6pt}
As for the roots of my existence, I am grateful to my family, parents, sisters and friends.
I owe my deepest gratitude to my wife Yael,  for supporting me in various aspects of life during this period. 
And of course, I owe many thanks to my son Michael Nur, whose existence brought me a new perspective on my time, my past and my future.
}

\newpage        

\thispagestyle{plain}
\begin{center}
    \Large
    \textbf{On the Essential Spectrum of Schr\"odinger Operators on Graphs}
 
    \vspace{0.4cm}
    \large
  	Thesis for the degree of ``Doctor of Philosophy" \vspace{4pt} \\ By \\
    \vspace{0.4cm}
    \textbf{Latif Eliaz}\\
 	\vspace{0.4cm}
	 This work was carried out under the supervision of\\
    \textbf{Prof.~Jonathan Breuer} \\
    \vspace{0.9cm}
    \textbf{Abstract}
\end{center}

This work studies geometrical characterizations of the essential spectrum $\sigma_{\text ess}$ of Schr\"odinger operators on graphs.
Especially we focus on generalizing characterizations which are given in terms of the concept of right limits.
Intuitively the set of right limits of a Schr\"odinger operator $H$ on $\ell^2(\mathbb{N})$ includes the limit operators which are obtained by a sequence of left-shifts (moving away to infinity) of $H$.
One characterization, which is known for such operators is that $\sigma_{\text ess}(H)$ is equal to the union over the spectra of right limits of $H$, i.e.\ 
\begin{equation} \label{eq:abstractLastSimon}
\sigma_{\text ess}(H)=\bigcup_{H^{(r)}\textrm{ is a right limit of }H}\sigma(H^{(r)}).\tag{$*$}
\end{equation}
Additionally, the essential spectrum equals to the union  over the sets of ``eigenvalues'' corresponding to bounded eigenfunctions of the right limits of $H$. The natural generalizations of the definition and the above relations to $\mathbb{Z}^{n}$ are known to hold as well. The first characterization above is essentially due to a work by Last-Simon from 2006, in which they prove \eqref{eq:abstractLastSimon} with a closure on the right hand side. The second characterization has been shown independently by  Simon and Chandler-Wilde--Lindner in works from 2011. In this work we study the possibility of generalizing these characterizations of $\sigma_{ess}(H)$ to Schr\"odinger operators on graphs.

In Chapter \ref{chapter2} we focus on the first characterization on graphs of uniform polynomial growth. We show first the validity of the argument of Last-Simon in this case. On the second hand we study the limitation of this argument, and show that it can not be directly generalized to graphs of exponential growth. Moreover, we give an example of a graph of non-uniform polynomial growth on which this characterization fails. 

In Chapter \ref{chapter3} of the work we focus on trees. We review an argument for extending the Last-Simon method to regular trees, and give a constructive proof of \eqref{eq:abstractLastSimon} on the family of operators with a spherically symmetric potential on regular trees, in which also a better understanding of the spectral properties of the problem is obtained. Finally in Section \ref{sec:sparsePot} we implement the results and calculate the essential spectrum for an example of a Schr\"odinger operator with a sparse spherically symmetric potential. 

In Chapter \ref{chapter4} we study the possible generalization of the second characterization to general graphs, and show its validity on graphs of uniform sub-exponential growth. As a consequence we also get \eqref{eq:abstractLastSimon} on these graphs. In the course of this study we encounter a necessity for a reverse Shnol's type result on graphs, which we give here as well. 
In the last section of Chapter \ref{chapter4} we develop some examples for the implementation of the above mentioned results obtained on graphs of sub-exponential growth.

Parts of the content of this work have been published in \cite{BDE}.

\setcounter{tocdepth}{2}
\tableofcontents

\chapter{Overview}\label{chapter1}

\section{Introduction}

\subsection{Motivation and background}
The dynamics of a quantum physical system is described by the Schr\"odinger equation
\begin{equation} \nonumber
i\frac{\partial}{\partial t}\psi=H\psi
\end{equation}
where $H$ is a self-adjoint operator acting on the Hilbert space of the system (the ``Hamiltonian''). Thus, understanding the spectra of such operators is a topic of central importance in mathematical physics. Hamiltonians of single-particle quantum systems are often comprised of a sum of an operator corresponding to the kinetic energy (the Laplacian, denoted by $\Delta$) and an operator corresponding to the potential energy (the `potential', denoted by $V$). We refer to an operator of the form $$
H=\Delta+V
$$
as a Schr\"odinger operator. In this work we consider discrete Schr\"odinger operators defined over graphs. In particular we shall be interested in the \emph{essential spectrum} of such operators and various possible characterizations.

The spectrum of a bounded, self-adjoint operator, $H$ acting on a Hilbert space, $\mathcal{H}$ is the set
\begin{equation} \label{eq:spectrumDef}
\sigma(H)=\left\{\lambda\in\mathbb{C} \mid (H-\lambda I) \textrm{ does not have a bounded inverse} \right\}.
\end{equation}
In the case that $\mathcal{H}$ is finite dimensional, $\sigma(H)$ is simply a set of eigenvalues of finite multiplicity. In the case that $\mathcal{H}$ is not finite dimensional, $\sigma(H)$ is more complicated. It is therefore natural to separate $\sigma(H)$ into two sets. One, the \emph{discrete spectrum}, $\sigma_{\textrm{disc}}(H)$, consisting of isolated eigenvalues of finite multiplicity, and its complement, the \emph{essential spectrum}, $\sigma_{\textrm{ess}}(H)$. In a sense, $\sigma_{\textrm{ess}}(H)$ is the part of the spectrum that results from the infinite dimensionality of $\mathcal{H}$. It was given its name by Weyl in his consideration of the part of the spectrum of a half-line Schr\"odinger operator that is independent of the boundary conditions (see \cite{Weyl} and the discussion in \cite[notes to Section VII.3]{ReedSimonI}).

To make this invariance property of $\sigma_{\textrm{ess}}$ precise, recall that an operator, $K$, defined over $\mathcal{H}$ is called \emph{compact} if it is the limit (in norm) of finite rank operators. Weyl's Theorem (see e.g.\ \cite[Theorem S.13]{ReedSimonI}) states that for $H$ as above $\sigma_{\textrm{ess}}(H)=\sigma_{\textrm{ess}}(H+K)$ for any compact $K$. It follows that for a discrete Schr\"odinger operator $\sigma_{\textrm ess}$ is independent of the information contained in any finite part of the underlying space, and thus, in particular, of boundary conditions.

Compactness has a role in another equivalent definition of the essential spectrum. A bounded operator $A$ is said to be Fredholm if $A$ is invertible modulo compact operators, i.e.\ there exist bounded operators $S,T$ so that $(AS-I)$ and $(TA-I)$ are compact. An equivalent definition for the essential spectrum is given by
\begin{equation} \label{eq:sigmaEssAndFred}
\sigma_{\textrm ess}(H)=\{\lambda\in\mathbb{C} \mid (H-\lambda I) \textrm{ is not Fredholm}\}.
\end{equation}
That is, $(H-\lambda I)$ does not have a bounded inverse also modulo compact operators.

An equivalent description of $\sigma_{\textrm ess}$ that will be very useful for us is known as Weyl's Criterion (see, e.g.\ \cite[Theorem VII.12]{ReedSimonI}).
\begin{theorem}[Weyl's Criterion]\label{thm:Weyl}
Let $H$ be a bounded self-adjoint operator on a separable Hilbert space. Then ${\lambda\in\sigma\left(H\right)}$ iff there exists a sequence $\left\{\psi_n\right\}_{n=1}^\infty\subset\mathcal{H}$ of approximate eigenfunctions for $\lambda$ with unit norm, i.e.\ functions satisfying $\left\Vert\psi_n\right\Vert=1$ and 
\begin{equation}\label{eq:WeylAppEF}
\left\Vert\left(H-\lambda\right)\psi_n\right\Vert\xrightarrow{n\to\infty}0.
\end{equation}
Moreover, ${\lambda\in\sigma_{\text{ess}}\left(H\right)}$ iff there exists an orthonormal sequence $\left\{\psi_n\right\}_{n=1}^\infty\subset\mathcal{H}$ of approximate eigenfunctions for $\lambda$, i.e.\ functions satisfying $\left(\psi_n,\psi_k\right)=\delta_{n,k}$ and \eqref{eq:WeylAppEF}.
\end{theorem}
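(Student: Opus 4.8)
The plan is to base the entire argument on the spectral theorem for bounded self-adjoint operators, which reduces every claim to an elementary estimate involving the spectral projections $E_\Omega(H)$ onto intervals $\Omega$ around $\lambda$. Throughout I use that $H=H^*$ forces $\sigma(H)\subset\mathbb{R}$, so $\lambda$ may be taken real.

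First I would handle the statement about the full spectrum. For the easy direction, assuming a unit-norm approximate eigenfunction sequence exists, I argue by contradiction: if $\lambda\notin\sigma(H)$ then $(H-\lambda)^{-1}$ is bounded and $1=\|\psi_n\|\leq\|(H-\lambda)^{-1}\|\,\|(H-\lambda)\psi_n\|\to 0$, which is absurd, so $\lambda\in\sigma(H)$. For the converse, the key observation is that for a self-adjoint operator, $\lambda\in\sigma(H)$ is equivalent to $(H-\lambda)$ failing to be bounded below: were there a $c>0$ with $\|(H-\lambda)\psi\|\geq c\|\psi\|$ for all $\psi$, then injectivity together with density of the range (since $\ker(H-\lambda)^*=\ker(H-\lambda)=\{0\}$ by self-adjointness) would force $(H-\lambda)$ to be boundedly invertible. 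Hence $\inf_{\|\psi\|=1}\|(H-\lambda)\psi\|=0$, and choosing near-minimizers produces the desired sequence.

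Next the essential-spectrum statement. For the reverse direction I suppose toward a contradiction that $\lambda\notin\sigma_{\text{ess}}(H)$, so that for small $\epsilon$ the spectral projection $P=E_{(\lambda-\epsilon,\lambda+\epsilon)}(H)$ has finite rank. Splitting $\psi_n=P\psi_n+(I-P)\psi_n$ and using that $H$ commutes with $P$ together with the lower bound $\|(H-\lambda)(I-P)\psi_n\|\geq\epsilon\|(I-P)\psi_n\|$, I deduce $\|(I-P)\psi_n\|\to 0$ and hence $\|P\psi_n\|\to 1$. But an orthonormal sequence converges weakly to $0$, and $P$, being finite-rank and therefore compact, sends it to $0$ in norm, contradicting $\|P\psi_n\|\to 1$. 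Thus $\lambda\in\sigma_{\text{ess}}(H)$.

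For the forward direction I use the characterization of $\sigma_{\text{ess}}$ via spectral projections: $\lambda\in\sigma_{\text{ess}}(H)$ forces $\mathrm{Ran}\,E_{(\lambda-1/n,\lambda+1/n)}(H)$ to be infinite-dimensional for every $n$. Then I build the orthonormal sequence inductively, at step $n$ selecting a unit vector $\psi_n$ in this infinite-dimensional range orthogonal to $\psi_1,\dots,\psi_{n-1}$; the spectral-calculus estimate $\|(H-\lambda)\psi_n\|^2=\int|x-\lambda|^2\,d\langle E_x\psi_n,\psi_n\rangle\leq(1/n)^2$ then yields \eqref{eq:WeylAppEF}. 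The main obstacle is precisely establishing this equivalence between membership in $\sigma_{\text{ess}}$ and infinite-dimensionality of the local spectral subspaces; it hinges on the decomposition $\sigma=\sigma_{\text{disc}}\sqcup\sigma_{\text{ess}}$ and on the fact that an isolated point of the spectrum whose spectral projection has finite rank is exactly an isolated eigenvalue of finite multiplicity. Everything else reduces to the two elementary estimates above.
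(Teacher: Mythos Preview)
Your proof is correct and follows the standard spectral-theoretic route. Note, however, that the paper does not actually prove this theorem: Weyl's Criterion is stated as background and simply cited from Reed--Simon \cite[Theorem~VII.12]{ReedSimonI}, so there is no ``paper's own proof'' to compare against. Your argument is essentially the textbook one and would serve perfectly well as a self-contained justification were one needed.
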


When considering a Schr\"odinger operator defined over some underlying space, it is natural to wonder whether the independence of the essential spectrum on compact perturbations is expressed through the relation between the operator and the geometry of the underlying space. A concept that has proven useful for studying this type of questions (and whose generalization is central to this thesis) is the concept of `right limit'. Right limits were originally introduced by Last and Simon \cite{LastSimonAC} in their study of the absolutely continuous spectrum of Jacobi matrices (see \eqref{eq:rightlimJacobi} below). There are two equivalent concepts that have been introduced in slightly different settings. One is that of `limit operator' introduced by Muhamadiev \cite{Muham1} in the study of the inversion of  almost periodic differential operators on the real axes and the other is that of `localization at infinity' introduced by Georgescu and Iftimovici \cite{GI1} in the study of spectral properties of quantum Hamiltonians through abelian C$^*$-algebras. While the concept of `right limits' emphasizes the geometry of the underlying space, the concepts of `limit operator' and  `localization at infinity' center on operator theoretic aspects. All these concepts are essentially equivalent in the setting relevant for us. However, in order to explain the relevance to the essential spectrum we use the concept of right limit.

Consider a bounded Schr\"odinger operator $J=\Delta+Q$ acting
on $\ell^{2}\left(\mathbb{N}\right)$. An operator $J^{\left(r\right)}=\Delta+Q^{\left(r\right)}$,
acting on $\ell^{2}\left(\mathbb{Z}\right)$, is a right limit of
$J$ if there exists a sequence of indices $\left\{ n_{j}\right\}_{j=1}^\infty \subseteq\mathbb{N}$
such that for every fixed $l\in\mathbb{Z}$,
\begin{equation} \label{eq:rightlimdef}
Q_{l+n_j}\stackrel{j\to\infty}{\longrightarrow}Q^{(r)}_l.
\end{equation}
Equivalently, if we expand $J$ to an operator $\tilde{J}$ on $\ell^2\left(\mathbb{Z}\right)$ (e.g.\ by putting $Q$ equal to zero on the left half line), we will get $J^{(r)}$ as a strong limit of a sequence of left-shifts of $\tilde{J}$ (corresponding to the sequence $\left\{ n_{j}\right\}_{j=1}^{\infty}$). Note that, by compactness, one can always find a right limit along a subsequence of any such sequence of shifts.

One can consider somewhat more general operators by replacing $\Delta$ with a weighted version: a Jacobi matrix on $\ell^2(\mathbb{N})$ is a tridiagonal matrix with real diagonal entries $\{b_k\}_{k=1}^{\infty}$ and nonnegative off-diagonal entries $\{a_k\}_{k=1}^{\infty}$. A Jacobi matrix on $\ell^2(\mathbb{Z})$ is defined analogously.
A Jacobi matrix $J^{\left(r\right)}$,
acting on $\ell^{2}\left(\mathbb{Z}\right)$, is a right limit of a (bounded)
Jacobi matrix on $\ell^2(\mathbb{N})$ if there exists a sequence of indices $\left\{ n_{j}\right\}_{j=1}^\infty \subseteq\mathbb{N}$
so that for every fixed $l\in\mathbb{Z}$,
\begin{equation} \label{eq:rightlimJacobi}
a_{l+n_j,l+1+n_j}\stackrel{j\to\infty}{\longrightarrow}a^{(r)}_{l,l+1}, \quad b_{l+n_j}\stackrel{j\to\infty}{\longrightarrow}b^{(r)}_{l}.
\end{equation}

The concept of right limits has been extended also to operators on $\mathbb{Z}^n$ and $\mathbb{R}^n$.

Remarkably, given a Schr\"odinger operator $H$ defined on any of the spaces $\ell^2(\mathbb{N})$, $\ell^2(\mathbb{Z}^n)$ and $L^2(\mathbb{R}^n)$,
\begin{equation}\label{eq:sigmaEssChar}
\sigma_{\textrm ess}(H)=\bigcup_{H^{(r)} \textrm{ is a right limit of } H}\sigma\left(H^{(r)}\right).
\end{equation}
Notice that, by \eqref{eq:spectrumDef} and \eqref{eq:sigmaEssAndFred}, this characterization is equivalent to a relation between Fredholmness of the operator $(H-\lambda I)$ to the existence of bounded inverses to all the right limits of it.
This characterization is essentially due to Last--Simon \cite{LastSimonEss} (for related results see \cite{AmreinMP, Anselone, CWL, GI1,GI2,GI3,Kurbatov, LanRab, M1, MPR,Muham1,Muham2, RRS,RRSband, SeidelSil, Shubin1,Shubin2,SimonSz}; comprehensive reviews and further references on the subject can be found in \cite{CWL, LastSimonEss, SimonSz}).
Additionally, Simon \cite{SimonSz} and Chandler-Wilde--Lindner \cite{CWL} show, independently, that for a Schr\"odinger operator on $\ell^2(\mathbb{N})$ and on $\ell^2(\mathbb{Z}^n)$
\begin{equation}\label{eq:sigmaInfChar}
\sigma_{ess}(H) =  \bigcup_{H^{\left(r\right)}\text{ is a right limit of \ensuremath{H}}}\sigma_{\infty}\left(H^{\left(r\right)}\right),
\end{equation}
where, for an operator $K$ on a discrete space $X$, $\sigma_{\infty}\left(K\right)$ denotes the pure point spectrum of $K$ in $\ell^{\infty}\left(X\right)$, i.e.\ \[ \sigma_{\infty}\left(K\right)=\left\{ \lambda\,|\,\exists\psi\in\ell^{\infty}\left(X\right)\,\text{so that }K\psi=\lambda\psi\right\} . \]
\begin{remark*}
The equality \eqref{eq:sigmaEssChar} was stated in \cite{LastSimonEss} with $\overline{\bigcup_{r}\sigma\left(H^{\left(r\right)}\right)}$ on the right hand side. However, $\bigcup_{r}\sigma\left(H^{\left(r\right)}\right)$ is in fact closed (see e.g.\ \cite{RRS} and \cite{SimonSz} for details).
\end{remark*}

Historically, these results were developed along a few different threads. One thread goes back to Favard \cite{Favard} and continues with Muhamadiev \cite{Muham1,Muham2}, Lange--Rabinovich \cite{LanRab}, Rabinovich--Roch--Silbermann \cite{RRS,RRSband} and Chandler-Wilde--Lindner \cite{CWL}. This thread involves the concept of `limit operators' which was originally defined by Muhmadiev in \cite{Muham1} as the family of limits of shifts of certain differential operators. In this thread, results similar to \eqref{eq:sigmaEssChar} and \eqref{eq:sigmaInfChar} relating Fredholmess of the original operator and invertibility of the limit operators were obtained for various families of operators (including certain discrete Schr\"odinger operators).

Another thread involves $C^*$ algebras and was pursued by Georgescu--Iftimovici \cite{GI1,GI2,GI3} and Mantoiu \cite{M1} who define the concept of `localization at infinity' which also coincides in the Schr\"odinger operator case with right limits. They obtain a result similar to \eqref{eq:sigmaEssChar} (in their case, with the closure on the right hand side) for operators on locally compact, non-compact abelian groups.

As mentioned above, the concept of `right limit' was introduced by Last-Simon \cite{LastSimonAC} in their study of absolutely continuous spectrum of Schr\"odinger operators and later used by them in their study of the essential spectrum \cite{LastSimonEss}. Remling \cite{Remling} obtained a remarkable characterization of the right limits of Jacobi matrices and Schr\"odinger operators with absolutely continuous spectrum that has many important implications.

For a detailed review of the rich history of the subject see \cite{CWL}.

The proof of \eqref{eq:sigmaEssChar} and \eqref{eq:sigmaInfChar} in \cite{SimonSz} involves a growth estimate for generalized eigenfunctions corresponding to points in the spectrum. According to a result known as Shnol's Theorem, for certain Schr\"odinger operators, $H$, given $\lambda\in\mathbb{C}$, if there exists a polynomially growing generalized eigenfunction for $\lambda$ then $\lambda\in\sigma(H)$. This result has been originally developed by Shnol \cite{Shnol} (and rediscovered in \cite{SimonEF}, see also \cite[Section 2.4]{CFKS}) in the context of Schr\"odinger operators on the real line, with some restrictions on the growth rate of the potential. A converse of Shnol's Theorem is also known to hold in this context, and is sometimes referred to as an expansion theorem (see \cite[Section C5]{SimonSG} and references therein). According to it, for spectrally almost every point $\lambda\in\sigma(H)$ (as we will define below) there exists a corresponding polynomially growing generalized eigenfunction. Both directions have been developed further to the multidimensional and discrete settings, including $\mathbb{R}^n$, $\mathbb{Z}^n$ and other graphs, and were also studied beyond the scope of Schr\"odinger operators (see \cite{BdmLS09,BdmSt03,FLW14,HK2011,Han, LenzTeplyaev}).

\vspace{0.5cm}

This work is concerned with examining the above problems on graphs. Namely, our aim is to understand the relation between the essential spectrum of a Schr\"odinger operator, $H$, defined over a graph and the limits of $H$ `at infinity', where now `infinity' is approached along paths on the graph.
Let $G$ be a graph with vertices $V\left(G\right)$ and edges $E\left(G\right)$. A Schr\"odinger operator on $G$ is an operator, $H$, acting on $\psi\in\ell^2\left(V\left(G\right)\right)\cong\ell^2\left(G\right)$ by
\begin{equation}
\left(H\psi\right)\left(v\right)=\sum_{u\sim v}\left(\psi\left(u\right)-\psi\left(v\right)\right)+Q\left(v\right)\psi\left(v\right),\label{eq:SchrOp}
\end{equation}
where we denote $u\sim v$ for vertices $u,v\in G$ if $\left(u,v\right)\in E\left(G\right)$ and $Q: V\left(G\right) \rightarrow \mathbb{R}$ is a function (which we take to be bounded throughout the thesis).
Denoting the graph Laplacian by $\Delta$,
\begin{equation} \nonumber
\Delta \psi (v)=\sum_{u\sim v}\left(\psi\left(u\right)-\psi\left(v\right)\right),
\end{equation}
and using $Q$ to denote the multiplication operator by the function $Q$, we write $H=\Delta+Q$.

Analogously to the one dimensional case one can define Jacobi operators on $G$ by replacing $\Delta$ with a weighted version (acting on nearest neighbours with nonnegative weights).

The essential spectrum of Schr\"odinger operators on infinite graphs (other than $\mathbb{Z}^n$) has been studied mostly on trees, where in the context of regular trees, Golenia \cite{Golenia} and Golenia-Georgescu \cite{GolGeor}, have shown the analog of \eqref{eq:sigmaEssChar} when $Q$ has a limit (in the usual sense) along every path to infinity. Fujiwara \cite{Fujiwara} has shown that for rapidly branching trees, the essential spectrum of $\Delta$ consists of a single point. There are, in addition, several works studying the minimum of the essential spectrum on graphs (see, e.g.\ \cite{BoGo} and references therein).

In order to extend \eqref{eq:sigmaEssChar} to general graphs, one needs first to extend the notion of right limit to that setting. While the notion of `limits at infinity' is intuitively clear, it is not immediately obvious how this should be done formally. We do this in Section \ref{sec:def}. We denote by $\mathcal{R}$-limit the notion analogous to right limit in graphs, and by $\mathcal{R}(H)$ the set of $\mathcal{R}$-limits of the operator $H$.

The first result we state is almost an immediate consequence of the definition of the notion of $\mathcal{R}$-limit and the characterization of the essential spectrum via an orthogonal sequence of approximate eigenfunctions (Theorem \ref{thm:Weyl}). Nevertheless, we give a proof of this theorem in Section~\ref{sec:thm1} below for completeness.

\begin{theorem}\label{thm:Thm1}Assume $H$ is a bounded Schr\"odinger operator on
$\ell^{2}\left(G\right)$ where $G$ is a graph of bounded degree, then
\[
\bigcup_{L\in\mathcal{R}(H)}\sigma\left(L\right)\subseteq\sigma_{\text{ess}}\left(H\right).
\]
\end{theorem}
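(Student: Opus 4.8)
The plan is to apply Weyl's Criterion (Theorem \ref{thm:Weyl}) at both ends. Fix $L\in\mathcal{R}(H)$ and $\lambda\in\sigma(L)$; the goal is to manufacture an orthonormal sequence of approximate eigenfunctions for $H$ at $\lambda$, whence $\lambda\in\sigma_{\text{ess}}(H)$ by the second part of Theorem \ref{thm:Weyl}. Recall that $L$ acts on $\ell^2$ of a limiting rooted graph which, by the definition of $\mathcal{R}$-limit (Section \ref{sec:def}), is a local limit of $H$ along a sequence of vertices $\{v_j\}$ escaping to infinity: for every radius $\rho$ and all large $j$ there is a rooted graph-isomorphism $\iota_j$ from the ball of radius $\rho$ about the root onto the ball of radius $\rho$ about $v_j$, along which the potential of $H$ converges pointwise to that of $L$.

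First I would reduce to finitely supported data on the limit side. Since $\lambda\in\sigma(L)$, Weyl's Criterion yields unit vectors $\phi$ with $\|(L-\lambda)\phi\|$ arbitrarily small; as $\phi\in\ell^2$ and $L$ is bounded, truncating $\phi$ to a ball $B_R$ about the root and renormalizing perturbs $\|(L-\lambda)\phi\|$ by an arbitrarily small amount. Hence for each $\varepsilon>0$ there is a unit vector $\phi$ supported in a finite ball $B_R$ with $\|(L-\lambda)\phi\|<\varepsilon$.

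Next I would transplant $\phi$ onto $G$. Using the isomorphism $\iota_j$ of the ball of radius $R+1$ (one step beyond the support of $\phi$, so that the Laplacian term is reproduced rather than merely approximated), set $\psi_j=\phi\circ\iota_j^{-1}$, extended by zero off $B_R(v_j)$; then $\|\psi_j\|=1$. Because $\iota_j$ preserves the edge structure out to radius $R+1$, the transport of $(L-\lambda)\phi$ agrees with $(H-\lambda)\psi_j$ except in the potential term, so $(H-\lambda)\psi_j$ differs from the (isometric) transport of $(L-\lambda)\phi$ by a vector of norm at most $\max_{u\in B_R}|Q(\iota_j u)-Q_\infty(u)|$. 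By the pointwise convergence of the pulled-back potentials over the finitely many vertices of $B_R$, this error tends to $0$, so $\|(H-\lambda)\psi_j\|<2\varepsilon$ for $j$ large. Taking $\varepsilon=1/m$ produces, for each $m$, a unit vector supported in a finite ball about some $v_{j(m)}$ with approximate-eigenfunction error below $2/m$. Since the $v_j$ leave every finite subset of $G$, I would pass to a subsequence whose supporting balls are pairwise disjoint (each new root chosen far from the finitely many balls already selected); functions with disjoint supports being orthogonal, this gives the required orthonormal sequence, and Theorem \ref{thm:Weyl} yields $\lambda\in\sigma_{\text{ess}}(H)$. As $L$ and $\lambda\in\sigma(L)$ were arbitrary, the inclusion follows.

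The main obstacle is the transplantation estimate: it is the one place where the formal content of the $\mathcal{R}$-limit definition must be invoked exactly. One must verify that the radius-$(R+1)$ isomorphism reproduces the Laplacian acting on $\psi_j$ verbatim --- in particular matching degrees on the support and handling the boundary shell where $\psi_j$ vanishes --- so that only the finitely many potential discrepancies survive, and these are governed by pointwise convergence. The bounded-degree hypothesis enters here, guaranteeing both that $H$ is bounded and that balls of fixed radius are finite, which is what turns pointwise convergence of the potential into the uniform smallness of the error above.
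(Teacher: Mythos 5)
Your proposal is correct and follows essentially the same route as the paper: truncate a Weyl vector for $L$ to a finite ball, transplant it to balls around the escaping vertices via the ball isomorphisms from the definition of $\mathcal{R}$-limit, control the error by the local convergence, and space the supports out to obtain an orthonormal Weyl sequence for $H$. The only differences are cosmetic — the paper works with the norm of the difference of the truncated matrices $M^{(u)}_{R+2}-M^{(v_0')}_{R+2}$ (which bundles the eventual coincidence of the adjacency structure together with the convergence of the diagonal entries, exactly the point you flag as the "main obstacle"), and uses radius $R+2$ where your $R+1$ already suffices.
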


Chapter 2 is devoted to studying the limitations of generalizing the argument of Last--Simon \cite{LastSimonEss} to graphs. As we show there, the results extend to graphs of uniform polynomial growth, i.e.\ in the case that the number of points in each ball is uniformly bounded by a polynomial in the radius, we have that
\begin{equation}\label{eq:sigmaEssLastSimonChar}
\overline{\bigcup_{L\in\mathcal{R}(H)}\sigma\left(L\right)}=\sigma_{\text{ess}}\left(H\right).
\end{equation}
On the other hand, by a closer analysis of the proof, we show that this argument can not simply be generalized to graphs of exponential growth.

The final section of Chapter 2 contains an example of a graph of non-uniform polynomial growth on which \eqref{eq:sigmaEssChar} fails.
The graph in this counterexample is not a tree, but its construction involves the use of a sequence of regular graphs with girth growing to infinity. Such a sequence can be thought of as an approximation of a regular tree.

Regular trees are in a sense canonical examples of graphs of exponential growth which have a simple structure. It is therefore natural to ask whether  \eqref{eq:sigmaEssChar} holds on regular trees, and generally on trees. We study this case in Chapter \ref{chapter3}.
While the results from Chapter \ref{chapter2} may suggest that the answer is negative, it is in fact positive. In our work we have initially considered the case of Schr\"odinger operators $H=\Delta+Q$ on regular trees, for which $Q$ has a spherical symmetry around some fixed root. We shall refer to this case as the spherically symmetric case. In this case we have obtained a constructive proof of \eqref{eq:sigmaEssChar}, where the essential spectrum can also be studied using a one-dimensional Jacobi matrix associated with $H$.

After having proved our results, S.~Denisov informed us of an argument which proves \eqref{eq:sigmaEssLastSimonChar} generally for any Schr\"odinger operator on regular trees. Note the closure in \eqref{eq:sigmaEssLastSimonChar}. The paper \cite{BDE} contains both our and Denisov's results, which we briefly describe in Section~3.2 for completeness. Denisov's proof overcomes the restrictions we have seen in Chapter \ref{chapter2} by using trial functions which are supported on annuli around a fixed origin, instead of functions supported on balls.

Finally, in Chapter \ref{chapter4} we turn to the characterization \eqref{eq:sigmaInfChar}. By adapting and genralizing the work of Simon \cite{SimonSz} to graphs we obtain both \eqref{eq:sigmaEssChar} and \eqref{eq:sigmaInfChar} for graphs of subexponential growth. Note that these results are stronger than the results obtained in Chapter \ref{chapter2}.

In the course of the generalization of Simon's work to graphs we encounter a necessity for a Shnol's type result for graphs. A direct Shnol's result on graphs of sub-exponential growth was developed by \cite{HK2011} (see also \cite{BP2018,BdmLS09,FLW14}). Regarding the inverse direction there exist results in other general settings, including \cite{BdmLS09,BdmSt03}. A recent work of Lenz and Teplyaev \cite{LenzTeplyaev} contains the result relevant to the current setting. We present this result together with a proof in our particular case in Chapter~4.

In the last section of Chapter \ref{chapter4} we develop some examples for the implementation of the above mentioned results obtained on graphs of sub-exponential growth.

\section{Preliminaries}

In this subsection we discuss some preliminaries from spectral theory that are needed in our analysis below.

\subsection{Spectral measures}

There are several notions of spectral measures that we use in this thesis.

Let $H$ be a bounded self-adjoint operator on a separable Hilbert space $\mathcal{H}$.
According to the spectral theorem (see e.g.\ \cite[Theorem VII.8]{ReedSimonI}) there exists a projection valued measure $E(\cdot)$ corresponding to $H$, satisfying for any $\psi\in\mathcal{H}$
\[
\langle\psi,H\psi\rangle=\int_{\mathbb R}x \,d\langle \psi, E(x)\psi\rangle,
\]
where $\langle\cdot,\cdot\rangle$ is the inner product in $\mathcal{H}$.

We call a Borel Measure $\mu$ on $\mathbb{R}$ a \emph{spectral measure} for $H$ if\ \ $\forall S\subseteq \mathbb{R}$ a Borel Set (see \cite[pg.~503]{SimonSG}),
\begin{equation} \label{eq:SpectralMeasure}
E(S)=0 \iff \mu(S)=0.
\end{equation}

Assume now $\psi\in\mathcal{H}$. The \emph{spectral measure of $H$ with respect to $\psi$} is the unique measure on $\mathbb{R}$ satisfying for $z\in\mathbb{C}\backslash\mathbb{R}$ (see e.g.~\cite{ReedSimonI}):
\[
\int_{\mathbb{R}} \frac{d\mu_{\psi}(x)}{x-z}=\langle\psi , (H-z)^{-1}\psi\rangle.
\]
The \emph{cyclic subspace} spanned by $H$ and $\psi$ is given by
\[
\mathcal{H}_\psi=\overline{\textrm{span}\left\{H^n\psi \mid n=0,1,\ldots\right\}}.
\]
Note that the spectral measure of $H$ with respect to a vector $\psi$ is not necessarily a spectral measure in the sense of \eqref{eq:SpectralMeasure}. However, if $\mathcal{H}_\psi=\mathcal{H}$ then the measure $\mu_\psi$ is a spectral measure as defined above. Alternatively, by taking an orthonormal sequence $\{\psi_n\}_{n\in\mathbb{N}}$, a positive sequence $\{a_n\}_{n\in\mathbb{N}}\in\ell^1(\mathbb{N})$, and defining $\mu=\sum_{n\in\mathbb{N}} a_n \mu_{\psi_n}$ we can get a spectral measure satisfying \eqref{eq:SpectralMeasure}.

\subsection{The Borel transform}
Let $\mu$ be a measure on $\mathbb{R}$ satisfying
\[
\int_{\mathbb R}\frac{d\mu(x)}{1+|x|}<\infty.
\]
The \emph{Borel transform} (also known as the \emph{Steijles transform} or the \emph{$m$-function}) of $\mu$ is defined by
\[
F_\mu(z)=\int_{\mathbb R}\frac{d\mu(x)}{x-z},
\]
where $z\in\mathbb{C}_+$.
The function $F_\mu$ captures important properties of the measure $\mu$. The following theorem is a consequence of \cite[Theorem 1.6]{GSrankone}.
\begin{theorem}\label{thm:spectralMeasureProperties}
Given $F_\mu$ and $\mu$ as above. Then,
\begin{enumerate}
\item $\mu(\{E_0\})=\lim_{\varepsilon\to0}\varepsilon\textrm{Im}F_\mu(E+i\varepsilon)$.

\item Assume further that $\mu$ is a spectral measure of a self-adjoint operator $H$. If there exists $\delta>0$ such that $\lim_{\varepsilon\to0}\textrm{Im}F_\mu (E+i\varepsilon)$ is nonzero for all $E$ in an interval $(E_0-\delta,E_0+\delta)$ then $E_0\in\sigma(H)$.

\end{enumerate}

\end{theorem}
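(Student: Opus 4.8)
The plan is to deduce both statements from the boundary behaviour of the Herglotz function $F_\mu$, as collected in \cite[Theorem 1.6]{GSrankone}, organised as follows.

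For part (1) I would first compute the imaginary part explicitly. Writing $z=E_0+i\varepsilon$ and using $\textrm{Im}\,(x-E_0-i\varepsilon)^{-1}=\varepsilon\big((x-E_0)^2+\varepsilon^2\big)^{-1}$, one gets
\[
\varepsilon\,\textrm{Im}\,F_\mu(E_0+i\varepsilon)=\int_{\mathbb R}\frac{\varepsilon^2}{(x-E_0)^2+\varepsilon^2}\,d\mu(x).
\]
The integrand is bounded by $1$, converges pointwise as $\varepsilon\to0$ to the indicator of $\{x=E_0\}$, and --- restricting to $\varepsilon\le1$ --- is dominated by a fixed function integrable against $\mu$: on $\{|x-E_0|\ge1\}$ it is at most $(x-E_0)^{-2}$, which is $O\big((1+|x|)^{-1}\big)$ and hence $\mu$-integrable by the standing hypothesis $\int(1+|x|)^{-1}\,d\mu<\infty$, while on $\{|x-E_0|<1\}$ it is bounded. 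Dominated convergence then yields $\lim_{\varepsilon\to0}\varepsilon\,\textrm{Im}\,F_\mu(E_0+i\varepsilon)=\mu(\{E_0\})$. This part is purely measure-theoretic and uses nothing about $H$.

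For part (2) I would reduce the claim to a statement about the support of $\mu$. Since $\mu$ is a spectral measure, \eqref{eq:SpectralMeasure} gives $E(S)=0\iff\mu(S)=0$, whence $\sigma(H)=\textrm{supp}\,\mu$; thus it suffices to show $\mu\big((E_0-\eta,E_0+\eta)\big)>0$ for every $\eta\in(0,\delta)$. Fix such an $\eta$ and choose endpoints $a,b$ with $E_0-\eta<a<E_0<b<E_0+\eta$ that are not atoms of $\mu$ (possible since $\mu$ has at most countably many atoms). The Stieltjes inversion formula gives
\[
\mu\big((a,b)\big)=\lim_{\varepsilon\to0}\frac1\pi\int_a^b \textrm{Im}\,F_\mu(E+i\varepsilon)\,dE.
\]
Because the Poisson kernel is positive, $\textrm{Im}\,F_\mu(E+i\varepsilon)\ge0$, so Fatou's lemma bounds the right-hand side below by $\tfrac1\pi\int_a^b \lim_{\varepsilon\to0}\textrm{Im}\,F_\mu(E+i\varepsilon)\,dE$. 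By hypothesis this pointwise limit exists and is nonzero, hence strictly positive, throughout $(E_0-\delta,E_0+\delta)$, so the integral over $(a,b)$ is strictly positive. Therefore $\mu\big((a,b)\big)>0$, and letting $a,b\to E_0$ I conclude $E_0\in\textrm{supp}\,\mu=\sigma(H)$.

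The routine part is (1), where the only care needed is exhibiting the $\mu$-integrable dominating function from the weak growth bound on $\mu$. I expect the main obstacle to be the interchange of limit and integral in (2): the hypothesis controls the boundary values of $\textrm{Im}\,F_\mu$ pointwise, whereas the inversion formula only sees interval averages, so the argument hinges on the positivity of $\textrm{Im}\,F_\mu$ (which makes Fatou's lemma applicable and the resulting lower bound strict) together with the freedom to place the endpoints $a,b$ off the atomic set of $\mu$.
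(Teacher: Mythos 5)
Your argument is correct. Note, though, that the paper does not actually prove this statement: it simply records it as ``a consequence of \cite[Theorem 1.6]{GSrankone}'', so the comparison is between your self-contained proof and an appeal to Gesztesy--Simon's collected boundary-value facts for Borel transforms. What you do differently is re-derive the two facts from first principles: part (1) from the Poisson-kernel formula $\varepsilon\,\textrm{Im}\,F_\mu(E_0+i\varepsilon)=\int \varepsilon^2\big((x-E_0)^2+\varepsilon^2\big)^{-1}d\mu(x)$ plus dominated convergence, and part (2) from $\sigma(H)=\textrm{supp}\,\mu$ (valid because of the equivalence \eqref{eq:SpectralMeasure}), Stieltjes inversion at non-atomic endpoints, and Fatou. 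The two points where care is genuinely needed are exactly the ones you flag: since $\int(1+|x|)^{-1}d\mu<\infty$ does not force $\mu$ to be finite, the constant $1$ is not an admissible dominating function in (1), and your $\min\bigl(1,(x-E_0)^{-2}\bigr)\lesssim(1+|x|)^{-1}$ bound fixes this; and in (2) the positivity of $\textrm{Im}\,F_\mu$ is what lets Fatou convert the pointwise hypothesis into a strictly positive lower bound for $\mu\bigl((a,b)\bigr)$, with the countability of atoms guaranteeing admissible endpoints. (Your statement of the hypothesis in (2) implicitly reads ``nonzero'' as ``the limit exists in $[0,+\infty]$ and is positive'', which is the intended reading given $\textrm{Im}\,F_\mu\ge0$, and Fatou with $\liminf$ covers the value $+\infty$ as well.) What your route buys is a proof that is elementary and independent of the cited reference; what the paper's citation buys is brevity, since \cite[Theorem 1.6]{GSrankone} packages these boundary-value statements in greater generality.
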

\subsection{The resolvent identity}
Assume that $H, H_0$ are (bounded) Schr\"odinger operators. 
Denote by $R,R_0$ the operators, $R(z)=(H-z)^{-1}$ and $R_0(z)=(H_0-z)^{-1}$, defined on the complement of the spectrum in $\mathbb{C}$ (the resolvent set).
The following identity is known as the \emph{resolvent identity}
\[
R_{0}-R=R_{0}\left(H-H_{0}\right)R.
\]

Let $\varphi\in\mathcal{H}$, $\alpha\in\mathbb{R}$, and define $H_\alpha=H_0+\alpha\langle\varphi,\cdot\rangle\varphi$.
As a consequence of the resolvent identity one could get the following relation, which is sometimes known as the basic formula of rank-one perturbations (see e.g.\ \cite{SimonRankOne})
\[
F_\alpha(z)=\frac{F_0(z)}{1+\alpha F_0(z)},
\]
where 
\[
F_\alpha(z)=\langle\varphi , (H_\alpha-z)^{-1}\varphi\rangle.
\]
 
\section{$\mathcal{R}$-limits on graphs}

\subsection{Definitions and notations} \label{sec:def}

This section deals with the extension of the concept of right limits to general graphs with bounded degree. There are two issues that make the analogous notion of right limit for general graphs more complex than that of the one-dimensional object. The first (minor) one is the fact that general graphs may have multiple paths to infinity. This is true already in the case of $\mathbb{Z}^d$ and is the main reason why we refrain from using the name `right limit' in this case and use $\mathcal{R}$-limit instead. The second issue is that with a general graph the absence of homogeneity means that the different $\mathcal{R}$-limits of an operator might be defined on various different graphs which are not necessarily related in a simple way to the graph over which the original operator was defined. Thus, one is faced with the requirement to compare operators defined over different graphs. In order to deal with the first issue, one has to specify a path to infinity. In order to deal with the second one, we need to introduce local mappings to finite dimensional vector spaces which will satisfy a certain compatibility condition with each other.

Let $H$ be a Schr\"odinger operator on a graph $G$ with bounded degree. For any vertex $v\in G$ and $r\in\mathbb{N}$ denote the ball
\[
B_r\left(v\right)=\left\{u\in G\,|\,\text{dist}(u,v)\leq r\right\},
\]
where $\text{dist}(u,v)$ denotes the distance in the graph between $u$ and $v$, which is defined by the length (number of edges) of the shortest path between them.
Denote further by $N_{v,r}$ the number of vertices in this ball, i.e.\ $N_{v,r}=\left|B_r\left(v\right)\right|$. Let $H_r^{(v)}=H|_{B_r(v)}$(=the restriction of $H$ to $\ell^2 \left(B_r(v) \right)$).

Let $\eta$ be an indexing of the vertices of this ball,
\begin{equation} \nonumber
\eta:B_r(v)\to\left\{1,2,\ldots,N_{v,r}\right\}
\end{equation}
and define the corresponding unitary mapping $\mathcal{I}_\eta:\ell^2\left(B_r\left(v\right)\right)\to \mathbb{C}^{N_{v,r}}$ by
\begin{equation} \nonumber
\mathcal{I}_\eta \left(\delta_u\right)=e_{\eta(u)},
\end{equation}
for any vertex $u\in B_r(v)$, where $\delta_u$ is the delta function at $u$ and $\left \{e_1,e_2,\ldots,e_{N_{v,r}} \right \}$ is the standard basis in $\mathbb{C}^{N_{v,r}}$ (i.e.\ $e_i\left(j\right)=\delta_{i,j}$).
Let $M^{(v)}_{\eta, r}\in\mathcal{M}_{N_{v,r},N_{v,r}}$ be the matrix defined by
\begin{equation} \nonumber
M^{(v)}_{\eta,r}=\mathcal{I}_\eta H_r^{(v)} \mathcal{I}_\eta^{-1}.
\end{equation}

\begin{definition}
Fix a vertex $v\in G$ and for any $r \in \mathbb{N}$, let
\begin{equation} \nonumber
\eta_r :B_r(v)\to\left\{1,2,\ldots,N_{v,r}\right\}
\end{equation}
be an enumeration as above, and $\mathcal{I}_r=\mathcal{I}_{\eta_r}$ be the corresponding isomorphism.
We say that the sequence of isomorphisms $\left\{\mathcal{I}_r \right\}_{r=1}^\infty$ is \emph{coherent} if for any $r<s$ and any $u \in B_r(v)$
\begin{equation} \nonumber
\eta_s(u)=\eta_r(u).
\end{equation}
When we want to emphasize the dependence on $v$, we say that  $\left\{\mathcal{I}_r \right\}_{r=1}^\infty$ is a coherent sequence at $v$.
\end{definition}

Note that, if $\{\mathcal{I}_r\}_{r=1}^\infty$ is a coherent sequence of isomorphisms at $v \in G$, then for any $r$, the corresponding matrix $M^{(v)}_{\eta_r,r}$ is the $N_{v,r} \times N_{v,r}$ upper left corner of the matrix $M^{(v)}_{\eta_s,s}$ for any $r \leq s$. Thus, in what follows, when the coherent sequence is clear, we omit the $\eta_r$ and write simply $M^{(v)}_r=M^{(v)}_{\eta_r,r}$.

We say that a sequence of vertices $\left\{v_n\right\}_{n=0}^\infty$ is a path to infinity in $G$ if $v_{n+1}\sim v_n\ \forall n\in\mathbb{N}$, and $|v_n|=\text{dist}\left(v_n,v_0\right)\raisebox{\dimexpr-0.2\height}{\rotatebox{0}{$\xrightarrow{\makebox[0.5cm]{$\rotatebox{-0}{$\scriptstyle{n\to\infty}$}$}}$}}\infty$ monotonically.
\begin{definition} \label{def:rlimdef}
Given a graph $G'$, a vertex $v_0'\in G'$ and a Schr\"odinger operator $H'$ on $G'$, we say that $\left\{H',G',v_0'\right\}$ is an $\mathcal{R}$-limit of $H$ along the path to infinity $\left\{v_n\right\}_{n=0}^\infty$ if there exists a sequence of indices $\left\{n_j\right\}_{j=1}^\infty$, such that
\begin{enumerate}[leftmargin=*]
\item[(i)]For any $j\in\mathbb{N}$ there exists a coherent sequence of isomorphisms $\left\{\mathcal{I}^{(j)}_{k}\right\}_{k=1}^\infty$ at $v_{n_j}$.
\item[(ii)]There exists a coherent sequence of isomorphisms $\left\{\mathcal{I}'_{k}\right\}_{k=1}^\infty$ at ${v_0'}$.
\item[(iii)]For any $r\in\mathbb{N}$ $N_{v_{n_j},r}=N_{v_0',r}$ for all sufficiently large $j$, and
\begin{equation} \label{eq:rlimdef}
\lim_{j\to\infty}M^{\left(v_{n_j}\right)}_r=M^{(v_0')}_{r}.
\end{equation}
\end{enumerate}
\end{definition}

In the one dimensional case, the matrices $M^{(v_j)}_r$ are simply truncated Jacobi matrices and \eqref{eq:rlimdef} translates to the condition \eqref{eq:rightlimdef}. Thus, the definition of $\mathcal{R}$-limits is a direct generalization of the definition of right limits in the one dimensional case.

Note that, as in the one dimensional case, one can always find an $\mathcal{R}$-limit along a subsequence of any given sequence of vertices that move away to infinity.

\begin{lem}\label{lemma:rlimexists}
Let $H=\Delta+Q$ be a bounded Schr\"odinger operator on an infinite graph $\{G,v_0\}$ of a bounded degree. Assume $\left\{u_j\right\}_{j=1}^\infty$ is a sequence of vertices such that $\text{dist}(v_0,u_j)\raisebox{\dimexpr-0.2\height}{\rotatebox{0}{$\xrightarrow{\makebox[0.5cm]{$\rotatebox{-0}{$\scriptstyle{j\to\infty}$}$}}$}}\infty$ monotonically. Then there exists an $\mathcal{R}$-limit of $H$ which is obtained along a subsequence of $\left\{u_j\right\}_{j=1}^\infty$.
\end{lem}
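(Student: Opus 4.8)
The plan is to prove Lemma~\ref{lemma:rlimexists} by a diagonal argument, extracting a subsequence along which all the finite truncated matrices converge simultaneously. First I would pass to a subsequence along which the combinatorial structure stabilizes: since $G$ has bounded degree, say $\deg(v)\le D$ for all $v$, the number of vertices $N_{u_j,r}=|B_r(u_j)|$ is bounded above by $1+D+D(D-1)+\cdots \le D^{r+1}$, a quantity independent of $j$. Hence for each fixed $r$ the sequence $\{N_{u_j,r}\}_{j=1}^\infty$ takes values in a finite set, so by pigeonhole there is an infinite subsequence on which it is constant. I would do this for $r=1$, then thin again for $r=2$, and so on, and finally take a diagonal subsequence so that $N_{u_j,r}$ is eventually constant in $j$ for every fixed $r$; call this common value $N_{v_0',r}$. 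This secures the first half of condition (iii).

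Next I would arrange convergence of the matrices themselves. For each $j$, choose a coherent sequence of isomorphisms $\{\mathcal{I}^{(j)}_k\}_{k=1}^\infty$ at $u_j$; such a coherent sequence always exists, since one can enumerate $B_1(u_j)$, then extend the enumeration to $B_2(u_j)$ by indexing the new vertices, and continue, with $\eta_k$ restricting to $\eta_r$ on $B_r(u_j)$ for $r<k$ by construction. The resulting matrices $M^{(u_j)}_r$ are $N_{v_0',r}\times N_{v_0',r}$ (once $j$ is large) with entries that are bounded: the off-diagonal entries lie in $\{-1,0\}$ (they record adjacency), the diagonal entries are $Q(u)-\deg_G(u)$ restricted to the ball and are bounded because $Q$ is bounded and the degree is at most $D$. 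Thus each matrix entry ranges over a bounded subset of $\RM$, and by a further diagonal extraction over $r$ and over the finitely many entries for each $r$, I obtain a subsequence along which $\lim_{j\to\infty} M^{(u_j)}_r$ exists for every $r$; denote the limit by $M_r$.

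It then remains to realize the limiting matrices as coming from an honest Schr\"odinger operator on a genuine graph. I would construct $G'$ and the root $v_0'$ directly from the limit data: the coherence of the sequences $\{\mathcal{I}^{(j)}_k\}$ guarantees that $M_r$ is the upper-left $N_{v_0',r}\times N_{v_0',r}$ corner of $M_s$ for $r<s$, so the $M_r$ are nested and assemble into a single infinite matrix $M$. The off-diagonal $\{-1,0\}$ entries of $M$ specify an adjacency relation, hence a graph $G'$ on the index set $\bigcup_r\{1,\dots,N_{v_0',r}\}$ with distinguished vertex $v_0'$ corresponding to index $1$; the diagonal entries recover $Q'$ via $Q'(u)=M_{uu}+\deg_{G'}(u)$. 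The coherent identifications then furnish the isomorphisms $\{\mathcal{I}'_k\}$ required by condition (ii), and $\{H',G',v_0'\}$ satisfies \eqref{eq:rlimdef} by construction, so it is the desired $\mathcal{R}$-limit obtained along the chosen subsequence of $\{u_j\}$.

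The main obstacle I anticipate is not the compactness extraction, which is routine, but verifying that the limiting matrix $M$ genuinely encodes a Schr\"odinger operator on a well-defined graph of bounded degree: one must check that the limiting off-diagonal pattern is symmetric and consistent across scales (so that the adjacency relation is well-defined and independent of $r$), that the recovered degree at each vertex is finite and bounded by $D$, and that $v_0'$ has an infinite connected component so that $\{v_n'\}$ can be taken to a genuine path to infinity. Each of these follows from the bounded-degree hypothesis and the coherence condition, but confirming that the strong limit of the truncations is exactly the operator built from $M$ (rather than merely agreeing on finite balls) is the step that requires care.
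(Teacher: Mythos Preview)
Your proof is correct and uses the same diagonal--compactness argument as the paper, the only organizational difference being that the paper first stabilizes the isomorphism type of the balls (via the finitely many possible degree patterns) and then the potential, whereas you handle both at once through the matrix entries; your added care in reconstructing the limit graph from $M$ is more explicit than the paper's terse treatment, which leaves those checks implicit. One minor slip: with the paper's convention $(\Delta\psi)(v)=\sum_{u\sim v}(\psi(u)-\psi(v))$ the off-diagonal entries of $H$ lie in $\{0,1\}$ rather than $\{-1,0\}$, though this is immaterial to the argument.
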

\begin{proof}
Consider the sequence $U_0=\left\{u_j\right\}_{j=1}^\infty$. Since the set $\text{deg}\left(U_0\right)$ of possible vertices degree is finite some degree repeats infinitely many times. Restrict to this subsequence $U_1$ and repeat the argument with the set of neighbours of vertices of $U_1$. Inductively define for every $k\in\mathbb{N}$ such a subsequence $U_k$ for which the vertices degree agree in balls of radius $k$  (under a corresponding isomorphism of these sub-graphs).
We take the diagonal over the resulting subsequences of vertices $\subseteq U_k$ to
define a subsequence $U\subseteq U_0$ for which the vertices degree agree in balls of any radius for index large enough.

Next, since the potential is bounded we can restrict to a subsequence $U'_1\subseteq U$ on which the sequence $Q(U'_1)$ is converging. Again repeat the argument over neighbours of vertices of $U'_1$, and similarly continue inductively and take the diagonal to construct a subsequence on which $H$ converges to an $\mathcal{R}$-limit. 
\end{proof}

\begin{remark}
It is natural to define a topology on the set of weighted, rooted graphs (of bounded degree) by comparing the edge weights on growing spheres around the root. This space is metrizable (see e.g.\ \cite{Lavosz} for details) and it is not hard to see that the convergence we describe to $\mathcal{R}$-limits is the same as convergence of shifts of the graph in that topology. In particular, convergence of the Laplacian to an $\mathcal{R}$-limit can be thought of as a particular case of Benjamini-Schramm convergence (first defined in \cite{BS}), where the root is shifted along a fixed infinite graph. The case of a Schr\"odinger operator (i.e.\ with an added potential) is of course somewhat more general.
\end{remark}

\subsection{Proof of Theorem \ref{thm:Thm1}}\label{sec:thm1}
The proof of the first theorem is straightforward and thus we include it already here.

\begin{proof} [Proof of Theorem~\ref{thm:Thm1}]
Assume $\{H',G',v_0'\}$ is an $\mathcal{R}$-limit of $H$ along a path to infinity $\left\{v_j\right\}_{j=0}^\infty$, and $\lambda\in\sigma\left(H'\right)$.
Given $\varepsilon>0$ define
\[
\varepsilon'=\min\left(\frac{2\varepsilon}{1+\Vert H'\Vert+|\lambda|},\frac{1}{2}\ \right).
\]
According to Weyl's Criterion (Theorem \ref{thm:Weyl}, for $\sigma\left(H'\right)$) there exists ${\psi\in\ell^{2}\left(G'\right)}$
such that $\left\Vert \left(H'-\lambda\right)\psi\right\Vert <\varepsilon'$ and $\left\Vert \psi\right\Vert =1$.
Additionally, since $\psi\in\ell^2\left(G'\right)$, there exists $R>0$ such that
\[
\left\Vert \psi|_{G'\backslash B_R(v_0')}\right\Vert <\varepsilon'.
\]
Thus by defining for every $w\in V(G')$
\[
\varphi(w)=\begin{cases}
\psi(w)/{K} & w\in B_R(v_0') \\
0 & \text{otherwise},
\end{cases}
\]
with
\[
K=\left\Vert \psi|_{B_R(v_0')}\right\Vert>\frac{1}{2},
\]
we get an approximate eigenfunction for $H'$, supported on $B_R\left(v_0'\right)$, and satisfying $\left\Vert \varphi\right\Vert =1$. Indeed
\[
\left\Vert\left(H'-\lambda\right)\varphi\right\Vert=
\left\Vert\left(H'-\lambda\right)\frac{\psi-(\psi-\varphi)}{K}\right\Vert< {{1+\left\Vert H'\right\Vert+\left|\lambda\right|}\over{K}} \varepsilon'\leq\varepsilon.
\]
Since $H'$ is an $\mathcal{R}$-limit of $H$ there exists some $u=v_{n_j}\in G$ so that the corresponding matrices satisfy
\[
\left\Vert M^{(u)}_{R+2} - M^{(v_0')}_{R+2} \right\Vert < \varepsilon.
\]
Let $\mathcal{I}:\ell^2\left(B_{R+2}(u)\right)\to\mathbb{C}^{N_{u,R+2}}$ and $\mathcal{I'}:\ell^2\left(B_{R+2}(v_0')\right)\to\mathbb{C}^{N_{v_0',R+2}}$ be the isomorphisms from Definition~\ref{def:rlimdef}.
Denote by $\chi'$ the function $\chi'=\mathcal{I}^{-1}\mathcal{I}'\widetilde{\varphi} \in\ell^2\left(B_{R+2}(u)\right)$,
where $\widetilde{\varphi}=\varphi|_{B_{R+2}(v'_0)}$. Additionally define,
\[
\chi(w)=\begin{cases}
\chi'(w) & w\in B_{R+2}\left(u\right)\\
0 & \text{otherwise}.
\end{cases}
\]
Then
\[
\left(H-\lambda\right)\chi=\left(H|_{B_{R+2}(u)}-\lambda\right)\chi'
\]
and thus,
\begin{eqnarray}
&\left\Vert \left(H-\lambda\right)\chi\right\Vert =
\left\Vert \left(M^{(u)}_{R+2}-\lambda\right)\mathcal{I}'\widetilde{\varphi}\right\Vert < \nonumber\\
&\left\Vert \left(M^{(u)}_{R+2}-M^{(v_0')}_{R+2}\right)\mathcal{I}'\widetilde{\varphi}\right\Vert +
\left\Vert \left(M^{(v_0')}_{R+2}-\lambda\right)\mathcal{I}'\widetilde{\varphi}\right\Vert <\nonumber\\
&\varepsilon + \left\Vert \left(H'-\lambda\right)\varphi\right\Vert < 2\varepsilon \nonumber
\end{eqnarray}
We can now repeat this argument for a subsequence of vertices along the sequence $\left\{v_{n_j}\right\}$ from Definition~\ref{def:rlimdef},  such that $\text{dist}\left(u_1,u_2\right)>R+2$ for any two vertices $u_1, u_2$ on this subsequence.
As a result we get for any $\varepsilon>0$ an orthonormal sequence of (compactly supported) functions $\left\{\varphi_k\right\}_{k=1}^\infty$ satisfying,
\[
\left\Vert \left(H-\lambda\right)\varphi_k\right\Vert <\varepsilon.
\]
Thus, by taking e.g.\ $\varepsilon_n=\frac{1}{n}$, we can choose an orthonormal sequence of approximate eigenfunctions for $H$.
Hence by Weyl's Criterion for the essential spectrum (Theorem \ref{thm:Weyl}) $\lambda\in\sigma_\text{ess}\left(H\right)$.
\end{proof}

\section{Summary and further directions}

As we show in this work, the problem of characterizing the essential spectrum of Schr\"odinger operators on general graphs is non-trivial and raises various interesting questions. While in the case of $\mathbb{Z}^d$, the characterization is given completely in terms of the associated $\mathcal{R}$-limits, this is not true for general graphs. Thus, a natural problem that arises naturally from this work is that of characterizing graphs for which \eqref{eq:sigmaEssChar} holds (e.g.\ in terms of their geometric properties).

As we show in this work, \eqref{eq:sigmaEssChar} does hold for graphs of uniform sub-exponential growth and for trees with a spherically homogeneous potential. It seems likely that this class could be generalized to other graphs with spherical symmetry (such as those described in \cite{BK} for example). We leave this for future work.

In the course of the proof of \eqref{eq:sigmaEssChar} on graphs we encounter a necessity for a Shnol's type result for graphs. We prove a reverse result of this type, i.e.\ the existence of a generalized eigenfunction of specific growth rate for each point in the spectrum. 
As we describe in Section \ref{sec:PfonRegTrees}, for general Schr\"odinger operators on trees \eqref{eq:sigmaEssLastSimonChar} holds. The problem of removing the closure from the left hand side is still open. We believe that a better understanding of the growth properties of generalized eigenfunctions for operators on trees would be of use in studying this problem.

\chapter[Extending the localization method]{Extending the Last-Simon localization method to general graphs}\label{chapter2}

In this chapter we study possible ``direct" generalizations of the method of \cite{LastSimonEss} to general graphs.
In the first two sections we adapt the argument to general graphs by using trial functions which are supported on balls instead of intervals. This method enables us to obtain positive results on graphs with uniform polynomial growth rate.
Next, in Section~2.3 we study the restrictions of applying this method to more general graphs. Finally, in Section~2.4 we demonstrate, by a counterexample, that in fact the statement is false on general graphs. 
The example given in Section~2.4 is based on Section~4 of \cite{BDE}. 

\section{Graphs of uniform polynomial growth}

Let $G$ be an infinite graph with vertex degree bounded by $d$. 
Given a vertex $v\in G$, recall the notation (identifying the graph $G$ with the set of vertices $V(G)$)
\[
B_r(u)=\left\{v\in G\,|\,\text{dist}(u,v)\leq r\right\},
\]
and define $N_u(r):\mathbb{N}\to\mathbb{N}$ by
\[
N_{u}(r)= \left\vert B_{r}(u)\right\vert.
\]

Given functions $f,g:\mathbb{N}\to\mathbb{R}$ (and similarly for functions defined on the graph) we write $f\lesssim g$ if there exists a constant $c>0$ so that for any  $n\in\mathbb{N}$,
\[
f(n)\leq c g(n).
\]
Consider a function $\gamma:\mathbb{N}\to\mathbb{N}$ and $\alpha>0$, we say that $\gamma$ is of upper (lower) $\alpha$-polynomial growth if $\gamma\lesssim n^\alpha$ ($\gamma\gtrsim n^\alpha$).
In short we will say that $\gamma$ is of upper (lower) polynomial growth.

We say that $G$ is of (\{$\alpha,\beta$\}-)polynomial growth if for some $\alpha, \beta>0$, and a vertex $v_0\in G$,
\[
r^\alpha \lesssim N_{v_0}(r) \lesssim r^\beta.
\]
\begin{remark*}
If $\text{dist}(u,v)=\rho$ then $N_u(r)\leq N_v(r+\rho)$. Thus, if $G$ is of \{$\alpha,\beta$\}-polynomial growth, then for any $u\in G$ also
\[
r^\alpha \lesssim N_{u}(r) \lesssim r^\beta.
\]
\end{remark*}
We say that $G$ is of {\bf uniform} \{$\alpha,\beta$\}-polynomial growth if $r^\alpha \lesssim N_{u}(r) \lesssim r^\beta$ for any $u\in G$ with the same constants, i.e.\ there exist $c,c'>0$ such that for any $u\in G$, $r\in\mathbb{N}$,
\[
c r^\alpha \leq N_u(r)\leq c'r^\beta.
\]

Let $H$ be a bounded Schr\"odinger operator on $G$ (see \eqref{eq:SchrOp}). As in  Chapter~1 we denote by $\mathcal{R}(H)$ the set of $\mathcal{R}$-limits of $H$.

\begin{theorem} \label{thm:Thm4}
Assume $G$ is a graph of \{$\alpha,\beta$\}-uniform polynomial growth so that $\beta-\alpha<1$, and assume $H$ is a bounded Schr\"odinger operator on $G$, then
\begin{equation} \label{eq:mainsta}
\sigma_{\text ess}(H)=\overline{\bigcup_{L\in\mathcal{R}(H)}\sigma(L)}.
\end{equation}
\end{theorem}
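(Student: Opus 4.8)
I would first dispose of the easy inclusion: Theorem~\ref{thm:Thm1} already gives $\bigcup_{L}\sigma(L)\subseteq\sigma_{\mathrm{ess}}(H)$, and $\sigma_{\mathrm{ess}}(H)$ is closed, so $\overline{\bigcup_{L}\sigma(L)}\subseteq\sigma_{\mathrm{ess}}(H)$. The content is entirely the reverse inclusion. Fix $\lambda\in\sigma_{\mathrm{ess}}(H)$; it suffices to produce, for every $\varepsilon>0$, an $\mathcal{R}$-limit $L$ with $\mathrm{dist}(\lambda,\sigma(L))$ small, the smallness tending to $0$ with $\varepsilon$. By Weyl's Criterion (Theorem~\ref{thm:Weyl}) I would take an orthonormal sequence $\{\psi_n\}$ with $\|(H-\lambda)\psi_n\|\to0$; orthonormality forces $\psi_n\to0$ weakly, so the mass of $\psi_n$ escapes every fixed ball $B_\rho(v_0)$ as $n\to\infty$. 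The plan then has two stages: (a) \emph{localize} a suitable $\psi_n$ to a ball $B_{2R}(w)$ around a far vertex $w$, producing a unit vector $\phi$ with $\|(H-\lambda)\phi\|$ small; and (b) \emph{transplant} $\phi$ to an $\mathcal{R}$-limit through the coherent isomorphisms of Definition~\ref{def:rlimdef}, whose existence along a subsequence of far centers is guaranteed by Lemma~\ref{lemma:rlimexists}.

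The crux is (a), and it is the only place the growth hypothesis enters. Fix $R$ and a maximal $R$-separated set $\{w_i\}$, so that $\{B_R(w_i)\}$ covers $G$ while $\{B_{R/2}(w_i)\}$ is disjoint. Uniform $\{\alpha,\beta\}$-growth bounds the multiplicity of the covers $\{B_R(w_i)\}$ and $\{B_{2R}(w_i)\}$ by $K_R\lesssim R^{\beta-\alpha}$, since each vertex $x$ lies in at most $|B_{3R}(x)|/\min_i|B_{R/2}(w_i)|\lesssim R^{\beta-\alpha}$ of them. For each $i$ choose a cutoff $\chi_i$ equal to $1$ on $B_R(w_i)$, vanishing off $B_{2R}(w_i)$ and tapering linearly in the graph distance, so that $\|[\Delta,\chi_i]\psi\|\lesssim (d/R)\,\|\psi|_{B_{2R}(w_i)}\|$. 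Writing $(H-\lambda)(\chi_i\psi)=\chi_i(H-\lambda)\psi+[\Delta,\chi_i]\psi$ and summing over $i$, the bounded multiplicity yields
\[
\sum_i\|(H-\lambda)(\chi_i\psi)\|^2\lesssim K_R\bigl(\varepsilon^2+d^2/R^2\bigr).
\]
On the other hand, if \emph{every} far ball (say $\mathrm{dist}(v_0,w_i)>D$) were \emph{bad}, meaning $\|(H-\lambda)(\chi_i\psi)\|>C\varepsilon\,\|\chi_i\psi\|\ge C\varepsilon\,\|\psi|_{B_R(w_i)}\|$, then summing and using that the far balls cover $G\setminus B_{D+R}(v_0)$, whose $\psi$-mass exceeds $\tfrac12$ once $n$ is large, would force $\sum_i\|(H-\lambda)(\chi_i\psi)\|^2>\tfrac12 C^2\varepsilon^2$.

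These two bounds contradict one another as soon as $C^2\gtrsim K_R\bigl(1+d^2/(R^2\varepsilon^2)\bigr)$. Choosing $R\sim d/\varepsilon$ makes $d^2/(R^2\varepsilon^2)\sim 1$ and $K_R\sim(d/\varepsilon)^{\beta-\alpha}$, so one may take $C\sim(d/\varepsilon)^{(\beta-\alpha)/2}$; then $C\varepsilon\sim d^{(\beta-\alpha)/2}\varepsilon^{\,1-(\beta-\alpha)/2}$, which tends to $0$ \emph{precisely because} $\beta-\alpha<1$. The contradiction shows that some far ball $B_R(w_i)$ is \emph{good}: normalizing $\phi=\chi_i\psi/\|\chi_i\psi\|$ gives a unit vector supported on $B_{2R}(w_i)$ with $\|(H-\lambda)\phi\|\le C\varepsilon\to0$. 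Since this holds for every threshold $D$ (push $n$ up so the mass escapes $B_{D+R}(v_0)$), for each fixed $\varepsilon$ I obtain such good balls of the \emph{fixed} radius $2R(\varepsilon)$ with centers $w\to\infty$.

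Finally, for fixed $\varepsilon$ I would apply Lemma~\ref{lemma:rlimexists} to a sequence of these centers $w\to\infty$ to extract an $\mathcal{R}$-limit $\{L,G',v_0'\}$, then transplant the corresponding $\phi$'s through the coherent isomorphisms to functions on $B_{2R}(v_0')$; since the radius $2R$ is \emph{fixed}, the convergence $\|M^{(w)}_{2R}-M^{(v_0')}_{2R}\|\to0$ applies with no diagonal difficulty, and exactly as in the proof of Theorem~\ref{thm:Thm1} one gets a unit $\phi'$ on $G'$ with $\|(L-\lambda)\phi'\|\le 2C\varepsilon$. Hence $\mathrm{dist}(\lambda,\sigma(L))\le 2C\varepsilon$, and letting $\varepsilon\to0$ places $\lambda$ in $\overline{\bigcup_L\sigma(L)}$. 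The main obstacle is the localization (a): the whole argument hinges on making the localized error $C\varepsilon$ vanish, and the exponent $1-(\beta-\alpha)/2$ is positive only when $\beta-\alpha<1$ — which is the analytic reflection of the failure on exponentially growing graphs examined later in the chapter.
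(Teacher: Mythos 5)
Your argument is correct, and it reaches the conclusion by a genuinely different route from the one in the text. The paper runs the Last--Simon machinery verbatim: it builds a true partition of unity $\sum_u\psi_{u,r}^2=1$ out of pyramid functions indexed by \emph{all} vertices $u\in G$ (normalized twice, by $c_r(u)$ and by $\eta_r(v)$), invokes Proposition~\ref{prop:LS2.2} to extract one good member, and the growth hypothesis enters through the estimate $\left\Vert C^{(r)}\right\Vert\lesssim r^{-2+2(\beta-\alpha)}$, which vanishes as $r\to\infty$ exactly when $\beta-\alpha<1$. You instead discretize: a maximal $R$-separated net gives a cover $\left\{B_{2R}(w_i)\right\}$ of multiplicity $K_R\lesssim R^{\beta-\alpha}$, and a direct averaging/pigeonhole contradiction replaces Proposition~\ref{prop:LS2.2}; both proofs ultimately hinge on the same ratio $R^{\beta}/R^{\alpha}$, and your localization and transplantation steps are sound (the one presentational caveat is that ``bad'' should be declared to include the degenerate case $\chi_i\psi=0$, so that the surviving good ball genuinely carries mass --- the analogue of the clause $j_\alpha\varphi\neq0$ in Proposition~\ref{prop:LS2.2}). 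What your route buys: by working with a bounded-multiplicity cover rather than a partition of unity you never divide by $\eta_r$, whose oscillation between $r^{\alpha-\beta}$ and $r^{\beta-\alpha}$ is precisely what costs the paper an extra factor of $r^{\beta-\alpha}$ in $\left\Vert C^{(r)}\right\Vert$. As a result your localized error is $C\varepsilon\sim\varepsilon^{1-(\beta-\alpha)/2}$, and --- contrary to your closing remark --- this tends to $0$ whenever $\beta-\alpha<2$, not just when $\beta-\alpha<1$; your argument therefore proves the theorem on a strictly larger class of graphs than the method of Section~\ref{sec:pfofThm4} does (still subsumed by the Chapter~\ref{chapter4} results for uniform sub-exponential growth, and still consistent with the obstruction on exponentially growing graphs quantified in Theorem~\ref{thm:Thm5}).
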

The inclusion of $\sigma(L)$ in $\sigma_{\text ess}(H)$ is proven generally in Chapter 1. Here we will prove the opposite inclusion.
\begin{remark}
In Chapter~\ref{chapter4}, we give a stronger version of this characterization using bounded generalized  eigenfunctions of $\mathcal{R}$-limits of $H$.
As we show there the set $\cup\sigma(L)$ is actually already closed, and \eqref{eq:mainsta} holds also without the assumption of lower polynomial growth and the condition $\alpha>\beta-1$. The purpose of the current part is to investigate the possibility to generalize the Last-Simon method to general graphs.
\end{remark}
Requiring simply polynomial growth (not uniform) of $G$ is not enough!
We show in  Section~\ref{sec:counterexample} an example of a graph of non-uniform polynomial growth on which $\sigma_{\text ess}(H)\backslash \overline{\cup \sigma(L)}$ is non-empty. Indeed,
\begin{theorem}
\label{thm:counterExm} There exists a graph $G$ of polynomial growth so that the adjacency operator on $G$, $A_G$, satisfies
\[
\sigma_{\text{ess}}\left(A_G\right)\Bigg\backslash\overline{\bigcup_{L\in\mathcal{R}(A_G)}\sigma\left(L\right)}
\]
is nonempty.
\end{theorem}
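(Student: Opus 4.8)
The plan is to build $G$ from a single infinite ray with finite $k$-regular graphs of growing girth hung along it, and to exploit the numerical gap between two quantities: the constant function on a $k$-regular graph is an eigenfunction of its adjacency matrix with eigenvalue $k$, whereas the adjacency operator of any tree whose degrees are bounded by $k+1$ has norm at most $2\sqrt{k}$. For $k\ge 5$ one has $2\sqrt{k}<k$, so if all $\mathcal{R}$-limits turn out to be such trees while the value $k$ survives into $\sigma_{\mathrm{ess}}(A_G)$, we are done. Fix such a $k$ (e.g.\ $k=5$).

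Concretely, I would fix a sequence of finite $k$-regular graphs $\{G_n\}_{n\ge 1}$ with girth $g_n\to\infty$ (these exist by Erd\H{o}s--Sachs), so in particular $|G_n|\to\infty$. Take a ray $w_0,w_1,w_2,\dots$ (edges $w_i\sim w_{i+1}$), pick a vertex $a_n\in G_n$, and attach $G_n$ by the single edge $w_{d_n}\sim a_n$, where $d_1<d_2<\cdots$ is an increasing sequence to be chosen; set $v_0=w_0$. The first step is to show $k\in\sigma_{\mathrm{ess}}(A_G)$: the normalised indicators $\psi_n=|G_n|^{-1/2}\mathbf{1}_{G_n}$ are orthonormal (disjoint supports), and a direct computation shows $(A_G-k)\psi_n$ is supported on the single ray-vertex $w_{d_n}$ with $\|(A_G-k)\psi_n\|=|G_n|^{-1/2}\to 0$; Weyl's Criterion (Theorem \ref{thm:Weyl}) then gives $k\in\sigma_{\mathrm{ess}}(A_G)$.

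Next I would choose the $d_n$ recursively so that $G_n$ is completely swallowed by $B_r(v_0)$ before the radius $r$ reaches the next attachment (i.e.\ $d_{n+1}>d_n+1+\mathrm{diam}(G_n)$) and so that $d_n$ dominates the accumulated mass, $\sum_{m\le n}|G_m|\lesssim d_n$. With this spacing the ray gives $N_{v_0}(r)\ge r+1$, while within radius $r$ the count is at most $r+1+\sum_{m\le n}|G_m|$ for the current index $n$ (the one with $d_n<r$), which the recursive choice bounds by $\lesssim r$; thus $G$ has polynomial (indeed linear) growth at $v_0$. Crucially this growth is \emph{non-uniform}: balls of radius $<g_n/2$ about interior vertices of $G_n$ are genuine balls of the $k$-regular tree and grow exponentially. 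Finally I would identify the $\mathcal{R}$-limits. Because each $G_n$ is finite and its only link to the rest of $G$ is the cut-edge $w_{d_n}\sim a_n$, a path to infinity with monotone distance from $v_0$ cannot make an excursion into any $G_n$ and return; hence every such path coincides eventually with the ray, and all $\mathcal{R}$-limits are centred at ray vertices $w_{m_j}$ with $m_j\to\infty$. Since $g_n\to\infty$ and the attachments spread apart, each limiting ball is cycle-free and meets at most one attachment, so every $L\in\mathcal{R}(A_G)$ is the adjacency operator of a tree of maximum degree $\le k+1$ (the two-sided path $\mathbb{Z}$, or $\mathbb{Z}$ with one $k$-regular tree grafted on). By the tree-norm bound $\|L\|\le 2\sqrt{k}<k$, whence $k\notin\overline{\bigcup_{L}\sigma(L)}$. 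Combining with the previous step, $k\in\sigma_{\mathrm{ess}}(A_G)\setminus\overline{\bigcup_{L\in\mathcal{R}(A_G)}\sigma(L)}$, which is therefore nonempty.

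The main obstacle is the tension in the last two steps: the $\mathcal{R}$-limit analysis wants $g_n\to\infty$ (to make the limits genuinely tree-like and cycle-free), while polynomial growth wants to keep the exponential local growth of the high-girth pieces from leaking into $B_r(v_0)$. The resolution is to decouple the two scales---let $g_n\to\infty$, but place the attachments $d_n$ so far out, and so widely spaced, that each exponentially growing block is either negligible or already fully absorbed, contributing only its fixed, polynomially controlled mass at the radii from which $v_0$ sees it. Carrying out this bookkeeping rigorously, making precise that paths to infinity cannot re-enter the finite blocks, and pinning down the degree profile of the limiting trees (hence the bound $2\sqrt{k}$) is where the real work lies.
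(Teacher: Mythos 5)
Your proposal is correct and follows the same blueprint as the paper's proof: glue finite $d$-regular graphs of growing girth onto a half-line spine, use the normalised constant functions as an orthonormal Weyl sequence to put the degree into $\sigma_{\text{ess}}$, classify the $\mathcal{R}$-limits as (locally) trees, and observe that the degree lies outside all their spectra. The differences are worth recording. First, you attach each block by a single pendant edge rather than splicing it into the spine as the paper does (cutting the edge $(k_i,k_i+1)$ and routing the line through two far-apart marked vertices); your variant needs no marked-vertex condition and makes the cut-edge argument for confining paths to infinity to the ray very clean. Be aware, though, that under the paper's actual usage of $\mathcal{R}$-limits (Lemma \ref{lemma:rlimexists} extracts limits along \emph{any} vertex sequence escaping to infinity, not only along monotone paths), centers deep inside the blocks do produce the additional limit $A_{T_k}$; this costs you nothing since $\sigma(A_{T_k})=[-2\sqrt{k-1},2\sqrt{k-1}]\not\ni k$, but your classification should include it. Second, and more substantively, where the paper must show $d\notin\sigma(A_{\widetilde T})$ for the line-plus-tree limit --- which it does by a rank-one/resolvent-identity computation of the $m$-function (Lemma \ref{lem:Lemma1}) --- you replace this by the crude universal bound $\Vert A_L\Vert\le 2\sqrt{\Delta-1}$ for any tree of maximum degree $\Delta$, here $\Delta=k+1$, forcing $k\ge5$ so that $2\sqrt{k}<k$. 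This is genuinely more elementary (no $m$-function computation, and Erd\H{o}s--Sachs suffices in place of Ramanujan graphs), at the price of losing the small degrees and the sharper information $\sigma(A_{\widetilde T})=\sigma(A_{T})$ that the paper's lemma yields. The remaining bookkeeping you defer (spacing $d_n\gtrsim\sum_{m\le n}|G_m|$ for linear growth, $d_{n+1}-d_n\to\infty$ so each limiting ball meets one attachment, girth $>2r$ so it is cycle-free) is exactly the bookkeeping the paper also does, and it goes through.
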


\begin{remark}
By a simple adaptation everything holds also for Jacobi operators on the graph. 
For simplicity we treat Schr\"odinger operators.
\end{remark}

\section[Proof of Theorem~2.1]{Proof of Theorem~\ref{thm:Thm4}}\label{sec:pfofThm4}

First we cite from Last-Simon \cite{LastSimonEss} a proposition which will be useful in the proof:
\begin{prop}
[{\cite[Theorem 2.2]{LastSimonEss}}] \label{prop:LS2.2}
Let $\mathcal{H}$ be a separable Hilbert space, and $A$ a bounded selfadjoint operator on $\mathcal{H}$. Assume $\left\{j_\alpha\right\}_{\alpha\in S}$ is a set of bounded selfadjoint operators $($indexed by a discrete set $S)$, which is a partition of unity, namely, $\sum_\alpha j^2_\alpha =1$. Let $\varphi\in\mathcal{H}$. Then there exists $\alpha\in S$ such that $j_\alpha\varphi\neq0$, and
\[
\left\Vert A j_\alpha\varphi\right\Vert^2\leq \left\{2\left(\frac{\left\Vert A\varphi\right\Vert}{\Vert \varphi\Vert}\right)^2+\Vert C\Vert\right\}\left\Vert j_\alpha\varphi\right\Vert^2,
\]
where
\[
C=-\Sigma_\alpha 2[A,j_\alpha]^2.
\]
\end{prop}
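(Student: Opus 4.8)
The plan is to argue by contradiction through an averaging (pigeonhole) over the index set $S$. Suppose the desired $\alpha$ did not exist. Then for every $\alpha$ with $j_\alpha\varphi\neq0$ one would have the strict reverse inequality
\[
\|A j_\alpha\varphi\|^2 > \Big(2\tfrac{\|A\varphi\|^2}{\|\varphi\|^2}+\|C\|\Big)\|j_\alpha\varphi\|^2 ,
\]
while for $\alpha$ with $j_\alpha\varphi=0$ both sides vanish. Since $\varphi\neq0$ (the ratio $\|A\varphi\|/\|\varphi\|$ is assumed meaningful) forces at least one $j_\alpha\varphi\neq0$, because $\sum_\alpha\|j_\alpha\varphi\|^2=\|\varphi\|^2$, summing over $\alpha$ would yield $\sum_\alpha\|A j_\alpha\varphi\|^2 > 2\|A\varphi\|^2+\|C\|\,\|\varphi\|^2$. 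Thus the whole statement reduces to proving the single aggregate estimate
\[
\sum_\alpha\|A j_\alpha\varphi\|^2 \le 2\|A\varphi\|^2+\|C\|\,\|\varphi\|^2 ,
\]
which contradicts the above and hence produces the claimed $\alpha$.

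To establish this aggregate bound I would first record the two elementary identities coming from the partition of unity. Since each $j_\alpha$ is self-adjoint, $\|j_\alpha\varphi\|^2=\langle\varphi,j_\alpha^2\varphi\rangle$, and summing (strong convergence of $\sum_\alpha j_\alpha^2=1$, continuity of the inner product) gives $\sum_\alpha\|j_\alpha\varphi\|^2=\|\varphi\|^2$; applying the same to the vector $A\varphi$ gives $\sum_\alpha\|j_\alpha A\varphi\|^2=\|A\varphi\|^2$. The point of departure is then the commutator splitting $A j_\alpha=j_\alpha A+[A,j_\alpha]$. Applying the convexity inequality $\|a+b\|^2\le 2\|a\|^2+2\|b\|^2$ termwise and summing gives
\[
\sum_\alpha\|A j_\alpha\varphi\|^2 \le 2\sum_\alpha\|j_\alpha A\varphi\|^2+2\sum_\alpha\|[A,j_\alpha]\varphi\|^2 = 2\|A\varphi\|^2+2\sum_\alpha\|[A,j_\alpha]\varphi\|^2 .
\]

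It remains to identify the commutator sum with $C$. Since $A$ and each $j_\alpha$ are self-adjoint, $[A,j_\alpha]$ is skew-adjoint, so $[A,j_\alpha]^*[A,j_\alpha]=-[A,j_\alpha]^2$ and consequently
\[
\sum_\alpha\|[A,j_\alpha]\varphi\|^2 = -\sum_\alpha\langle\varphi,[A,j_\alpha]^2\varphi\rangle = \tfrac12\langle\varphi,C\varphi\rangle ,
\]
directly from the definition $C=-2\sum_\alpha[A,j_\alpha]^2$. The same computation shows $C=2\sum_\alpha[A,j_\alpha]^*[A,j_\alpha]\ge 0$, so $\langle\varphi,C\varphi\rangle\le\|C\|\,\|\varphi\|^2$; substituting yields exactly the aggregate estimate and closes the contradiction.

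I expect the genuinely delicate points to be bookkeeping rather than conceptual: one must check that all the series (being series of nonnegative numbers, convergent by monotonicity) are manipulated legitimately, and that $C$ is a well-defined bounded nonnegative operator so that $\|C\|$ is meaningful on the right-hand side — this is precisely where the hypothesis that $C$ is bounded enters. The structural heart of the proof, and the step I would single out as decisive, is the commutator splitting combined with the factor-$2$ convexity bound: this is what manufactures the constant $2$ multiplying $\|A\varphi\|^2/\|\varphi\|^2$ and isolates the entire localization error into the single positive operator $C$.
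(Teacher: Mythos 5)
Your proof is correct, and it is essentially the standard Last--Simon argument for their Theorem~2.2 (which the thesis cites without reproving): the averaging/pigeonhole reduction to the aggregate bound $\sum_\alpha\|Aj_\alpha\varphi\|^2\le 2\|A\varphi\|^2+\langle\varphi,C\varphi\rangle$, obtained from the commutator splitting $Aj_\alpha=j_\alpha A+[A,j_\alpha]$, the convexity inequality, and the skew-adjointness identity $[A,j_\alpha]^*[A,j_\alpha]=-[A,j_\alpha]^2$, is exactly the intended proof. Your closing remarks correctly identify the only hypotheses that need care (namely $\varphi\neq0$ and the boundedness of $C$, without which the stated bound is vacuous).
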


\begin{remark}
Last-Simon do not require the boundedness of $A$. Since in our applications $A$ is bounded we formulate the theorem in this, slightly simpler, case.
\end{remark}

The proof proceeds by using this proposition in order to uniformly truncate a sequence of approximate eigenfunctions for $\lambda\in\sigma_{\text ess}(H)$. We will use a specific partition of unity which we describe next.

\begin{proof}[Proof of Theorem~\ref{thm:Thm4}]

Define for any $u\in G, r\in \mathbb{N}$ the ``pyramid" function
\[\chi_{u,r}(v)=\begin{cases}
\frac{r-k}{r}\,\,\,\,\,\,\,\,\,\,\,\,\,\, & k=\text{dist}(v,u)\leq r\\
0 & \text{dist}(v,u)>r.
\end{cases}\]
Let $c,c'>0$ be such that $c r^\alpha \leq N_u(r)\leq c'r^\beta$ for any $u\in G$. Let \[c_{r}^{2}(u)=\sum_v\chi^2_{u,r}(v)\] then
\[
c_r^2(u)=1+\sum_{k=1}^{r-1} \left(\frac{r-k}{r}\right)^2\left(N_u(k)-N_u(k-1)\right).
\]
For any $1\leq k\leq r/2$ we have that $\nicefrac{(r-k)}{r}\geq\nicefrac{1}{2}$, and thus
\[
c_r^2(u)\geq 1+\frac{1}{4}\sum_{k=1}^{r/2}\left(N_u(k)-N_u(k-1)\right)= \frac{1}{4} N_u\left(\nicefrac{r}{2}\right).
\]
On the other hand,
\[
c_r^2(u)\leq  1+\sum_{k=1}^{r-1}\left(N_u(k)-N_u(k-1)\right)=N_u(r-1).
\]
Asymptotically we get that there exist $C_1,C_2>0$ such that for every $u\in G$ and $r\in\mathbb{N}$
\[
C_1 r^\alpha\leq c_{r}^{2}(u)\leq C_2 r^\beta.
\]

Additionally, for any $v\in G$  define
\[\varphi_{u,r}^2(v)=c_{r}^{-2}(u)\chi_{u,r}^2(v),\]
and (notice that this time the sum is over  the lower index of $\varphi$)
\[\eta_r^2(v)=\sum_u \varphi_{u,r}^2(v)=\sum_u \frac{\chi_{u,r}^2(v)}{c_r^2(u)}.\]
Then
\[ \eta_r^2(v)\leq C_1^{-1} r^{-\alpha}\left(1+\sum_{k=1}^{r-1} \left(\frac{r-k}{r}\right)^2 \left(N_v(k)-N_v(k-1)\right)\right)\leq \frac{C_2}{C_1} r^{\beta-\alpha}
\]
and similarly
\[
\eta_r^2(v)\geq \frac{C_1}{C_2} r^{\alpha-\beta}.
\]
We now define the function
\[\psi_{u,r}^2(v)=\eta^{-2}_r(v)c_{r}^{-2}(u)\chi_{u,r}^2(v)\]
which satisfies $\sum_{u}\psi_{u,r}^{2}(v)=1$ for any $v\in G$.

For the rest of the proof we denote by ($*$) the following condition on a given $u,v,w\in G$ and $r\in\mathbb{N}$:
\[
(*)\  \max\left(\text{dist}(u,v),\text{dist}(u,w)\right)\leq r \text{ and } \text{dist}(u,v)\neq\text{dist}(u,w).
\]
Notice that, given $u\in G$ and $w\sim v\in G$, then
\[
\left|\chi_{u,r}(v)-\chi_{u,r}(w)\right|=\begin{cases}\frac{1}{r} & (*)\ \text{holds}\\ 0 & \text{otherwise}.
\end{cases}
\]
Hence, uniformly in $u,v,w\in G$,
\begin{equation} \label{eq:bnd1}
\left|\left\langle \delta_{v},\left[H,\psi_{u,r}\right]\delta_{w}\right\rangle \right|\lesssim\begin{cases}
\frac{1}{\eta_r\cdot c_{r}\cdot r}\,\,\,\, & \text{if }w\sim v\text{ and }(*)\text{ holds}\\
0 & \text{otherwise}.
\end{cases}
\end{equation}
Define $C^{(r)}=-\sum_{u}2[H,\psi_{u,r}]^{2}$. We have for any $v,w\in G$, the following expression for the matrix elements of $C^{(r)}$
\[
C^{(r)}_{v,w}=\langle\delta_v, C^{(r)} \delta_w\rangle= -2\sum_{x,u\in G}\left\langle \delta_{v},\left[H,\psi_{u,r}\right]\delta_{x}\right\rangle \left\langle \delta_{x},\left[H,\psi_{u,r}\right]\delta_{w}\right\rangle.
\]
Each term in the sum is either bounded by $\left(\eta_r \cdot c_r\cdot r\right)^{-2}$ if ($*$) is satisfied or is zero otherwise. 
Accordingly, for a term to be nonzero it is necessary that $u\in B_r(v)$ and $x\sim v$. The number of such terms is bounded by $c'r^{\beta}\cdot d$. 
Consequently, the matrix elements of $C^{(r)}$ satisfy the following upper bound (uniformly in $v,w\in G$):
\[
\left|C_{v,w}^{(r)}\right|\lesssim 2\cdot d\cdot c' r^{\beta}(\eta_r\cdot c_{r}\cdot r)^{-2}\lesssim\frac{r^\beta}{r^{\alpha-\beta}\cdot r^{\alpha}\cdot r^2}\lesssim\frac{1}{r^{2-2(\beta-\alpha)}}.
\]
Thus, for some constant $K>0$,
\[\left\Vert C^{(r)}\right\Vert \leq K\cdot r^{-2+2(\beta-\alpha)}.\]

Fix an $\nicefrac{1}{2}>\varepsilon>0$. Since $\beta-\alpha<1$, we can now fix an $r>0$ such that $\left\Vert C^{(r)}\right\Vert<\varepsilon^{2}$.

Fix a vertex $v_0\in G$. Assume $\lambda\in\sigma_{ess}(H)$. By Weyl's Criterion (Theorem \ref{thm:Weyl}) there exists a sequence of unit vectors $\left\{ \phi^{(m)}\right\} _{m=1}^{\infty}\subset\ell^2(G)$, such that $\left\Vert \left(H-\lambda\right)\phi^{(m)}\right\Vert \to0$ and $\phi^{(m)}\overset{w\,\,}{\to}0$ (where $\overset{w\,\,}\to$ indicates weak convergence).
Consequently,
\[\sum_{v\in B_{r}(v_0)}\left|\phi^{(m)}(v)\right|^{2}\underset{m\to\infty}{\to}0,\]
for any fixed $r\in\mathbb{N}$.
Thus, by restricting $\left\{\phi^{(m)}\right\}_{m=1}^{\infty}$ to a subsequence if necessary (which we denote the same), 
\[
\varphi^{(m)}(v)=
\begin{cases}
\phi^{(m)}(v) & v\in G\backslash B_{m}(v_0) \\
0 & \text{otherwise}
\end{cases}
\]
satisfies both $\left\Vert \varphi^{(m)}\right\Vert >1-\varepsilon$ and
\begin{eqnarray*}
\left\Vert \left(H-\lambda\right)\varphi^{(m)}\right\Vert 	\leq	\left\Vert \left(H-\lambda\right)\phi^{(m)}\right\Vert +\left\Vert \left(H-\lambda\right)\left(\varphi^{(m)}-\phi^{(m)}\right)\right\Vert \leq \\
\varepsilon/2	+	\left\Vert H-\lambda\right\Vert \left\Vert \varphi^{(m)}-\phi^{(m)}\right\Vert <\varepsilon/2+C\cdot\varepsilon/2C=\varepsilon.
\end{eqnarray*}

Next we apply Proposition~\ref{prop:LS2.2}, with the operator $A=H-\lambda$, the set $\left\{j_\alpha\right\}_\alpha=\left\{\psi_{u,r-1}\right\}_{u\in G}$ (treated as multiplication operators) and each time with a function $\varphi=\varphi^{(m)}$ from the sequence of approximate eigenfunctions found above. Thus, for any $m\in\mathbb{N}$ there exists $\psi_{m}=\psi_{u_{m},r-1}$ such that $\psi_{m}\varphi^{(m)} \neq0$ and (for $\varepsilon\leq \nicefrac{1}{2}$)
\begin{eqnarray*}
\left\Vert \left(H-\lambda\right)\psi_{m}\varphi^{(m)}\right\Vert^2 \leq& \left(2 \frac{\left\Vert \left(H-\lambda\right)\varphi^{(m)}\right\Vert^2}{\left\Vert\varphi^{(m)}\right\Vert^2} + \left\Vert C^{(r)}\right\Vert \right) \left\Vert \psi_{m}\varphi^{(m)}\right\Vert^2\\
\leq & \left( 2\left(\frac{\varepsilon}{1-\varepsilon}\right)^2 + \varepsilon^2 \right)\left\Vert \psi_{m}\varphi^{(m)}\right\Vert^2 \\
\leq & 9\varepsilon^2  \left\Vert \psi_{m}\varphi^{(m)}\right\Vert^2.
\end{eqnarray*}
Note that $\psi_{m}\varphi^{(m)}$ is supported in $B_{r-1}(u_{m})\subseteq G\backslash B_{m-r}(v_0)$, which moves out to infinity.
By Lemma~\ref{lemma:rlimexists} there exists a sequence of indices $\left\{ m_{j}\right\} _{j=1}^{\infty}$ of $\left\{ u_{m}\right\} _{m=1}^{\infty}$ on which $H$ approaches an $\mathcal{R}$-limit $\left\{H',G',v_0'\right\}$. Denote by $M_r^{(n)},M'_r$ the matrices corresponding to $H|_{B_{r}\left(u_{m_n}\right)}$ and $H'|_{B_r\left(v_0'\right)}$.
Further denote by $\mathcal{I}_r'$, and $\mathcal{I}_r^{(n)}$ the corresponding isomorphisms (acting on $\ell^2\left(B_r\left(v_0'\right)\right)$ and $\ell^2\left(B_r\left(u_{m_n}\right)\right)$). Then there exists $N\in\mathbb{N}$, so that for any $n>N$
\[
\left\Vert M_r^{(n)}-M'_r\right\Vert <\varepsilon.
\]
Define $\zeta_{n}=\mathcal{I}_r'^{-1}\mathcal{I}_r^{(n)}\psi_{m_{n}}\varphi_{m_{n}}$ (completed with zeros on $G'\backslash B_r\left(v_0'\right)$). Then for any $n>N$
\begin{eqnarray*}
&\left\Vert \left(H'-\lambda\right)\zeta_{n}\right\Vert 	=	\left\Vert \left(H'|_{B_r(v_0')}-\lambda\right)\zeta_{n}\right\Vert =\\
&=\left\Vert \left(M'_r-\lambda\right)\mathcal{I}_r^{(n)}\psi_{m_{n}}\varphi_{m_{n}}\right\Vert\leq\\
&\left\Vert \left(M_r^{(n)}-M'_r\right)\mathcal{I}_r^{(n)}\psi_{m_{n}}\varphi_{m_{n}}\right\Vert 	+	\left\Vert \left(M_r^{(n)}-\lambda\right)\mathcal{I}_r^{(n)}\psi_{m_{n}}\varphi_{m_{n}}\right\Vert < \\
&\varepsilon\left\Vert \mathcal{I}_r^{(n)}\psi_{m_{n}}\varphi_{m_{n}}\right\Vert + \left\Vert \left(H|_{B_r\left(u_{m_n}\right)}-\lambda\right)\psi_{m_{n}}\varphi_{m_{n}}\right\Vert=\\
&\varepsilon\left\Vert\zeta_n \right\Vert + \left\Vert \left(H-\lambda\right)\psi_{m_{n}}\varphi_{m_{n}}\right\Vert < 4\varepsilon\left\Vert \zeta_{n}\right\Vert.
\end{eqnarray*}
Thus
\[
\lim_{n\to\infty}\frac{\|\left(H'-\lambda\right)\zeta_{n}\|}{\|\zeta_{n}\|}\leq 4\varepsilon,
\]
which implies (by a variant of Weyl's Criterion, Theorem \ref{thm:Weyl}, see e.g.\ \cite[Proposition 7.2.2]{SimonSz}) that $\text{dist}\left(\lambda,\sigma\left(H'\right)\right)\leq 4\varepsilon$.
Since $\varepsilon$ is arbitrary, we can conclude
\[\lambda\in\overline{\bigcup_{H'\in\mathcal{R}(H)}\sigma\left(H'\right)}.\]
\end{proof}

\section{On extending the proof to graphs of exponential growth}

In order to avoid cumbersome calculations we shall assume in this section that the graph $G$ is uniform in the sense that $N_u(k)=N_v(k)\equiv N(k)$ for every pair of vertices $u,v\in G$ and every $k\in\mathbb{N}$.
Examples of such graphs include $\mathbb{Z}^n$ and a $d$-regular tree $T_d$.

Let $\left\{f_r:\mathbb{N}\cup\{0\}\to[0,1]\right\}_{r\in\mathbb{N}}$ be a sequence of functions, such that each $f_r(k)$ is supported (and is non-zero) on $[0,r]$. We shall assume (without loss of generality) that $f_r(0)=1$ and that $f_r(k)$ is monotonically decreasing with $k$. Given such a sequence we can define trial functions on the graph
\begin{equation}\label{eq:newchi}
\chi_{u,r}(v)=f_r \left(\text{dist}(v,u)\right).
\end{equation}

We choose a root $v_0$ for the graph $G$, and denote
\[
\mathcal{S}(k)=\mathcal{S}_{v_0}(k)=\left\{v\in G\,|\,\text{dist}\left(v_0,v\right)=k\right\},
\]
\[
S(k)=\left|\mathcal{S}(k)\right|=N(k)-N(k-1).
\]
We shall prove the following:
\begin{theorem} \label{thm:Thm5}
Let $H$ be a bounded Schr\"odinger operator on a uniform infinite graph $G$. Assume there exists $\alpha>0$ so that
\begin{equation}
\sup_{r\in\mathbb{N}} \,\,\,\,\, \frac{N(r)}{S(r)}\leq \alpha.
\end{equation}
Let $\left\{\chi_{u,r}\right\}_{r\in\mathbb{N},\, u\in G}$ be a set of trial functions satisfying \eqref{eq:newchi} and let $($similarly to  Section \ref{sec:pfofThm4}$)$
\[
C^{(r)}=-\sum_{u}2[H,\chi_{u,r}]^{2}.
\]
Then $\forall r\in\mathbb{N}$ and $\forall v\in G$, the diagonal matrix element,  $C_{v,v}^{(r)}$, satisfies
\begin{equation} \label{eq:Cbound}
\left|C_{v,v}^{(r)}\right|>{1\over 2\alpha}.
\end{equation}
\end{theorem}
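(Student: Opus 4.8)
The plan is to compute the diagonal entry $C^{(r)}_{v,v}$ in closed form and then bound it below using the growth hypothesis. First I would use that the potential $Q$ is a multiplication operator and therefore commutes with the multiplication operator $\chi_{u,r}$, so that $[H,\chi_{u,r}]=[\Delta,\chi_{u,r}]$. A direct computation from \eqref{eq:SchrOp} gives, for the matrix elements of this commutator,
\[
\langle\delta_a,[\Delta,\chi_{u,r}]\delta_b\rangle=\chi_{u,r}(b)-\chi_{u,r}(a)\qquad(a\sim b),
\]
and $0$ otherwise. Since $\Delta$ and $\chi_{u,r}$ are self-adjoint, $[\Delta,\chi_{u,r}]$ is anti-self-adjoint, whence $-[\Delta,\chi_{u,r}]^2=[\Delta,\chi_{u,r}]^*[\Delta,\chi_{u,r}]\ge0$. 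In particular $C^{(r)}\ge0$, and
\[
C^{(r)}_{v,v}=2\sum_u\bigl\|[\Delta,\chi_{u,r}]\delta_v\bigr\|^2=2\sum_u\sum_{w\sim v}\bigl(\chi_{u,r}(v)-\chi_{u,r}(w)\bigr)^2 .
\]

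Next I would exploit the symmetry $\chi_{u,r}(v)=f_r(\mathrm{dist}(u,v))=\chi_{v,r}(u)$ to interchange the roles of centre and argument; after swapping the order of summation the last expression becomes $2\sum_{w\sim v}\|\chi_{v,r}-\chi_{w,r}\|^2$. For a fixed neighbour $w\sim v$ and any vertex $u$ the distances $\mathrm{dist}(v,u)$ and $\mathrm{dist}(w,u)$ differ by at most $1$, so the summand $\bigl(f_r(\mathrm{dist}(v,u))-f_r(\mathrm{dist}(w,u))\bigr)^2$ is either $0$ (equal distances) or equals $a_k^2$ with $a_k:=f_r(k-1)-f_r(k)\ge0$ and $k=\max(\mathrm{dist}(v,u),\mathrm{dist}(w,u))$. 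Grouping the $u$'s by this value of $k$ I would obtain the clean shell expansion
\[
C^{(r)}_{v,v}=2\sum_{k\ge1}a_k^2\,D_k,\qquad D_k:=\sum_{w\sim v}\bigl|B_{k-1}(v)\,\triangle\,B_{k-1}(w)\bigr|,
\]
where $\triangle$ is symmetric difference. The cross terms in $k$ vanish because the sets $B_{k-1}(v)\triangle B_{k-1}(w)$ sit on disjoint shells for distinct $k$; equivalently, the identity $f_r=\sum_k a_k\mathbf{1}_{[0,k-1]}$ writes $f_r$ as a convex combination of step functions, which is the structural reason for this diagonalisation.

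I would then bound $D_k$ from below. Every $u$ with $\mathrm{dist}(v,u)=k$ has some neighbour $w$ of $v$ on a shortest path to it, and that $w$ satisfies $\mathrm{dist}(w,u)=k-1$, so $u\in B_{k-1}(w)\setminus B_{k-1}(v)$ contributes to $D_k$; hence $D_k\ge S(k)$, and the hypothesis $\sup_r N(r)/S(r)\le\alpha$ gives $D_k\ge S(k)\ge N(k)/\alpha$. Since $f_r$ decreases from $f_r(0)=1$ to $0$, the coefficients telescope to $\sum_{k\ge1}a_k=1$. Combining these gives $C^{(r)}_{v,v}\ge\frac2\alpha\sum_k N(k)a_k^2$ with $\sum_k a_k=1$. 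Here the growth hypothesis is decisive: $N(k)/S(k)\le\alpha$ forces $N(k)\ge(\alpha/(\alpha-1))^k$, i.e.\ genuine exponential growth, which keeps $\sum_k 1/N(k)$ (equivalently $\sum_k 1/S(k)$) uniformly bounded; a Cauchy--Schwarz estimate against the constraint $\sum_k a_k=1$ then bounds $\sum_k N(k)a_k^2$ below by a positive constant, uniform in $r$ and in the choice of $f_r$, which already yields a bound of the type \eqref{eq:Cbound}.

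The hard part will be the final quantitative step: extracting the clean constant $1/(2\alpha)$ rather than the weaker $O(\alpha^{-2})$ that the crude estimate $D_k\ge S(k)$ produces via $\sum_k 1/S(k)\le\alpha(\alpha-1)$. Sharpening to $1/(2\alpha)$ requires retaining the \emph{full} symmetric-difference count $D_k$ — that is, the contribution of the neighbours of $v$ lying on the far side of $u$, not only the entry neighbours — and carefully balancing the concentration of the mass $\{a_k\}$ against the sphere sizes $S(k)$. This is also the step that clarifies why the phenomenon is genuine: for polynomially growing graphs one has $\alpha=\infty$, the series $\sum_k 1/S(k)$ diverges, the lower bound collapses, and indeed $\|C^{(r)}\|\to0$ — which is precisely what allows the Last--Simon localisation argument to succeed on $\mathbb{Z}^n$ yet fail on graphs of exponential growth.
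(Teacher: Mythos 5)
Your combinatorial reduction is correct and in fact matches the skeleton of the paper's proof: $[H,\chi_{u,r}]=[\Delta,\chi_{u,r}]$, the commutator is anti-self-adjoint so $C^{(r)}\ge 0$, and after swapping the roles of centre and argument the diagonal entry becomes $2\sum_{k\ge1}a_k^2D_k$ with $a_k=f_r(k-1)-f_r(k)$ and $D_k\ge S(k)$ (the paper organizes the same count via the condition $(*)$). The problem is in the final step, and it is not merely the constant. The operator that the theorem is actually about --- as the paper's proof makes explicit, and as is forced by the reference to Section~\ref{sec:pfofThm4} and by the subsequent remark invoking Proposition~\ref{prop:LS2.2}, which requires a partition of unity $\sum_u j_u^2=1$ --- is built from the \emph{normalized} functions $\psi_{u,r}=\chi_{u,r}/c_r$ with $c_r^2=\sum_v\chi_{u,r}^2(v)=\sum_k f_r(k)^2S(k)$. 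Its diagonal entry is therefore the ratio
\[
C^{(r)}_{v,v}\;=\;\frac{2\sum_k a_k^2D_k}{c_r^2}\;\ge\;\frac{2\sum_kS(k)a_k^2}{\sum_kS(k)f_r(k)^2},
\]
and your Cauchy--Schwarz estimate against $\sum_ka_k=1$ only bounds the numerator below by a constant; it says nothing about the denominator, which is unbounded in $r$. Concretely, on the $d$-regular tree take $f_r(k)=(d-1)^{-k/2}$ on $[0,r]$: then $S(k)f_r(k)^2$ and $S(k)a_k^2$ are both constant in $k$, so numerator and denominator each grow linearly in $r$, and a constant lower bound on the numerator alone yields only $C^{(r)}_{v,v}\gtrsim 1/r\to0$ --- i.e.\ your method, applied to the quantity the theorem is really about, fails to produce a uniform bound (even though the ratio itself is bounded below in this example).

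The missing ingredient is a comparison of the two weighted sums against each other, not a separate bound on each: one needs
\[
\sum_kS(k)f_r(k)^2\;\le\;4\alpha^2\sum_kS(k)\bigl(f_r(k-1)-f_r(k)\bigr)^2,
\]
valid uniformly over all admissible decreasing $f_r$. This is exactly where the paper uses Leindler's weighted discrete Hardy inequality (Proposition~\ref{prop:leindler}) with $\lambda_n=S(r-n)$, $a_n=x_r(n)=g_r(n)-g_r(n-1)$, $p=2$, the hypothesis $N(m)/S(m)\le\alpha$ entering through $\sum_{k=n}^{r-1}S(r-k)/S(r-n)=N(r-n)/S(r-n)\le\alpha$. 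Your observation that $N/S\le\alpha$ forces exponential growth and the convergence of $\sum_k1/S(k)$ is true but is used in an inessential way; the Hardy inequality uses the hypothesis shell by shell. If instead you insist on reading the statement literally with the unnormalized $\chi_{u,r}$, your argument does close (giving $2/(\alpha(\alpha-1))$ rather than $1/(2\alpha)$), but that version of the statement is nearly vacuous --- on a regular tree the unnormalized diagonal entry tends to infinity with $r$ --- and it does not support the intended conclusion that the Last--Simon scheme cannot be run with ball-supported trial functions on exponentially growing graphs.
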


\begin{remark}

Functions of the type $\chi_{u,r}$ defined in \eqref{eq:newchi} are natural candidates for trial functions on graphs.
Thus, one might try to follow the argument of the proof of Theorem~\ref{thm:Thm4} using such functions and the operator $C^{(r)}$ as defined above.
It is then necessary to obtain an upper bound on $\left\Vert C^{(r)} \right\Vert$ which is asymptotically vanishing as $r\to\infty$,  in order to implement Proposition~\ref{prop:LS2.2} and find approximate eigenfunctions. 
Under the conditions of Theorem~\ref{thm:Thm5}, the diagonal matrix elements of $C^{(r)}$ are uniformly bounded from below. Thus, the bound \eqref{eq:Cbound} shows that such an attempt is doomed to fail for any non polynomially growing graph.
\end{remark}

\begin{remark} This section is complemented by Section~\ref{sec:counterexample}, in which we present an example for a graph of non-uniform polynomial growth on which the characterization fails. As opposed to that counterexample, we focus here on graphs with a strong regularity condition, for which one might have hoped that the argument could still hold.
\end{remark}

The proof relies on the following weighted discrete Hardy Inequality, originally proven by Leindler:
\begin{prop}[\cite{Leindler}, (1)] \label{prop:leindler}
Let $N\in\mathbb{N}$, $a_n\geq0,\, \lambda_n\geq0$ $(n=1,2,\ldots N)$, $p\geq1$, then
\begin{equation} \label{eq:whardy}
\sum_{n=1}^N \lambda_n \left(\sum_{k=1}^n a_k\right)^p \leq p^p \sum_{n=1}^N \lambda_n^{1-p}\left(\sum_{k=n}^N\lambda_k\right)^p a_n^p.
\end{equation}
\end{prop}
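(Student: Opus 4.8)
The plan is to prove \eqref{eq:whardy} by the classical three-step route for Hardy-type inequalities: an Abel summation that rewrites the left-hand side in terms of the tail weights, a convexity estimate for the map $x\mapsto x^p$, and a single application of H\"older's inequality, after which the asserted bound follows by an algebraic rearrangement. Throughout I abbreviate the partial sums $A_n=\sum_{k=1}^n a_k$ (with the convention $A_0=0$) and the tail weights $\Lambda_n=\sum_{k=n}^N\lambda_k$ (with $\Lambda_{N+1}=0$), so that $\lambda_n=\Lambda_n-\Lambda_{n+1}$ for every $1\le n\le N$.

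First I would perform summation by parts. Inserting $\lambda_n=\Lambda_n-\Lambda_{n+1}$ and shifting the index in the resulting telescoping sum gives
\[
\sum_{n=1}^N\lambda_n A_n^p=\sum_{n=1}^N\Lambda_n\bigl(A_n^p-A_{n-1}^p\bigr),
\]
where the boundary contributions drop out because $\Lambda_{N+1}=0$ and $A_0=0$. Next I would exploit convexity: since $p\ge1$ the function $x\mapsto x^p$ is convex and increasing on $[0,\infty)$, and $A_n\ge A_{n-1}\ge0$ because the $a_k$ are nonnegative, so $A_n^p-A_{n-1}^p\le p\,A_n^{p-1}(A_n-A_{n-1})=p\,A_n^{p-1}a_n$. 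Substituting this into the identity above yields
\[
\sum_{n=1}^N\lambda_n A_n^p\le p\sum_{n=1}^N\Lambda_n A_n^{p-1}a_n.
\]

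The decisive step is to factor each summand on the right so as to separate the target quantity from a copy of the left-hand side. Writing
\[
\Lambda_n A_n^{p-1}a_n=\bigl(\lambda_n^{1-p}\Lambda_n^p a_n^p\bigr)^{1/p}\bigl(\lambda_n A_n^p\bigr)^{(p-1)/p}
\]
(the powers of $\lambda_n$ cancel exactly, since $\tfrac{1-p}{p}+\tfrac{p-1}{p}=0$) and applying H\"older's inequality with the conjugate exponents $p$ and $p/(p-1)$, I would obtain
\[
\sum_{n=1}^N\Lambda_n A_n^{p-1}a_n\le\Bigl(\sum_{n=1}^N\lambda_n^{1-p}\Lambda_n^p a_n^p\Bigr)^{1/p}\Bigl(\sum_{n=1}^N\lambda_n A_n^p\Bigr)^{(p-1)/p}.
\]
Setting $S=\sum_n\lambda_n A_n^p$ and $T=\sum_n\lambda_n^{1-p}\Lambda_n^p a_n^p$, the combination of the last three displays reads $S\le p\,T^{1/p}S^{(p-1)/p}$; dividing by $S^{(p-1)/p}$ gives $S^{1/p}\le p\,T^{1/p}$, and raising to the $p$-th power produces $S\le p^p T$, which is precisely \eqref{eq:whardy}.

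The only places the argument can stumble are the degenerate cases in this final division, and these I would dispose of at the end. Because all sums are finite, $S<\infty$ automatically, so the division is legitimate whenever $S>0$; if $S=0$ the inequality is trivial. The genuinely delicate ingredient when $p>1$ is the weight $\lambda_n^{1-p}$: I would reduce at the outset to the case $\lambda_n>0$ for all $n$, which is exactly the setting in which the H\"older factorization above is valid. The general statement then follows by the usual conventions (a term with $\lambda_n=0$ and $a_n\neq0$ forces the right-hand side to be $+\infty$, making the claim vacuous, while a term with $\lambda_n=a_n=0$ contributes nothing on either side) or, alternatively, by a limiting argument letting the offending $\lambda_n\downarrow0$.
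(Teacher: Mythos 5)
Your proof is correct, but there is nothing in the paper to compare it against: the thesis does not prove Proposition~\ref{prop:leindler} at all, it imports it verbatim from Leindler's 1970 paper (inequality (1) there) and uses it as a black box in the proof of Theorem~\ref{thm:Thm5}. What you have written is a valid self-contained proof, and it follows the classical route for weighted Hardy inequalities (essentially Leindler's own): Abel summation against the tail weights $\Lambda_n=\sum_{k\geq n}\lambda_k$, the tangent-line bound $A_n^p-A_{n-1}^p\leq p\,A_n^{p-1}a_n$ from convexity of $x\mapsto x^p$, H\"older with conjugate exponents $p$ and $p/(p-1)$ arranged so that a copy of the left-hand side $S$ reappears, and the bootstrap $S\leq p\,T^{1/p}S^{(p-1)/p}\Rightarrow S\leq p^pT$. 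All the computations check out, including the cancellation of the powers of $\lambda_n$ in the H\"older factorization and the boundary terms $A_0=0$, $\Lambda_{N+1}=0$ in the summation by parts. One small imprecision in your closing discussion: if $\lambda_n=0$ and $a_n\neq 0$ but also $\Lambda_n=0$ (i.e.\ all subsequent weights vanish), the offending term on the right is of the form $\infty\cdot 0$, not $+\infty$, so the claim that the inequality becomes vacuous is not quite right in that sub-case; however, such indices then contribute nothing to the left-hand side either, so one can simply truncate the sum at $n-1$, and in any case the limiting argument $\lambda_n\downarrow 0$ that you offer as an alternative does handle every degenerate configuration. It is also worth noting that in the paper's application the weights are sphere sizes $\lambda_n=S(r-n)\geq 1$, so the degenerate cases never arise there.
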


\begin{proof}[Proof of Theorem~\ref{thm:Thm5}]

For any $r\in\mathbb{N}$ let $\left\{\chi_{u,r}\right\}_{u\in G}$ be a sequence of trial functions as in \eqref{eq:newchi}.
Recall the definitions,
\begin{eqnarray*}
c_{r}^{2}(u)=&\sum_v\chi^2_{u,r}(v),\\
\varphi_{u,r}^2(v)=&c_{r}^{-2}(u)\chi_{u,r}^2(v).
\end{eqnarray*}
Notice that, since the graph is uniform,
\[ \sum_u \varphi_{u,r}^2(v)\equiv 1, \]
and thus we can skip the normalization by $\eta_r$. We have
\[
c_r^2(u)\equiv c_r^2=\sum_{k=1}^{r} f_r^2(k) S(k).
\]
Also, in this case,
\begin{equation*}
\left|\left\langle \delta_{v},\left[H,\psi_{u,r}\right]\delta_{w}\right\rangle \right|=\begin{cases}
\frac{1}{ c_{r}}\cdot\left(f_r(k)-f_r(k-1)\right)\,\,\,\, & \text{if }w\sim v\text{ and }(*)\text{ holds}\\
0 & \text{otherwise},
\end{cases}
\end{equation*}
with,
\[
(*)\  k=\max\left(\text{dist}(u,v),\text{dist}(u,w)\right)\leq r \text{ and } \text{dist}(u,v)\neq\text{dist}(u,w).
\]
Thus,
\begin{eqnarray*}
|C_{v,u}^{(r)}|=|\langle\delta_v, C^{(r)} \delta_u\rangle|=2\left|\sum_{y,z}\langle\delta_v, [\psi_{y,r},H] \delta_z\rangle\langle\delta_z, [\psi_{y,r},H] \delta_u\rangle\right|.
\end{eqnarray*}
The sum on $z$ contributes $\left|\left\{z\,|\,z\sim v\, \wedge\, z\sim u\right\}\right|$ terms, which in total is between $1$ and $d$. The sum over $y$ gives a non-zero contribution if either $\text{dist}(y,z)\leq r$ or both $\text{dist}(y,v)\leq r$ and $\text{dist}(y,u)\leq r$. Thus we get the following bound for appropriate $u$ and $v$ (e.g.\ $u=v$),
\begin{eqnarray*}
|C_{v,u}^{(r)}| \geq 2\sum_{k=1}^{r} S(k)\, \left|\frac{1}{ c_{r}}\cdot\left(f_r(k)-f_r(k-1)\right)\right|^2.
\end{eqnarray*}
Define $g_r(k)=f_r(r-k)$, and $x_r(k)=g_r(k)-g_r(k-1)$. Since by assumption $f_r$ is monotonically decreasing $x_r(k)\geq0, \forall k$, and we have $g_r(k)=\sum_{j=1}^k x_r(j)$. By a change of indices we get
\begin{eqnarray*}
|C_{v,u}^{(r)}|\geq 2\frac{\sum_{n=0}^{r-1} \, S(r-n) \left(x_r(k)\right)^2}{\sum_{n=0}^{r-1} S(r-n) \left( \sum_{k=0}^n x_r(k) \right)^2 }.
\end{eqnarray*}
Next we apply Proposition~\ref{prop:leindler}, with
\begin{eqnarray*}
a_n = & x_r (n) \\
\lambda_n = & S(r-n) \\
p=&2 \\
N=&r-1.
\end{eqnarray*}
Then, for any $r\in\mathbb{N}$, \eqref{eq:whardy} translates to:
\[
\sum_{n=0}^{r-1} S(r-n) \left(g_r(n)\right)^2 \leq 4 \sum_{n=1}^{r-1} S(r-n) \left(\sum_{k=n}^{r-1}\frac{S(r-k)}{S(r-n)}\right)^2 \left(x_r(n)\right)^2.
\]
By the assumption there exists $\alpha>0$ so that $\forall r\in\mathbb{N},\,\, N(r)\leq \alpha\cdot S(r)$. Thus,
\[
\sum_{k=n}^{r-1}\frac{S(r-k)}{S(r-n)}=\frac{\sum_{j=1}^{r-n} S(j)}{S(r-n)}= \frac{N(r-n)}{S(r-n)}\leq \alpha,
\]
and finally, $\forall r\in\mathbb{N}$,
\[
|C_{v,u}^{(r)}|\geq \frac{1}{2\alpha}>0.
\]

\end{proof}

\begin{remark}
On the $d$-regular tree
\begin{eqnarray*}
S_{T_d}(r)=d\cdot(d-1)^{r-1},\\
N_{T_d}(r)=d\cdot\frac{(d-1)^r-1}{d-2},\\
\lim_{r\to\infty}\frac{N_{T_d}(r)}{S_{T_d}(r)}=\frac{d-1}{d-2}.
\end{eqnarray*}
Thus from Theorem~\ref{thm:Thm5} we conclude that the argument of Theorem~\ref{thm:Thm4} does not hold.
We expect a similar behaviour on every graph for which the growth rate is beyond polynomial.
\end{remark}
\begin{remark}
On the other hand on $\mathbb{Z}^n$,
\begin{eqnarray*}
\frac{N_{\mathbb{Z}^n}(r)}{S_{\mathbb{Z}^n}(r)}\sim r,
\end{eqnarray*}
which as expected is not bounded.
\end{remark}

\section[A counterexample]{A counterexample with non-uniform polynomial growth}{} \label{sec:counterexample}
We shall prove in this section Theorem~\ref{thm:counterExm} which we repeat here for completeness. 

\begin{namedtheorem}[Theorem~\ref{thm:counterExm}]
\label{thm:repCounterExm}There exists a graph $G$ of polynomial growth so that the adjacency operator on $G$, $A_G$, satisfies
\[
\sigma_{\text{ess}}\left(A_G\right)\Bigg\backslash\overline{\bigcup_{L\in\mathcal{R}(A_G)}\sigma\left(L\right)}
\]
is nonempty.
\end{namedtheorem}

\begin{proof}
First, recall that the girth of a graph $\mathcal{G}$ is
\[
\text{girth}\left(\mathcal{G}\right)\equiv\min\left\{\text{length}(l)\,|\,l\text{ is a cycle in }\mathcal{G}\right\}.
\]
Fix $d>2$ and let $\left\{ G_{n_{i}},u^{(1)}_i,u^{(2)}_i \right\} _{i=1}^{\infty}$
be a sequence of $d$-regular graphs on $n_{i}$ vertices, each with two marked vertices $u^{(1)}_i,u^{(2)}_i \in G_{n_i}$, where, $\left\{n_{i}\right\}_{i=1}^{\infty}\subset\mathbb{N}$, $n_{i}\to\infty$ monotonically, and so that
\begin{align*}
&\text{girth}\left(G_{n_{i}}\right)\overset{i}{\longrightarrow}\infty,\\
&\text{dist}\left(u^{(1)}_i,u^{(2)}_i\right)\overset{i}{\longrightarrow}\infty.
\end{align*}
\begin{figure}[ht!]
\centering
\includegraphics[width=135mm]{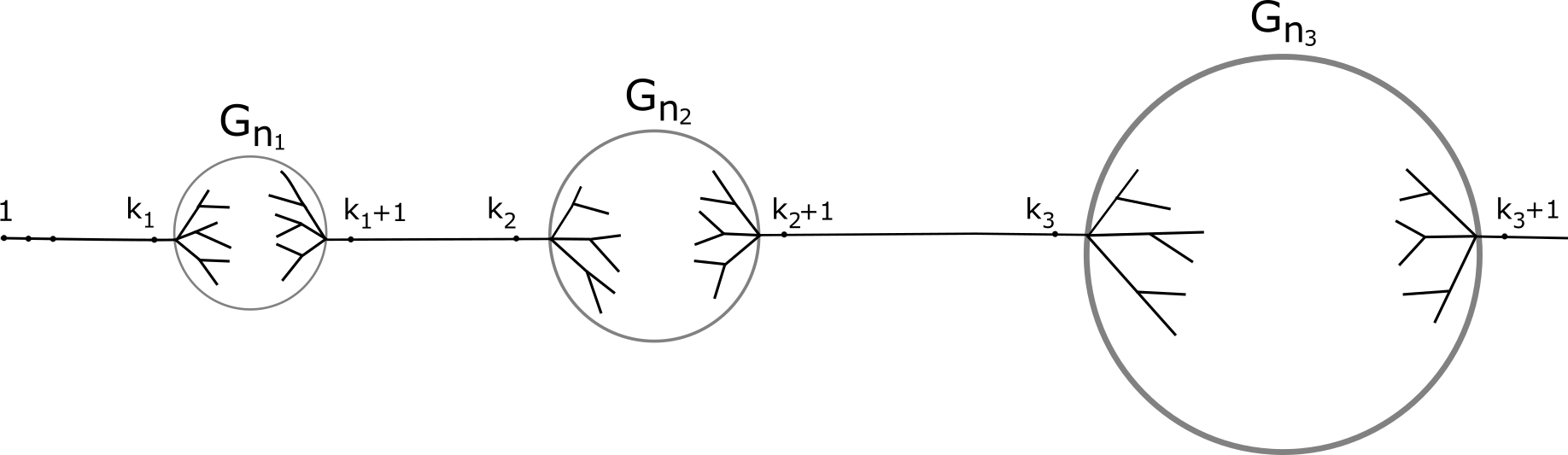}\\
\caption{The construction of the graph $G$ for the counterexample. \label{figure1}}
\end{figure}
By, e.g., \cite{LPS} such a sequence exists for $d=p+1$ for any prime $p\neq1$ satisfying $p\equiv1\left(\text{mod}\,4\right)$.
Additionally, let $\left\{ k_{i}\right\} _{i=1}^{\infty}\subset\mathbb{N}$, be an increasing sequence so that $k_{i+1}-k_{i}\to\infty$. We construct $G$ by `replacing' the edge $(k_i, k_i+1)$ in $\mathbb{N}$ by the graph $G_{n_{i}}$. This is done by cutting $(k_i, k_i+1)$ and attaching $u^{(1)}_i$ to $k_i$ and $u^{(2)}_i$ to $k_{i+1}$ (see Figure~\ref{figure1}). Formally
\begin{eqnarray*}
V(G)=&\mathbb{N}\cup \bigcup_{i=1}^\infty V\left(G_{n_i}\right),\\
E(G)=&\left(\bigcup_{i=1}^\infty E\left(G_{n_i}\right)\right) \cup \left(E\left(\mathbb{N}\right)\Big\backslash \cup_{i=1}^\infty \left\{(k_i,k_{i+1})\right\}\right)\cup \\
&\left(\bigcup_{i=1}^\infty \left\{(k_i,u^{(1)}_i)\right\}\right)\cup \left(\bigcup_{i=1}^\infty \left\{(u^{(2)}_i,k_{i+1})\right\}\right).
\end{eqnarray*}

In order to obtain polynomial growth of the graph we can choose for example $k_i=\sum_{j=1}^i n_j$. Consequently the growth of the graph, e.g.\ around $1\in\mathbb{N}$, satisfies $\forall k \in \mathbb{N}$,
\[
k\leq N_1(k)\leq 2k.
\]

Let $H=A_G$ on $G$, i.e.\ the potential is $Q(v)=\text{deg}\left(v\right)$. For each graph
$G_{n_i}$ the constant function $\varphi\left(v\right)=\frac{1}{\sqrt{n_i}}$ is an eigenfunction of $A_{G_i}$ with eigenvalue $\lambda=d$. Define
\[
\varphi_{i}\left(v\right)=\begin{cases}
\sfrac{1}{\sqrt{n_{i}}}\,\,\,\,\, & \text{if }v\in G_{n_{i}}\\
0 & \text{otherwise.}
\end{cases}
\]
Then summing over the boundary terms, we have for any $i\in\mathbb{N}$
\[
\left\Vert H\varphi_{i}-\lambda\varphi_{i}\right\Vert^{2}=\sfrac{2}{n_{i}}.
\]
Thus
\[
\left\Vert H\varphi_{i}-\lambda\varphi_{i}\right\Vert ^{2}\overset{i\to\infty}{\,\longrightarrow\,\,\,}0.
\]
Additionally, for any $i\neq j$ the functions $\varphi_{i}$ and
$\varphi_{j}$ are orthogonal. Thus $\left\{ \varphi_{i}\right\} _{i=1}^{\infty}$
is an orthonormal sequence of approximate eigenfunctions of $H$
for the value $\lambda=d$, and thus $d\in\sigma_{\text{ess}}\left(H\right).$
We claim that $d\notin\overline{\bigcup\sigma\left(L\right)}$. Indeed, it is easy to see that
the only $\mathcal{R}$-limits of $G$ are the following three objects:
\begin{enumerate}
\item The adjacency operator on the full line $\mathbb{Z}$, appearing when the limit is taken along a subsequence $\left\{ v_{n_j}\right\} _{j=1}^{\infty}$ of points (only) on $\mathbb{N}$, of increasing distance from the sequence $\left\{ k_{i}\right\} _{i=1}^{\infty}$, i.e.\
\[
\inf_{i}\left(\text{dist}\left(v_{n_j},k_{i}\right)\right)\underset{j}{\longrightarrow}\infty.
\]

\item  The adjacency operator on a $d$-regular tree $T_{d}$, appearing when $\left\{ v_{n_j}\right\} _{j=1}^{\infty}$
includes (only) points on $\left\{ G_{n_{i}}\right\}_{i=1}^{\infty} $, of increasing distance from both the sequences of vertices $\left\{u^{(1)}_i \right\}_{i=1}^{\infty}$
and $\left\{u^{(2)}_i\right\}_{i=1}^{\infty}$, i.e.\
\[
\inf_{i}\text{dist}\left(v_{n_j},u^{(\ell)}_i\right)\underset{j}{\longrightarrow}\infty
\]
for both $\ell=1,2$.
Since the girth of $G_{n_{i}}$ grows to infinity, we conclude that
for any $R>0$ the reduced graph of radius $R$ around $v_{n_j}$ will
be a tree for $j$ large enough.
\item  The adjacency operator on the tree, $\widetilde{T}=\widetilde{T}_d$, which is a half-line connected to a $d$-regular tree at the point $1\in\mathbb{N}$,
appearing when $\left\{v_{n_j}\right\}_{j=1}^{\infty}$ are points
on $\mathbb{N}$ of fixed distance from $\left\{k_{i}\right\}_{i=1}^{\infty}$,
or when $\left\{ v_{n_j}\right\} _{j=1}^{\infty}$ are points from $\left\{ G_{n_{i}}\right\}_{i=1}^{\infty}$
and are of a fixed distance from either $\left\{u^{(1)}_i\right\}_{i=1}^{\infty}$
or $\left\{u^{(2)}_i \right\}_{i=1}^{\infty}$.
\end{enumerate}
The corresponding spectra for the first two operators are:
\begin{enumerate}
\item $\sigma\left(A_{\mathbb{Z}}\right)=\left[-2,2\right]$.
\item $\sigma\left(A_{T_{d}}\right)=\left[-2\sqrt{d-1},2\sqrt{d-1}\right]$.
\end{enumerate}
Both of them do not contain the point $\lambda=d$. The following lemma
completes the argument.
\begin{lem}\label{lem:Lemma1}
$d\notin\sigma\left(A_{\widetilde{T}}\right)$
\end{lem}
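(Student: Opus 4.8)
The plan is to split the question $d\in\sigma(A_{\widetilde T})$ into an essential-spectrum part and an eigenvalue part. Realize $\widetilde T$ as the disjoint union of the half-line $\mathbb{N}=\{1,2,\dots\}$ and a $d$-regular tree $T_d$ with root $o$, joined by the single edge $(1,o)$. Then $A_{\widetilde T}=A_0+B$, where $A_0=A_{\mathbb{N}}\oplus A_{T_d}$ and $B=|\delta_1\rangle\langle\delta_o|+|\delta_o\rangle\langle\delta_1|$ is the rank-two self-adjoint operator adding that edge. Since $B$ is finite rank, hence compact, Weyl's theorem gives $\sigma_{\text{ess}}(A_{\widetilde T})=\sigma_{\text{ess}}(A_0)\subseteq[-2,2]\cup[-2\sqrt{d-1},2\sqrt{d-1}]$. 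As $d>2$ and $d>2\sqrt{d-1}$ (because $(d-2)^2>0$), this already shows $d\notin\sigma_{\text{ess}}(A_{\widetilde T})$. It remains only to show that $d$ is not an eigenvalue.

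\textbf{Reduction to a scalar condition.} Because $d\notin\sigma(A_0)$, the resolvent $G_0=(A_0-d)^{-1}$ exists and is block-diagonal (it does not mix the half-line and the tree). First I would solve $(A_0+B-d)\psi=0$ by writing $(A_0-d)\psi=-B\psi=-\bigl(\psi(o)\,\delta_1+\psi(1)\,\delta_o\bigr)$, so that $\psi$ is completely determined by the two scalars $a=\psi(1)$ and $b=\psi(o)$. Evaluating $\psi=-(A_0-d)^{-1}\bigl(b\,\delta_1+a\,\delta_o\bigr)$ at the vertices $1$ and $o$, and using that $\langle\delta_1,G_0\delta_o\rangle=0$ by block-diagonality, yields $a=-b\,G_{HL}(1,1;d)$ and $b=-a\,G_T(o,o;d)$, where $G_{HL}(1,1;z)=\langle\delta_1,(A_{\mathbb{N}}-z)^{-1}\delta_1\rangle$ and $G_T(o,o;z)=\langle\delta_o,(A_{T_d}-z)^{-1}\delta_o\rangle$. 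Hence a nonzero solution $(a,b)$ exists \emph{iff}
\[
G_{HL}(1,1;d)\cdot G_T(o,o;d)=1 .
\]
Thus $d$ is an eigenvalue precisely when this single numerical identity holds, and this formulation automatically accounts for all of $\ell^2(\widetilde T)$, including non-radial functions.

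\textbf{Computing the two Green's functions.} Both are obtained from self-consistent (renewal) equations and then evaluated at $z=d$, always selecting the branch that decays as $z\to+\infty$ (the physical resolvent branch, which is negative for $z$ above the spectrum). For the free half-line, deleting the first site gives $G_{HL}=(-z-G_{HL})^{-1}$, i.e.\ $G_{HL}^2+zG_{HL}+1=0$, so $G_{HL}(1,1;d)=\tfrac{1}{2}\bigl(-d+\sqrt{d^2-4}\bigr)=-\rho$ with $\rho=\tfrac12(d-\sqrt{d^2-4})\in(0,1)$. For the tree, the forward-branch Green's function $\Gamma$ of a $(d-1)$-ary subtree satisfies $(d-1)\Gamma^2+z\Gamma+1=0$ and the root value is $G_T(o,o;z)=(-z-d\Gamma)^{-1}$; at $z=d$ one gets $\Gamma(d)=-\tfrac{1}{d-1}$ and hence $G_T(o,o;d)=-\tfrac{d-1}{d(d-2)}$. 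The product is therefore
\[
G_{HL}(1,1;d)\,G_T(o,o;d)=\frac{\rho\,(d-1)}{d(d-2)} .
\]
Since $0<\rho<1$ and $d(d-2)-(d-1)=d^2-3d+1>0$ for $d\ge 3$, this product lies in $(0,1)$ and in particular never equals $1$. Consequently $d$ is not an eigenvalue, and combined with the first step we conclude $d\notin\sigma(A_{\widetilde T})$.

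\textbf{Main obstacle.} The method itself is short; the delicate points are the correct branch choices for $G_{HL}$ and $G_T$ (both must be the analytic branches tending to $0$ at infinity, equivalently negative above the spectrum) and the final strict inequality $\rho(d-1)<d(d-2)$. A more elementary alternative solves the eigenvalue recursion $A\psi=d\psi$ directly: the $\ell^2$ condition forces the geometric decay $\rho$ along the half-line and $(d-1)^{-1}$ along each tree sphere, and one checks that the remaining compatibility equation at the junction cannot hold. That route, however, must separately rule out non-radial eigenfunctions via a spherical-harmonic decomposition of $T_d$ (each angular sector reducing to a Jacobi matrix with spectrum in $[-2\sqrt{d-1},2\sqrt{d-1}]$), whereas the resolvent formulation above disposes of all eigenfunctions simultaneously and is therefore the cleaner path.
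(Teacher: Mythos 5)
Your proof is correct, and although it ultimately lands on the same scalar obstruction as the paper --- the non-vanishing of $1-m_T(d)\,m_{\mathbb N}(d)$, where $m_T$ and $m_{\mathbb N}$ are exactly your $G_T(o,o;\cdot)$ and $G_{HL}(1,1;\cdot)$ --- you reach and resolve that obstruction by a genuinely different route. The paper removes the connecting edge via the resolvent identity, obtains $m(z)=m_T(z)/\bigl(1-m_T(z)m_{\mathbb N}(z)\bigr)$ for $m(z)=\langle\delta_0,(A_{\widetilde T}-z)^{-1}\delta_0\rangle$, and rules out $d\in\sigma_{\mathrm{disc}}$ by showing $\lim_{\varepsilon\to0}\mathrm{Im}\,m(d+i\varepsilon)=0$, proving the denominator nonzero at $z=d$ by an irrationality argument ($\sqrt{d^2-4}\notin\mathbb Q$). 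You instead treat the added edge as a rank-two perturbation and write the eigenvalue equation $(A_0-d)\psi=-B\psi$ directly, reducing the existence of an $\ell^2$ eigenfunction to the $2\times 2$ Krein condition $G_{HL}(1,1;d)\,G_T(o,o;d)=1$, which you exclude by the quantitative bound $0<\rho(d-1)/\bigl(d(d-2)\bigr)<1$ coming from $d^2-3d+1>0$ for integer $d\geq 3$. Your route buys two things: the explicit inequality is more robust and more transparent than the irrationality trick, and the rank-two formulation automatically covers every candidate eigenfunction (if $B\psi=0$ then $\psi=0$), whereas the paper's argument through $\mathrm{Im}\,m$ alone tacitly relies on any eigenfunction at $d$ having a nonzero component in the cyclic subspace generated by $\delta_0$ --- a point your version never has to address. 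The branch choices you flag as the delicate step are handled correctly (both Green's functions are the branches behaving like $-1/z$ at infinity, hence negative above the spectrum), and your values $G_{HL}(1,1;d)=-\rho$ and $G_T(o,o;d)=-(d-1)/\bigl(d(d-2)\bigr)$ agree with the paper's closed forms for $m_{\mathbb N}$ and $m_T$ evaluated at $z=d$.
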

Before proving the lemma we conclude that this example satisfies $d\notin\overline{\bigcup_{L}\sigma\left(L\right)}$,
while $d\in\sigma_{ess}(H)$. This completes the proof of Theorem~\ref{thm:counterExm}.
\end{proof}

\begin{proof}[Proof of Lemma~\ref{lem:Lemma1}]
The tree $\widetilde{T}=\widetilde{T}_d$ is composed of a $d$-regular tree $T=T_{d}$
and a line, such that the point $1\in\mathbb{N}$ is connected to a point $0\in T$. Thus $A_{\widetilde{T}}$ is a finite rank perturbation of $A_T\oplus A_\mathbb{N}$, and so
\[\sigma_\text{ess}\left(A_{\widetilde{T}}\right)=\sigma_\text{ess}\left(A_T\right)\cup\sigma_\text{ess}\left(A_\mathbb{N}\right)=\sigma_\text{ess}\left(A_T\right)=[-2\sqrt{d-1},2\sqrt{d-1}].
\]
Therefore $d\notin\sigma_\text{ess}\left(A_{\widetilde{T}}\right)$. We want to exclude the possibility that ${d \in \sigma_{\text disc}\left(A_{\widetilde{T}} \right)=
\sigma\left(A_{\widetilde{T}} \right) \setminus \sigma_{\textrm{ess}}\left(A_{\widetilde{T}} \right)}$.

Using Dirac's bra-ket notation, define $A_{0}=A_{\widetilde{T}}-\left|\delta_{0}\right\rangle \left\langle \delta_{1}\right|-\left|\delta_{1}\right\rangle \left\langle
\delta_{0}\right|$
and $R\left(z\right)=\left(A_{\widetilde{T}}-z\right)^{-1}$, $R_{0}\left(z\right)=\left(A_{0}-z\right)^{-1}$.
Recall the resolvent identity (we omit the dependence on $z$),
\begin{equation} \label{eq:resid}
R_{0}-R=R_{0}\left(A_{\widetilde{T}}-A_{0}\right)R=R_{0}\left(\left|\delta_{0}\right\rangle \left\langle
\delta_{1}\right|+\left|\delta_{1}\right\rangle \left\langle \delta_{0}\right|\right)R.
\end{equation}
Multiplying by $\delta_{0}$ on both sides we have
\[
m_{T}(z)-m(z)=m_{T}(z)\left\langle \delta_{1}, R(z) \delta_{0}\right\rangle,
\]
where
\[
m_{T}\left(z\right)=\left\langle \delta_{0},R_{0}\left(z\right)\delta_{0}\right\rangle
\]
\[
m\left(z\right)=\left\langle \delta_{0},R\left(z\right)\delta_{0}\right\rangle.
\]
Additionally, by multiplying the identity \eqref{eq:resid} by $\delta_{1}$ on the left and by $\delta_{0}$ on the right we have
\[
0-\left\langle \delta_{1}\right|R(z)\left|\delta_{0}\right\rangle =m_{\mathbb{N}}(z)m(z)
\]
with
\[
m_{\mathbb{N}}\left(z\right)=\left\langle \delta_{1},R_{0}\left(z\right)\delta_{1}\right\rangle.
\]
Combining we get,
\[
m_{T}(z)-m(z)=m_{T}(z)\left(-m_{\mathbb{N}}(z)m(z)\right),
\]
which implies
\begin{equation} \label{eq:appendix1}
m(z)=\frac{m_{T}(z)}{1-m_{T}(z)m_{\mathbb{N}}(z)}.
\end{equation}

Now, if $\lambda\in\sigma_\text{disc}\left(A_{\tilde{T}}\right)$, then
$\lim_{\varepsilon\to0}\text{Im}\left(m\left(\lambda+i\varepsilon\right)\right)\neq0$ (see Theorem \ref{thm:spectralMeasureProperties}).
We will consider this expression for $\lambda=d$. It follows from \eqref{eq:appendix1} that
\[
\text{Im}\left(m\right)=\frac{\text{Im}\left(m_{T}\left(1+\overline{m_{T}m_{\mathbb{N}}}\right)\right)}{\left|1-m_{T}m_{\mathbb{N}}\right|^{2}}=\frac{\text{Im}\left(m_{T}\right)-\left|m_{T}\right|^{2}\text{Im}\left(m_{\mathbb{N}}\right)}{\left|1-m_{T}m_{\mathbb{N}}\right|^{2}}.
\]

It is known (see, e.g., \cite{SimonSz}) that
\begin{equation}\label{eq:mN}
m_{\mathbb{N}}\left(z\right)=\frac{-z+\sqrt{z^{2}-4}}{2}
\end{equation}
\[
m_{T}\left(z\right)=\frac{-2\left(d-1\right)}{\left(d-2\right)z+d\sqrt{z^{2}-4\left(d-1\right)}}.
\]
Thus
\[
\lim_{\varepsilon\to0}\text{Im}\left(m_{T}\left(d+i\varepsilon\right)\right)=\lim_{\varepsilon\to0}\text{Im}\left(m_{\mathbb{N}}\left(d+i\varepsilon\right)\right)=0.
\]
Additionally the denominator of $\text{Im}\left(m\right)$ satisfies,
\begin{align*}
1-m_{T}\left(d+i0\right)m_{\mathbb{N}}\left(d+i0\right)=&1-\left(d-1\right)\frac{d-\sqrt{d^{2}-4}}{\left(d-2\right)d+d\sqrt{d^{2}-4\left(d-1\right)}}=\\
&1-\frac{\left(d-1\right)\left(d-\sqrt{d^{2}-4}\right)}{2d\left(d-2\right)} \neq 0
\end{align*}
since the number $\sqrt{d^{2}-4}$ is
irrational for every $2<d\in\mathbb{N}$ ($d^2-4$ is not a perfect square). Thus
\begin{equation} \nonumber
\lim_{\varepsilon\to0}\left|1-m_{T}\left(d+i\varepsilon\right)m_{\mathbb{N}}\left(d+i\varepsilon\right)\right|^{2}>0,
\end{equation}
and we get
\[
\lim_{\varepsilon\to0}\text{Im}\left(m\left(d+i\varepsilon\right)\right)=0.
\]
This implies that $d\notin\lambda\in\sigma_\text{disc}\left(A_{\tilde{T}}\right)$,
and we can conclude that $d\notin\sigma\left(A_{\widetilde{T}}\right)$.
\end{proof}

\begin{remark*} In fact $\sigma\left(A_{\widetilde{T}}\right)=\sigma\left(A_T\right)$.
The inclusion $\sigma\left(A_{T}\right)\subseteq\sigma\left(A_{\widetilde{T}}\right)$ is clear.
Additionally, it is not hard, but is a bit cumbersome to see that for any $\lambda\notin\sigma\left(A_T\right)$ the expression
$1-m_{T}\left(\lambda\right)m_{\mathbb{N}}\left(\lambda\right)$ is nonzero, and thus in this case also
$\lambda\notin\sigma\left(A_{\widetilde{T}}\right)$.
\end{remark*}

\chapter{Characterizing $\sigma_{\text ess}(H)$ on regular trees}\label{chapter3}
After studying in the previous chapter the limitation of the method of Last-Simon \cite{LastSimonEss} on general graphs, we now turn to the natural problem of generalizing the first characterization of the essential spectrum \eqref{eq:sigmaEssChar} to infinite trees and especially to regular trees.
First, we review in a sketch an argument which overcomes the limitations and produces a positive result of the form \eqref{eq:sigmaEssLastSimonChar} on regular trees. In the rest of this chapter we use a different method to study a special case, of Schr\"odinger operators $H=\Delta+Q$ on a regular tree, for which $Q$ has a spherical symmetry around a fixed root. In this case we present a constructive proof of \eqref{eq:sigmaEssChar}, which also enables us to obtain a better understanding of the spectral properties of the problem. 
Finally in Section \ref{sec:sparsePot} we implement the results and calculate the essential spectrum for an example of a Schr\"odinger operator with sparse spherically symmetric potential.
The content of this chapter is based on  \cite{BDE}.

\section{Introduction}
We first recall the 1-dimensional characterization of the essential spectrum \eqref{eq:sigmaEssChar}. Let $J$ be a one sided Jacobi matrix. Then
\begin{equation}
\sigma_{ess}(J) = \bigcup_{J^{\left(r\right)}\text{ is a right limit of \ensuremath{J}}}\sigma\left(J^{\left(r\right)}\right)\label{eq:repMainResult}.
\end{equation}
As we know from Chapter \ref{chapter2} the method of Last--Simon \cite{LastSimonEss} for proving this characterization (with a closure on the right hand side) can be generalized to a family of graphs of uniform polynomial growth. On the other hand, we have seen an example for a graph (which is not included in this family) for which \eqref{eq:repMainResult} fails. As we have seen, the right limits of this example were all trees. We have additionally seen that it is impossible to generalize this method by considering balls on graphs of exponential growth.

It is therefore somewhat surprising that on regular trees, which are in some sense the canonical example of exponentially growing graphs, \eqref{eq:sigmaEssLastSimonChar} still holds.

We shall focus here on the case of Schr\"odinger operators $H=\Delta+Q$ on regular trees, for which $Q$ has a spherical symmetry around some fixed root (see Definition \ref{def:sphSymOp} below). We shall refer to this case as the spherically symmetric case.
\begin{theorem}
\label{thm:spSymChar}Assume $H$ is a bounded and spherically symmetric Schr\"odinger operator on $\ell^{2}\left(T\right)$ where $T$ is a regular tree, then
\[
\sigma_{\text{ess}}\left(H\right)=\bigcup_{L\in\mathcal{R}(H)}\sigma\left(L\right).
\]
\end{theorem}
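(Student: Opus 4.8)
The plan is to prove the two inclusions separately, with the easy one already handled by Theorem~\ref{thm:Thm1}. Since $T$ is a graph of bounded degree, that theorem immediately gives $\bigcup_{L\in\mathcal{R}(H)}\sigma(L)\subseteq\sigma_{\text{ess}}(H)$, and moreover it tells us the union is contained in a closed set. So the content is the reverse inclusion $\sigma_{\text{ess}}(H)\subseteq\bigcup_{L\in\mathcal{R}(H)}\sigma(L)$, together with the (a priori nonobvious) claim that the union is already closed so that no closure is needed on the right-hand side. The spherical symmetry of $Q$ is exactly what should make both of these accessible, and the intended route is to reduce the whole problem to a one-dimensional Jacobi matrix.

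First I would set up the reduction to one dimension. For a spherically symmetric potential $Q(v)=q(|v|)$ on the regular tree $T$, the subspace of $\ell^2(T)$ spanned by functions that are constant on each sphere $\mathcal{S}(k)$ is invariant under $H$, and $H$ restricted to this subspace is unitarily equivalent to a half-line Jacobi matrix $J$ whose off-diagonal weights encode the branching number $d-1$ (the standard weights are $\sqrt{d}$ for the first step and $\sqrt{d-1}$ thereafter) and whose diagonal is $q(k)-d$ (or the appropriate degree-dependent shift). The orthogonal complement decomposes into further pieces supported on subtrees; by spherical symmetry these pieces are, up to unitary equivalence and relabeling of the radial coordinate, Jacobi matrices of the same off-diagonal form but with the radial index shifted. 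The key structural fact I would establish is that $\sigma(H)$ and $\sigma_{\text{ess}}(H)$ are governed by this finite family of one-dimensional Jacobi matrices, so that computing the essential spectrum of $H$ amounts to computing essential spectra of explicit half-line Jacobi matrices built from the sequence $\{q(k)\}_{k}$.

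Next I would use the one-dimensional characterization \eqref{eq:repMainResult} on these Jacobi matrices. For a half-line Jacobi matrix, $\sigma_{\text{ess}}(J)=\bigcup_{J^{(r)}}\sigma(J^{(r)})$, where the right limits are whole-line Jacobi matrices obtained as limits of shifts of the coefficient sequences. The crucial point is to match these one-dimensional right limits with the $\mathcal{R}$-limits of $H$ on the tree in the sense of Definition~\ref{def:rlimdef}. Given a right limit $J^{(r)}$ arising from a shift sequence $k_j\to\infty$, I would build a corresponding $\mathcal{R}$-limit of $H$ along a path to infinity that passes through spheres of radius $\approx k_j$; because the branching and the radial potential values converge, the local matrices $M^{(v_{n_j})}_r$ converge to those of a spherically symmetric Schr\"odinger operator on the full regular tree whose radial Jacobi matrix is precisely $J^{(r)}$ (or contains it as its spherically symmetric part). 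Conversely, every $\mathcal{R}$-limit is again a spherically symmetric operator on a regular tree (the tree structure is forced locally by the fixed degree and the vanishing of cycles, just as in the counterexample's tree limits), so its spectrum is captured by the corresponding whole-line radial Jacobi matrix. This two-way correspondence converts $\sigma_{\text{ess}}(H)=\bigcup\sigma(J^{(r)})$ into $\sigma_{\text{ess}}(H)=\bigcup_{L\in\mathcal{R}(H)}\sigma(L)$, and since the union over one-dimensional right limits is known to be closed (the Remark after \eqref{eq:sigmaInfChar}), the union over $\mathcal{R}$-limits is closed too, eliminating the closure.

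The main obstacle I anticipate is controlling the passage between the tree spectrum and the radial Jacobi spectrum, and in particular verifying that the spectrum of a whole-tree $\mathcal{R}$-limit equals the spectrum of its radial Jacobi matrix without losing or gaining points. On the tree, unlike the half-line, the non-radial sectors can in principle contribute extra spectrum, so I would need the spectral decomposition of the spherically symmetric operator on $T$ to show that the full spectrum is exhausted by the radial and shifted-radial Jacobi pieces; this is where the regularity of the tree and the explicit form of the decomposition are indispensable. A secondary technical point is ensuring the correspondence of shift sequences: a given radial right limit must be realizable by an admissible path to infinity satisfying the coherence and ball-counting conditions (iii) of Definition~\ref{def:rlimdef}, which I expect to be straightforward on a regular tree but requires care in bookkeeping of the enumerations $\eta_r$. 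If these two points are handled, the constructive nature of the Jacobi reduction should also yield the promised finer understanding of the spectral properties and make the sparse-potential example in Section~\ref{sec:sparsePot} a direct computation.
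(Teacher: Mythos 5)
Your overall strategy --- easy inclusion from Theorem~\ref{thm:Thm1}, spherical decomposition into radial Jacobi matrices, then the one-dimensional right-limit characterization --- is the same skeleton the paper uses, but there is a genuine gap at the central step. You assert that the reduction yields $\sigma_{\text{ess}}(H)=\bigcup_{r}\sigma\bigl(J^{(r)}\bigr)$ over right limits of the radial Jacobi matrix $J_H$. This is false. The decomposition is an \emph{infinite} direct sum $H\cong\bigoplus_{n}\bigl(\oplus_{j=1}^{k_n}S_n\bigr)$ in which $S_n$ is the $(n-1)$-st tail of $J_H$, and the essential spectrum of such a direct sum is strictly larger than $\sigma_{\text{ess}}(J_H)$ in general: each $S_n$ has the same essential spectrum as $J_H$ (finite-rank perturbation), but their \emph{discrete} eigenvalues vary with $n$ and their accumulation points belong to $\sigma_{\text{ess}}$ of the direct sum. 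This is exactly the set $\Sigma$ isolated in Proposition~\ref{prop:PropB}, and it is captured not by the right limits $J^{(r)}$ but by the \emph{strong} limits $J_{(s)}$ of the sequence of tails, whence the two-part union in Propositions~\ref{prop:spSymJacobi1} and~\ref{prop:spSymJacobi2}. The sparse-potential example of Section~\ref{sec:sparsePot} shows the gap is real: there $\sigma_{\text{ess}}(H)$ contains the infinite family $\{z_k\}_{k\geq1}$ coming from eigenvalues of the $J_{(k)}$, none of which lies in $\sigma\bigl(J^{(0)}\bigr)\cup\sigma\bigl(J^{(1)}\bigr)$, the union over all right limits of $J_H$. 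Your argument as written would therefore miss part of the essential spectrum.

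A second, related error is your claim that every $\mathcal{R}$-limit of $H$ is again spherically symmetric about its root, with radial Jacobi matrix equal to the corresponding $J^{(r)}$. The $\mathcal{R}$-limits constructed in the paper's proof of Proposition~\ref{prop:spSymJacobi2} are \emph{not} spherically symmetric around $v_0'$: they are assembled from an ancestors sequence $\{v_k'\}$ and the descendant subtrees $\Gamma_{v_k'}$, producing a potential that is constant on ``spheres about a point at infinity'' (this is visible in the second $\mathcal{R}$-limit listed at the end of Section~\ref{sec:sparsePot}). What the paper actually needs, and proves, is only the one-sided inclusion $\sigma\bigl(J^{(r)}\bigr)\subseteq\sigma(L)$ and $\sigma\bigl(J_{(s)}\bigr)\subseteq\sigma(L)$ for a suitably built $\mathcal{R}$-limit $L$, obtained by transplanting approximate eigenfunctions; no exact identification of $\sigma(L)$ with a single radial Jacobi spectrum is required or true. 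To repair your proposal you would need (i) the analogue of Proposition~\ref{prop:PropB} identifying $\sigma_{\text{ess}}$ of the infinite direct sum as $\sigma_{\text{ess}}(J_H)\cup\Sigma$, (ii) the identification of $\Sigma$ with spectra of strong limits of the tails (Proposition~\ref{prop:PropA}), and (iii) the explicit ancestors-sequence construction realizing both families inside spectra of genuine $\mathcal{R}$-limits.
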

As before, here $\mathcal{R}(H)$ denotes the set of $\mathcal{R}$-limits of $H$.

We give a constructive proof for this theorem in Section~3.3 below. The argument involves a correspondence between the essential spectrum of $H$ and the spectra of specific 1-dimensional Jacobi operators. This result might be of interest on its own and is useful in calculating $\sigma_{\text ess}$ in some cases.
Denote by $V_0\subset\ell^2(T)$ the subspace which is spanned by $\delta_{v_0}$ and $H$. By the Gram-Schmidt process on the sequence of vectors $H^n\delta_{v_0}$, $n\in\mathbb{N}$ we get an orthonormal sequence which we will use as a complete orthonormal set for $V_0$. Clearly $HV_0\subset V_0$ and the restriction of $H$ to this subspace is unitarily equivalent to a Jacobi matrix. We denote by $J_H$ the corresponding matrix.
\begin{prop}\label{prop:spSymJacobi1} Assume $H$ is a bounded and spherically symmetric Schr\"odinger operator on $\ell^{2}\left(T\right)$ where $T$ is a regular tree.
Then
\[
\sigma_{\text{ess}}\left(H\right)\subseteq{\left(\bigcup_{r}\sigma\left(J^{\left(r\right)}\right)\right)\cup\left(\bigcup_{s}\sigma\left(J_{\left(s\right)}\right)\right)}
\]
where $\left\{ J^{\left(r\right)}\right\} $ is the set of right limits
of $J_H$, and $\left\{ J_{\left(s\right)}\right\} $ is the set of
strong limits of the sequence $\left\{ J_{n}\right\} _{n=1}^{\infty}$ of tails of $J_H$.
\end{prop}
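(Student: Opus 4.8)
The plan is to reduce the statement to the one-dimensional theory of right limits by decomposing $\ell^{2}(T)$ into $H$-invariant subspaces adapted to the spherical symmetry. Write $q+1$ for the degree of $T$, let $S_{n}$ be the sphere of radius $n$ about the root $v_{0}$, and let $V_{\mathrm{rad}}$ be the closed span of the normalized indicators $e_{n}=|S_{n}|^{-1/2}\mathbf{1}_{S_{n}}$. Since $Q$ is radial and $T$ is regular, $V_{\mathrm{rad}}$ is $H$-invariant, coincides with the cyclic subspace $V_{0}$, and $H|_{V_{0}}$ is unitarily equivalent to $J_{H}$. For the orthogonal complement I would use the ``angular difference'' subspaces: given $w\in S_{n}$ with children $c_{1},\dots,c_{p}$ and an angular vector $a$ with $\sum_{j}a_{j}=0$, take the functions supported on the forward subtrees of the $c_{j}$ that are radial about $w$ within each subtree with angular weights $a_{j}$. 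A short computation shows such a function is annihilated by $H$ at $w$ (precisely because $\sum_{j}a_{j}=0$), so the subspace is $H$-invariant, and that $H$ acts on it as the one-sided Jacobi matrix with bulk weights $\sqrt{q}$ and diagonal $Q_{n+m}-(q+1)$ at depth $m$, i.e.\ exactly the tail $J_{n+1}$ of $J_{H}$. A sphere-by-sphere dimension count then confirms that these subspaces together with $V_{\mathrm{rad}}$ exhaust $\ell^{2}(T)$ and that each tail $J_{m}$ occurs with finite multiplicity (of order $|S_{m-1}|$).

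This yields a unitary equivalence $H\cong J_{H}\oplus B$ with $B=\bigoplus_{\alpha}J_{n_{\alpha}}$ a direct sum of tails of $J_{H}$, each appearing finitely often, whence $\sigma_{\mathrm{ess}}(H)=\sigma_{\mathrm{ess}}(J_{H})\cup\sigma_{\mathrm{ess}}(B)$. For the radial part I would simply invoke the one-dimensional characterization \eqref{eq:repMainResult} to get $\sigma_{\mathrm{ess}}(J_{H})=\bigcup_{r}\sigma(J^{(r)})$. The same identity holds for every tail $J_{n}$, since a tail differs from $J_{H}$ only by a finite truncation at the start and hence has exactly the same right limits; this already handles any $\lambda$ lying in the essential spectrum of a single summand of $B$.

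The remaining contribution to $\sigma_{\mathrm{ess}}(B)$ is cross-summand accumulation, i.e.\ $\lambda=\lim_{k}\lambda_{k}$ with $\lambda_{k}\in\sigma(J_{n_{\alpha_{k}}})$ along \emph{distinct} summands. Because each tail occurs with finite multiplicity, distinctness forces the starting levels $n_{\alpha_{k}}\to\infty$. Picking finitely supported unit vectors $\xi_{k}$ with $\|(J_{n_{\alpha_{k}}}-\lambda)\xi_{k}\|\to 0$, I would, after passing to a subsequence and applying an IMS-type localization so that each piece stays an approximate eigenvector, split according to where the mass of $\xi_{k}$ sits. If a definite fraction of the mass remains at bounded depth in the tails, then along a subsequence the Jacobi parameters of $J_{n_{\alpha_{k}}}$ converge entrywise to those of a strong limit of tails $J_{(s)}$, the truncated vectors satisfy $\|(J_{(s)}-\lambda)\xi_{k}\|\to 0$, and $\lambda\in\sigma(J_{(s)})$. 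If instead the mass escapes to infinite depth, then recentering $\xi_{k}$ at its bulk and taking an entrywise limit of the shifted Jacobi matrices produces a two-sided Jacobi matrix, which is a right limit of $J_{H}$, so $\lambda\in\sigma(J^{(r)})$. In either case $\lambda$ lies in the right-hand side.

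The hard part will be this last dichotomy. The underlying difficulty is that the spectrum need not be continuous under strong limits: points of $\sigma(J_{n_{\alpha_{k}}})$ can disappear in the strong (tail) limit, so one cannot merely pass the spectrum through the limit. Separating the two ways an approximate eigenvector can ``run off to infinity'' in $B$ is exactly the cure, and it is precisely why \emph{both} the right limits of $J_{H}$ and the strong limits of its tails must appear on the right-hand side. A secondary point requiring care is the multiplicity bookkeeping from the first step, since it is the finiteness of the multiplicity of each tail that converts ``infinitely many distinct summands'' into ``starting levels tending to infinity.''
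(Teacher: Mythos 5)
Your first step --- decomposing $\ell^{2}(T)$ into the radial cyclic subspace and the ``angular difference'' subspaces so that $H\cong J_{H}\oplus\bigoplus_{\alpha}J_{n_{\alpha}}$, with each tail occurring with finite multiplicity --- is exactly the paper's starting point (the paper quotes this spherical decomposition from the literature rather than rederiving it), and your use of the one-dimensional identity $\sigma_{\mathrm{ess}}(J_{H})=\bigcup_{r}\sigma(J^{(r)})$ for the radial part and for each single tail is also how the paper proceeds. The gap is in the cross-summand accumulation step. Your plan --- IMS localization at scale $R$ plus entrywise limits of the recentered Jacobi matrices --- produces, for each fixed $R$, a limit object $J'_{R}$ (a right limit or a strong limit of tails) with $\operatorname{dist}\left(\lambda,\sigma(J'_{R})\right)\lesssim 1/R$ coming from the commutator error; since $J'_{R}$ may change with $R$, this only places $\lambda$ in the \emph{closure} of the right-hand side. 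The proposition is stated without a closure, and removing it is precisely the point of the constructive argument in this chapter (the general result \eqref{eq:onTrees} for trees does carry a closure). Relatedly, your claim that a ``definite fraction of mass at bounded depth'' already gives $\left\Vert (J_{(s)}-\lambda)\xi_{k}\right\Vert\to 0$ for the truncated vectors is not justified: truncation creates boundary terms that do not vanish unless $\xi_{k}$ is small near the cut, which is exactly what localization at a fixed scale cannot guarantee.

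The paper closes this gap with two ideas absent from your sketch. First (Proposition~\ref{prop:PropB}), since $\sigma(J_{n})=\sigma_{\mathrm{ess}}(J)\cup\sigma_{\mathrm{disc}}(J_{n})$, the only new points contributed by the direct sum are limits $E=\lim_{k}E_{k}$ of \emph{actual discrete eigenvalues} $E_{k}$ of $J_{n_{k}}$ with normalized $\ell^{2}$ eigenfunctions $\psi_{k}$; no approximate eigenvectors or localization are needed. Second, the dichotomy is run on the single number $\mu_{k}\left(\{E_{k}\}\right)=|\psi_{k}(1)|^{2}$, where $\mu_{k}$ is the spectral measure of $J_{n_{k}}$ at $\delta_{1}$: if it tends to $0$, the zero-padded eigenfunctions form a Weyl sequence for $J$ itself (the only boundary error is $a_{n_{k}}\psi_{k}(1)$), so $E\in\sigma_{\mathrm{ess}}(J)=\bigcup_{r}\sigma(J^{(r)})$ with no closure; if it stays bounded below along a subsequence, the weak convergence $\mu_{k}\to\mu_{s}$ of the spectral measures (which does follow from the strong convergence $J_{n_{k}}\to J_{(s)}$) forces $E\in\operatorname{supp}(\mu_{s})\subseteq\sigma(J_{(s)})$ for that one strong limit. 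To make your scheme work you would need either to adopt this eigenvalue/spectral-measure argument or to prove separately that $\left(\bigcup_{r}\sigma(J^{(r)})\right)\cup\left(\bigcup_{s}\sigma(J_{(s)})\right)$ is closed.
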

\begin{prop}\label{prop:spSymJacobi2}
In the setting of Proposition~\ref{prop:spSymJacobi1},
\[
{\left(\bigcup_{r}\sigma\left(J^{\left(r\right)}\right)\right)\cup\left(\bigcup_{s}\sigma\left(J_{\left(s\right)}\right)\right)}\subseteq\bigcup_{L\in\mathcal{R}(H)}\sigma\left(L\right)
\]
\end{prop}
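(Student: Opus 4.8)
The plan is to establish the two inclusions $\bigcup_{r}\sigma(J^{(r)})\subseteq\bigcup_{L\in\mathcal{R}(H)}\sigma(L)$ and $\bigcup_{s}\sigma(J_{(s)})\subseteq\bigcup_{L\in\mathcal{R}(H)}\sigma(L)$ by a single mechanism, treating two-sided right limits and one-sided tail limits uniformly. Both families of Jacobi matrices share the same bulk: off-diagonal weight $\sqrt{d-1}$ and diagonal $\tilde q(k)-d$, where $\tilde q(k)=\lim_{j}q(n_{j}+k)$ is a subsequential limit of shifts of the radial potential $q$ (over $k\in\mathbb{Z}$ for a right limit, over $k\ge 0$ for a tail limit). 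First I would promote this data to an $\mathcal{R}$-limit of $H$: by boundedness of $Q$ and Lemma~\ref{lemma:rlimexists}, after passing to a further subsequence the radial path $\{v_{n_j}\}$ produces an $\mathcal{R}$-limit $L=H'$ living on the $d$-regular tree $T_d$ and carrying a potential $\tilde Q(u)=\tilde q(h(u))$ that is constant on the level sets of the signed height $h$ relative to the (now infinitely receded) root direction, and which agrees with $\tilde q$ on all the indices used below. It then suffices to prove $\sigma(J^{(r)})\subseteq\sigma(H')$ and $\sigma(J_{(s)})\subseteq\sigma(H')$ for this one operator $L=H'$.

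The core of the proof is a lift of Jacobi approximate eigenfunctions into $\ell^2(T_d)$. Given $\lambda$ in the spectrum of the Jacobi matrix $J$ (either $J^{(r)}$ or $J_{(s)}$) and $\varepsilon>0$, Weyl's Criterion (Theorem~\ref{thm:Weyl}) supplies a unit vector $\psi$ with $\|(J-\lambda)\psi\|<\varepsilon$; truncating and renormalising, I may assume $\psi$ is supported on a finite window of indices. I would fix a vertex $p\in T_d$ one level below the bottom of this window together with two of its forward children $w_1,w_2$; this is where I use $d\ge 3$, so that $p$ has at least two children (for $d=2$ the tree is $\mathbb{Z}$ and the claim is the classical one-dimensional statement). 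The key observation is that in each forward subtree $T_{w_i}^{+}$ the height coincides with the distance to the root $w_i$, so $\tilde Q$ is spherically symmetric about $w_i$ there, and the spherical reduction of $H'$ on $T_{w_i}^{+}$ (with the parent edge discarded) is exactly a half-line Jacobi matrix with weights $\sqrt{d-1}$ and diagonal $\tilde q-d$ — matching $J$ term by term, boundary entry included. Lifting $\psi$ radially into $T_{w_1}^+$ and $T_{w_2}^+$ (the passage between Jacobi coordinates and sphere-averaged tree functions is an isometry, so the lifts $\Phi_1,\Phi_2$ lie in $\ell^2$ and preserve norms), I would set $\Psi=\tfrac{1}{\sqrt 2}(\Phi_1-\Phi_2)$, a unit vector.

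To finish I would estimate $\|(H'-\lambda)\Psi\|$. Inside each subtree the tree eigenvalue equation reproduces the Jacobi recursion, so the interior contribution — including the truncation defect at the top of the window — equals, via the isometry, the corresponding piece of $(J-\lambda)\psi$; at the bottom index the absence of a backward neighbour in $T_{w_i}^+$ matches the vanishing of $\psi$ below the window. The only additional term is the ``leak'' at the branch vertex $p$, namely $(H'\Psi)(p)=\Psi(w_1)+\Psi(w_2)=\tfrac{1}{\sqrt 2}(\Phi_1(w_1)-\Phi_2(w_2))$, which vanishes identically because $\Phi_1(w_1)=\Phi_2(w_2)$ and the signs are opposite (this is the whole point of antisymmetrising). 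Hence $\|(H'-\lambda)\Psi\|=\|(J-\lambda)\psi\|<\varepsilon$ with $\|\Psi\|=1$, and as $\varepsilon>0$ is arbitrary, Weyl's Criterion yields $\lambda\in\sigma(H')\subseteq\bigcup_{L\in\mathcal{R}(H)}\sigma(L)$. Applying this to every right limit $J^{(r)}$ and every tail limit $J_{(s)}$ gives both inclusions and, combined with Proposition~\ref{prop:spSymJacobi1}, the proposition.

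The main obstacle — and the reason a naive argument fails — is that the horocycles (level sets of $h$) on $T_d$ are infinite, so the horocyclic subspace that would carry the two-sided matrix $J^{(r)}$ contains no nonzero $\ell^2$ vector; one cannot present $J^{(r)}$ as the restriction of $H'$ to a reducing subspace and simply quote $\sigma(\text{restriction})\subseteq\sigma(H')$. The antisymmetric lift is designed precisely to sidestep this: it works with finitely supported approximate eigenfunctions on two sibling subtrees, which do have finite spheres, so that the only boundary effect is the single backward coupling at $p$, and that one term is annihilated by the choice of signs. The one computation that must be carried out with care is the verification that the spherical reduction inside a forward subtree reproduces the Jacobi data exactly, in particular that the diagonal boundary entry at $w_i$ is $\tilde q(\cdot)-d$ rather than a modified value; once this is in hand, the rest is a direct application of Weyl's Criterion.
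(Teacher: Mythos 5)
Your proposal is correct, and while it builds the same limiting object as the paper (the Schr\"odinger operator $L=H'$ on $T_d$ whose potential is constant on the horocycles determined by an ancestors sequence), the way you transfer spectral information from the Jacobi matrices to $L$ is genuinely different. The paper first approximates $L$ by operators $L^{(j)}$ supported on the descendant subtrees $\Gamma_{v_j'}$, invokes the full spherical decomposition $L^{(j)}\cong\bigoplus_i L_i^{(j)}$ (citing the decomposition theory of Allard--Froese and Breuer), produces approximate eigenfunctions of the summands $L_i^{(j)}$ by shifting truncations of the Jacobi approximate eigenfunction deep into the half-line, and then passes to the strong limit $L^{(j)}\to L$; the case of tail limits $J_{(s)}$ is reduced to the right-limit case by observing that each $J_{(s)}$ is the half-line restriction of some $J^{(r)}$. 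Your antisymmetrized lift $\Psi=\tfrac{1}{\sqrt2}(\Phi_1-\Phi_2)$ on two sibling forward subtrees replaces all of this with one explicit trial function: it is in effect a vector in a single nontrivial sector of that same spherical decomposition at the branch vertex $p$, but you never need to quote the decomposition theorem, the approximating operators $L^{(j)}$, or the reduction of $J_{(s)}$ to $J^{(r)}$ -- the sign cancellation $\Phi_1(w_1)=\Phi_2(w_2)$ kills the single boundary term directly and gives the clean bound $\|(H'-\lambda)\Psi\|\le\|(J-\lambda)\psi\|$, treating two-sided and one-sided matrices uniformly. The one step you pass over quickly is the verification that the horocyclic operator $H'$ really is an $\mathcal{R}$-limit of $H$: Lemma~\ref{lemma:rlimexists} only gives existence of some $\mathcal{R}$-limit along a subsequence, and identifying its potential as $\tilde q(h(\cdot))$ uses that $Q(u)=q(|u|)$ together with the fact that $|u|-|v_{n_j}|$ converges to the height function; the paper spends a careful paragraph on this (via Proposition~\ref{prop:RlimOnRegT} and the $d^R$ multiplicity bound), and your write-up should at least record that this is where the spherical symmetry of $Q$ is used. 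With that point made explicit, your argument is complete and arguably more self-contained than the published one.
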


Theorem~\ref{thm:spSymChar} immediately follows from Theorem~\ref{thm:Thm1} and these two propositions. Moreover, we get an additional characterization of the essential spectrum for such operators.

\begin{theorem}\label{thm:spSymJacobi}
Assume $H$ is a bounded and spherically symmetric Schr\"odinger operator on $\ell^{2}\left(T\right)$ where $T$ is a regular tree.
Then
\[
\sigma_{\text{ess}}\left(H\right)={\left(\bigcup_{r}\sigma\left(J^{\left(r\right)}\right)\right)\cup\left(\bigcup_{s}\sigma\left(J_{\left(s\right)}\right)\right)}
\]
where $\left\{ J^{\left(r\right)}\right\}$ and  $\left\{ J_{\left(s\right)}\right\}$ are as in Proposition~\ref{prop:spSymJacobi1}.
\end{theorem}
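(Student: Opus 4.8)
The plan is to obtain the claimed equality by closing a cycle of inclusions already supplied by the three preceding results: Theorem~\ref{thm:Thm1}, Proposition~\ref{prop:spSymJacobi1}, and Proposition~\ref{prop:spSymJacobi2}. No genuinely new argument is required at this stage; the substantive content has been distributed among those results, and what remains is the bookkeeping that assembles them into a single identity.

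Concretely, I would proceed as follows. Proposition~\ref{prop:spSymJacobi1} provides directly the inclusion
\[
\sigma_{\text{ess}}(H) \subseteq \left(\bigcup_{r}\sigma(J^{(r)})\right)\cup\left(\bigcup_{s}\sigma(J_{(s)})\right).
\]
For the reverse inclusion I would compose Proposition~\ref{prop:spSymJacobi2}, which embeds this right-hand side into $\bigcup_{L\in\mathcal{R}(H)}\sigma(L)$, with Theorem~\ref{thm:Thm1}, which places that union inside $\sigma_{\text{ess}}(H)$:
\[
\left(\bigcup_{r}\sigma(J^{(r)})\right)\cup\left(\bigcup_{s}\sigma(J_{(s)})\right) \subseteq \bigcup_{L\in\mathcal{R}(H)}\sigma(L) \subseteq \sigma_{\text{ess}}(H).
\]
Putting the two displays together forces equality throughout the chain, and in particular yields the asserted identity. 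As a by-product the same cycle shows that $\bigcup_{L\in\mathcal{R}(H)}\sigma(L)$ coincides with every set appearing in the chain, which is precisely the statement of Theorem~\ref{thm:spSymChar}; thus both theorems fall out of the single circular argument.

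Since the proof is a purely formal combination of earlier statements, there is no real obstacle at this level: the difficulty lives entirely inside the propositions being invoked. The genuinely substantive step is Proposition~\ref{prop:spSymJacobi1}, which asserts that the essential spectrum of the spherically symmetric operator $H$ is captured by the right limits and the tail limits of the associated one-dimensional Jacobi matrix $J_H$. Establishing it requires decomposing $\ell^2(T)$ into the cyclic subspace $V_0$, on which $H$ acts unitarily as $J_H$, together with its orthogonal complement, and then controlling the essential spectrum contributed by that complement through the tail operators $J_{(s)}$. I would therefore expect essentially all of the effort to sit in Propositions~\ref{prop:spSymJacobi1} and~\ref{prop:spSymJacobi2}, and would treat the present theorem itself as the short closing step that links them to Theorem~\ref{thm:Thm1}.
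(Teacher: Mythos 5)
Your proposal is correct and matches the paper exactly: the paper likewise derives both Theorem~\ref{thm:spSymChar} and Theorem~\ref{thm:spSymJacobi} by chaining Proposition~\ref{prop:spSymJacobi1}, Proposition~\ref{prop:spSymJacobi2}, and Theorem~\ref{thm:Thm1} into a cycle of inclusions that forces equality throughout. The substantive work is indeed deferred to the two propositions, just as you indicate.
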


After we prove Theorem \ref{thm:spSymChar}, S.~Denisov has shown us an argument which we describe in \cite{BDE}, and proves generally that
\begin{equation}\label{eq:onTrees}
\sigma_{\text{ess}}\left(H\right)=\overline{\bigcup_{L\in\mathcal{R}(H)}\sigma\left(L\right)}.
\end{equation}
for any Schr\"odinger operator on a regular tree.
As we sketch in Section~3.2 the proof overcomes the restrictions we have seen in Chapter \ref{chapter2} by using trial functions which are supported on annuli around a fixed origin, instead of functions supported on balls.

\begin{remark}
We expect this result to carry over to the case of non-regular trees as well. We restricted our attention to regular trees in \cite{BDE}, for simplicity. Further, it is an interesting open problem to study whether this argument can be generalized to a larger family of graphs (e.g.\ by applying some restrictions on their connectivity).
\end{remark}

\begin{remark}
The general result \eqref{eq:onTrees} includes a closure on the right hand side. Since it is an adaptation of the argument of \cite{LastSimonEss} and Section 2.2, we cannot do better with this method. However, as mentioned in Chapter 1, in the one dimensional case the result \eqref{eq:sigmaEssChar} is known to hold without the closure. For now it is still an open problem whether this result can be extended to trees.
In the spherically symmetric case we have Theorem 3.1, which holds without the closure on the right hand side. 
\end{remark}
\begin{remark}
By a simple adaptation everything holds also for Jacobi operators on the tree. For simplicity we treat Schr\"odinger operators.
\end{remark}

\begin{remark}
We expect the argument used in the proof of Theorem \ref{thm:spSymChar} (and Theorem~\ref{thm:spSymJacobi})
to carry over to more general graphs which have a spherical symmetry around some fixed root. For example, consider a spherically symmetric tree with cycles (an example  is given in Section~\ref{sec:example2} in which we additionally assume sparseness of this graph). Notice that this type of examples cannot be treated with the argument used on trees. We leave this direction to a future work.
\end{remark}

\section{Overcoming the obstacles in the argument on trees}\label{sec:PfonRegTrees}
For completeness, we sketch here Denisov's proof of \eqref{eq:onTrees} for general operators on regular trees.
The full proof is included in \cite[Section 4]{BDE}. Given a regular tree $T$, we fix an origin $v_0$ and define trial functions of the form:
\[
\chi_{k,r}(v)=\left\{
\begin{array}{cc}
1-\frac{||v|-k|}{r}   & {\rm if} \quad ||v|-k|<R  \\
0 & {\rm otherwise,}
\end{array}
\right.\quad
\]
where $k,r\in\mathbb{N}$, and as usual $|v|=\text{dist}\left(v,v_0\right)$. Define further
\[\psi_{k,r}(v)=\nicefrac{\chi_{k,r}(v)}{\sqrt{\sum_k \chi^2_{k,r}(v)}}.\]
We continue by defining $C^{(r)}$ as in the proof of Theorem \ref{thm:Thm4}, i.e.\
\[
C^{(r)}=-2\sum_k \left[H,\psi_{k,r}\right]^2.
\]
The number of nonzero terms in this sum  is linear with $r$, in contrast to the exponential number of terms we had in the corresponding sum in Chapter \ref{chapter2}. This fact allows a better bound on $\left\Vert C^{(r)} \right\Vert$,
by which we can conclude that $\lim_{r\to\infty}\left\Vert C^{(r)}\right\Vert=0$.
We continue tracing  the argument of Last-Simon and take a Weyl's sequence of orthonormal approximate eigenfunctions $\left\{\varphi_n\right\}_{n\in\mathbb{N}}$ for $\lambda$.
By following the argument, we split each $\varphi_n$ into candidate approximate eigenfunctions $\varphi_{k,n}=\psi_{k,r}\varphi_n$ (for $k\in \mathbb{N}$) which are supported on annuli of diameter $2r$, moving away to infinity, and such that the sum
\[
\sum_k \left\Vert\left(H-\lambda\right)\varphi_{k,n}	\right\Vert^2
\]
is suitably small (for large enough  $r$ and $n$).
In the second part of the proof the structure of the tree is exploited, in order to split each function $\varphi_{k,n}$ (in the connections closest to the origin) into a sum of candidate approximate eigenfunctions which are locally supported in the graph (on balls of radius $r$). Next we conclude that for each $n\in\mathbb{N}$ there exists at least one such locally supported approximate eigenfunction which is nonzero.
Now we can find $\mathcal{R}$-limits and corresponding approximate eigenfunctions by an adaptation of the argument we used in Section 2.2, based on compactness.

\section{$\mathcal{R}$-Limits on regular trees}
As a preliminary for the proof of Propositions~\ref{prop:spSymJacobi1} and \ref{prop:spSymJacobi2} we discuss here some properties of $\mathcal{R}$-limits of regular trees and present some useful definitions and notation.
Let $H$ be a Schr\"odinger operator on a $d$-regular tree $T$ (with $d>2$) with root vertex $v_0\in T$.
In this case the Laplace operator is different from the adjacency operator by a constant, i.e.\ $\Delta_T=A_T- dI$. The influence of the term $-dI$ on the spectrum is just a shift, and thus, for convenience, we can ignore this term (or absorb it into the potential $Q$) and write $\Delta$ for the adjacency operator.
Let $\left\{H',T',v_0'\right\}$ be an $\mathcal{R}$-limit of $H$ along a path to infinity ${\left\{v_j\right\}_{j=0}^\infty\subset V\left(T\right)}$. The following are properties of $\mathcal{R}$-limits on regular trees.

First, since a regular tree is homogeneous, any $\mathcal{R}$-limit of $H$ is defined on the same regular tree. Thus we can assume that $T'$ is another copy of the $d$-regular tree.
Next, we will rely on this property in order to develop a more geometric formulation for the definition of $\mathcal{R}$-limit on regular trees.

\begin{definition}
Given an isometry between trees $f:T\to T'$, denote by $I_{f}:\ell^{2}\left(T\right)\to\ell^{2}\left(T'\right)$
the isometry operator: $\left(I_{f}\psi\right)\left(v\right)=\psi\left(f\left(v\right)\right)$.
\end{definition}
\begin{definition}
For any vertex $u\in T$, $R>0$, denote by $P_{u,R}$ the projection operator onto $\ell^{2}\left(B_{R}\left(u\right)\right)$.
Further, for any operator $X$ on $\ell^{2}\left(T\right)$, denote the operator $X_{u,R}=P_{u,R}XP_{u,R}$.
\end{definition}

\begin{prop} \label{prop:RlimOnRegT}
Let $H$ be a Schr\"odinger operator on a $d$-regular tree $T$, and assume $\left\{H',T',v_0'\right\}$ is an $\mathcal{R}$-limit of $H$  along a path to infinity $\left\{v_j\right\}_{j=0}^\infty$.
Then there exists a subsequence of vertices $\left\{u_j\right\}_{j=1}^\infty\subseteq\left\{v_j\right\}_{j=0}^\infty$ and a sequence of tree isometries $\left\{f_{j}:T\to T'\right\}_{j=1}^\infty$, with $f_{j}\left(u_{j}\right)=v_{0}'$ $($see Figure~\ref{figure2}$)$ satisfying, for every $R>0$,
\begin{equation} \label{eq:rlimonregT}
\left\Vert I_{f_{j}}H_{u_{j},R}I_{f_{j}}^{-1}-H'_{v_{0}',R}\right\Vert \underset{j\to\infty}{\longrightarrow}0.
\end{equation}
Moreover, if $H'$ is a Schr\"odinger operator on $\left\{T',v_0'\right\}$ and there exist sequences $\left\{u_j\right\}_{j=1}^\infty$, $\left|u_j\right|\to\infty$ and $\left\{f_j\right\}_{j=1}^\infty$ as above, such that \eqref{eq:rlimonregT} is satisfied for any $R>0$, then $\left\{H',T',v_0'\right\}$ is an $\mathcal{R}$-limit of $H$.
\end{prop}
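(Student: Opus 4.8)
The statement is a geometric repackaging of Definition~\ref{def:rlimdef} in the special case of a regular tree, and the plan is to prove the two implications separately, exploiting two features of the $d$-regular tree. First, it is \emph{homogeneous}: for every $u\in T$, $w\in T'$ and every $R$ the balls $B_R(u)$ and $B_R(w)$ are isometric as rooted graphs, and $N_{u,R}=N_{w,R}$ depends only on $R$. Second, the off-diagonal (adjacency) part of any compression $H_{u,R}$ is integer-valued, hence it is transported \emph{exactly} by any tree isometry and only the diagonal potential part can carry a nontrivial limit. Thus, once the combinatorics of the balls is matched, \eqref{eq:rlimonregT} reduces to the convergence of $Q$ on balls, i.e.\ to the convergence of the diagonal entries in \eqref{eq:rlimdef}.

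For the forward implication I would start from the data of Definition~\ref{def:rlimdef}: a subsequence $v_{n_j}$ with coherent indexings $\eta^{(j)}_r$ at $v_{n_j}$ and $\eta'_r$ at $v_0'$ for which $M^{(v_{n_j})}_r\to M^{(v_0')}_r$. Each pair of indexings gives a bijection $g^{(j)}_r=(\eta'_r)^{-1}\eta^{(j)}_r:B_r(v_{n_j})\to B_r(v_0')$, and since the off-diagonal entries of the $M$'s lie in $\{0,1\}$ and converge, for $j$ large they must be equal; hence $g^{(j)}_r$ preserves adjacency, i.e.\ is a graph isomorphism of balls. Because the centre of a ball in a regular tree is graph-theoretically distinguished (it is the unique vertex at distance $r$ from every degree-one vertex of the ball), $g^{(j)}_r$ automatically sends $v_{n_j}\mapsto v_0'$. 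Using homogeneity I extend $g^{(j)}_r$ to a tree isometry $f_j:T\to T'$ with $f_j(v_{n_j})=v_0'$, a diagonal argument over $r$ letting the matching radius grow with $j$, and set $u_j=v_{n_j}$. Then $I_{f_j}H_{u_j,R}I_{f_j}^{-1}$ and $H'_{v_0',R}$ have identical adjacency parts, while their diagonals differ by $\max_{v\in B_R(v_0')}\lvert Q(f_j^{-1}v)-Q'(v)\rvert$, which is exactly the vanishing diagonal contribution to $\lVert M^{(u_j)}_R-M^{(v_0')}_R\rVert$; this yields \eqref{eq:rlimonregT}.

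For the reverse implication the isometries are handed to me, so I run the argument backwards. I fix once and for all a coherent indexing $\{\eta'_r\}$ of the balls around $v_0'$ and pull it back by the given isometries, $\eta^{(j)}_r=\eta'_r\circ f_j$ on $B_r(u_j)$ (legitimate since $f_j$ maps $B_r(u_j)$ isometrically onto $B_r(v_0')$); these are coherent at $u_j$ because $f_j$ is a tree isomorphism. With this choice $M^{(u_j)}_r$ is precisely the matrix of $I_{f_j}H_{u_j,r}I_{f_j}^{-1}$ in the basis $\eta'_r$, so \eqref{eq:rlimonregT} gives $M^{(u_j)}_r\to M^{(v_0')}_r$ for every $r$, while $N_{u_j,r}=N_{v_0',r}$ holds automatically by homogeneity. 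Hence conditions (i)--(iii) of Definition~\ref{def:rlimdef} are met with the $u_j$ as centres.

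The one genuinely delicate point — and the step I expect to be the main obstacle — is the remaining requirement of Definition~\ref{def:rlimdef} that the centres lie along a single \emph{path to infinity}. On a tree a monotone path is forced to be a geodesic ray, so all of its vertices sit on one ray, whereas the given $u_j$ may branch off in pairwise incomparable directions. The plan is therefore to pass to a subsequence of $\{u_j\}$ along which the limit can be realized by centres on a common ray, applying a K\"onig-type compactness argument to the geodesics $[v_0,u_j]$ (using local finiteness of $T$) to stabilize their initial segments. This is exactly where the tree geometry must be handled with care, and it is the crux of the reverse direction. I note that in the setting in which this proposition is applied (Propositions~\ref{prop:spSymJacobi1} and~\ref{prop:spSymJacobi2}, where $Q$ is spherically symmetric) the difficulty disappears entirely: there $H_{u,R}$ depends only on $\lvert u\rvert$, so the centres may be placed along \emph{any} fixed ray and a path to infinity is produced for free.
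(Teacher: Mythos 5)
Your route is the same as the paper's: in the forward direction the paper extends the coherent indexings $\mathcal{I}^{(j)}_k$ and $\mathcal{I}'_k$ to global isomorphisms and sets $I_{f_j}=\mathcal{I}'^{-1}\mathcal{I}_j$, and in the reverse direction it pulls back a fixed coherent indexing at $v_0'$ through the $f_j$, exactly as you propose. Your forward direction is in fact more careful than the paper's one-line version: the observation that the off-diagonal entries of the matrices in \eqref{eq:rlimdef} are integer-valued and convergent, hence eventually equal, so that the induced ball bijections are graph isomorphisms which must fix the centre (your eccentricity argument), together with the diagonalization in $r$, is precisely what is needed to know that $\mathcal{I}'^{-1}\mathcal{I}_j$ is really induced by a tree isometry carrying $u_j$ to $v_0'$; the paper leaves all of this implicit. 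That half of your proposal is complete and correct.

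The step you flag as the crux of the reverse direction is indeed the only genuine issue, and your worry is justified rather than excessive: the paper disposes of it with the single sentence ``by compactness there is a path to infinity which contains a subsequence of $\{u_j\}$'', and your K\"onig-type argument is an attempt to justify exactly that sentence. But be aware of what such an argument actually yields: stabilizing initial segments of the geodesics $[v_0,u_j]$ produces a ray every initial segment of which is shared by infinitely many of the $[v_0,u_j]$ --- it does not place the $u_j$ themselves on that ray, and in general no subsequence of them lies on a common geodesic ray. (Take $u_j$ at depth $2j$, hanging at distance $j$ off the $j$-th vertex of a fixed ray; any geodesic ray contains at most two such $u_j$.) With a potential equal to $1$ exactly on such a scattered family and $0$ elsewhere, \eqref{eq:rlimonregT} holds with $H'=\Delta+\langle\delta_{v_0'},\cdot\rangle\delta_{v_0'}$, yet no subsequence of centres with $Q\to1$ can sit on a single path to infinity, so $H'$ fails the literal Definition~\ref{def:rlimdef}. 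In other words, the ``moreover'' direction needs either a relaxed notion of $\mathcal{R}$-limit in which the centres are only required to escape to infinity (which is how Lemma~\ref{lemma:rlimexists} and the rest of the paper actually use the concept), or an additional hypothesis letting one relocate the centres onto one ray. Your closing observation is the correct repair for the place the statement is used: in Proposition~\ref{prop:spSymJacobi2} the spherical symmetry makes $H_{u,R}$ depend only on $|u|$, so the centres can be chosen on a fixed ray and the difficulty disappears. I would therefore either prove the reverse direction under that extra structure or explicitly record the weakened definition you are invoking; the compactness argument alone, as sketched by you and as asserted by the paper, does not close this gap.
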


\begin{figure}[ht!]
\centering
\includegraphics[width=130mm]{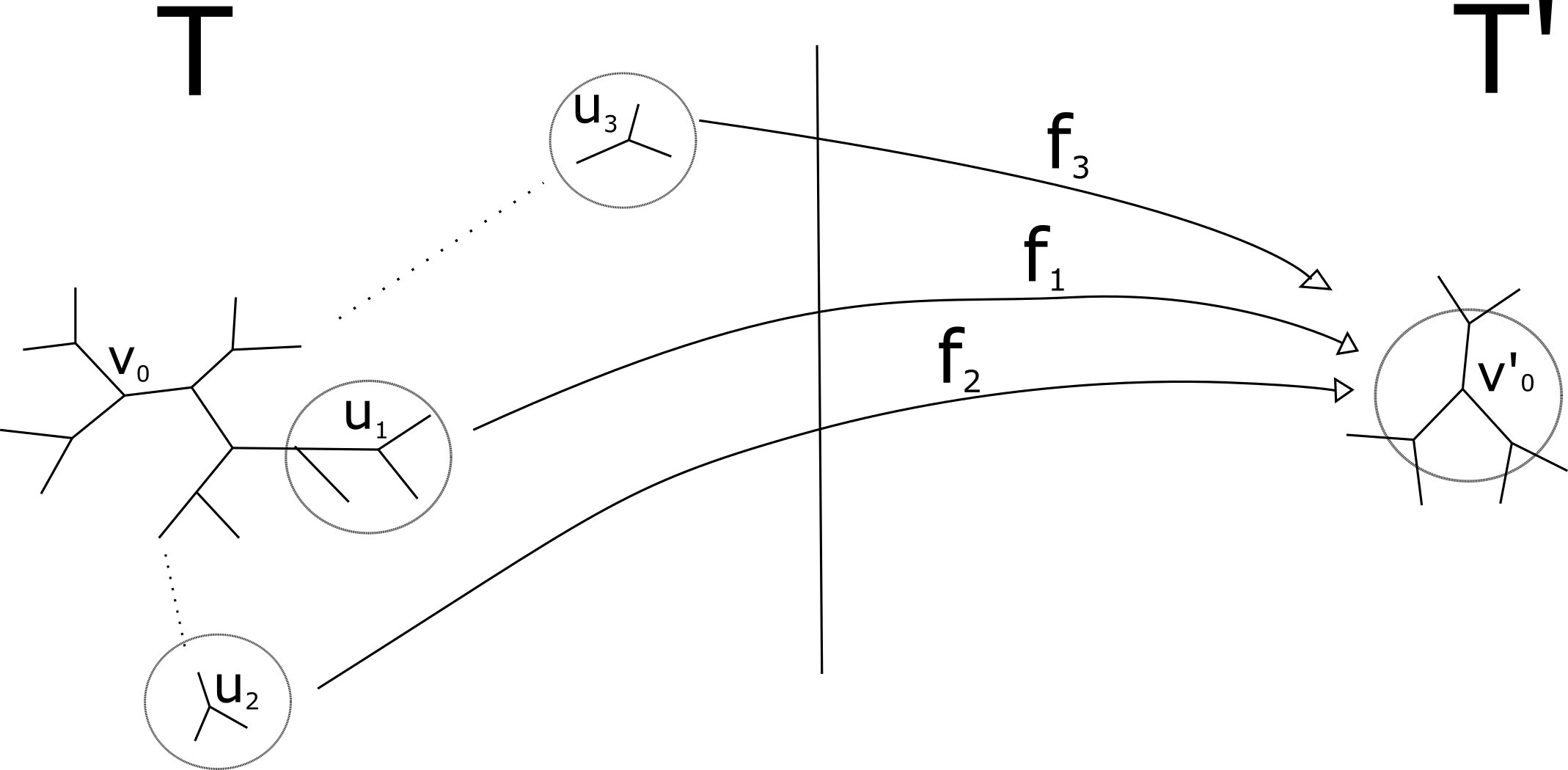}\\
\caption{The isometries $f_j$ between $T$ and $T'$. \label{figure2}}
\end{figure}

\begin{proof}
Assume the $\mathcal{R}$-limit $H'$ is obtained along the subsequence $\left\{v_{n_j}\right\}_{j=1}^\infty$ (as in Definition~\ref{def:rlimdef}), and define $u_j=v_{n_j}$. Note that for any $j\in\mathbb{N}$ the coherent isomorphisms sequence $\left\{\mathcal{I}^{(j)}_k \right\}_{k=1}^\infty$ can be extended to an isomorphism $\mathcal{I}_j:\ell^2\left(T\right)\to\ell^2\left(\mathbb{N}\right)$, that agrees on balls around $u_j$ with $\mathcal{I}^{(j)}_k$. Similarly, the sequence $\mathcal{I'}_k$ can be extended to an isomorphism $\mathcal{I'}:\ell^2\left(T'\right)\to\ell^2\left(\mathbb{N}\right)$.
Now we can define $I_{f_j}$ (and $f_j$) by $I_{f_j}={\mathcal{I'}^{-1}\mathcal{I}_{j}}$. The convergence \eqref{eq:rlimonregT} then follows directly from \eqref{eq:rlimdef}.

In the other direction, assume $\left\{u_j\right\}$ is a sequence of vertices and $\left\{f_j\right\}_{j=1}^\infty$ is a sequence of tree isometries as above.
By compactness there is a path to infinity, $\left\{v_j\right\}_{j=1}^\infty$, which contains a subsequence $\left\{u_j'\right\}_{j=1}^\infty\subseteq\left\{u_j\right\}_{j=1}^\infty$, i.e.\ $u_j'=v_{n_j}$, for a corresponding sequence $\left\{n_j\right\}\subseteq\mathbb{N}$. Let $\left\{\mathcal{I'}_k\right\}_{k=1}^\infty$ be any sequence of coherent isomorphisms of $T'$ around $v_0'$. We can now define for any $j\in\mathbb{N}$ a coherent sequence of isomorphisms $\left\{\mathcal{I}^{(j)}_{k}\right\}_{k=1}^\infty$ of $T$ around $u_j'$ by
\begin{equation} \nonumber
\mathcal{I}^{(j)}_{k}=\mathcal{I'}_k \mathcal{I}_{f_j}|_{B_k\left(u_j'\right)}.
\end{equation}
The convergence \eqref{eq:rlimdef} follows directly from  \eqref{eq:rlimonregT}.
\end{proof}

Next we present another property of $\mathcal{R}$-limits on regular trees, which is the possibility to choose the isometries $\left\{f_j\right\}$ such that the path from $u_j$ to $v_0$ is always mapped to the same sequence of vertices in $T'$ from $v_0'$ to infinity.
\begin{definition}
Denote by $N\left(u\right)$ the set of \textbf{neighbors} of the vertex $u$. Additionally, assuming $u\neq v_0$, denote by $A\left(u\right)=A^1(u)=A_{v_0}\left(u\right)$ the vertex ${w\in N(u)}$ on the (shortest) path from $v_0$ to $u$. For $n\in \mathbb{N}$, let $A^n(v)=A\left(A^{n-1}\left(v \right)\right)$.
\end{definition}

\begin{prop} \label{prop:ancestorsSeq}
Let $\left\{T,v_0\right\}$ and $\left\{T',v_0'\right\}$ be two copies of the $d$-regular tree, assume $\left\{v_j\right\}_{j=0}^\infty$ is a path to infinity in $T$, $\left\{w_j\right\}_{j=1}^\infty\subseteq\left\{v_j\right\}_{j=0}^\infty$ is a subsequence, and  $\left\{g_{j}:T\to T'\right\}_{j=1}^\infty$ is a sequence of tree isomtries, with $g_j\left(w_j\right)=v_0'$. Then there exist a subsequence of vertices $\left\{u_j\right\}_{j=1}^\infty\subseteq\left\{w_j\right\}_{j=1}^\infty$, a corresponding subsequence of tree isometries $\left\{f_{j}\right\}_{j=1}^\infty\subseteq\left\{g_{j}\right\}_{j=1}^\infty$, and a path to infinity ${\left\{v_k'\right\}_{k=0}^\infty\subset V\left(T'\right)}$, so that
\begin{equation} \label{eq:ancestorSeqCond}
\forall n\in\mathbb{N},\ f_{j}\left(A^{n}\left(u_{j}\right)\right)=v_{n}',
\end{equation}
for any $j\in\mathbb{N}$ such that $\left|u_{j}\right|>n$.
\end{prop}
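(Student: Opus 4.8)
The plan is to reduce \eqref{eq:ancestorSeqCond} to a level-by-level pigeonhole argument followed by a diagonal extraction, the point being that branching in a $d$-regular tree is finite. First I would record the geometric fact that a path to infinity in a tree is a geodesic ray: since $v_{n+1}\sim v_n$ and $|v_n|\to\infty$ monotonically, one has $|v_{n+1}|=|v_n|+1$, so $\{v_j\}_{j=0}^\infty$ is the geodesic issuing from $v_0$. Writing $w_j=v_{m_j}$, the ancestor chain $w_j=A^0(w_j),A^1(w_j),\ldots,A^{m_j}(w_j)=v_0$ is exactly the initial segment $v_{m_j},v_{m_j-1},\ldots,v_0$ of this ray, with $A^n(w_j)=v_{m_j-n}$. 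Setting $z_j(n):=g_j\big(A^n(w_j)\big)$, the isometry property gives $\text{dist}\big(z_j(n),v_0'\big)=\text{dist}\big(A^n(w_j),w_j\big)=n$, so $z_j$ is a geodesic segment of length $m_j=|w_j|$ emanating from $v_0'$, with no backtracking. Thus each index contributes a finite geodesic ray from $v_0'$, and the whole task is to extract a subsequence along which these segments stabilize to one common infinite ray $\{v_n'\}$.

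Next I would build $\{v_n'\}$ together with nested infinite index sets $S_0\supseteq S_1\supseteq\cdots$ by induction on the level $n$. Put $v_0'=v_0'$ and let $S_0$ be the full index set. Given an infinite $S_{n-1}$ and fixed vertices $v_0',\ldots,v_{n-1}'$ with $z_j(m)=v_m'$ for all $m\le n-1$ and all $j\in S_{n-1}$ (with $|w_j|\ge n-1$), consider those $j\in S_{n-1}$ with $|w_j|\ge n$; since $|w_j|\to\infty$, all but finitely many qualify. For each such $j$, the vertex $z_j(n)$ is a neighbor of $v_{n-1}'$ distinct from $v_{n-2}'$, hence lies in a set of at most $d-1$ vertices (at most $d$ when $n=1$). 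As this set is finite and $S_{n-1}$ is infinite, the pigeonhole principle yields an infinite $S_n\subseteq S_{n-1}$ and a single vertex $v_n'$ with $z_j(n)=v_n'$ for all $j\in S_n$. The resulting sequence $\{v_n'\}_{n=0}^\infty$ satisfies $v_{n+1}'\sim v_n'$ and $|v_n'|=n$, so its vertices are pairwise distinct and it is a genuine path to infinity in $T'$.

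Finally I would diagonalize: pick indices $j_1<j_2<\cdots$ with $j_k\in S_k$ and set $u_k:=w_{j_k}$, $f_k:=g_{j_k}$ (reindexed). For each fixed $n$ and every $k\ge n$ we have $j_k\in S_k\subseteq S_n$, hence $f_k\big(A^n(u_k)\big)=z_{j_k}(n)=v_n'$. This gives \eqref{eq:ancestorSeqCond} in the sense that, \emph{for each fixed $n$, the equality $f_j\big(A^n(u_j)\big)=v_n'$ holds for all sufficiently large $j$} (and every such $j$ indeed has $|u_j|>n$ once $|u_j|$ exceeds the level-$n$ threshold).

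The step I expect to be delicate is precisely this uniformity. The pigeonhole–diagonal scheme stabilizes each \emph{fixed} level only eventually in $j$, and one should not expect more: a segment $z_j$ may agree with the emerging spine only up to some position $p_j$ that itself recedes to infinity while $|w_j|-p_j\to\infty$, so the segments need not be honest prefixes of $\{v_n'\}$ simultaneously for all $j$ in any subsequence. Consequently \eqref{eq:ancestorSeqCond} must be read level-by-level (for each $n$, for all large $j$), which is exactly the form needed to feed this back into Proposition~\ref{prop:RlimOnRegT} and align the $\mathcal{R}$-limit along a single ray $\{v_n'\}$. An alternative packaging of the same argument is to note that the geodesics from $v_0'$ form a compact space under agreement on initial segments, so $\{z_j\}$ has a convergent subsequence; the pigeonhole presentation above is just the constructive version of this compactness and is what I would write out.
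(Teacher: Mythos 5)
Your argument is essentially the paper's own proof: the paper likewise pins down $v_1'\in N(v_0')$ by pigeonhole on the value of $f_j\left(A\left(w_j\right)\right)$ that repeats infinitely often, continues inductively level by level, and finishes with a diagonal extraction over the nested subsequences. Your closing caveat that the conclusion is only delivered level-by-level (for each fixed $n$, for all sufficiently large $j$, since a segment need not be an honest prefix of the limiting ray) is accurate and well taken --- that is exactly what the diagonal yields, it is the form actually used in the proof of Proposition~\ref{prop:spSymJacobi2}, and the paper's two-line argument gives nothing stronger.
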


\begin{proof}
By compactness, the sequence $\left\{ f_{j}\left(A\left(w_{j}\right)\right)\right\} _{j=1}^{\infty}\subset T'$ contains a vertex ${v\in N(v_{0}')}$ an infinite number of times, denote it by $v_{1}'=v$
and restrict to this sebsequence. Continue further inductively
to define the path to infinity $\{v_{k}'\}_{k=0}^{\infty}$. Finally take the diagonal over the resulting subsequences of vertices $\subseteq\left\{w_j\right\}_{j=1}^\infty$ and tree isometries $\subseteq\left\{g_j\right\}_{j=1}^\infty$ to define the subsequence $\left\{u_{j}\right\}_{j=1}^\infty$ and the subsequence $\left\{f_{j}\right\}_{j=1}^\infty$.
\end{proof}

We refer to the sequence $\{v_{k}'\}_{k=0}^{\infty}$ from Proposition~\ref{prop:ancestorsSeq} as an \emph{ancestors sequence}.

\begin{corollary}
Let $H$ be a Schr\"odinger operator on a $d$-regular tree $T$, and assume $\left\{H',T',v_0'\right\}$ is an $\mathcal{R}$-limit of $H$  along a path to infinity ${\left\{v_j\right\}_{j=0}^\infty}$.
Then there exists a subsequence of vertices $\left\{u_j\right\}_{j=1}^\infty\subseteq\left\{v_j\right\}_{j=0}^\infty$, a sequence of tree isometries $\left\{f_{j}\right\}_{j=1}^\infty$ as in Proposition~\ref{prop:RlimOnRegT}, and an ancestors sequence ${\left\{v_k'\right\}_{k=0}^\infty\subset V\left(T'\right)}$, such that \eqref{eq:rlimonregT} and \eqref{eq:ancestorSeqCond} are satisfied.
\end{corollary}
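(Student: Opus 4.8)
The plan is to derive this Corollary by chaining Proposition~\ref{prop:RlimOnRegT} with Proposition~\ref{prop:ancestorsSeq}, arranging the notation so that the conclusion of the former supplies exactly the hypotheses of the latter, and then checking that the norm convergence \eqref{eq:rlimonregT} survives the extra passage to a subsequence.

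First I would apply Proposition~\ref{prop:RlimOnRegT} to the given $\mathcal{R}$-limit $\left\{H',T',v_0'\right\}$ along the path $\left\{v_j\right\}_{j=0}^\infty$. Since $T$ is $d$-regular and homogeneous, $T'$ is another copy of the same tree, so the first assertion of that proposition yields a subsequence of vertices, which I will rename $\left\{w_j\right\}_{j=1}^\infty\subseteq\left\{v_j\right\}_{j=0}^\infty$ so as to free the symbols $u_j,f_j$ for the final output, together with tree isometries $\left\{g_j:T\to T'\right\}_{j=1}^\infty$ satisfying $g_j(w_j)=v_0'$ and
\[
\left\Vert I_{g_j}H_{w_j,R}I_{g_j}^{-1}-H'_{v_0',R}\right\Vert\underset{j\to\infty}{\longrightarrow}0
\]
for every $R>0$.

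Next I would feed $\left\{w_j\right\}$ and $\left\{g_j\right\}$ into Proposition~\ref{prop:ancestorsSeq}, whose hypotheses are now met verbatim: the two copies $\left\{T,v_0\right\},\left\{T',v_0'\right\}$, the path $\left\{v_j\right\}$, the subsequence $\left\{w_j\right\}$, and the isometries $\left\{g_j\right\}$ with $g_j(w_j)=v_0'$. This produces a further subsequence $\left\{u_j\right\}_{j=1}^\infty\subseteq\left\{w_j\right\}_{j=1}^\infty$, a corresponding subsequence of isometries $\left\{f_j\right\}_{j=1}^\infty\subseteq\left\{g_j\right\}_{j=1}^\infty$, and an ancestors sequence $\left\{v_k'\right\}_{k=0}^\infty$ for which \eqref{eq:ancestorSeqCond} holds. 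Note that for the base index this reads $f_j(u_j)=v_0'$, so the isometries $\left\{f_j\right\}$ are indeed normalized as in Proposition~\ref{prop:RlimOnRegT}.

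Finally I would observe that \eqref{eq:rlimonregT} is a statement about the limit $j\to\infty$: since $\left\{u_j\right\},\left\{f_j\right\}$ is a subsequence of $\left\{w_j\right\},\left\{g_j\right\}$, along which the displayed norm already tends to zero for each fixed $R$, the same convergence persists. Hence \eqref{eq:rlimonregT} and \eqref{eq:ancestorSeqCond} hold simultaneously for $\left\{u_j\right\},\left\{f_j\right\}$ and the ancestors sequence $\left\{v_k'\right\}$, which is the claim. I do not expect a genuine obstacle here; the only points requiring care are the bookkeeping of which subsequence is passed to which proposition, and the elementary remark that a convergent sequence remains convergent along subsequences.
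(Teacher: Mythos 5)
Your proposal is correct and is exactly the argument the paper intends (the paper states the corollary without proof, as an immediate consequence of chaining Proposition~\ref{prop:RlimOnRegT} with Proposition~\ref{prop:ancestorsSeq}). The hypotheses are matched correctly and the observation that the convergence in \eqref{eq:rlimonregT} persists along the further subsequence is the only point that needs checking, which you handle.
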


We conclude with some more definitions that will be useful below,
\begin{definition}[\textbf{Descendants of an Ancestors Sequence}]
Given an ancestors sequence $\left\{ v_{k}'\right\} _{k=0}^{\infty}$
we define a descendant (strict total) order relation $>_{D}$ on the neighboring
vertices of the tree, recursively, as follows:

\begin{enumerate}
\item for any $k\in\mathbb{N}\cup\{0\}$, $v_{k+1}'>_{D}v_{k}'$.
\item if $w>_{D}u$ then for any $z\in N\left(u\right)$ so that $z \neq w$,
$u>_{D}z$ (and so also $w>_D z$ is required, to ensure transitivity).
\end{enumerate}
\end{definition}
\begin{definition}\label{def:descSubtree}
 Given a vertex $v'\in T'$ denote by $\Gamma_{v'}$ the subtree of descendants of $v'$,
\[
\Gamma_{v'}=\left\{ u\in T'\,|\,v'>_{D}u\right\} \cup\left\{v'\right\}.
\]
\end{definition}

\section{Proof of Propositions 3.1.1 and 3.1.2}

\begin{definition}\label{def:sphSymOp} We say that an operator $H=\Delta+Q$ on a regular tree, $T$, is spherically symmetric (around some vertex, $v_0 \in T$) if the potential satisfies $Q(v)=q\left(\left|v\right|+1\right)$, where $\left|v\right|=\text{dist}\left(v,v_{0}\right)$  and $q: \mathbb{N} \rightarrow \mathbb{R}$.
\end{definition}

\begin{definition}
A Jacobi matrix with parameters $\{a_k\}_{k=1}^\infty$ and $\{b_k\}_{k=1}^\infty$ is a matrix of the form
\[
J=\left[\begin{array}{ccccc}
b_1 & a_1 & 0\\
a_1 & b_2 & a_2 & 0\\
0 & a_2 & b_3 & a_3 \\
 & 0 & a_3 & b_4\\
 &  &  &  & \ldots
\end{array}\right].
\]
\end{definition}
\begin{definition}
The $k$-th tail of a semi-infinite matrix $A$ is a semi-infinite matrix $A^{[k]}$ defined by $\left(A^{[k]}\right)_{i,j}=(A)_{i+k,j+k}$, where $i,j\in\mathbb{N}$.
\end{definition}

\begin{proof} [Proof of Proposition~\ref{prop:spSymJacobi1}]
We begin with exploring the essential spectrum of $H$ using the symmetry of the system.
The spherical symmetry implies that $H$ decomposes as a direct sum (see \cite{AF,Breuer}) of Jacobi matrices
\[H\cong\bigoplus_{n=1}^\infty\left(\oplus_{j=1}^{k_n} S_n\right),\]
where $S_n$ has the parameters
\begin{equation*}
a_k^{(n)}=\begin{cases} \sqrt{d}\ \ \ \ \ \ \ \ n=k=1 \\
\sqrt{d-1}\ \ \ \textrm{otherwise},
\end{cases}
\end{equation*}
$b_k^{(n)}=q_{k+n-1}$, and $k_n$ is some explicit function of $n$ and the degree of the tree (see \cite{Breuer}). Notice that the direct sum includes $k_n$ copies of $S_n$ for each $n\in\mathbb{N}$. Additionally, note that the matrix $S_n$ is the ($n-k$)'th tail of the matrix $S_k$ for any $n>k\in\mathbb{N}$. The matrix we denoted previously by $J_H$ is actually $S_1$, which is,
\[
S_{1}=J_H=\left[\begin{array}{ccccc}
q_{1} & \sqrt{d} & 0\\
\sqrt{d} & q_{2} & \sqrt{d-1}\\
0 & \sqrt{d-1} & q_{3} & \sqrt{d-1}\\
 &  & \sqrt{d-1} & q_{4}\\
 &  &  &  & \ldots
\end{array}\right]
\]
All other $S_n$'s are tails of this Jacobi matrix.

Proposition \ref{prop:spSymJacobi1} now follows from the next proposition, which we give in a more general form.
\end{proof}

\begin{prop}\label{prop:PropA} Assume $J$ is a bounded Jacobi matrix with parameters $\{a_j\}_{j=1}^\infty$ and $\{b_j\}_{j=1}^\infty$,
satisfying
\[
\sup_{j}\left(\left|a_{j}\right|+\left|b_{j}\right|+\left|a_{j}\right|^{-1}\right)=M<\infty.
\]
Let $\left\{ J_{n}\right\} _{n=1}^{\infty}$ be a subsequence of the
sequence of tails of $J$, $\left\{J^{[k]}\right\}_{k=1}^\infty$, let $\left\{ i_{n}\right\}_{n=1}^\infty \subset\mathbb{N}$, and
let $K=\bigoplus_{n=1}^{\infty}\left(\oplus_{j=1}^{i_{n}}J_{n}\right)$.
Then the essential spectrum of $K$ satisfies:
\[
\sigma_{\text{ess}}\left(K\right)\subseteq{\left(\bigcup_{r}\sigma\left(J^{\left(r\right)}\right)\right)\cup\left(\bigcup_{s}\sigma\left(J_{\left(s\right)}\right)\right)}
\]
where $\left\{ J^{\left(r\right)}\right\} $ is the set of right limits
of $J$, and $\left\{ J_{\left(s\right)}\right\} $ is the set of
strong limits of the sequence $\left\{ J_{n}\right\} _{n=1}^{\infty}.$
\end{prop}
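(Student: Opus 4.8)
The plan is to regard $K$ as a Jacobi operator on the graph $G_K$ which is the disjoint union of the half-lines carrying the blocks $J_n$ (one ray for each pair $(n,j)$, $1\le j\le i_n$). This graph has uniform linear growth, $N_u(\rho)\asymp\rho$, so the localization machinery behind Theorem~\ref{thm:Thm4} is available ray by ray; but, crucially, $G_K$ is disconnected, so the two genuinely different ways of escaping to infinity must be separated: moving deep into a fixed ray, which produces a (two-sided) right limit $J^{(r)}$ of $J$, and hugging the endpoints of rays whose block index $n$ tends to infinity, which produces a (one-sided) strong limit $J_{(s)}$ of the tails. I would fix $\lambda\in\sigma_{\mathrm{ess}}(K)$ and, by Weyl's Criterion (Theorem~\ref{thm:Weyl}), take an orthonormal Weyl sequence $\{\psi_m\}$ with $\|(K-\lambda)\psi_m\|\to0$ and $\psi_m\rightharpoonup0$.

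First I would arrange that $\{\psi_m\}$ escapes to infinity in $G_K$. Writing points as triples $(n,j,\ell)$ and letting $F_N=\{(n,j,\ell):n\le N,\ \ell\le N\}$ (a finite set, since each $i_n<\infty$), weak convergence gives $\|\mathbf{1}_{F_N}\psi_m\|\to0$ for each fixed $N$, while the boundary term $\|[K,\mathbf{1}_{F_N}]\psi_m\|$ is controlled by the mass of $\psi_m$ on the finite edge-boundary of $F_N$ and hence also vanishes. A diagonal choice $N=g(m)\to\infty$ therefore replaces $\psi_m$ by a normalized restriction supported in $F_{g(m)}^{\,c}$, still an approximate eigenfunction. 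Next I would apply Proposition~\ref{prop:LS2.2} with the pyramid partition of unity of radius $r_m$, exactly as in the proof of Theorem~\ref{thm:Thm4}; because the growth is linear ($\beta-\alpha=0$) one has $\|C^{(r)}\|\lesssim r^{-2}$, so choosing $r_m\to\infty$ slowly (with $r_m=o(g(m))$ and $\|C^{(r_m)}\|\to0$) yields, for each $m$, a unit vector $\phi_m$ supported in a single window $B_{r_m}(u_m)$ of one ray with $\|(K-\lambda)\phi_m\|^2\le 2\|(K-\lambda)\psi_m\|^2+\|C^{(r_m)}\|\to0$. Since the windows meet $F_{g(m)}^{\,c}$, their centers $u_m=(n_m,j_m,\ell_m)$ escape: either the depth $\ell_m\to\infty$ or the block index $n_m\to\infty$.

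With this in hand I would split into two cases along a subsequence. If the window stays interior, i.e.\ $\ell_m-r_m\to\infty$, then the entries of $J_{n_m}$ seen by $\phi_m$ are the entries of $J$ at absolute positions around $p_{n_m}+\ell_m$ (writing $J_n=J^{[p_n]}$), which tend to infinity; extracting a right limit of $J$ by compactness as in Lemma~\ref{lemma:rlimexists} and transplanting $\phi_m$ onto $\ell^2(\mathbb{Z})$ (centering the window at $0$) gives, since $r_m\to\infty$, a genuine approximate eigenfunction, so $\lambda\in\sigma(J^{(r)})$. If instead $\ell_m-r_m$ stays bounded, the window touches the ray's endpoint, the support lies in positions $\le 2r_m+O(1)$, and the escape forces $n_m\to\infty$; passing to a subsequence along which $J_{n_m}$ converges strongly to a tail-limit $J_{(s)}$ and transplanting $\phi_m$ onto $\ell^2(\mathbb{N})$ (respecting the endpoint at position $1$) again yields an approximate eigenfunction, because the head entries converge on the growing window, so $\lambda\in\sigma(J_{(s)})$. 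In either case $\lambda$ lies in $\big(\bigcup_r\sigma(J^{(r)})\big)\cup\big(\bigcup_s\sigma(J_{(s)})\big)$, which is the claim. Only boundedness of the parameters is needed for this direction; the uniform bound $|a_j|^{-1}\le M$ merely guarantees that the limiting $J^{(r)}$ and $J_{(s)}$ are non-degenerate Jacobi matrices.

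The main obstacle, and the reason the tail-limits $J_{(s)}$ appear at all, is precisely the disconnectedness of $G_K$: a single within-component localization (as in Theorem~\ref{thm:Thm4}) only ever sees right limits, yet the essential spectrum of the direct sum also absorbs accumulation of the boundary-supported (discrete) spectra of the tails across blocks with $n\to\infty$. The delicate point is therefore to run the localization so that it captures non-vanishing mass in whichever of the two directions the Weyl sequence actually escapes, and to let the window radius $r_m\to\infty$ in step with the escape so that the transplanted functions become exact approximate eigenfunctions — this is what lets me land in the set $\big(\bigcup_r\sigma(J^{(r)})\big)\cup\big(\bigcup_s\sigma(J_{(s)})\big)$ itself rather than only in its closure.
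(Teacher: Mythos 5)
There is a genuine gap, and it sits exactly where the statement is delicate: the absence of a closure on the right-hand side. Your localization scheme (Weyl sequence, escape to infinity, Proposition~\ref{prop:LS2.2} with pyramid cut-offs of radius $r_m\to\infty$) does legitimately produce unit vectors $\phi_m$ with $\|(K-\lambda)\phi_m\|\to0$ supported in windows $B_{r_m}(u_m)$. The problem is the final transplantation step. A right limit $J^{(r)}$, and likewise a strong limit $J_{(s)}$ of the tails, is obtained from \emph{entrywise} convergence along a subsequence: the discrepancy between the shifted matrix and the limit tends to zero on each \emph{fixed} window, with no uniformity in the window size. Your transplanted functions live on windows of radius $r_m\to\infty$, so the error $\|(J_{(s)}-J_{n_m})\tilde\phi_m\|$ is controlled by the entrywise discrepancy over a \emph{growing} range, which entrywise convergence does not bound (take $b^{(n_k)}_l=\delta_{l,k}$: it converges entrywise to $0$ but the discrepancy on the window containing position $k$ never decays). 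One cannot decouple $r_m$ from the center $u_m$ (hence from $n_m$), because both are handed to you jointly by the partition-of-unity step applied to $\psi_m$. What the argument actually yields is $\operatorname{dist}(\lambda,\sigma(L_\varepsilon))\le C\varepsilon$ for limit operators $L_\varepsilon$ depending on $\varepsilon$, i.e.\ membership in the \emph{closure} of the union --- and the thesis states explicitly (Remark after \eqref{eq:onTrees}) that the Last--Simon method cannot do better than the closure. So your concluding claim that letting $r_m\to\infty$ ``in step with the escape'' lands you in the union itself is precisely the unproved step.

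Your interior case is repairable: when $\ell_m-r_m\to\infty$ the transplanted $\tilde\phi_m$ are, verbatim, an orthonormal Weyl sequence for $J$ itself, so $\lambda\in\sigma_{\mathrm{ess}}(J)$, and one then quotes the known one-dimensional result \eqref{eq:sigmaEssChar} (without closure) to place $\lambda$ in $\bigcup_r\sigma(J^{(r)})$ --- no right limit needs to be extracted by hand. The boundary case is where the real work lies and where the paper proceeds quite differently: it first proves Proposition~\ref{prop:PropB}, $\sigma_{\mathrm{ess}}(K)=\sigma_{\mathrm{ess}}(J)\cup\Sigma$ with $\Sigma$ the set of accumulation points of discrete eigenvalues $E_k$ of the $J_{n_k}$, and then shows $\Sigma\subseteq\bigcup_s\sigma(J_{(s)})$ by a spectral-measure argument: if the point masses $\mu_k(\{E_k\})$ tend to $0$ then shifting the eigenvectors back into $J$ forces $E\in\sigma_{\mathrm{ess}}(J)$ (a contradiction), so they stay bounded below, and weak convergence $\mu_k\to\mu_s$ then forces $E\in\operatorname{supp}(\mu_s)\subseteq\sigma(J_{(s)})$. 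That quantitative lower bound on the spectral mass is the mechanism that replaces the uniform-window control your proposal is missing; without it (or an independent proof that $\bigcup_s\sigma(J_{(s)})$ absorbs such accumulation points) your argument proves only the closure version of the proposition.
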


\noindent Before proving Proposition~\ref{prop:PropA} we present another preliminary
proposition:
\begin{prop}\label{prop:PropB}
The essential spectrum of $K$ satisfies:
\[
\sigma_{\text{ess}}(K)=\sigma_{\text{ess}}\left(J\right)\cup\Sigma
\]
where,
\begin{align*}
\Sigma_{\text{ }}=&\ \Sigma_0\big\backslash\sigma_{\text{ess}}\left(J\right), \\
\Sigma_0= &\left\{ \vphantom{\int_t} E\in\mathbb{R}\,|\,\exists\left\{n_{k}\right\}_{k=1}^{\infty},n_{k+1}\geq n_{k}\in\mathbb{N},\,\left\{g_{k}\right\}_{k=1}^{\infty}\in\ell^{2}\left(\mathbb{N}\right),\,\lambda_{k}\in\mathbb{R}, \right.\\
&\, \left. \text{so that }   J_{n_{k}}g_{k}=\lambda_{k}g_{k},\text{ and }\lambda_{k}\underset{k\to\infty}{\longrightarrow}E\right\}
\end{align*}
$($i.e.\ $\Sigma$ is the set of limit points of eigenvalues of the $J_{n}$'s that are not in $\sigma_{\textrm{ess}})$.
\end{prop}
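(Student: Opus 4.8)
The plan is to read off $\sigma_{\mathrm{ess}}(K)$ directly from Weyl's Criterion (Theorem~\ref{thm:Weyl}), treating $K=\bigoplus_{n}\left(\oplus_{j=1}^{i_n}J_n\right)$ as a countable direct sum of half-line Jacobi matrices and controlling how approximate eigenfunctions are distributed among the summands. Two preliminaries set the stage. First, every tail has the same essential spectrum as $J$: decomposing $\ell^2(\mathbb{N})=\ell^2(\{1,\dots,k\})\oplus\ell^2(\{k+1,k+2,\dots\})$, the matrix $J$ differs from $A\oplus J^{[k]}$ (with $A$ the $k\times k$ corner) only by the single tridiagonal coupling between coordinates $k$ and $k+1$, hence by a finite-rank operator, so Weyl's theorem gives $\sigma_{\mathrm{ess}}(J^{[k]})=\sigma_{\mathrm{ess}}(J)$, and in particular $\sigma_{\mathrm{ess}}(J_n)=\sigma_{\mathrm{ess}}(J)$ for every $n$. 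Second, I order the copies so that the summand index $\alpha$ satisfies $n(\alpha)\to\infty$ (each $J_n$ occurs only $i_n<\infty$ times), and for $E\notin\sigma_{\mathrm{ess}}(J)$ I fix $\delta>0$ with $(E-\delta,E+\delta)\cap\sigma_{\mathrm{ess}}(J)=\varnothing$; this is possible because $\sigma_{\mathrm{ess}}(J)$ is closed and is the common essential spectrum of all blocks, so inside this interval the spectrum of each $J_n$ consists only of isolated eigenvalues of finite multiplicity.

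For the inclusion $\sigma_{\mathrm{ess}}(J)\cup\Sigma\subseteq\sigma_{\mathrm{ess}}(K)$ I argue blockwise. If $E\in\sigma_{\mathrm{ess}}(J)=\sigma_{\mathrm{ess}}(J_1)$, an orthonormal Weyl sequence for $J_1$ transplanted into one fixed copy of $J_1$ inside $K$ is already an orthonormal Weyl sequence for $K$. If $E\in\Sigma$, the definition of $\Sigma_0$ supplies eigenpairs $J_{n_k}g_k=\lambda_k g_k$ with $\|g_k\|=1$ and $\lambda_k\to E$; after passing to a subsequence I may assume $n_k\to\infty$, since if $n_k$ were eventually constant, equal to $n_\ast$, then the distinct $\lambda_k$ would accumulate at $E$ inside $\sigma(J_{n_\ast})$, giving $E\in\sigma_{\mathrm{ess}}(J_{n_\ast})=\sigma_{\mathrm{ess}}(J)$, contrary to $E\in\Sigma$ (the degenerate subcase $\lambda_k\equiv E$ is ruled out by the limit-point reading of $\Sigma_0$, discussed below). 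Placing each $g_k$ on a distinct copy of $J_{n_k}$ yields an orthonormal family $\hat g_k$ with $\|(K-E)\hat g_k\|=|\lambda_k-E|\to0$, so $E\in\sigma_{\mathrm{ess}}(K)$.

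The substantive direction is $\sigma_{\mathrm{ess}}(K)\subseteq\sigma_{\mathrm{ess}}(J)\cup\Sigma$. Take $E\in\sigma_{\mathrm{ess}}(K)$ with $E\notin\sigma_{\mathrm{ess}}(J)$ and an orthonormal Weyl sequence $\{\psi_m\}$, $\|(K-E)\psi_m\|\to0$; decompose $\psi_m=\bigoplus_\alpha\psi_m^\alpha$. In each block $\alpha\cong J_{n(\alpha)}$ split $\psi_m^\alpha$ using the finite-rank spectral projection $P^\alpha=\chi_{(E-\delta,E+\delta)}(J_{n(\alpha)})$; on the complementary range $\|(J_{n(\alpha)}-E)\psi_m^\alpha\|\ge\delta\,\|(1-P^\alpha)\psi_m^\alpha\|$, so summing over $\alpha$ gives $\sum_\alpha\|(1-P^\alpha)\psi_m^\alpha\|^2\le\delta^{-2}\|(K-E)\psi_m\|^2\to0$, whence $\sum_\alpha\|P^\alpha\psi_m^\alpha\|^2\to1$. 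Thus asymptotically all of the mass of $\psi_m$ concentrates in the finite-dimensional near-$E$ eigenspaces of the blocks. Because $\{\psi_m\}$ is orthonormal it converges weakly to $0$, and each $P^\alpha$ (being finite rank) maps this to norm convergence, so the contribution of any fixed finite set of blocks tends to $0$; therefore for large $m$ the near-$E$ mass lives in blocks of arbitrarily large index $\alpha$, i.e.\ with $n(\alpha)$ arbitrarily large, and a block can carry such mass only if $J_{n(\alpha)}$ has an eigenvalue in $(E-\delta,E+\delta)$. Repeating with $\delta=1/k$ extracts $n_k\to\infty$ and eigenvalues $\lambda_k\in\sigma(J_{n_k})\cap(E-1/k,E+1/k)$, so $\lambda_k\to E$ and $E\in\Sigma_0$; since $E\notin\sigma_{\mathrm{ess}}(J)$ this gives $E\in\Sigma$.

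I expect this last direction to be the main obstacle, specifically the absence of a \emph{uniform} gap: the number of eigenvalues of $J_n$ inside $(E-\delta,E+\delta)$ and their spacing may depend on $n$. The way around this is that the argument never needs uniformity — it only extracts a subsequence — so the per-block finiteness of $\mathrm{rank}\,P^\alpha$, together with the single uniform lower bound $\delta$ on the spectral gap away from $\sigma_{\mathrm{ess}}(J)$, suffices. A secondary point to state carefully is the reconciliation of the formal definition of $\Sigma_0$ (which literally permits eventually constant index sequences, hence isolated eigenvalues of a single tail) with its intended reading as the set of \emph{limit points} of the eigenvalues of the $J_n$: a genuinely accumulating constant-index sequence forces $E\in\sigma_{\mathrm{ess}}(J)$ and is removed in forming $\Sigma=\Sigma_0\setminus\sigma_{\mathrm{ess}}(J)$, while an isolated eigenvalue of one tail is not a limit point and so is excluded at the outset.
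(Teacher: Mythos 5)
Your proof is correct, but it takes a genuinely different route from the paper's. The paper argues at the level of spectra as sets: it first establishes $\sigma(K)=\overline{\bigcup_n\sigma(J_n)}=\sigma_{\text{ess}}(J)\cup\overline{\bigcup_n\sigma_n}$ (with $\sigma_n=\sigma_{\text{disc}}(J_n)$), and then shows that any point of $\sigma(K)\setminus\bigl(\sigma_{\text{ess}}(J)\cup\Sigma\bigr)$ is an isolated eigenvalue of finite multiplicity of only finitely many blocks, hence lies in $\sigma_{\text{disc}}(K)$; the hard inclusion is obtained by elimination rather than by exhibiting anything. You instead work directly with an orthonormal Weyl sequence for $K$ at $E\notin\sigma_{\text{ess}}(J)$, use the uniform spectral gap $\delta$ to push all but a vanishing fraction of the mass into the finite-rank projections $P^\alpha=\chi_{(E-\delta,E+\delta)}(J_{n(\alpha)})$, and use weak convergence to force that mass into blocks of arbitrarily large index; the estimate $\sum_\alpha\|(1-P^\alpha)\psi_m^\alpha\|^2\le\delta^{-2}\|(K-E)\psi_m\|^2$ is correct because the two spectral components of $(J_{n(\alpha)}-E)\psi_m^\alpha$ are orthogonal. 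Your version is more constructive — it actually produces the witnessing pairs $(n_k,\lambda_k)$ with $n_k\to\infty$ and $\lambda_k\to E$ — at the cost of being longer; the paper's version is shorter but leans on the somewhat delicate bookkeeping of which points of $\overline{\bigcup_n\sigma_n}$ remain isolated and of finite multiplicity in the direct sum. Your closing remark on the definitional ambiguity of $\Sigma_0$ (literal constant-index sequences versus genuine limit points) identifies a real issue: read literally, $\Sigma_0$ would contain an isolated eigenvalue of a single tail, for which the stated equality can fail, and the paper's own proof silently adopts the same limit-point reading that you make explicit. Both your directions, including the transplanting of eigenvectors into distinct copies for $\Sigma\subseteq\sigma_{\text{ess}}(K)$ and the use of monotonicity of $n_k$ to split into the eventually-constant and divergent cases, are sound.
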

\begin{proof}[Proof of Proposition~\ref{prop:PropB}]

First, note that $J_{n}$ is a finite rank perturbation of $J$ and thus
$\sigma_{\text{ess}}\left(J\right)=\sigma_{\text{ess}}\left(J_{n}\right)$
for every $n\in\mathbb{N}$. Thus $\sigma_{\text{ess}}\left(J\right)\subseteq\sigma_{\text{ess}}\left(K\right)$.
Additionally by definition $\Sigma\subseteq\sigma_{\text{ess}}\left(K\right)$.
Thus,
\[
\sigma_{\text{ess}}(K)\supseteq\sigma_{\text{ess}}\left(J\right)\cup\Sigma.
\]
For the reverse inclusion, denote $\sigma_{n}=\sigma_{\text disc}(J_n)=\sigma\left(J_{n}\right)\backslash\sigma_{\text{ess}}\left(J_{n}\right)$, so,
\begin{align*}
\sigma\left(J_{n}\right)=\sigma_{\text{ess}}\left(J_{n}\right)\cup\sigma_{n}=\sigma_{\text{ess}}\left(J\right)\cup\sigma_{n}.
\end{align*}
Then,
\begin{align*}
\sigma\left(K\right)&=\overline{\bigcup_{n}\sigma\left(J_{n}\right)}=\overline{\bigcup_{n}\left(\sigma_{\text{ess}}\left(J\right)\cup\sigma_{n}\right)}=\\
&=\overline{\sigma_{\text{ess}}\left(J\right)\cup\left(\bigcup_{n}\sigma_{n}\right)}.
\end{align*}
The essential spectrum is closed and thus we can write
\[
\sigma\left(K\right)=\sigma_{\text{ess}}\left(J\right)\cup\overline{\bigcup_{n}\sigma_{n}},
\]
and we claim that this is exactly:
\[
=\left(\bigcup_{n}\sigma\left(J_{n}\right)\right)\cup\Sigma.
\]
Indeed, $\sigma_{\text{ess}}\left(J\right)\subseteq\sigma\left(J_{n}\right)$
for every $n$, and if $\lambda\in\left(\overline{\bigcup_{n}\sigma_{n}}\right)\Big\backslash\sigma_{\text{ess}}\left(J\right)$
then it is either an isolated eigenvalue of some $J_{n}$ (so
$\lambda\in\sigma\left(J_{n}\right)$), or an accumulation point
of eigenvalues of $J_{n}$'s, in which case $\lambda\in\Sigma$.
The opposite inclusion follows immediately. We now have,
\[
\sigma\left(K\right)\backslash\left(\sigma_{\text{ess}}\left(J\right)\cup\Sigma\right)=\left[\left(\bigcup_{n}\sigma\left(J_{n}\right)\right)\cup\Sigma\right]\bigg\backslash\left(\sigma_{\text{ess}}\left(J\right)\cup\Sigma\right)=
\]
\[
=\left[\bigcup_{n}\left(\sigma\left(J_{n}\right)\backslash\sigma_{\text{ess}}\left(J\right)\right)\right]\bigg\backslash\Sigma.
\]
Each term $\sigma_{n}=\sigma\left(J_{n}\right)\backslash\sigma_{\text{ess}}\left(J\right)=\sigma\left(J_{n}\right)\backslash\sigma_{\text{ess}}\left(J_{n}\right)=\sigma_{\text{disc}}\left(J_{n}\right)$
contains only isolated eigenvalues of finite multiplicity. Every accumulation
point of such points is contained in $\Sigma$. Hence, every point
in $\left(\cup_{n}\sigma_n\right)\backslash\Sigma$
is an isolated eigenvalue of finite multiplicity \textbf{of finitely
many} \textbf{$J_{n}$'s}, and thus it is also an isolated eigenvalue of finite multiplicity of $K$. We conclude that $\sigma\left(K\right)\backslash\left(\sigma_{\text{ess}}\left(J\right)\cup\Sigma\right)\subseteq\sigma_{\text{disc}}\left(K\right)$,
and thus $\sigma_{\text{ess}}\left(K\right)\subseteq\sigma_{\text{ess}}\left(J\right)\cup\Sigma$.
\end{proof}

\begin{proof}[Proof of Proposition~\ref{prop:PropA}]

By \eqref{eq:sigmaEssChar} we have that $\sigma_{\text{ess}}\left(J\right)\subseteq{\cup_{r}\sigma\left(J^{\left(r\right)}\right)}$.
Thus (using Proposition~\ref{prop:PropB}) it is sufficient to prove that $\Sigma\subseteq{\cup_{s}\sigma\left(J_{\left(s\right)}\right)}$.
Let $E\in\Sigma$. Assume $\left\{ E_{k}\right\}_{k=1}^{\infty} $ is a sequence of eigenvalues  of $\left\{ J_{n_{k}}\right\}_{k=1}^{\infty}$, with $n_{k+1}\geq n_{k}$, so that $E_{k}\to E$, and let $\psi_{k}$ be the corresponding eigenfunctions, satisfying $J_{n_{k}}\psi_{k}=E_{k}\psi_{k}$, $\left\Vert \psi_{k}\right\Vert =1$.
If $\left\{ n_{k}\right\} _{k=1}^{\infty}$
is bounded then $E$ is a limit point of eigenvalues of $J_{n_0}$, where $n_{0}=\max_{k}n_{k}$, which means that $E\in\sigma_{\text{ess}}\left(J\right)$, contradicting $E \in \Sigma$. Thus $n_{k}\to\infty$ and, by restricting to a subsequence if necessary, we may assume that $J_{n_{k}}$ converges strongly to some $J_{\left(s\right)}$, i.e.\ for any $\psi\in\ell^2\left(N\right)$, $\left\Vert J_{\left(s\right)}\psi-J_{n_{k}}\psi\right\Vert \to0$.

Denote by $\mu_{k}$ the spectral measure of $J_{n_{k}}$ with respect to $\delta_{1}=(1,0,0,0,\ldots)$.
Then,
\[
\mu_{k}\overset{\text{w}\,\,}\longrightarrow\mu_{s}
\]
where $\mu_{s}$ is the spectral measure of $J_{\left(s\right)}$
with respect to $\delta_{1}$, and $\overset{\text{w}\,\,}\to$ indicates weak convergence.

\noindent Assume first that $\lim_{k\to\infty}\mu_{k}\left(\left\{ E_{k}\right\} \right)=0$. We shall show that in this case $E \in \sigma_{\textrm{ess}}(J)$, in contradiction with $E \in \Sigma$. Note
\[
\left|\psi_{k}(1)\right|^{2}=\mu_{k}\left(\left\{ E_{k}\right\} \right)\overset{k\to\infty}{\longrightarrow}0.
\]
Define $\left\{ \widetilde{\psi}_{k}\right\} _{n=1}^{\infty}$ by
\begin{equation} \nonumber
\widetilde{\psi}_{k}(j)=\begin{cases}
0\,\,\, & j<n_{k}\\
\psi_{k}(j-n_{k}+1)\,\: & j\geq n_{k}
\end{cases}.
\end{equation}
Then $\widetilde{\psi}_{k}$ satisfies $\left\Vert \widetilde{\psi}_{k}\right\Vert =1$
and,
\[
\left(J\widetilde{\psi}_{k}\right)\left(j\right)=\begin{cases}
\left(J_{n_{k}}\psi_{k}\right)\left(j-n_{k}+1\right)\,\,\, & j\geq n_{k}\\
a_{n_{k}}\psi_{k}\left(1\right) & j=n_{k}-1\\
0 & j<n_{k}-1
\end{cases}.
\]
Thus,
\[
\left\Vert J\widetilde{\psi}_{k}-E\widetilde{\psi}_{k}\right\Vert \leq\left\Vert J\widetilde{\psi}_{k}-\widetilde{J_{n_{k}}\psi_{k}}\right\Vert +\left\Vert \widetilde{J_{n_{k}}\psi_{k}}-E\widetilde{\psi}_{k}\right\Vert \leq
\]
\[
\leq\left|a_{n_{k}}\psi_{k}\left(1\right)\right|+\left|E_{k}-E\right|\to0.
\]
In addition, it is clear that $\widetilde{\psi}_{k}\overset{\text{w}}{\longrightarrow}0$, which implies that $E \in \sigma_{\textrm{ess}}(J)$. Thus, we can conclude that
\[\overline{\lim_{k}}\mu_{k}\left(\left\{ E_{k}\right\} \right)>0,
\]
so by taking a subsequence of $\left\{E_{k}\right\}_{k=1}^{\infty}$ we can assume that
\[\nu=\lim_{k}\mu_{k}\left(\left\{E_{k}\right\}\right)>0\]
exists.
Let $\varepsilon>0$ and $f\in\mathcal{C}\left(\mathbb{R}\right)$
such that $\text{supp}\left(f\right)\subseteq\left(E-\varepsilon,E+\varepsilon\right)$,
$f\geq0$ and $f\left(E\right)>0$. Then there exists some $N\in\mathbb{N}$ such that for every $k>N$,
\[
\left|E_{k}-E\right|<\varepsilon,\ \mu_{k}\left(\left\{ E_{k}\right\} \right)>\sfrac{\nu}{2}\ \text{and } f\left(E_{k}\right)>\sfrac{f\left(E\right)}{2}.
\]
Now, for every $k>N$ we have that
\[
\int f\,d\mu_{k}\geq f\left(E_{k}\right)\mu_{k}\left(\left\{ E_{k}\right\} \right)\geq\sfrac{f(E)\cdot \nu}{4}>0.
\]
Hence by the weak convergence of the measures $\mu_{k}\overset{\text{w}}{\longrightarrow}\mu_{s}$
we conclude that $\int f\,d\mu_{s}\geq\sfrac{f(E)\cdot \nu}{4}>0$
for every such $f$, and thus (since the spectrum is a closed set)
$E\in\overline{\text{supp}\left(\mu_{s}\right)}\subseteq\sigma\left(J_{\left(s\right)}\right)$.
\end{proof}

\begin{proof}[Proof of Proposition \ref{prop:spSymJacobi2}]
We shall now prove that
\[
\left(\bigcup_{r}\sigma\left(J^{\left(r\right)}\right)\right)\cup\left(\bigcup_{s}\sigma\left(J_{\left(s\right)}\right)\right)\subseteq \bigcup_{L\text{ is an $\mathcal{R}$-limit of }H} \sigma(L).
\]
Assume $J^{(r)}$ is a right limit of $J=J_H$ along a sequence
$\left\{ l_{i}\right\} _{i=1}^{\infty}\subseteq\mathbb{N}$, i.e.
\begin{equation} \label{eq:Jrightlim}
\left\Vert J_{l_{i},R}-J_{0,R}^{\left(r\right)}\right\Vert \underset{i\to\infty}{\longrightarrow}0
\end{equation}
for any $R>0$. We claim that we can find a corresponding $\mathcal{R}$-limit
$\left\{L,T',v'_0\right\}$ of $H$,
such that $\sigma\left(J^{\left(r\right)}\right)\subseteq\sigma\left(L\right)$.
Indeed, for $i\in\mathbb{N}$ take some $u_{i}\in T$ so that $\text{\text{dist}}\left(u_{i},v_{0}\right)=l_{i}$,
and take any isomorphism of trees $f_{i}:T\to T'$ so that $f_{i}\left(u_{i}\right)=v_{0}'$.
Moreover, by Proposition~\ref{prop:ancestorsSeq} and restricting to a subsequence if necessary, we can assume the existence of an ancestors sequence $\left\{ v_{j}'\right\} _{j=0}^{\infty}$ such that \eqref{eq:ancestorSeqCond} is satisfied.
For any $j\in\mathbb{N}$ define a Schr\"odinger operator $L^{\left(j\right)}$ on $\ell^2\left(\Gamma_{v_j'}\right)$, with diagonal terms:
\[
\left(L^{\left(j\right)}\right)_{x,x}=\left(J^{\left(r\right)}\right)_{\left|x\right|-j,\left|x\right|-j},
\]
where for $x\in\Gamma_{v_j'}$, $\left|x\right|=\text{dist}\left(x,v_j'\right)$. Note that $L^{\left(j\right)}$ is spherically symmetric around $v_j'$, and that the sequence $\left\{L^{\left(j\right)}\right\}_{j=1}^\infty$ satisfies $L^{(j)}|_{\Gamma_{v'_{j}}}=L^{(k)}|_{\Gamma_{v'_{j}}}$ for $k\geq j$ (see Figure~\ref{figure3}).
\begin{figure}[ht!]
\centering
\includegraphics[width=130mm]{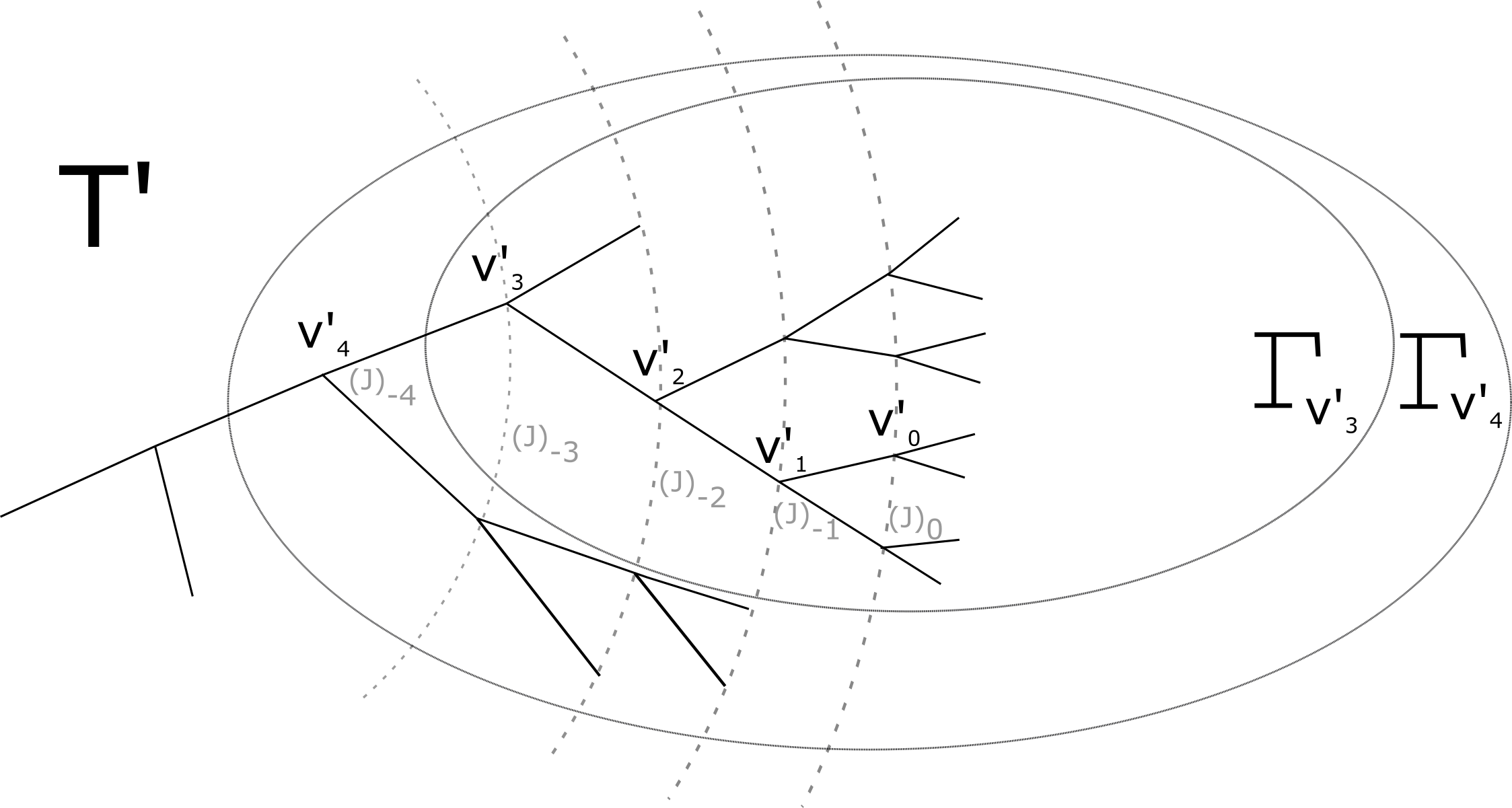}\\
\caption{The subtrees $\Gamma_{v'_j}$, and the diagonal terms of $L$, $(J)_k\equiv\left(J^{(r)}\right)_{k,k}$. \label{figure3}}
\end{figure}
Thus, we may define $L$ on $\ell^2\left(T'\right)$ by $L|_{\Gamma_{v'_{j}}}=L^{\left(j\right)}$. This defines an operator on $\ell^2\left(T'\right)$ since $\cup_{j=1}^\infty \Gamma_{v_j'}=T'$. The sequence $\left\{L^{\left(j\right)}\right\}_{j=1}^\infty$ converges strongly to $L$: Indeed, for any $\varepsilon>0$ and $g\in\ell^{2}\left(T'\right)$ we can find $R>0$ so that $\left\Vert g|_{T'\backslash B_{R}\left(v_{0}'\right)}\right\Vert <\varepsilon$,
and thus for any $j>R+1$,
\[
\left\Vert \left(L-L^{\left(j\right)}\right)g\right\Vert <\left\Vert L\right\Vert \left\Vert g|_{T'\backslash B_{R}\left(v_{0}'\right)}\right\Vert <\left\Vert L\right\Vert \varepsilon.
\]
Notice that by the spherical decomposition (and the symmetry) there exists for any $i\in\mathbb{N}$ a map $n_i(x):B_R\left(u_i\right)\to\mathbb{N}\cap[1,2R+1]$, so that for $x\in B_R\left(u_i\right)$
\[ \left(H_{u_i,R}\right)_{x,x}=\left(J_{l_i,R}\right)_{n_i(x),n_i(x)}.
\]
Similarly, each term $\left(L_{v_0',R}\right)_{y,y}$ is a diagonal term $\left(J^{(r)}_{0,R}\right)_{\widetilde{n}(y),\widetilde{n}(y)}$ (note that $L_{v_0',R}$ is spherically symmetric around $v_R'$, but not around $v_0'$). Moreover, by the construction of $L$, the maps $\widetilde{n}$ and $n_i$ are related by $\widetilde{n}\left(f_i(x)\right)=n_i(x)$ (for $i\in\mathbb{N},\ x\in B_R\left(u_i\right)$).
Now, since any diagonal term of $J_{l_i,R}$ and $J^{(r)}_{0,R}$ repeats at most $d^R$ times as a diagonal term of $H_{u_i,R}$ and $L_{v_0',R}$, and using \eqref{eq:Jrightlim}, it follows that for any $R\in\mathbb{N}$,
\[
\left\Vert I_{f_i}H_{u_i,R}I_{f_i}^{-1} - L_{v_0',R}\right\Vert<d^R\left\Vert J_{l_i,R}-J_{0,R}^{(r)}\right\Vert\underset{i\to\infty}\longrightarrow 0.
\]
Thus by Proposition~\ref{prop:RlimOnRegT}, $L$ is an $\mathcal{R}$-limit of $H$.

\qquad{}The spherical decomposition of $L^{\left(j\right)}$ produces
a direct sum of Jacobi matrices,
\begin{equation} \label{eq:Ljdecomp}
L^{\left(j\right)}\cong\bigoplus_{i=0}^{\infty}\left(\oplus L_{i}^{\left(j\right)}\right),
\end{equation}
where $L_{i}^{\left(j\right)}\in\mathcal{B}\left(\ell^{2}\left(\mathbb{N}\right)\right)$,
has diagonal terms
\[\left(L_{i}^{\left(j\right)}\right)_{n,n}=\left(J^{\left(r\right)}\right)_{n-j+i-1,n-j+i-1}.\] Now, from each approximate eigenfunction of $J^{\left(r\right)}$
we can produce approximate eigenfunctions of $L_{i}^{\left(j\right)}$ above, for any $j-i$ large enough: Assume $g$ is an approximate eigenfunction of $J^{\left(r\right)}$, satisfying $\left\Vert J^{\left(r\right)}g-\lambda g\right\Vert <\varepsilon$, since $g\in\ell^{2}\left(\mathbb{Z}\right)$ we can take $N$ large enough so that $\left\Vert g|_{\mathbb{Z}\backslash\left(-N,N\right)}\right\Vert <\varepsilon$.
For any $m\in\mathbb{Z}$ define $h_{m}\in\ell^{2}\left(\mathbb{N}\right)$
by
\begin{equation}
h_{m}(n)=\begin{cases}
g(n-m-1) & \left|n-m\right|<N\\
0 & \left|n-m\right|\geq N
\end{cases},\nonumber
\end{equation}
then for any $j,i\in\mathbb{N}$ such that $j-i>N$,
\[
\left(L_{i}^{\left(j\right)}h_{j-i}\right)\left(k\right)=\left(J^{\left(r\right)}g|_{[-N,N]}\right)\left(k-j+i-1\right)
\]
 for every $k\in\mathbb{N}\cap\left(-N-j+i-1,N-j+i-1\right)$. Thus,
\begin{eqnarray*}
\left\Vert L_{i}^{\left(j\right)}h_{j-i}-\lambda h_{j-i}\right\Vert  & \leq & \left\Vert J^{\left(r\right)}g|_{[-N,N]}-\lambda g|_{[-N,N]}\right\Vert +\varepsilon
\end{eqnarray*}
\[
\leq\left\Vert J^{\left(r\right)}g-\lambda g\right\Vert +\left(\left\Vert J^{\left(r\right)}\right\Vert +\left|\lambda\right|\right)\left\Vert g|_{\mathbb{Z}\backslash[-N,N]}\right\Vert +\varepsilon<C\cdot\varepsilon.
\]
By the unitary equivalence \eqref{eq:Ljdecomp}, an approximate eigenfuction of some $L_{i}^{\left(j\right)}$ will correspond to an approximate
eigenfunction of $L^{\left(j\right)}$, with the same eigenvalue.
Moreover, since the semi-infinite-matrix $L^{(j)}_i$ depends only on $j-i$, the same function is an approximate eigenfunction of $L^{(j)}$ for any $j$ large enough. Thus, using the strong convergence $L^{(j)}\to L$ we get an approximate eigenfunction of $L$. Therefore $\sigma\left(J^{(r)}\right)\subseteq\bigcup\sigma\left(L\right)$.

We now turn to the case
in which $J_{\left(s\right)}$
is a strong limit of the sequence $\left\{ J_{k}\right\} _{k=1}^{\infty}$.
Any such $J_{\left(s\right)}$ will appear as  the restriction to the half line $\ell^{2}\left(\mathbb{N}\right)$ of some right limit $J^{\left(r\right)}$.
Thus $J_{(s)}$ is contained in the set of matrices $\left\{ L_{i}^{\left(j\right)}\right\} _{i,j=0}^{\infty}$
above. Thus, again, any approximate eigenfunction of $J_{\left(s\right)}$
corresponds to an approximate eigenfunction of some $L_{i}^{\left(j\right)}$,
and thus also of $L$.  Hence $\sigma\left(J_{(s)}\right)\subseteq\bigcup\sigma\left(L\right)$.
\end{proof}

\section{Example: a sparse spherically symmetric potential} \label{sec:sparsePot}
Let $\alpha\in\mathbb{R}$ and define $q:\mathbb{N}\cup\{0\}\to\mathbb{R}$ by
\[
q(n)=
\begin{cases}
\alpha \ \ \ \ n\in\{k^2\,|\,k\in\mathbb{N}\} \\
0 \ \ \ \ \    \text{otherwise}.
\end{cases}
\]
We shall consider a spherically symmetric Schr\"odinger operator on the rooted $d$-regular tree $(T_d,v_0)$ defined by  $H=\Delta+Q$, where $Q(v)=q(|v|)$ (as usual $|v|=\text{dist}(v,v_0)$). We would like to calculate $\sigma_{\text ess}(H)$.

The matrix $J_H$ corresponding to $H$ is the Jacobi matrix with parameters
\[
a_n=\begin{cases}
\sqrt{d} & n=1 \\
\sqrt{d-1} & n>1,
\end{cases} 
\]
\[
b_n = q(n-1).
\]
As a consequence of Theorems \ref{thm:spSymChar} and \ref{thm:spSymJacobi} we know that
\[
\sigma_{\text ess}(H)=\bigcup_{L\in\mathcal{R}(H)}\sigma(L)=\left(\bigcup_{r}\sigma\left(J^{\left(r\right)}\right)\right)\cup\left(\bigcup_{s}\sigma\left(J_{\left(s\right)}\right)\right),
\]
where $\left\{ J^{\left(r\right)}\right\} $ and $\left\{ J_{\left(s\right)}\right\} $ are the sets of right limits of $J_H$ and of
strong limits of the sequence $\left\{ J_{n}\right\} _{n=1}^{\infty}$ of tails of $J_H$, respectively.
There are two right limits of $J_H$:
\[
J^{(0)}=\sqrt{d-1}\Delta_{\mathbb Z}=\left[\begin{array}{ccccc}
 & \ldots\\
\ldots & 0 & \sqrt{d-1}\\
 & \sqrt{d-1} & 0 & \sqrt{d-1}\\
 &  & \sqrt{d-1} & 0 & \ldots\\
 &  &  & \ldots
\end{array}\right],
\]
and 
\[
J^{(1)}=J^{(0)}+\alpha\langle\delta_1,\cdot\rangle\delta_1.
\]
There are infinitely many strong limits, which are:
\[
J_{(0)}= \sqrt{d-1}\Delta_{\mathbb N}=\left[\begin{array}{cccc}
 0 & \sqrt{d-1}\\
 \sqrt{d-1} & 0 & \sqrt{d-1}\\
  & \sqrt{d-1} & 0 & \ldots\\
  &  & \ldots
\end{array}\right],
\]
and, for every $k\in\mathbb{N}$, 
\[
J_{(k)}=J_{(0)}+\alpha\langle\delta_k,\cdot\rangle\delta_k.
\]
Obviously,
\[
\sigma\left(J^{(0)}\right)=\sigma\left(J_{(0)}\right)=\sigma_{\text ess}\left(J^{(1)}\right)=\sigma_{\text ess}\left(J_{(k)}\right)=\left[-2\sqrt{d-1},2\sqrt{d-1}\right].
\]
Thus we are left to calculate the discrete spectrum of $J^{(1)}$ and $J_{(k)}$. 
These matrices are rank one perturbations of $J^{(0)}$ and $J_{(0)}$.
Begin with $J^{(1)}$ and denote $R^{(1)}=\left(J^{(1)}-z\right)^{-1}$, $R^{(0)}(z)=\left(J^{(0)}-z\right)^{-1}$.
By the basic formula of rank-one perturbations we get that
\[
m^{(1)}(z)=\langle\delta_0,R^{(1)}(z)\delta_0\rangle= \frac{m^{(0)}(z)}{1+\alpha m^{(0)}(z)},
\]
where
\[
m^{(0)}(z)=\langle\delta_0,R^{(0)}(z)\delta_0\rangle. 
\]
Using the expression for the Borel transform corresponding to the adjacency operator on $\mathbb{Z}$ (see e.g\ \cite{SimonSz})
\[
m_{\mathbb Z}(z)=-\frac{1}{\sqrt{z^2-4}},
\]
and the relation $J^{(0)}=\sqrt{d-1}\Delta_{\mathbb Z}$ we conclude that
\[
m^{(0)}(z)=-\frac{1}{\sqrt{z^2-4(d-1)}}.
\]
We can get an additional point in the spectrum of $J^{(1)}$ if  $1+\alpha m^{(0)}(z)$ vanishes. Thus
\[
\sqrt{z^2-4(d-1)}=\alpha,
\]
so
\[
z_{\pm}= \pm\sqrt{\alpha^2+4(d-1)}.
\]
By requiring that $m(z)=-\nicefrac{1}{z}+O\left(\nicefrac{1}{z^2}\right)$ we get that the correct branch is 
\[
z_0=\text{sign}(\alpha)\sqrt{\alpha^2+4(d-1)}.
\]
The vector $\delta_0$ is not cyclic for $J^{(1)}$, but together with $\delta_1-\delta_{-1}$  we get a cyclic system. However, $J^{(1)}$ on the subspace spanned by $J^{(1)}$ and $\delta_1-\delta_{-1}$ is equivalent to $J_{(0)}$. Thus we can conclude that
\[
\sigma\left(J^{(1)}\right)=\left[-2\sqrt{d-1},2\sqrt{d-1}\right]\cup\{z_0\}.
\]

As for $J_{(1)}$, similarly, with $R_{(1)}=\left(J_{(1)}-z\right)^{-1}$ and $R_{(0)}=\left(J_{(0)}-z\right)^{-1}$, we have
\[
m_{(1)}(z)=\langle\delta_0,R_{(1)}(z)\delta_0\rangle= \frac{m_{(0)}(z)}{1+\alpha m_{(0)}(z)},
\]
where
\[
m_{(0)}(z)=\langle\delta_0,R_{(0)}(z)\delta_0\rangle. 
\]
Using the expression for the Borel transform corresponding to the adjacency operator on $\mathbb{N}$ (again see e.g\ \cite{SimonSz})
\[
m_{\mathbb N}(z)=\frac{-z+\sqrt{z^{2}-4}}{2},
\]
and the relation $J_{(0)}=\sqrt{d-1}\Delta_{\mathbb N}$ we conclude that
\[
m_{(0)}(z)=\frac{-z+\sqrt{z^{2}-4(d-1)}}{2(d-1)}.
\]
Again, we can get an additional point in the spectrum of $J_{(1)}$ if  $1+\alpha m_{(0)}(z)$ vanishes. Thus
\[
\alpha z - 2(d-1) =\alpha\sqrt{z^2-4(d-1)},
\]
so
\[
z_1= \alpha+\frac{d-1}{\alpha}
\]
and we can conclude that $\sigma\left(J_{(1)}\right)=\left[-2\sqrt{d-1},2\sqrt{d-1}\right]\cup\{z_1\}$.

Consider now the operator $J_{(k)}$. Since $J_{(k)}$ is a rank one perturbation of $J_{(0)}$ we know that $\sigma\left(J_{(k)}\right)$ consist of $\sigma\left(J_{(0)}\right)$ and a possibly additional point in the discrete spectrum. Thus it will be sufficient to find this point by studying the spectral measure of $J_{(k)}$ with respect to the vector $\delta_k$. Denote,
\[
G_{(k),n}(z)=\left\langle\delta_n,\left(J_{(k)}-z\right)^{-1}\delta_n\right\rangle.
\]
Again, by the basic formula of rank one perturbation we have
\[
G_{(k),k}(z)= \frac{G_{(0),k}(z)}{1+\alpha G_{(0),k}(z)},
\]
and we get an additional point in $\sigma\left(J_{(k)}\right)$ if
\[
G_{(0),k}(z)=-\frac{1}{\alpha}.
\]
Define further 
\[
G_{k}^{\mathbb N}(z)=\left\langle\delta_k,\left(\Delta_{\mathbb N}-z\right)^{-1}\delta_k\right\rangle.
\]
Since 
\[
G_{(0),k}(z)=\frac{1}{\sqrt{d-1}} G_{k}^{\mathbb N}\left(\frac{z}{\sqrt{d-1}}\right)
\]
we should solve the equation
\[
G_{k}^{\mathbb N}(u)=-\beta
\]
with $u=\nicefrac{z}{\sqrt{d-1}}$ and 
\[
\beta=\nicefrac{\sqrt{d-1}}{\alpha}.
\]
Using known expressions for $\Delta_{\mathbb N}$ (it follows e.g.\ from \cite[equation~(3.2.34) and example~3.7.3]{SimonSz}) we have
\[
G_{k}^{\mathbb N}\left(x+x^{-1}\right)=\frac{1-x^{2k}}{x-x^{-1}}.
\]
Denote
\[
f_k(x)=\frac{x^{2k}-1}{x-x^{-1}}.
\]
We have that, for any $x\neq\pm1$,
\[
f_k(x)=x\left(1+x^2+x^4+\ldots+x^{2n-2}\right),
\]
and thus $f'_k(x)>0$ for all $x\in\mathbb{R}\backslash\{0,\pm1\}$. Additionally $f_k(0)=0$, $\lim_{x\to\pm\infty}f_k(x)=\pm\infty$. Thus for a given $\beta\neq0$ there exists a single solution $x_k\in\mathbb{R}$ satisfying $f_k(x_k)=\beta$.
Now from $x_k$ we get a corresponding point
\[
z_k=\sqrt{d-1}\left(x_k+x_k^{-1}\right)
\]
in the spectrum of $J_{(k)}$.
Note that in the limit $k\to\infty$ we get, as expected, that $z_k\to z_0$.

We can now conclude that
\[
\sigma_{\text ess}(H)=\left[-2\sqrt{d-1},2\sqrt{d-1}\right]\cup\bigcup_{k=0}^{\infty}\{z_k\}.
\]
On the other hand, the set of $\mathcal{R}$-limits of $H$ is composed of two objects:
\begin{itemize}
\item The adjacency operator on $T$ the $d$-regular tree.
\item A Schr\"odinger operator on $T$ with a specific potential $Q$ that we will describe next. Let $v_0$ be a root for $T$, let $\{v_k\}_{k=0}^{\infty}$ be an ancestors sequence in $T$ (as defined in Proposition~\ref{prop:ancestorsSeq}), and let $\Gamma_{v_k}$ be the subtree of descendants of $v_k$ (as in Definition~\ref{def:descSubtree}). Then, 
\[
Q(v)= \begin{cases}
\alpha\,\,\,\text{ if for some } k\in\mathbb{N}, v\in \Gamma_{v_k} \text{ and } \text{dist}(v,v_k)=k \\
0 \,\,\, \text{ otherwise.}
\end{cases}
\]
The term `sphere at infinity' might be appropriate for the set of points on which $Q$ is nonzero. From the above computation the spectrum of this operator is exactly 
\[
\left[-2\sqrt{d-1},2\sqrt{d-1}\right]\cup\bigcup_{k=0}^{\infty}\{z_k\}.
\]
\end{itemize}

\chapter{Eigenfunction growth of $\mathcal{R}$-limits of $H$}\label{chapter4}

In this chapter we study possible generalizations of the characterization \eqref{eq:sigmaInfChar} of the essential spectrum $\sigma_{\text ess}$, which involves the set of bounded generalized eigenfunctions of right limits of $H$.
By a (non-trivial) adaptation to graphs of the proof given by Simon \cite[Section 7.2]{SimonSz} we produce positive results for graphs of sub-exponential growth.
As a consequence we get also a statement of \eqref{eq:sigmaEssChar} on graphs of uniform sub-exponential growth, which is stronger than the one that we obtained by generalizing Last-Simon.
The proof involves a correspondence between points in the spectrum and generalized eigenfunctions obeying a specific estimate on the growth rate. The statement that, for a given generalized eigenfunction of polynomial growth the corresponding energy is in the spectrum  is known as Shnol's Theorem. We study in more generality a ``reverse" Shnol's Theorem for graphs (following Lenz and Teplyaev \cite{LenzTeplyaev}) in Section~4.3. Finally, in Section~4.4 we show examples of applications of \eqref{eq:sigmaEssChar}.
The content of this chapter is the subject of a work in preparation, partly joint with S.~Beckus.
\section{Introduction}

We first recall \eqref{eq:sigmaInfChar} from Chapter~\ref{chapter1}: on $\mathbb{N}$ and on $\mathbb{Z}^n$ the essential spectrum of a Schr\"odinger operator $H$ can be characterized also in terms of the set of energies corresponding to bounded generalized eigenfunctions of right limits of $H$, i.e.
\begin{equation} \label{eq:1dsigmainfty}
\sigma_{\text ess}(H)=\bigcup_{H^{(r)}\text{ is a right limit of }H}\sigma_\infty\left(H^{(r)}\right),
\end{equation}
where, for a Schr\"odinger operator $J$
\[
\sigma_\infty(J)=\left\{\lambda\,\Big|\, \exists\psi\in\ell^\infty,\,J\psi=\lambda\psi\right\}.
\]
In the current chapter we study a possible generalization of this relation to graphs.

Let $H$ be a Schr\"odinger operator on a rooted graph $\left(G,v_0\right)$. Recall the notation $\mathcal{R}\left(H\right)$ for the set of $\mathcal{R}$-limits of $H$, and the definitions
\[
\left|v\right|=\text{dist}\left(v,v_{0}\right),
\]
\[
\mathcal{S}_{v_0}(r)=\left\{v\in G\,|\, \text{dist}(v,v_0)=r\right\},
\]
\[
S(r)=S_{v_0}(r)=\left|\mathcal{S}_{v_0}(r)\right|.
\]

In order to generalize \eqref{eq:1dsigmainfty} to general graphs we will have to assume a sub-exponential growth rate of the graph, i.e.\ $\forall \gamma>1, \exists C>0$, so that $\forall r\in\mathbb{N}$
\begin{equation}\label{eq:subexpgr}
S_{v_0}(r)< C\gamma^r.
\end{equation}

\begin{theorem}\label{thm:sigmainfty} Assume $\left(G,v_0\right)$ is an infinite graph of sub-exponential growth rate, and $H$ is a bounded Schr\"odinger operator on
$\ell^{2}\left(G\right)$, then
\begin{equation}\label{eq:essubsetinfty}
\sigma_{\text{ess}}\left(H\right)
\subseteq\bigcup_{H'\in\mathcal{R}(H)}\sigma_{\infty}\left(H'\right).
\end{equation} 
\end{theorem}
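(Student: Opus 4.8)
The plan is to follow Simon's strategy from \cite[Section 7.2]{SimonSz}, adapted to the graph setting, in three movements: reduce to spectrally generic energies, produce a polynomially bounded generalized eigenfunction there via the reverse Shnol's theorem, and then \emph{shift it to infinity} to manufacture a genuine bounded eigenfunction of an $\mathcal{R}$-limit. Fix a spectral measure $\mu$ for $H$ (in the sense of \eqref{eq:SpectralMeasure}). Since $\mu$-a.e.\ points are dense in $\operatorname{supp}\mu=\sigma(H)\supseteq\sigma_{\text{ess}}(H)$, and since the point spectrum of a self-adjoint operator on a separable space is countable, the set of $\mu$-a.e.\ energies $\lambda\in\sigma_{\text{ess}}(H)$ that are \emph{not} eigenvalues is still dense in $\sigma_{\text{ess}}(H)$. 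My first reduction is therefore to prove $\lambda\in\bigcup_{H'\in\mathcal{R}(H)}\sigma_\infty(H')$ for every such generic $\lambda$, and only at the end pass to all of $\sigma_{\text{ess}}(H)$.

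For a generic $\lambda$ as above, I invoke the reverse Shnol-type result of Section~4.3 (following Lenz--Teplyaev): for $\mu$-a.e.\ $\lambda$ there is a nonzero generalized eigenfunction $u$, i.e.\ $Hu=\lambda u$ pointwise, satisfying a polynomial bound $|u(v)|\le C\,(1+|v|)^{\kappa}$ for a fixed exponent $\kappa$. This is exactly the step where the sub-exponential growth hypothesis \eqref{eq:subexpgr} is indispensable: it is what forces the generalized eigenfunctions to grow only polynomially (on exponentially growing graphs such as regular trees this fails, which is why the $\sigma_\infty$-characterization is special to this class). Because $\lambda$ was chosen not to be an eigenvalue, $u\notin\ell^2(G)$, so $\sum_{|v|\le r}|u(v)|^2\to\infty$ and $u$ genuinely fails to decay out to infinity.

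The heart of the argument is the shift construction. Fixing a small $\varepsilon>0$, I select centers $w_j$ by maximizing the slowly decaying weighted quantity $|u(v)|\,(1+|v|)^{-\kappa-\varepsilon}$ over $\{\,|v|\ge j\,\}$; since $u\notin\ell^2$ the maximum is positive and $|w_j|\to\infty$. Because the weight varies by a factor tending to $1$ over any fixed ball, this selection yields uniform local control of the form $\sup_{B_R(w_j)}|u|\le M_R\,|u(w_j)|$ with $M_R$ independent of $j$. I then normalize, replacing $u$ by $u/u(w_j)$, pass (Lemma~\ref{lemma:rlimexists}) to a subsequence along which $H$ converges to an $\mathcal{R}$-limit $\{H',G',v_0'\}$ with associated coherent isomorphisms as in Definition~\ref{def:rlimdef}, and transport the normalized local data into $\ell^2(G')$ via these isomorphisms. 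On each fixed ball $B_R(v_0')$ the transported functions take values bounded by $M_R$, so a diagonal/compactness argument over $R$ extracts a subsequential pointwise limit $\psi$ with $\psi(v_0')=1$ and $|\psi|\le M_R$ on $B_R(v_0')$; thus $\psi\in\ell^\infty(G')$ and $\psi\neq0$. Passing to the limit in the eigenvalue equation vertex-by-vertex—each equation involves only finitely many neighbors by bounded degree—and using the matrix-element convergence \eqref{eq:rlimdef}, I obtain $H'\psi=\lambda\psi$, i.e.\ $\lambda\in\sigma_\infty(H')$.

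It remains to upgrade the inclusion from the dense set of generic energies to all of $\sigma_{\text{ess}}(H)$, which gives $\sigma_{\text{ess}}(H)\subseteq\overline{\bigcup_{H'}\sigma_\infty(H')}$; removing the closure requires that $\bigcup_{H'\in\mathcal{R}(H)}\sigma_\infty(H')$ be closed. I would establish this by the same compactness technology: a sequence $\lambda_n\to\lambda$ with bounded eigenfunctions $\psi_n$ of $\mathcal{R}$-limits $H'_n$ can be renormalized at near-maximal points and shifted, converging to a bounded eigenfunction of an $\mathcal{R}$-limit of an $\mathcal{R}$-limit; the transitivity of the $\mathcal{R}$-limit relation identifies the latter as an $\mathcal{R}$-limit of $H$ with eigenvalue $\lambda$. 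The two places I expect genuine difficulty are precisely (i) the growth bound supplied by reverse Shnol, where sub-exponential growth must be leveraged carefully to keep the exponent $\kappa$ finite, and (ii) the verification that the weighted-maximizer centers keep the normalized local profiles uniformly bounded (so that the limit $\psi$ is both nonzero and in $\ell^\infty$); the measure-theoretic reduction and the closedness argument are comparatively routine once this machinery is in place. Isolated infinite-multiplicity points of $\sigma_{\text{ess}}(H)$, not reached by the density step, are handled directly by shifting an orthonormal family of eigenfunctions to infinity using the same construction.
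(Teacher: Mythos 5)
Your overall strategy is the right one (reverse Shnol plus a shift-and-renormalize compactness argument), but there are two genuine gaps. First, the reduction to $\mu$-generic energies that are \emph{not} eigenvalues is vacuous in an important case: if $H$ has pure point spectrum (dense point spectrum occurs for Schr\"odinger operators on $\mathbb{Z}^d$ and on sparse graphs of subexponential growth), then $\mu$ is purely atomic and the set of $\mu$-a.e.\ energies which are not eigenvalues is $\mu$-null --- your topological argument ``remove a countable set from a dense set'' fails because the full-measure set itself may be the countable set of atoms. Consequently the generalized eigenfunctions produced by Theorem~\ref{thm:revSch} may all be genuine $\ell^{2}$ eigenfunctions, and you cannot avoid treating that case; this is exactly why the paper's proof splits into cases (a)/(b) and works with sequences $\lambda_n\to\lambda$ where the $\varphi^{(n)}$ may be bounded, unbounded, or square-summable. (A secondary inaccuracy: Theorem~\ref{thm:revSch} yields the bound \eqref{eq:graphefGrowth}, i.e.\ $(|v|+1)\sqrt{S_{v_0}(|v|)}$, which on a sub-exponentially growing graph is \emph{sub-exponential}, not polynomial; sub-exponential is what is needed to exclude alternative (ii) of Proposition~\ref{prop:unbounded}, and a polynomial bound is not available.)

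Second, and more seriously, your shift construction does not deliver a bounded limit. Maximizing $|u(v)|(1+|v|)^{-\kappa-\varepsilon}$ over $\{|v|\ge j\}$ controls $u$ only on the side of $w_j$ \emph{away} from the root; on $B_R(w_j)\cap\{|v|<|w_j|\}$ the maximality gives nothing, and for an exponentially decaying profile (e.g.\ $u(v)\sim\gamma^{-|v|}$ radially) one gets $\sup_{B_R(w_j)}|u|\approx\gamma^{R}|u(w_j)|$, so your $M_R$ grows like $\gamma^{R}$ and the diagonal limit $\psi$ satisfies only $|\psi|\le\gamma^{R}$ on $B_R(v_0')$ --- not $\psi\in\ell^{\infty}(G')$. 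This is precisely the dichotomy the paper isolates in Propositions~\ref{prop:bounded} and~\ref{prop:unbounded} via Lemma~\ref{lem:bounded}: either one can extract centers with \emph{two-sided} multiplicative control (in which case the compactness argument of Proposition~\ref{prop:Rlimseq} gives a bounded eigenfunction of an $\mathcal{R}$-limit), or the eigenfunction decays (resp.\ grows) exponentially. The exponential-decay branch cannot be dismissed; the paper disposes of it by a separate argument using normalized $\ell^{2}$ eigenfunctions: weak convergence to zero forces $\Vert\varphi^{(n)}\Vert_\infty\to0$, and uniform exponential decay beyond a fixed radius together with sub-exponential volume growth would force $\Vert\varphi^{(n)}\Vert_2\to0$, contradicting $\Vert\varphi^{(n)}\Vert_2=1$. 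Your proposal contains no substitute for this step, and your closing remark that infinite-multiplicity points are ``handled directly by shifting an orthonormal family of eigenfunctions'' runs into exactly the same obstruction, since those eigenfunctions may each be exponentially localized. The transitivity and closedness parts of your outline (Proposition~\ref{prop:RRsubsetR} and the corollary to Proposition~\ref{prop:Rlimseq} in the paper) are fine.
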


The proof depends on the existence of a \emph{generalized eigenfunction} for each point in the spectrum, by which we mean a function $\psi:G\to\mathbb{C}$ satisfying $H\psi=\lambda\psi$. The proof relies on the existence of such generalized eigenfunctions of specific growth rate, a property known as a ``reverse" Shnol's Theorem. We shall use the following form of a reverse Shnol's Theorem on graphs, which follows from \cite{LenzTeplyaev} (in which this property is given in a slightly broader context):
\begin{theorem}[reverse Shnol's Theorem, {\cite[Theorem 3]{LenzTeplyaev}}]\label{thm:revSch} Let $(G,v_0)$ be a rooted (infinite) graph of bounded degree and $H$ a bounded Schr\"odinger operator on $\ell^{2}\left(G\right)$. Assume $\omega\in\ell^2(G)$ is real and positive $($i.e.\ $\omega(v)>0\, \forall v\in G)$ and let $\mu$ be a spectral measure for $H$. Then, for $\mu$-a.e.\ $\lambda\in\sigma\left(H\right)$ there exists a generalized eigenfunction $\varphi=\varphi\left(v\right)$ satisfying $H\varphi=\lambda\varphi$,
and additionally
\begin{equation}\label{eq:phiomegainell2}
\varphi(\cdot)\omega(\cdot)\in\ell^2(G).
\end{equation}
\end{theorem}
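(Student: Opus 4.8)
The plan is to construct the required generalized eigenfunctions explicitly as Radon--Nikodym derivatives of the matrix elements of the spectral projections of $H$, and then verify the three needed properties separately: the eigenvalue equation, the weighted square-summability, and non-triviality. Let $E(\cdot)$ denote the projection-valued measure of $H$ and write $\mu_{\delta_v}(\cdot)=\langle\delta_v,E(\cdot)\delta_v\rangle$ for the spectral measure of $\delta_v$. Since all spectral measures of $H$ are mutually absolutely continuous, I may replace the given $\mu$ by the convenient choice
\[
\rho=\sum_{v\in G}\omega(v)^2\,\mu_{\delta_v},
\]
a finite measure of total mass $\|\omega\|_2^2$, which is a spectral measure in the sense of \eqref{eq:SpectralMeasure}: as $\omega(v)>0$ for every $v$, $\rho(S)=0$ forces $\langle\delta_v,E(S)\delta_v\rangle=0$ for all $v$, hence $E(S)=0$. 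In particular $\rho$ is supported on $\sigma(H)$, so ``$\mu$-a.e.\ $\lambda\in\sigma(H)$'' coincides with ``$\rho$-a.e.\ $\lambda$''. For each ordered pair $u,v$ the complex measure $\rho_{v,u}(\cdot)=\langle\delta_v,E(\cdot)\delta_u\rangle$ is absolutely continuous with respect to $\rho$, and I set the candidate eigenfunctions $\varphi^{(u)}(v,\lambda)=\frac{d\rho_{v,u}}{d\rho}(\lambda)$, indexed by a ``source'' vertex $u$.

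Next I verify the eigenvalue equation. Using that $E(S)$ commutes with $H$, that $H$ is self-adjoint with real symmetric matrix entries $H_{v,v'}=\langle\delta_v,H\delta_{v'}\rangle$, and that the sum over $v'$ is finite (bounded degree), one gets for every Borel $S$ the identity $\int_S\lambda\,d\rho_{v,u}=\langle\delta_v,E(S)H\delta_u\rangle=\sum_{v'}H_{v,v'}\,\rho_{v',u}(S)$. As this holds for all $S$, the measures satisfy $\lambda\,d\rho_{v,u}=\sum_{v'}H_{v,v'}\,d\rho_{v',u}$; differentiating against $\rho$ gives $\lambda\,\varphi^{(u)}(v,\lambda)=\sum_{v'}H_{v,v'}\,\varphi^{(u)}(v',\lambda)$, i.e.\ $(H\varphi^{(u)}(\cdot,\lambda))(v)=\lambda\varphi^{(u)}(v,\lambda)$, for $\rho$-a.e.\ $\lambda$. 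Since there are countably many vertices and each has finitely many neighbours, intersecting the countably many full-measure sets produces one $\rho$-conull set on which the equation holds at every vertex for every source $u$.

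Finally I control the weighted norm and non-triviality. The Cauchy--Schwarz bound $|\rho_{v,u}(S)|\le\mu_{\delta_v}(S)^{1/2}\mu_{\delta_u}(S)^{1/2}$ passes to the densities, yielding $|\varphi^{(u)}(v,\lambda)|^2\le\frac{d\mu_{\delta_v}}{d\rho}(\lambda)\,\frac{d\mu_{\delta_u}}{d\rho}(\lambda)$ for $\rho$-a.e.\ $\lambda$. Summing against the weight and using that, by monotone convergence, $\sum_v\omega(v)^2\frac{d\mu_{\delta_v}}{d\rho}=\frac{d\rho}{d\rho}=1$ $\rho$-a.e., I obtain
\[
\sum_{v\in G}\omega(v)^2\,|\varphi^{(u)}(v,\lambda)|^2\le\frac{d\mu_{\delta_u}}{d\rho}(\lambda),
\]
whose $\rho$-integral equals $\mu_{\delta_u}(\mathbb R)=1<\infty$. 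Hence for each $u$ one has $\omega\,\varphi^{(u)}(\cdot,\lambda)\in\ell^2(G)$ for $\rho$-a.e.\ $\lambda$, and by countability this holds for all $u$ simultaneously off a null set. For non-triviality, note that if all $\varphi^{(u)}(\cdot,\lambda)$ vanished on a set $S$ of positive $\rho$-measure, then $\langle\delta_v,E(S')\delta_u\rangle=0$ for all $v,u$ and all $S'\subseteq S$, forcing $E(S)=0$ and contradicting $\rho(S)>0$; so for $\rho$-a.e.\ $\lambda$ some source $u=u(\lambda)$ gives a non-trivial $\varphi=\varphi^{(u)}(\cdot,\lambda)$ with all the desired properties. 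I expect the main obstacle to be the analytic justification that the measure-level Cauchy--Schwarz estimate descends to a pointwise ($\rho$-a.e.) inequality between Radon--Nikodym derivatives, together with the bookkeeping required to make all the ``a.e.'' statements—the eigenvalue equation at every vertex, the weighted bound for every source, and non-triviality—valid on a single $\rho$-conull set.
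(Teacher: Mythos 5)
Your proposal is correct, but it takes a genuinely different route from the paper. The paper's proof is an application of the abstract expansion theorem of Poerschke--Stolz--Weidmann: it introduces the weight operator $T=\Vert\omega\Vert_\infty\,\omega^{-1}\ge 1$, verifies that $T$ is self-adjoint and that $KT^{-1}$ is Hilbert--Schmidt for $K=H+\Vert H\Vert+1$, and then reads off the generalized eigenfunctions $\varphi_1(\lambda)\in\mathcal{H}_-(T)$ from the $\mu$-spectral representation, using $\overline{\mathcal{M}_1}=\sigma(K)$ to see that the multiplicity-one component already covers the spectrum. You instead build the eigenfunctions by hand as Radon--Nikodym derivatives $\varphi^{(u)}(v,\lambda)=\tfrac{d\rho_{v,u}}{d\rho}(\lambda)$ of the matrix elements $\rho_{v,u}(\cdot)=\langle\delta_v,E(\cdot)\delta_u\rangle$ against the auxiliary spectral measure $\rho=\sum_v\omega(v)^2\mu_{\delta_v}$; the eigenvalue equation follows from locality of $H$ and the identity $\int_S\lambda\,d\rho_{v,u}=\sum_{v'}H_{v,v'}\rho_{v',u}(S)$, and the weighted $\ell^2$ bound from the fact that the matrix of densities is positive semi-definite $\rho$-a.e.\ (which yields $|\varphi^{(u)}(v,\lambda)|^2\le\tfrac{d\mu_{\delta_v}}{d\rho}\tfrac{d\mu_{\delta_u}}{d\rho}$) together with $\sum_v\omega(v)^2\tfrac{d\mu_{\delta_v}}{d\rho}=1$ a.e. All the steps check out: the replacement of $\mu$ by $\rho$ is legitimate because any two spectral measures in the sense of \eqref{eq:SpectralMeasure} are mutually absolutely continuous; the a.e.\ statements can indeed be collected on a single conull set by countability of $V(G)\times V(G)$; and your non-triviality argument ($\varphi^{(u)}(\cdot,\lambda)\equiv 0$ for all $u$ on a set $S$ would force $E(S')=0$ for all $S'\subseteq S$) is sound. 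The one step you rightly flag as delicate --- descending the Cauchy--Schwarz inequality from measures to densities --- is standard: apply $\Vert E(S)(\delta_v+t\delta_u)\Vert^2\ge 0$ for $t$ in a countable dense set and pass to densities. What each approach buys: the paper's argument is a short verification of hypotheses of a black-box theorem that also covers unbounded operators, Dirichlet forms and continuum settings (the context of Lenz--Teplyaev), while yours is fully self-contained and exploits the discreteness of $G$ (the $\delta_v$ are honest $\ell^2$ vectors, so no rigged-space machinery or multiplicity decomposition $\{\mathcal{M}_j\}$ is needed); both yield the same corollaries, since your good set has full $\rho$-measure and $\operatorname{supp}\rho=\sigma(H)$.
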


\begin{corollary}
Let $(G,v_0)$ be a rooted (infinite) graph of bounded degree and $H$ a bounded Schr\"odinger operator on $\ell^{2}\left(G\right)$. Let $\omega\in\ell^2(G)$ be real and positive, then
\[
\overline{\left\{\lambda \in \sigma(H)\,\big|\,\exists\varphi:G\to\mathbb{C},\,H\varphi=\lambda\varphi \wedge \eqref{eq:phiomegainell2}\right\}}=\sigma(H).
\]

\end{corollary}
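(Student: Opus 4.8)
The plan is to deduce the statement from the reverse Shnol's Theorem (Theorem~\ref{thm:revSch}) together with the elementary fact that the topological support of any spectral measure for $H$ equals $\sigma(H)$. Write $\Lambda=\left\{\lambda\in\sigma(H)\,\big|\,\exists\varphi:G\to\mathbb{C},\ H\varphi=\lambda\varphi\ \wedge\ \eqref{eq:phiomegainell2}\right\}$ for the set whose closure we must identify. One inclusion is immediate: since $\Lambda\subseteq\sigma(H)$ and $\sigma(H)$ is closed, we have $\overline{\Lambda}\subseteq\sigma(H)$. The content is the reverse inclusion $\sigma(H)\subseteq\overline{\Lambda}$.

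First I would fix a spectral measure $\mu$ for $H$ in the sense of \eqref{eq:SpectralMeasure} — such a $\mu$ exists, e.g.\ by the construction $\mu=\sum_n a_n\mu_{\psi_n}$ recalled in the preliminaries — and feed the positive function $\omega\in\ell^2(G)$ of the hypothesis into Theorem~\ref{thm:revSch}. Applying the reverse Shnol's Theorem to this $\omega$ and $\mu$ yields that for $\mu$-a.e.\ $\lambda\in\sigma(H)$ there is a generalized eigenfunction $\varphi$ with $H\varphi=\lambda\varphi$ and $\varphi\omega\in\ell^2(G)$; in other words, the set $\sigma(H)\setminus\Lambda$ is contained in a $\mu$-null set, so $\mu\bigl(\sigma(H)\setminus\Lambda\bigr)=0$.

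Next I would use that $\operatorname{supp}\mu=\sigma(H)$. Indeed, by the defining property \eqref{eq:SpectralMeasure}, for any open set $U$ one has $\mu(U)=0$ if and only if $E(U)=0$, and $\sigma(H)$ is precisely the complement of the largest open set $U$ with $E(U)=0$; hence the smallest closed set of full $\mu$-measure is $\sigma(H)$, and in particular $\mu(\mathbb{R}\setminus\sigma(H))=0$. Now $\overline{\Lambda}$ is closed, so $\mathbb{R}\setminus\overline{\Lambda}$ is open, and since $\mathbb{R}\setminus\overline{\Lambda}\subseteq\bigl(\mathbb{R}\setminus\sigma(H)\bigr)\cup\bigl(\sigma(H)\setminus\Lambda\bigr)$, both of which are $\mu$-null, we get $\mu\bigl(\mathbb{R}\setminus\overline{\Lambda}\bigr)=0$. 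Thus $\overline{\Lambda}$ is a closed set whose complement carries no $\mu$-mass, whence $\operatorname{supp}\mu\subseteq\overline{\Lambda}$, i.e.\ $\sigma(H)\subseteq\overline{\Lambda}$. Together with the trivial inclusion this gives $\overline{\Lambda}=\sigma(H)$.

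The entire analytic difficulty is already absorbed into Theorem~\ref{thm:revSch}; what remains is soft measure theory. The only points deserving care are the identification $\operatorname{supp}\mu=\sigma(H)$ (which rests on \eqref{eq:SpectralMeasure}) and the observation that a closed set of full $\mu$-measure necessarily contains $\operatorname{supp}\mu$. Note that we never need $\Lambda$ itself to be Borel, since we only estimate the outer measure of the open set $\mathbb{R}\setminus\overline{\Lambda}$ by the two null sets above.
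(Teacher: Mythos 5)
Your proof is correct and follows essentially the same route as the paper, which obtains the corollary from Theorem~\ref{thm:revSch} by observing (in the proof and the closing remark of Section~\ref{sec:RSproperty}) that the admissible energies form a full-$\mu$-measure subset of $\mathcal{M}_1$ with $\overline{\mathcal{M}_1}=\sigma(K)$, i.e.\ exactly your ``full measure plus $\operatorname{supp}\mu=\sigma(H)$'' argument. The measure-theoretic care you take about $\Lambda$ not needing to be Borel is a nice touch but does not change the substance.
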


\begin{corollary}
For $\mu$-a.e.\ $\lambda\in\sigma\left(H\right)$ there exists a corresponding generalized eigenfunction satisfying for any $v\in G$,
\begin{equation} \label{eq:graphefGrowth}
\left|\varphi\left(v\right)\right|\leq \left(|v|+1\right)\cdot \sqrt{S_{v_0}\left(\left|v\right|\right)}.
\end{equation}
\end{corollary}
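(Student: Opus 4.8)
The plan is to derive the corollary directly from the reverse Shnol's Theorem (Theorem~\ref{thm:revSch}) by making a judicious choice of the weight $\omega$. Theorem~\ref{thm:revSch} asserts that for \emph{any} positive $\omega\in\ell^2(G)$ and $\mu$-a.e.\ $\lambda\in\sigma(H)$ there is a generalized eigenfunction $\varphi$ with $\varphi\omega\in\ell^2(G)$. Hence the whole task is to engineer $\omega$ so that this single square-summability condition forces the pointwise bound \eqref{eq:graphefGrowth}.

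First I would take
\[
\omega(v)=\frac{1}{(|v|+1)\sqrt{S_{v_0}(|v|)}},
\]
which is well defined and strictly positive since $G$ is infinite and connected, so that $S_{v_0}(r)\geq1$ for every $r$. To verify $\omega\in\ell^2(G)$ I group the sum over vertices by spheres: the sphere $\mathcal{S}_{v_0}(r)$ contains exactly $S_{v_0}(r)$ vertices, each contributing $\omega^2=(r+1)^{-2}S_{v_0}(r)^{-1}$, so
\[
\sum_{v\in G}\omega(v)^2=\sum_{r=0}^{\infty}\frac{S_{v_0}(r)}{(r+1)^2\,S_{v_0}(r)}=\sum_{r=0}^{\infty}\frac{1}{(r+1)^2}<\infty.
\]
The factor $\sqrt{S_{v_0}(|v|)}$ in the denominator of $\omega$ is inserted precisely so that the spherical counting cancels, and the remaining factor $(|v|+1)^{-1}$ is exactly what turns the leftover sphere sum into a convergent $p$-series.

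Applying Theorem~\ref{thm:revSch} with this $\omega$ yields, for $\mu$-a.e.\ $\lambda\in\sigma(H)$, a generalized eigenfunction $\varphi$ for which, denoting $C=\sum_{v}|\varphi(v)|^2\omega(v)^2$, we have $C<\infty$. Since every term of a convergent series of nonnegative reals is dominated by its total sum, I obtain the pointwise estimate $|\varphi(v)|^2\omega(v)^2\leq C$ for every $v$, that is,
\[
|\varphi(v)|\leq\sqrt{C}\,(|v|+1)\sqrt{S_{v_0}(|v|)}.
\]
Finally, because the eigenvalue equation $H\varphi=\lambda\varphi$ is homogeneous, $\varphi/\sqrt{C}$ is again a generalized eigenfunction for the same $\lambda$; replacing $\varphi$ by it absorbs the constant and produces exactly \eqref{eq:graphefGrowth}.

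There is no serious obstacle: the corollary is essentially a repackaging of Theorem~\ref{thm:revSch}. The only points requiring genuine care are the design of $\omega$ (the $\sqrt{S_{v_0}}$ factor is chosen so the sphere sizes cancel, leaving convergence to come from the polynomial factor alone) and the passage from the global condition $\varphi\omega\in\ell^2$ to a true pointwise bound, which is legitimate only because individual terms of a convergent nonnegative series are controlled by the whole sum. The per-$\lambda$ rescaling that removes the $\lambda$-dependent constant $\sqrt{C}$ is valid precisely because the notion of generalized eigenfunction carries no normalization.
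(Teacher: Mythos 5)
Your proposal is correct and follows exactly the paper's route: the paper's Lemma~\ref{lem-l2} introduces precisely the weight $\omega_G(v)=\bigl((|v|+1)\sqrt{S_{v_0}(|v|)}\bigr)^{-1}$, checks $\omega_G\in\ell^2(G)$ by the same sphere-by-sphere cancellation, and the corollary then follows from Theorem~\ref{thm:revSch} via the inclusion $\ell^2\subseteq\ell^\infty$, just as you argue. Your explicit remark that the $\lambda$-dependent constant can be absorbed by rescaling the (unnormalized) generalized eigenfunction is a small point the paper leaves implicit, but it is exactly right.
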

For completeness we shall include a proof of Theorem~\ref{thm:revSch} and its corollaries in Section~\ref{sec:RSproperty}.

In the case of graphs of sub-exponential growth the following additional property holds, which is a (direct) Shnol's theorem:

\begin{prop}[{\cite[Theorem 4.8]{HK2011}}]\label{prop:HK4.8}
Let $H$ be a bounded Schr\"odinger operator on a rooted graph $(G,v_0)$ of a bounded degree.  Assume {$w:V(G)\to\mathbb{C}$} is a $($non-zero$)$ generalized eigenfunction of $H$, satisfying $(H-\lambda)w=0$.
Assume that $w$ is sub-exponentially bounded with respect to the graph metric,
i.e.\ $e^{-\alpha |\cdot|}w\in\ell^2(G)$ for all $\alpha>0$.
Then $\lambda  \in \sigma(H)$.
\end{prop}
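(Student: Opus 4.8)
The plan is to prove Proposition~\ref{prop:HK4.8} by the classical Shnol strategy: I would manufacture a Weyl sequence for $\lambda$ by truncating the generalized eigenfunction $w$ to larger and larger balls around $v_0$, and then invoke the first half of Weyl's Criterion (Theorem~\ref{thm:Weyl}), which only asks for unit vectors $\psi_n$ with $\|(H-\lambda)\psi_n\|\to 0$. Write $\chi_n=\mathbf{1}_{B_n(v_0)}$ and set $w_n=\chi_n w\in\ell^2(G)$. Since the potential $Q$ is a multiplication operator it commutes with $\chi_n$, and since $(H-\lambda)w=0$ holds pointwise, one gets $(H-\lambda)w_n=[\Delta,\chi_n]w$. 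A direct computation gives $\bigl([\Delta,\chi_n]w\bigr)(v)=\sum_{u\sim v}(\chi_n(u)-\chi_n(v))w(u)$, which is supported only on the two spheres $\mathcal{S}_{v_0}(n)$ and $\mathcal{S}_{v_0}(n+1)$, i.e.\ on the vertices incident to an edge crossing $\partial B_n(v_0)$. Using the bounded degree $d$ and Cauchy--Schwarz, this yields the boundary estimate $\|(H-\lambda)w_n\|^2\le d^2\,(b_n+b_{n+1})$, where I abbreviate the sphere masses by $b_k=\sum_{|v|=k}|w(v)|^2$.

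The heart of the matter is to show that $\|(H-\lambda)w_n\|/\|w_n\|$ tends to zero along some subsequence. Writing $a_n=\|w_n\|^2=\sum_{k=0}^n b_k$, the quantity to control is $(b_n+b_{n+1})/a_n$, and it suffices to prove $\liminf_{n}(b_n+b_{n+1})/a_n=0$. I would argue by contradiction, and this is where the sub-exponential boundedness hypothesis enters; establishing this dichotomy is the main obstacle. Suppose $b_n+b_{n+1}\ge \varepsilon\,a_n$ for all $n\ge N$ and some $\varepsilon>0$. Since $b_n+b_{n+1}=a_{n+1}-a_{n-1}$ and $a_n\ge a_{n-1}$, this forces $a_{n+1}\ge(1+\varepsilon)a_{n-1}$; iterating in steps of two, and using $a_{N-1}>0$ (which holds once $B_{N-1}(v_0)$ contains a vertex where $w\neq 0$), I obtain $a_n\ge c\,\rho^{\,n}$ for large $n$, with $\rho=(1+\varepsilon)^{1/2}>1$.

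It then remains to convert this exponential lower bound into a contradiction with the hypothesis $e^{-\alpha|\cdot|}w\in\ell^2(G)$ for all $\alpha>0$, i.e.\ $\sum_k e^{-2\alpha k}b_k<\infty$. Choosing $\alpha>0$ small enough that $\rho\,e^{-2\alpha}>1$ and summing by parts,
\begin{equation}\nonumber
\sum_{k=0}^{n} e^{-2\alpha k}b_k \;\ge\; (1-e^{-2\alpha})\sum_{k=0}^{n-1}e^{-2\alpha k}a_k \;\ge\; c\,(1-e^{-2\alpha})\sum_{k}(\rho\,e^{-2\alpha})^{k},
\end{equation}
which diverges, contradicting summability. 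Hence the liminf is zero, and along a subsequence $\{n_j\}$ the normalized truncations $w_{n_j}/\|w_{n_j}\|$ (unit vectors, since $a_{n_j}>0$) satisfy $\|(H-\lambda)w_{n_j}\|/\|w_{n_j}\|\to 0$. By Weyl's Criterion this gives $\lambda\in\sigma(H)$, completing the proof. If $w$ already lies in $\ell^2(G)$ the argument degenerates pleasantly: $a_n$ converges to a positive limit, $b_n\to 0$, the ratio tends to zero immediately, and $w$ is a genuine eigenfunction.
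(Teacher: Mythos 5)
Your proof is correct, and it is worth noting that the paper itself gives no proof of this proposition at all: it is imported wholesale from \cite{HK2011} (Theorem 4.8 there), so you have supplied a self-contained argument where the thesis relies on a citation. What you wrote is the standard discrete Shnol argument, and every step checks out: the identity $(H-\lambda)w_n=[\Delta,\chi_n]w$ is valid pointwise because all sums are finite on a bounded-degree graph; the commutator is indeed supported on $\mathcal{S}_{v_0}(n)\cup\mathcal{S}_{v_0}(n+1)$ and the Cauchy--Schwarz count gives $\|(H-\lambda)w_n\|^2\le d^2(b_n+b_{n+1})$; the dichotomy is the crux and your version is sound, since $b_n+b_{n+1}=a_{n+1}-a_{n-1}\ge\varepsilon a_n\ge\varepsilon a_{n-1}$ forces $a_{N-1+2m}\ge(1+\varepsilon)^m a_{N-1}$ and hence $a_n\gtrsim\rho^n$ with $\rho=(1+\varepsilon)^{1/2}$; and the Abel summation $\sum_{k=0}^n e^{-2\alpha k}b_k=e^{-2\alpha n}a_n+(1-e^{-2\alpha})\sum_{k=0}^{n-1}e^{-2\alpha k}a_k$ converts this into divergence of $\|e^{-\alpha|\cdot|}w\|^2$ for $0<\alpha<\tfrac12\log\rho$, contradicting the hypothesis. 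The only cosmetic caveats are that the geometric lower bound on $a_k$ holds only for $k$ large (which does not affect the divergence) and that one should fix $N$ large enough that $a_{N-1}>0$, which you note. The conclusion via the first half of Theorem~\ref{thm:Weyl} is then immediate. For comparison, \cite{HK2011} proves the result in the more general framework of Dirichlet forms on graphs via a Caccioppoli-type inequality; your ball-truncation argument is more elementary and exactly suited to the bounded Schr\"odinger operators considered here.
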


\begin{remark}
This result appears in \cite{HK2011} in more generality, allowing any (bounded) Jacobi operators on the graph.
\end{remark}

If we assume a {\bf uniform} sub-exponential growth rate of the graph, i.e.\ $\forall \gamma>1, \exists C>0$, so that $\forall u\in G, r\in\mathbb{N}$
\begin{equation}
S_{u}(r)< C\gamma^r,
\end{equation}
we get the following result:
\begin{theorem}\label{thm:fullresultsubexp}
Let $G$ be an infinite graph of uniform sub-exponential growth and let $H$ be a bounded Schr\"odinger operator $\ell^2(G)$. Then,
\begin{equation}
\sigma_{\text ess}(H)=\bigcup_{H'\in\mathcal{R}(H)}\sigma_\infty(H')=\bigcup_{H'\in\mathcal{R}(H)}\sigma(H').
\end{equation}
\end{theorem}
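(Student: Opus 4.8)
The plan is to establish the two equalities as a single cycle of three inclusions,
$$\sigma_{\text{ess}}(H) \subseteq \bigcup_{H'\in\mathcal{R}(H)}\sigma_\infty(H') \subseteq \bigcup_{H'\in\mathcal{R}(H)}\sigma(H') \subseteq \sigma_{\text{ess}}(H),$$
each of the outer two inclusions being supplied verbatim by a result already proven, and the middle one by a short passage connecting $\sigma_\infty$ to $\sigma$ on the level of a single $\mathcal{R}$-limit. First I would record that uniform sub-exponential growth implies bounded degree: fixing any single $\gamma>1$ gives $\deg(u)=N_u(1)-1\le S_u(1)<C_\gamma\gamma$ uniformly in $u$. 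This makes both Theorem~\ref{thm:Thm1} and the hypotheses of Theorem~\ref{thm:sigmainfty} available. The leftmost inclusion is then precisely Theorem~\ref{thm:sigmainfty} (uniform sub-exponential growth is stronger than the mere sub-exponential growth it requires), and the rightmost inclusion is precisely Theorem~\ref{thm:Thm1}. Neither needs further work.

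The middle inclusion is the substantive step, and I would prove it one $\mathcal{R}$-limit at a time, showing $\sigma_\infty(H')\subseteq\sigma(H')$ for each fixed $\{H',G',v_0'\}\in\mathcal{R}(H)$. Fix $\lambda\in\sigma_\infty(H')$ with a bounded eigenfunction $\psi\in\ell^\infty(G')$, $H'\psi=\lambda\psi$. The crucial observation is that $G'$ itself inherits sub-exponential growth about its root: by Definition~\ref{def:rlimdef}(iii) one has $N_{v_0'}(r)=N_{v_{n_j}}(r)$ for all large $j$, and uniform sub-exponential growth of $G$ bounds $N_{v_{n_j}}(r)<C_\gamma\gamma^r$ uniformly in $j$, whence $S_{v_0'}(r)<C_\gamma\gamma^r$ for every $\gamma>1$. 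Consequently, for any $\alpha>0$ the choice $\gamma=e^{\alpha}$ gives
$$\sum_{v\in G'} e^{-2\alpha|v|}\left|\psi(v)\right|^2 \le \left\Vert\psi\right\Vert_\infty^2 \sum_{r\ge 0} S_{v_0'}(r)\,e^{-2\alpha r} < \infty,$$
so $\psi$ is sub-exponentially bounded with respect to the graph metric on $(G',v_0')$. Applying the direct Shnol theorem, Proposition~\ref{prop:HK4.8}, to the bounded Schr\"odinger operator $H'$ on the bounded-degree rooted graph $(G',v_0')$ then yields $\lambda\in\sigma(H')$. Taking the union over $H'\in\mathcal{R}(H)$ gives the middle inclusion and closes the cycle, so all three sets coincide.

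I expect the main obstacle to be exactly this inheritance argument: one must verify that the $\mathcal{R}$-limit graph $G'$ is of sub-exponential growth about $v_0'$ and that $\ell^\infty$-eigenfunctions are therefore sub-exponentially bounded, which is what licenses Proposition~\ref{prop:HK4.8}. It is precisely here that the \emph{uniformity} of the growth hypothesis is genuinely used, since under merely non-uniform sub-exponential growth of $G$ one loses control of $N_{v_{n_j}}(r)$ as $n_j\to\infty$, in line with the phenomenon exhibited by the counterexample of Section~\ref{sec:counterexample}. Everything else is bookkeeping: checking bounded degree, invoking Theorems~\ref{thm:Thm1} and~\ref{thm:sigmainfty}, and assembling the three inclusions into the stated pair of equalities.
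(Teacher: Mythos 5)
Your proposal is correct and follows essentially the same route as the paper's proof: both combine Theorem~\ref{thm:Thm1} and Theorem~\ref{thm:sigmainfty} for the outer inclusions and reduce the remaining step to $\sigma_\infty(H')\subseteq\sigma(H')$, proved by showing that each $\mathcal{R}$-limit graph inherits sub-exponential growth (so bounded generalized eigenfunctions satisfy $e^{-\alpha|\cdot|}\varphi\in\ell^2(G')$) and then invoking Proposition~\ref{prop:HK4.8}. The extra details you supply (bounded degree from the growth hypothesis, and the explicit summation verifying sub-exponential boundedness) are correct and only make explicit what the paper leaves implicit.
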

\begin{proof}
We already know from Theorems~\ref{thm:Thm1} and~\ref{thm:sigmainfty} that
\[
\bigcup_{H'\in\mathcal{R}(H)}\sigma(H')\subseteq\sigma_{\text ess}(H)\subseteq\bigcup_{H'\in\mathcal{R}(H)}\sigma_\infty(H').
\]
Thus, showing that, for any $H'\in\mathcal{R}(H)$,
\[
\sigma_\infty(H')\subseteq\sigma(H')
\]
will complete the proof. Actually, since we assume uniform sub-exponential growth also any $\mathcal{R}$-limit $(H',G')$ of $(H,G)$ is of (uniform) sub-exponential growth (for any $r>0$, the sphere $\mathcal{S}_{v_0'}(r)$ has a corresponding sphere in the original graph which satisfies \eqref{eq:subexpgr}, and thus also $S_{v_0'}(r)<C\gamma^r$).
In this case $e^{-\alpha|\cdot|}\in\ell^2(G')$ for any $\alpha>0$. Thus, given a  bounded generalized eigenfunction $\varphi$, satisfying $H'\varphi=\lambda'\varphi$, the conditions of Proposition~\ref{prop:HK4.8} are satisfied for $\varphi$, and we get that $\lambda\in\sigma(H')$.
\end{proof}

\begin{remark} On graphs of exponential growth
\[
\sigma_{\text{ess}}\left(H\right)\nsupseteq\bigcup_{H'\in\mathcal{R}(H)}\sigma_{\infty}\left(H'\right).
\]
A simple example is $H=\Delta$, the adjacency operator on the $d$-regular
tree. The only $\mathcal{R}$-limit is the same operator $(H'=\Delta$ on $T_d)$, for which, e.g.\ by taking the constant function $\psi\equiv1$, we get that $d\in\sigma_{\infty}(\Delta)$. Thus \[
d\in\sigma_{\infty}\left(\Delta\right)\backslash\sigma_{\text{ess}}\left(\Delta\right).
\]
In fact, $[-d,d]\subset\sigma_{\infty}(\Delta)$ (it follows e.g.\ from \cite[Equation~(7.7)]{MoharWoess}), and thus
\[
\left[-d,d\right]\big\backslash\left[-2\sqrt{d-1},2\sqrt{d-2}\right]\subset\sigma_{\infty}\left(\Delta\right)\backslash\sigma_{\text{ess}}\left(\Delta\right).
\] 
\end{remark}
\begin{remark}
As discussed in Chapter~2 the uniform growth of $G$ is crucial. See the example in Section~\ref{sec:counterexample}.
\end{remark}
\begin{remark}
The above mentioned example does not contradict \eqref{eq:essubsetinfty} on general graphs. It is still an open problem whether this inclusion holds or not on graphs of exponential growth.
\end{remark}
\begin{remark}
The set of graphs of uniform sub-exponential growth contains many different examples. For instance, one class of such graphs is the set of (infinite) penny graphs, i.e.\ graphs whose vertices can be represented by unit circles, with no two of these circles crossing each other, and with two adjacent vertices if and only if they are represented by tangent circles. Similarly one can consider any planar graph in which each vertex can be represented by a closed shape such that its girth and the surrounded area are bounded below by a constant greater than zero.
\end{remark}
\begin{remark}
By a direct adaptation everything holds also for Jacobi operators on the graph as in (1.2). For simplicity we stay here with Schr\"odinger operators.
\end{remark}

\section{Characterizing $\sigma_{\text ess}(H)$ using generalized eigenfunctions}

\subsection{Notations}
Throughout this section it will be more convenient to use notations different than the notations given in Chapter~1 for the definition of $\mathcal{R}$-limits. Assume $\left(H',G',v_0'\right)$ is an $\mathcal{R}$-limit of $(H,G,v_0)$, let $\left\{v_n\right\}_{n\in\mathbb{N}}$ and $\left\{n_j\right\}_{j\in\mathbb{N}}$ be  a path to infinity and the corresponding subsequence of indices of vertices in $G$ along which $\left(H',G'\right)$ is obtained (as in Definition~1.2.2), and let $\left\{\mathcal{I}_r^{(j)}\right\}_{r,j\in\mathbb{N}}$ and $\left\{\mathcal{I}'_r\right\}_{r,j\in\mathbb{N}}$ be the corresponding sequences of isomorphisms. Denote the corresponding indexing of vertices by $\eta_r^{(j)}$ and $\eta_r'$.
First, we define $u_j=v_{n_j}$, and use the sequence $\{u_n\}_{n\in\mathbb{N}}$ instead of $\{v_n\}_{n\in\mathbb{N}}$. In this case we will say that the $\mathcal{R}$-limit is obtained along the sequence of vertices $\left\{u_j\right\}_{j\in\mathbb{N}}$.
Define another mapping $f_{n,r}: B_r\left(u_n\right)\to B_r\left(v_0'\right)$ by
\[
f_{n,r}=\eta_r'^{-1}\circ\eta_r^{(n)},
\]
and let $\mathcal{I}_{f_{n,r}}$ be the corresponding unitary operator.
Also, recall the definition,
\[ H_{u,r} = H|_{B_r(u)}. \]
Then we can write the condition for an $\mathcal{R}$-limit \eqref{def:rlimdef} in an equivalent way:
\begin{equation}\label{eq:altRlimdef}
\forall r\in\mathbb{N},\,\,\left\Vert \mathcal{I}_{f_{n,r}}H_{u_{n},r}\mathcal{I}_{f_{n,r}}^{-1}-H'_{v_{0}',r}\right\Vert \to 0.
\end{equation}
Notice that the set $\{f_{n,r}\}_{n,r\in\mathbb{N}}$ is (and required to be) coherent in the sense that for any fixed $n$ and $r'>r$, the actions of $f_{n,r'}$ and $f_{n,r}$ on $B_r(u_n)$ are identical.

Additionally we introduce the notion of limit of a sequence of rooted graphs, as follows:
\begin{definition} \label{def:graphsconv}
Let $\{G_k,u_k\}_{k\in\mathbb{N}}$ be a sequence of rooted graphs, and let $\left\{f_{k,r}:B_r(u_k)\to B_r\left(v_0'\right)\right\}_{k,r\in\mathbb{N}}$ be a set of (coherent) maps.
We say that the sequence $\{G_k,u_k\}_{k\in\mathbb{N}}$ converge to a rooted graph $(G',v_0')$ (with respect to the set of maps $\left\{f_{k,r}\right\}_{k,r\in\mathbb{N}}$) if, for any $R>0$ there exists $K\in\mathbb{N}$, such that for any $k>K$
\[
f_{k,r}\left(B_R({u}_k)\right)=B_R(v_0').
\]
\end{definition}

\subsection{General observations}

The proof of Theorem~4.1 requires some preliminary propositions for which we don't have to impose a restriction on the growth of the graph. The argument follows the proof of Theorem~(7.2.1) of Simon \cite{SimonSz}, which concerns operators on $\ell^{2}\left(\mathbb{N}\right)$, with special attention and adaptations to the more complex case of operators on graphs.

Let $G$ be a graph of bounded degree, and $H=\Delta+Q$ be a bounded Schr\"odinger operator on $\ell^2(G)$.

\begin{prop} \label{prop:RRsubsetR}
$\mathcal{R}\left(\mathcal{R}\left(H\right)\right)\subseteq\mathcal{R}\left(H\right)$.
\end{prop}
\begin{proof}
Assume:
\begin{enumerate}
\item[(1)] The operator $\left(H',G',v_0'\right)$ is an $\mathcal{R}$-limit of $\left(H,G,v_0\right)$
along the sequence $\left\{ u_{n}\right\} _{n=1}^{\infty}\subset G$
and the sequence of unitary maps $\left\{ f_{n,r}\right\} _{n,r\in\mathbb{N}}$.
\item[(2)] The operator $\left(\widetilde{H},\widetilde{G},\widetilde{v}_0\right)$ is
an $\mathcal{R}$-limit of $\left(H',G',v_0'\right)$ along the sequence
$\left\{ u_{n}'\right\} _{n=1}^{\infty}\subset G'$ and the sequence of unitary maps $\left\{ f_{n,r}'\right\} _{n,r\in\mathbb{N}}$.
\end{enumerate}
For any vertex $v$ (either in $G$ or $G'$ or $\widetilde{G}$) denote by $|v|$ the distance from  $v$ to the root (correspondingly $v_0$ or $v_0'$ or $\widetilde{v}_0$). Using the assumption (1) above, for any $n\in\mathbb{N}$ we can pick $k(n)$ so that $\left|u_{k(n)}\right|>2\left|u_n'\right|$ and
\[
\left\Vert I_{f_{k(n),R_n}}H_{u_{k(n)},R_n}I_{f_{k(n),R_n}}^{-1}-H'_{v_{0}',R_n}\right\Vert < \frac{1}{n},
\]
where $R_n=n+\left|u_n'\right|$ (and $\left|u_n'\right|=\text{dist}\left(u_n',v_0'\right)$). Denote for any $n\in\mathbb{N}$, $w_n=f_{R_n,k(n)}^{-1}\left(u_n'\right)$. Note that $w_n\in B_{\left|u_n'\right|}\left(u_{k(n)}\right)$, and thus $\left|w_n\right|>\left|u_n'\right|\to\infty$.
Let $R>0$ and $\varepsilon>0$. Using assumption (2), there exists $n_0\in\mathbb{N}$ such that for any $n>n_0$,
\[
\left\Vert I_{f_{n,R}'}H'_{u_n',R} I_{f_{n,R}'}^{-1}-\widetilde{H}{}_{\widetilde{v_{0}},R}\right\Vert < \varepsilon.
\]
We take $n>\max\left\{R,\frac{1}{\varepsilon},n_0 \right\}$. Then, since $B_R\left(w_n\right)\subset B_{R_n}\left(u_{k(n)}\right)$,
\begin{eqnarray*}
&\left\Vert I_{f_{n,R}'}I_{f_{k(n),R_n}}H_{w_n,R}I_{f_{k(n),R_n}}^{-1}I_{f_{n,R}'}^{-1}-\widetilde{H}{}_{\widetilde{v_{0}},R}\right\Vert \leq
\\
&\left\Vert I_{f_{n,R}'}I_{f_{k(n),R_n}}H_{w_n,R}I_{f_{k(n),R_n}}^{-1}I_{f_{n,R}'}^{-1}-I_{f_{n,R}'}H'_{u_n',R} I_{f_{n,R}'}^{-1}\right\Vert + \\
&+\left\Vert I_{f_{n,R}'}H'_{u_n',R} I_{f_{n,R}'}^{-1}-\widetilde{H}{}_{\widetilde{v_{0}},R}\right\Vert < \\
&\left\Vert I_{f_{k(n),R_n}}H_{u_{k(n)},R_n}I_{f_{k(n),R_n}}^{-1}-H'_{v_{0}',R_n}\right\Vert + \varepsilon < 2\varepsilon.
\end{eqnarray*}
Thus, $\widetilde{H}$ is an $\mathcal{R}$-limit of $H$ along the sequence $\left\{w_n\right\}_{n=1}^\infty$ and the sequence of unitary maps $\left\{g_{n,R}=f'_{n,R}\circ f_{k(n),R_n}\right\}_{n,R\in\mathbb{N}}$.

\end{proof}
\smallskip{}

\begin{prop}\label{prop:Rlimseq}
Assume $\left\{ \left(L_{k},G_k\right)\right\}_{k=1}^\infty \subset\mathcal{R}\left(H\right)$ is
a sequence of $\mathcal{R}$-limits of $\left(H,G\right)$, and assume there exists a sequence of generalized eigenfunctions $\varphi^{\left(k\right)}$
of $L_{k}$, satisfying $L_{k}\varphi^{\left(k\right)}=\lambda_{k}\varphi^{\left(k\right)}$,
and $\lambda_{k}\to\lambda'$. Assume further there exists
a sequence of vertices $u_{k}\in G_{k}$, such that
\[
\max_{u\in B_{k}\left(u_{k}\right)}\left|\varphi^{\left(k\right)}\left(u\right)\right|\leq C\cdot\left|\varphi^{\left(k\right)}\left(u_{k}\right)\right|\neq0,
\]
for some constant $C>0$. Then there exists an $\mathcal{R}$-limit
$\left(H',G'\right)\in\mathcal{R}\left(H\right)$, and a bounded nonzero
function $0\neq\varphi'\in\ell^{\infty}\left(G'\right)$,
satisfying $H'\varphi'=\lambda'\varphi'$.
\end{prop}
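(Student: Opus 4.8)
The plan is to realize $(H',G',v_0')$ as a limit of the operators $L_k$ re-rooted at the vertices $u_k$, and to obtain $\varphi'$ as a pointwise limit of normalized versions of the $\varphi^{(k)}$. First I would normalize, setting $\psi^{(k)}=\varphi^{(k)}/\varphi^{(k)}(u_k)$, so that $\psi^{(k)}(u_k)=1$ and, by hypothesis, $|\psi^{(k)}(u)|\le C$ for every $u\in B_k(u_k)$; these remain eigenfunctions, $L_k\psi^{(k)}=\lambda_k\psi^{(k)}$. The purpose of the normalization is that the control $\max_{B_k(u_k)}|\varphi^{(k)}|\le C|\varphi^{(k)}(u_k)|$ becomes a \emph{uniform} bound on balls that exhaust the eventual limit graph.

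Next, I would run a compactness/diagonal argument analogous to Lemma~\ref{lemma:rlimexists}. Since all the $G_k$ have degree bounded by that of $G$ and all the potentials are uniformly bounded, I can pass to a subsequence along which the rooted balls $B_R(u_k)$ stabilize to a fixed rooted graph $(G',v_0')$ in the sense of Definition~\ref{def:graphsconv}, the matrix elements of $L_k$ on these balls converge to those of a bounded Schr\"odinger operator $H'$ on $G'$, and, using the uniform bound $|\psi^{(k)}|\le C$ together with a further diagonalization over the (countably many) vertices, the normalized eigenfunctions converge pointwise through the identifying isomorphisms to a function $\varphi'$ on $G'$.

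I would then verify the three required properties of $\varphi'$. Boundedness, with $\|\varphi'\|_\infty\le C$, is immediate, since each vertex of $G'$ eventually lies in the image of $B_k(u_k)$, where the bound $C$ holds; nonvanishing follows from $\varphi'(v_0')=\lim_k\psi^{(k)}(u_k)=1$. The eigenvalue equation $H'\varphi'=\lambda'\varphi'$ passes to the limit one vertex at a time: because $H'$ is nearest-neighbour, hence local, evaluating $(L_k\psi^{(k)})(u)=\lambda_k\psi^{(k)}(u)$ at the vertex $u$ corresponding to a fixed $v\in G'$ and letting $k\to\infty$, using the local convergence $L_k\to H'$, $\psi^{(k)}\to\varphi'$, and $\lambda_k\to\lambda'$, yields $(H'\varphi')(v)=\lambda'\varphi'(v)$.

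The step I expect to be the main obstacle is showing that $(H',G',v_0')$ is genuinely an $\mathcal{R}$-limit of $H$, and not merely a limit of the family $\{L_k\}$. This requires the bookkeeping of Proposition~\ref{prop:RRsubsetR}: for each $k$, using that $L_k\in\mathcal{R}(H)$ is obtained along some escaping sequence $\{w_n^{(k)}\}\subset G$, I would select an index $n(k)$ large enough and a vertex $z_k\in G$ corresponding to $u_k$ under the identifying isomorphisms, so that $|z_k|\to\infty$ and $\|I_{g_{k,r}}H_{z_k,r}I_{g_{k,r}}^{-1}-(L_k)_{u_k,r}\|<1/k$ for all $r\le k$. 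Combining this with the convergence $(L_k)_{u_k,r}\to H'_{v_0',r}$ via the triangle inequality, exactly as in the proof of Proposition~\ref{prop:RRsubsetR}, gives $H_{z_k,r}\to H'_{v_0',r}$ for every fixed $r$, so that $(H',G',v_0')$ is an $\mathcal{R}$-limit of $H$ along $\{z_k\}$. The delicate point is coherently composing the two layers of isomorphisms, those realizing $L_k$ as an $\mathcal{R}$-limit of $H$ and those realizing $H'$ as the limit of the $L_k$, while keeping the re-rooting from the root of $G_k$ to $u_k$ consistent across $k$.
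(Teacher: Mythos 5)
Your proposal is correct and follows essentially the same route as the paper's proof: normalize at $u_k$ to get eigenfunctions uniformly bounded by $C$ on exhausting balls, extract a diagonal subsequence along which the rooted graphs, the operators, and the normalized eigenfunctions converge locally, verify $\varphi'(v_0')=1$, $\|\varphi'\|_\infty\le C$ and the eigenvalue equation by local convergence, and invoke the argument of Proposition~\ref{prop:RRsubsetR} to confirm that the limit object is itself an $\mathcal{R}$-limit of $H$. The paper verifies the eigenvalue equation by a telescoping norm estimate on balls $B_\ell(v_0')$ rather than pointwise, but this is an inessential difference.
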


\begin{proof}
We first consider the sequence $\{G_k,u_k\}_{k\in\mathbb{N}}$ of rooted graphs.
Since the degree of $G$ is bounded, the set of graphs corresponding to $\mathcal{R}$-limits on $G$ is compact. Thus, there exists a subsequence $\left\{\widehat{G}_k,\widehat{u}_k\right\}_{k\in\mathbb{N}}\subseteq\{G_k,u_k\}_{k\in\mathbb{N}}$, a corresponding set of (coherent) maps $\left\{f_{k,R}\right\}_{k,R\in\mathbb{N}}$ and a rooted graph denoted by $(G',v_0')$, such that
\[
(\widehat{G}_k,\widehat{u}_k)\to(G',v_0')
\]
in the sense of Definition~\ref{def:graphsconv}.
Of course $f_{k,R}(\widehat{u}_k)=v_0'$.
Denote by $\left\{\widehat\varphi^{(k)}\right\}$ and $\left\{\widehat L_k\right\}$ the corresponding subsequences of  $\left\{\varphi^{(k)}\right\}$ and $\left\{L_k\right\}$. Define $\psi^{(k)}:B_k\left(v_0'\right)\to\mathbb{C}$ by
\[
\psi^{\left(k\right)}\left(u\right)=\frac{\widehat\varphi^{\left(k\right)}\left(f_{k,k}^{-1}u\right)}{\widehat\varphi^{\left(k\right)}\left(\widehat{u}_{k}\right).}
\]
Then,  $\psi^{\left(k\right)}\left(v_{0}'\right)=1$, and
\[
\sup_{u\in B_{k}\left(v_0'\right)}\left|\psi^{\left(k\right)}\left(u\right)\right|\leq C.
\]
Define further ${L}_k'=\mathcal{I}_{f_{k,k}}^{-1}\widehat L_{k}\mathcal{I}_{f_{k,k}}$. By compactness (and since $\psi^{(k)}$ is bounded in a neighbourhood of $v_0'$)  there
exists a sequence of indices $k(\ell):\mathbb{N}\to\mathbb{N}$ such that both $\psi^{\left(k(\ell)\right)}$ and $\widehat L_{k(\ell)}$ converge to limit objects $\varphi':G'\to\mathbb{C}$ and $L':\ell^2(G')\to\ell^2(G)$, in the sense that
\[
\left\Vert\left(\psi^{\left(k\left(\ell\right)\right)}-\varphi'\right)\Big\vert_{B_\ell\left(v_0'\right)}\right\Vert\to0,
\]
and
\[
\left\Vert \left(L_{k(\ell)}'\right)_{v_0',\ell} -\left(L'\right)_{v_0',\ell}\right\Vert \to0.
\]
Since $L_k'$ is given by an isomorphism of $\widehat{L}_k$ (which is an $\mathcal{R}$-limit of $H$) restricted to a ball, it is also a restriction of an $\mathcal{R}$-limit of $H$ (we can expand $f_{k,k}$ to an isomorphism of the full operator to get another $\mathcal{R}$-limit). Moreover $L'$ is also an $\mathcal{R}$-limit of $H$ as a limit of restrictions of $\mathcal{R}$-limits, by an argument similar to the argument in the proof of Proposition~\ref{prop:RRsubsetR}.
Now,
\begin{eqnarray*}
&\left\Vert L'\varphi'-\lambda\varphi'\right\Vert=\lim_{\ell\to\infty}\left\Vert\left(L'\varphi'-\lambda\varphi'\right)\Big\vert_{B_{\ell}\left(v_0'\right)}\right\Vert \leq \\
&\leq\lim_{\ell\to\infty}\bigg(\left\Vert\left(L'\varphi'-L'_{k(\ell)}\varphi'\right)\big\vert_{B_{\ell}\left(v_0'\right)}\right\Vert+
\left\Vert L'_{k(\ell)}\left(\varphi'-\psi^{(k(\ell))}\right)\big\vert_{B_\ell\left(v_0'\right)}\right\Vert+\\
&+\left\Vert\left(L'_{k(\ell)}\psi^{(k(\ell))}-\lambda_{k(\ell)}\psi^{(k(\ell))}\right)\big\vert_{B_{\ell}\left(v_0'\right)}\right\Vert+
\left\Vert\left(\lambda_{k(\ell)}\psi^{(k(\ell))}-\lambda_{k(\ell)}\varphi'\right)\big\vert_{B_{\ell}\left(v_0'\right)}\right\Vert+\\
&\left\Vert\left(\lambda_{k(\ell)}\varphi'-\lambda\varphi'\right)\big\vert_{B_{\ell}\left(v_0'\right)}\right\Vert\bigg)\to0,
\end{eqnarray*}
and thus $L'\varphi=\lambda\varphi'$. Additionally $\left\Vert \varphi'\right\Vert _{\infty}\leq C$,
so $\lambda\in\sigma_{\infty}\left(L'\right)$.
\end{proof}
\smallskip{}

\begin{corollary*}
The set $\bigcup_{L\in\mathcal{R}(H)}\sigma_\infty(L)$ is closed.
\end{corollary*}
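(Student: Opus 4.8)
I would prove closedness directly from Proposition~\ref{prop:Rlimseq}, which already does all the heavy lifting. Suppose $\lambda'$ is a limit point of the union, so there is a sequence $\lambda_k \to \lambda'$ with each $\lambda_k \in \bigcup_{L\in\mathcal{R}(H)}\sigma_\infty(L)$. For every $k$ I would choose an $\mathcal{R}$-limit $(L_k,G_k)\in\mathcal{R}(H)$ and a nonzero $\varphi^{(k)}\in\ell^\infty(G_k)$ with $L_k\varphi^{(k)}=\lambda_k\varphi^{(k)}$. The goal is to feed this data into Proposition~\ref{prop:Rlimseq}; its only nontrivial hypothesis is the existence of root vertices $u_k$ at which $\varphi^{(k)}$ is comparable, up to a constant independent of $k$, to its maximum over the ball $B_k(u_k)$.

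\textbf{The key step.} The observation that makes this work is that the required comparison can always be arranged by centering the balls near the supremum of the eigenfunction. Since $\varphi^{(k)}\in\ell^\infty(G_k)$ is nonzero, set $M_k=\|\varphi^{(k)}\|_\infty>0$ and pick any vertex $u_k\in G_k$ with $|\varphi^{(k)}(u_k)|>M_k/2$. Then for every $k$,
\[
\max_{u\in B_k(u_k)}\left|\varphi^{(k)}(u)\right|\leq M_k<2\left|\varphi^{(k)}(u_k)\right|,
\]
so the hypothesis of Proposition~\ref{prop:Rlimseq} holds with the uniform constant $C=2$, and trivially $\varphi^{(k)}(u_k)\neq0$.

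\textbf{Conclusion.} Applying Proposition~\ref{prop:Rlimseq} now yields an $\mathcal{R}$-limit $(H',G')\in\mathcal{R}(H)$ together with a nonzero bounded function $\varphi'\in\ell^\infty(G')$ satisfying $H'\varphi'=\lambda'\varphi'$. Hence $\lambda'\in\sigma_\infty(H')\subseteq\bigcup_{L\in\mathcal{R}(H)}\sigma_\infty(L)$, so the union contains all its limit points and is therefore closed.

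\textbf{On the difficulty.} Essentially all of the real content has been front-loaded into Proposition~\ref{prop:Rlimseq}, which in turn rests on the compactness of the space of rooted bounded-degree graphs and on Proposition~\ref{prop:RRsubsetR} to ensure the diagonal limit remains inside $\mathcal{R}(H)$. The only thing the corollary must supply is the normalization above, and the single point requiring care is that the supremum $M_k$ need not be attained — which is exactly why one selects $u_k$ with value exceeding $M_k/2$ rather than insisting on a genuine maximizer. Beyond that there is no substantive obstacle.
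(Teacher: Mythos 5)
Your proof is correct and is essentially identical to the paper's own argument: both select $u_k$ with $|\varphi^{(k)}(u_k)|$ at least half the sup-norm so that Proposition~\ref{prop:Rlimseq} applies with $C=2$, and then read off $\lambda'\in\sigma_\infty(H')$ for the resulting $\mathcal{R}$-limit. No further comment is needed.
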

\begin{proof}
Assume $\left\{\lambda_n\right\}_{n\in\mathbb{N}}\subset\bigcup_{L\in\mathcal{R}(H)}\sigma_\infty(L)$ and $\lambda_n\to\lambda'$. For any $n\in\mathbb{N}$, let $\left(L_n,G_n\right)\in \mathcal{R}(H)$ and $0\neq\varphi_n\in \ell^\infty(G_n)$ such that $(L_n-\lambda_n)\varphi_n=0$. Denote $u_n\in G_n$ so that
$\varphi_n(u_n)\geq\frac{\left\Vert \varphi_n\right\Vert_\infty}{2}.$
The conditions of Proposition~\ref{prop:Rlimseq} are satisfied with these sequences (and $C=2$) and we get that $\lambda'\in \bigcup_{L\in\mathcal{R}(H)}\sigma_\infty(L)$.
\end{proof}

The next two propositions rely on Proposition~\ref{prop:Rlimseq}. We show that under certain conditions the existence of a generalized eigenfunction of $H$ results in the existence of a bounded generalized eigenfunction of some $\mathcal{R}$-limit of $H$. The first proposition treats bounded generalized eigenfunctions, while in the second proposition they are unbounded.

\begin{prop} \label{prop:bounded}
Assume $\varphi\neq0$ is a bounded generalized eigenfunction of $H$, satisfying $H\varphi=\lambda\varphi$. Then one of the following is satisfied:
\begin{enumerate}
\item[(i)] There exists an $\mathcal{R}$-limit $(H',G')$ of $H$, and a bounded nonzero
function $0\neq\varphi'\in\ell^{\infty}\left(G'\right)$, solving
$H'\varphi'=\lambda\varphi'$.
\item[(ii)] There exist constants $\gamma>1,C>0$ so that $\forall u\in G$, $|\varphi\left(u\right)|\leq C\cdot \gamma^{-\left|u\right|}$.
\end{enumerate}
\end{prop}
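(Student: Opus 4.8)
The plan is to prove a dichotomy: if the exponential-decay alternative (ii) fails, then I can manufacture a bounded nonzero generalized eigenfunction of some $\mathcal{R}$-limit, which is exactly (i). So assume (ii) fails. Negating it, for \emph{every} $\gamma>1$ we have $\sup_{u\in G}|\varphi(u)|\gamma^{|u|}=\infty$; that is, $\varphi$ does not decay exponentially against any weight. The heart of the argument is then to produce a \emph{single} constant $C>0$ and a sequence of vertices $u_k$ with $|u_k|\to\infty$ such that $\varphi(u_k)\neq0$ and $\max_{u\in B_k(u_k)}|\varphi(u)|\le C|\varphi(u_k)|$, i.e.\ centers escaping to infinity around which $\varphi$ grows by at most a fixed factor over the (growing) ball $B_k$.

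Once such centers are available, I would feed the data $L_k=H$, $\varphi^{(k)}=\varphi$, $\lambda_k=\lambda$ together with the vertices $u_k$ into the construction of Proposition~\ref{prop:Rlimseq}: the functions $\psi^{(k)}$ obtained by transporting $\varphi|_{B_k(u_k)}$ to $B_k(v_0')$ and dividing by $\varphi(u_k)$ are bounded by $C$ and equal $1$ at the root, so a subsequence converges on every ball to a function $\varphi'$ with $\|\varphi'\|_\infty\le C$ and $\varphi'(v_0')=1\neq0$; since $u_k\to\infty$ the shifts of $H$ converge (Lemma~\ref{lemma:rlimexists}) to an $\mathcal{R}$-limit $(H',G')$, and the same telescoping estimate as in the proof of Proposition~\ref{prop:Rlimseq} gives $H'\varphi'=\lambda\varphi'$. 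This is precisely alternative (i). Here $L_k=H$ plays the role of the $\mathcal{R}$-limits in Proposition~\ref{prop:Rlimseq}; the only place membership in $\mathcal{R}(H)$ was used there was to identify the limit as an $\mathcal{R}$-limit, which now holds directly because $u_k\to\infty$.

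To build the centers I would run an ascent argument exploiting boundedness of $\varphi$. Fix $C=2$ and set $B=\|\varphi\|_\infty$. For a vertex $u$ write $N_k(u)=\max_{w\in B_k(u)}|\varphi(w)|$, attained at some vertex since $G$ has bounded degree (so $B_k(u)$ is finite), and call $u$ \emph{$k$-good} if $N_k(u)\le C|\varphi(u)|$. If $u$ is not $k$-good and $\varphi(u)\neq0$, pick $\sigma(u)\in B_k(u)$ with $|\varphi(\sigma(u))|=N_k(u)>C|\varphi(u)|$. Iterating $u,\sigma(u),\sigma^2(u),\dots$ multiplies $|\varphi|$ by a factor larger than $C$ at each step while staying $\le B$, so the chain must reach a $k$-good vertex after at most $j\le\log_C\!\big(B/|\varphi(u)|\big)$ steps, each moving a distance $\le k$. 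To start far enough out, I invoke the failure of (ii): for each $k$ put $\gamma_k=C^{1/(2k)}>1$. Since $\sup_u|\varphi(u)|\gamma_k^{|u|}=\infty$ while $|\varphi|\le B$, I can choose $u_0^{(k)}$ with $|u_0^{(k)}|\ge k^2$ and $|\varphi(u_0^{(k)})|\gamma_k^{|u_0^{(k)}|}\ge1$, hence $|\varphi(u_0^{(k)})|\ge\gamma_k^{-|u_0^{(k)}|}$. Running the ascent from $u_0^{(k)}$ produces a $k$-good vertex $u_k$ with $|\varphi(u_k)|\ge|\varphi(u_0^{(k)})|>0$ and total displacement $jk\le k\log_C B+|u_0^{(k)}|/2$, so $|u_k|\ge|u_0^{(k)}|-jk\ge|u_0^{(k)}|/2-k\log_C B\ge k^2/2-k\log_C B\to\infty$. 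These $u_k$ satisfy the requirement above.

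The main obstacle is exactly this center-selection step: obtaining one constant $C$ that controls the growth of $\varphi$ over balls whose radius grows with $k$, while simultaneously forcing the centers to infinity. The ascent trick resolves it by converting ``$\varphi$ keeps growing inside the ball'' into geometric growth of $|\varphi|$, which a bounded function cannot sustain, and the weight $\gamma_k\to1$ calibrated against the ball radius $k$ guarantees the good endpoint does not drift back toward the origin.
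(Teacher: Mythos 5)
Your proof is correct, but it takes a genuinely different route from the paper's. The paper organizes the values of $\varphi$ by annuli: for each fixed $k$ it sets $q_m^{(k)}=\max_{m(k-1)\le|u|<mk}|\varphi(u)|$ and splits into cases according to whether $q_m^{(k)}\to0$; the non-vanishing case is handled directly, and the vanishing case is fed into a sequence lemma (Lemma~\ref{lem:bounded}) whose two alternatives unwind into exponential decay (your alternative (ii)) or, for each $k$, a subsequence of annuli on which $q$ is locally controlled, yielding for each $k$ an $\mathcal{R}$-limit $L^{(k)}$ carrying a generalized eigenfunction controlled by the factor $r$ on the ball of radius $k$; a second application of the compactness machinery (Proposition~\ref{prop:Rlimseq}) across $k$ then finishes. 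You instead argue contrapositively and, for each $k$, manufacture a single $k$-good center directly by a greedy steepest-ascent inside balls of radius $k$: boundedness of $\varphi$ forces the ascent to terminate after at most $\log_C\bigl(B/|\varphi(u_0^{(k)})|\bigr)$ steps, and the calibration $\gamma_k=C^{1/(2k)}$, $|u_0^{(k)}|\ge k^2$, $|\varphi(u_0^{(k)})|\ge\gamma_k^{-|u_0^{(k)}|}$ keeps the terminal vertex at distance at least $k^2/2-k\log_C B\to\infty$ from the root. This removes the need for Lemma~\ref{lem:bounded} and for the intermediate family $\{L^{(k)}\}$ of $\mathcal{R}$-limits: one pass of the compactness argument suffices, run (as you note) with $L_k=H$ rather than with members of $\mathcal{R}(H)$ --- which is legitimate, and is exactly what the paper itself does in its first case, since $|u_k|\to\infty$ makes the limiting operator an $\mathcal{R}$-limit. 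What the paper's version buys is closer fidelity to Simon's one-dimensional argument; what yours buys is a shorter, single-level extraction with the explicit uniform constant $C=2$. The only points to make explicit in a write-up are the passage to a subsequence along which $|u_k|$ is monotone before invoking Lemma~\ref{lemma:rlimexists}, and the observation (which you use implicitly) that the supremum of $|\varphi(u)|\gamma_k^{|u|}$ over $|u|\le k^2$ is finite, so the failure of (ii) really does supply a starting vertex beyond distance $k^2$.
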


\begin{prop} \label{prop:unbounded}
Assume $\varphi$ is an unbounded generalized eigenfunction of $H$, satisfying $H\varphi=\lambda\varphi$. Then one of the following is satisfied:
\begin{enumerate}
\item[(i)] There exists an $\mathcal{R}$-limit $(H',G')$ of $H$, and a bounded nonzero
function $0\neq\varphi'\in\ell^{\infty}\left(G'\right)$, solving
$H'\varphi'=\lambda\varphi'$.
\item[(ii)] There exists a constant $\gamma>1$, so that, $\forall C>0$, $\exists u\in G$, satisfying $|\varphi\left(u\right)|\geq C\cdot \gamma^{\left|u\right|}$.
\end{enumerate}
\end{prop}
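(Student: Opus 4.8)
The plan is to prove the stated dichotomy by assuming that alternative (ii) \emph{fails} and then producing the bounded generalized eigenfunction of an $\mathcal{R}$-limit demanded by (i). Negating (ii) says exactly that $\varphi$ is \emph{sub-exponentially bounded}: for every $\gamma>1$ there is a constant $C_\gamma>0$ with $|\varphi(u)|\le C_\gamma\gamma^{|u|}$ for all $u\in G$. The goal is then to extract from the (unbounded, but sub-exponential) function $\varphi$ a sequence of vertices along which $\varphi$ is ``flat'' on balls of growing radius, so that Proposition~\ref{prop:Rlimseq} applies.

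The device for producing these vertices is an exponentially weighted maximum. For $\gamma>1$ set $\Phi_\gamma(u)=|\varphi(u)|\gamma^{-|u|}$. Taking $\delta=\sqrt{\gamma}\in(1,\gamma)$ in the sub-exponential bound gives $\Phi_\gamma(u)\le C_\delta\,\gamma^{-|u|/2}\to 0$ as $|u|\to\infty$, so $\Phi_\gamma$ is bounded and attains a strictly positive maximum at some vertex $w(\gamma)$; in particular $\varphi(w(\gamma))\neq 0$. Since $\varphi$ is unbounded, $\sup_u\Phi_\gamma(u)\to\infty$ as $\gamma\to 1^+$ (fix a vertex where $|\varphi|$ is large and let $\gamma\to 1$), while on any fixed ball $\sup_{u\in B_{R_0}(v_0)}\Phi_\gamma(u)\le\max_{B_{R_0}(v_0)}|\varphi|$ uniformly in $\gamma$ (because $\gamma^{-|u|}\le 1$); comparing these two facts forces $|w(\gamma)|\to\infty$ as $\gamma\to 1^+$. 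The maximizing property reads $|\varphi(u)|\le|\varphi(w(\gamma))|\,\gamma^{\,|u|-|w(\gamma)|}$ for every $u$, and since $\big||u|-|w(\gamma)|\big|\le\text{dist}(u,w(\gamma))$, restricting to $u\in B_k(w(\gamma))$ yields $\max_{B_k(w(\gamma))}|\varphi|\le\gamma^{k}\,|\varphi(w(\gamma))|$.

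Now fix any constant $C>1$ and set $\gamma_k=C^{1/k}$ and $u_k=w(\gamma_k)$. Then $\gamma_k\to 1$ (so $|u_k|\to\infty$), $\varphi(u_k)\neq 0$, and $\gamma_k^{\,k}=C$, giving precisely $\max_{u\in B_k(u_k)}|\varphi(u)|\le C\,|\varphi(u_k)|$. This is exactly the hypothesis of Proposition~\ref{prop:Rlimseq}, applied with $L_k=H$, $G_k=G$, $\lambda_k=\lambda$ and $\varphi^{(k)}=\varphi$. Its proof applies in this situation essentially verbatim: the limiting operator there is built as a limit of shifts of $L_k=H$, hence is directly an $\mathcal{R}$-limit of $H$ (so one does not even need $\mathcal{R}(\mathcal{R}(H))\subseteq\mathcal{R}(H)$, Proposition~\ref{prop:RRsubsetR}, to close the argument). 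The output is an $\mathcal{R}$-limit $(H',G')$ of $H$ and a nonzero $\varphi'\in\ell^\infty(G')$ with $H'\varphi'=\lambda\varphi'$, which is alternative (i).

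I expect the crux to be the construction in the middle paragraph, namely achieving balance by a \emph{fixed} constant $C$ over balls of radius $k\to\infty$ while simultaneously driving the base points to infinity. The tension is that a slowly growing $\varphi$ need not be locally flat at a prescribed scale; the resolution is the coupling $\gamma_k=C^{1/k}$, which lets the weight $\gamma_k^{-|u|}$ relax to $1$ just slowly enough that $\gamma_k^{\,k}$ stays bounded, yet $\gamma_k\to 1$ still pushes the weighted maximizers out to infinity. Verifying that these maximizers genuinely escape (rather than oscillating among bounded distances) is the one quantitative point, and it rests on the uniform-in-$\gamma$ bound for $\Phi_\gamma$ on fixed balls together with $\sup_u\Phi_\gamma\to\infty$. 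This argument parallels the proof of Proposition~\ref{prop:bounded}, with the decaying weight $\gamma^{-|u|}$ playing the role dual to the growing weight used there.
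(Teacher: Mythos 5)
Your proof is correct, and it reaches the same structural endpoint as the paper --- vertices $u_k\to\infty$ at which $\varphi$ is flat at scale $k$ up to a fixed constant, fed into Proposition~\ref{prop:Rlimseq} --- but the mechanism for locating those vertices is genuinely different. The paper fixes $k$, takes the maxima $q_m^{(k)}$ of $|\varphi|$ over annuli of width $k$, uses the sub-exponential bound to get $q_m^{(k)}<C_{(k)}r^m$, and then invokes the combinatorial Lemma~\ref{lem:unbounded} on real sequences to extract ``record'' indices where consecutive annuli maxima are comparable; this yields, for each $k$ separately, an $\mathcal{R}$-limit $L^{(k)}$ with a controlled eigenfunction, and Proposition~\ref{prop:Rlimseq} is then applied to the resulting sequence $\{L^{(k)}\}_k$ (so the closure under iterated limits enters). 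You instead maximize the exponentially weighted function $\Phi_\gamma=|\varphi|\,\gamma^{-|\cdot|}$ globally; the maximizer automatically satisfies $\max_{B_k}|\varphi|\le\gamma^k|\varphi(w(\gamma))|$ on \emph{every} ball, the coupling $\gamma_k=C^{1/k}$ freezes the constant while the radius grows, and unboundedness of $\varphi$ forces $|w(\gamma)|\to\infty$ as $\gamma\to1^+$. All the quantitative steps check out: the sub-exponential bound at exponent $\sqrt{\gamma}$ guarantees the maximum is attained, the triangle inequality $\bigl||u|-|w(\gamma)|\bigr|\le\mathrm{dist}(u,w(\gamma))$ gives the ball estimate, and the uniform bound of $\Phi_\gamma$ on fixed balls versus $\sup_u\Phi_\gamma\to\infty$ forces escape to infinity. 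Your route is shorter, bypasses Lemma~\ref{lem:unbounded} and the intermediate double limit entirely, and (as you note) applies Proposition~\ref{prop:Rlimseq} with $L_k=H$ so that the limiting operator is an $\mathcal{R}$-limit of $H$ directly --- exactly the variant the paper itself uses in the first case of the proof of Proposition~\ref{prop:bounded}. The paper's annuli bookkeeping is more hands-on but generalizes verbatim to the bounded case (Proposition~\ref{prop:bounded}) by the dual lemma, which is presumably why the authors organized it that way.
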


We start with the proof of Proposition~\ref{prop:bounded}, and first present a lemma which will be useful.

\begin{lemma} \label{lem:bounded}
Let $\{a_n\}_{n=1}^\infty$ be a sequence of positive numbers and let $r>1$. Assume
\[\lim _{n\to\infty} a_n=0.\]
Then either there exists $C>0$ such that for any $n\in\mathbb{N}$
\begin{equation} \label{eq:expdec}
a_{n}<C r^{-n}.
\end{equation}
or there exists a subsequence $\left\{a_{n_k}\right\}_{k=1}^\infty$ such that for any $k\in\mathbb{N}$
\begin{equation} \label{eq:decsubseq}
a_{n_k}>a_{n_k+1} \text{ and } r a_{n_k}\geq a_{n_k-1}.
\end{equation}
\end{lemma}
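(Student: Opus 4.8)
The plan is to renormalize by setting $b_n = a_n r^n$, which turns the exponential-decay alternative into a boundedness statement: \eqref{eq:expdec} holds for some $C>0$ precisely when $b_n = a_n r^n < C$ for all $n$, i.e.\ exactly when $\{b_n\}$ is bounded. A one-line computation shows the two conditions in \eqref{eq:decsubseq} also reformulate cleanly in terms of $b$: since $a_n = b_n r^{-n}$, the inequality $a_{n}>a_{n+1}$ is equivalent to $b_{n+1} < r\,b_n$, and $r a_{n}\ge a_{n-1}$ is equivalent to $b_{n}\ge b_{n-1}$. So the lemma becomes: if $\{b_n\}$ is unbounded, then there are infinitely many indices $n$ with $b_n \ge b_{n-1}$ and $b_{n+1} < r\,b_n$.

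First I would reduce the construction of the subsequence to a single existence statement: it suffices to show that for every $N\ge1$ there exists $n>N$ with $b_n\ge b_{n-1}$ and $b_{n+1}<r b_n$, since one can then select $n_1<n_2<\cdots$ one index at a time. Fixing such an $N$, I argue by contradiction, assuming that for every $n>N$ one has either $b_n<b_{n-1}$ or $b_{n+1}\ge r b_n$.

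The key step is to feed a record of $b$ into this dichotomy. Because $\{b_n\}$ is unbounded and each term is finite, the running maximum tends to infinity, so there are infinitely many strict records; choose one with index $p>N$, meaning $b_p>b_j$ for all $j<p$. Then $b_p>b_{p-1}$, so the first disjunct fails at $n=p$ and we are forced into $b_{p+1}\ge r b_p$; since $r>1$ this makes $p+1$ a record as well, and the same reasoning propagates. By induction $b_{p+j}\ge r^j b_p$ for all $j\ge0$, hence $a_{p+j}=b_{p+j}r^{-(p+j)}\ge b_p r^{-p}=a_p>0$, contradicting $a_n\to0$. This contradiction produces the desired $n>N$, and iterating yields the subsequence. (The boundary bookkeeping is harmless: $n>N\ge1$ forces $n\ge2$, so $a_{n-1}$ always makes sense, and the two alternatives are exhaustive because the bounded case gives \eqref{eq:expdec} outright.)

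The main obstacle — and the reason a direct attempt stalls — is that records of $b$ by themselves only deliver the incoming inequality $b_n\ge b_{n-1}$ (equivalently $r a_n\ge a_{n-1}$), never the outgoing one $b_{n+1}<r b_n$ (equivalently $a_n>a_{n+1}$); one can easily have $b$ jumping up at every record. The resolution is precisely to run the argument by contradiction, so that the \emph{failure} of the outgoing inequality at a record forces indefinite geometric growth of $b$ at rate $r$, which is exactly what $a_n\to0$ forbids.
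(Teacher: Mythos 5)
Your proof is correct. It is a close relative of the paper's argument but organized dually: the paper works with the $a_n$ directly, defines $N_1=\{n \,:\, a_n\ge a_k\ \forall k>n\}$ (future-dominating indices, which supply the inequality $a_n\ge a_{n+1}$ for free), and shows that if the remaining condition $r a_n\ge a_{n-1}$ fails for all large $n\in N_1$, then a \emph{backward} recursion forces $a_{n-k}>r^k a_n$ and hence \eqref{eq:expdec} with an explicit constant $C=a_{n_0}r^{n_0}$. You instead renormalize to $b_n=a_n r^n$, use strict records of $b$ (past-dominating indices, which supply $r a_n\ge a_{n-1}$ for free), and let the failure of the other condition propagate \emph{forward} geometrically, $b_{p+j}\ge r^j b_p$, until it collides with $a_n\to0$. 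Both arguments are sound and of comparable length; your version has the small advantage of delivering the strict inequality $a_{n_k}>a_{n_k+1}$ directly (the paper's $N_1$ only gives $a_n\ge a_{n+1}$), while the paper's version reads off an explicit decay constant rather than concluding by contradiction.
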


We will prove this lemma after proving the proposition.

\begin{proof}[Proof of Proposition~\ref{prop:bounded}]
Fix $k\in\mathbb{N}\backslash\{1\}$ and define
\[
q_{m}=q_{m}^{\left(k\right)}=\max_{m\left(k-1\right)\leq\left|u\right|<mk}\left|\varphi\left(u\right)\right|.
\]
Choose a vertex satisfying $m\left(k-1\right)\leq\left|u_{m}\right|<m k$ and
\[
\left\vert\varphi\left(u_m \right)\right\vert=\max_{m\left(k-1\right)\leq\left|u\right|<m k}\left|\varphi\left(u\right)\right|=q_{m}
\]
and  denote it by $u_m$.

If $q_{m}\nrightarrow0$, let $\left\{q_{m_\ell}\right\}_{\ell\in\mathbb{N}}$ be a subsequence such that $q_{m_\ell}\to q'>0$.
Take $\ell_0\in\mathbb{N}$ so that $q_{m_\ell}>q'/2$ for every $\ell>\ell_0$. Then,
\[
\max_{u\in B_{\ell}\left(u_{m_\ell}\right)}\left|\varphi\left(u\right)\right|\leq \left\Vert \varphi \right\Vert_\infty \leq\frac{2\left\Vert \varphi \right\Vert_\infty}{q'}\cdot\left|\varphi\left(u_{m_\ell}\right)\right|\neq0.
\]
We can now repeat the compactness argument described in the proof of Proposition~\ref{prop:Rlimseq} to get an $\mathcal{R}$-limit (along a subsequence of $\{u_{m_\ell}\}_{l\in\mathbb{N}}$), and a corresponding generalized eigenfunction defined on it  which is bounded and nonzero.

Assume now that $\lim_{m\to\infty}q_{m}=0$,
and apply Lemma~\ref{lem:bounded} with this sequence and some fixed $r>1$ (independent of $k$). Notice that if for some $k\in\mathbb{N}\backslash\{1\}$, there exists $C>0$ such that for any $n\in\mathbb{N}$
\[
q_m^{(k)}<Cr^{-n},
\]
then, for any $u\in G$
\[
\left|\varphi(u)\right|<Cr^{-|u|/(k-1)},
\]
and $\varphi$ is exponentially decaying as in (ii) with $\gamma=r^{1/(k-1)}>1$.
Otherwise, 
we get (for any fixed $k\in\mathbb{N}$) a sequence $\left\{q_{m_i}\right\}_{i=1}^\infty$ satisfying
\[
q_{m_i}>q_{m_i+1} \text{ and } r q_{m_i}\geq q_{m_i-1}.
\]
Denote $\psi_i(u)=\frac{\varphi\left(u\right)}{\left\vert\varphi\left(u_{m_i}\right)\right\vert}$.
By compactness we can choose (for each $k$) a subsequence of $\left\{m_i\right\}_{i\in\mathbb{N}}$  (which we denote again by $\left\{m_i\right\}_{i\in\mathbb{N}}$) and a set of unitary maps $\left\{f_{i,R}\right\}_{i,R\in\mathbb{N}}$, so that both $H_{u_{m_i},i}$ and $\mathcal{I}_{f_{i,R}}\psi_i$ converge to an $\mathcal{R}$-limit $\left(L^{\left(k\right)},G_k, v_0^{(k)}\right)$ and a generalized eigenfunction $\varphi^{\left(k\right)}$ defined on $G_k$. Moreover, $\varphi^{\left(k\right)}$ satisfies both
\begin{equation*}
\max_{u\in B_{k}\left(v_0^{(k)}\right)}\left|\varphi^{\left(k\right)}\left(u\right)\right|\leq r \cdot\left|\varphi^{\left(k\right)}\left(v_0^{(k)}\right)\right|,
\end{equation*}
and $L^{\left(k\right)}\varphi^{\left(k\right)}=\lambda\varphi^{\left(k\right)}$.
We now use {Proposition~\ref{prop:Rlimseq}} with the sequences $\left\{L^{(k)}\right\}_{k\in\mathbb{N}}$, $\left\{\varphi^{(k)}\right\}_{k\in\mathbb{N}}$ and $\lambda_k\equiv\lambda$ to complete the proof and get a bounded generalized eigenfunction of some $\mathcal{R}$-limit of $H$.
\end{proof}

\begin{proof}[Proof of Lemma~\ref{lem:bounded}]
Define the set,
\[
N_1=\left\{ n\in\mathbb{N}\,|\,a_{n}\geq\max_{k>n}a_{k}\right\}.
\]
Since $a_n\to0$, $N_1$ is infinite. Define further,
\[
N_2=N_1\cap\left\{ n\in\mathbb{N}\,|\,a_{n-1}\leq r a_{n}\right\}.
\]
If $N_2$ is infinite we get a subsequence satisfying \eqref{eq:decsubseq}. Otherwise, there exists $n_0\in\mathbb{N}$ so that for any $n_0<n\in N_1$
\[
a_n < r a_n < a_{n-1}.
\]
Thus also $a_{n-1}\in N_1$, and we get that $a_{n-2}>r a_{n-1}>r^2 a_n$. Recursively for any natural $k< n-n_0$, we get that $n-k>n_0\in N_1$ and
\[
a_{n-k}> r^k a_n.
\]
Since $N_2$ is finite we conclude that $\left\{a_n\right\}_{n\in\mathbb{N}}$ satisfies exponential decay \eqref{eq:expdec},
\[
a_n < a_{n_0} r^{n_0} r^{-n}.
\]
\end{proof}

Next we prove Proposition~\ref{prop:unbounded}, and again begin by presenting a useful lemma.

\begin{lemma} \label{lem:unbounded}
Let $\{a_n\}_{n=1}^\infty$ be an unbounded sequence of real numbers. Assume the existence of $r>1,\ C>0$ such that $\forall n\in\mathbb{N}$,
\begin{equation}
\label{eq:expbound}
a_n<C r^n.
\end{equation}
Then, for any $\gamma>r$ there exists a subsequence $\left\{a_{n_k}\right\}_{k=1}^\infty$ such that for any $k\in\mathbb{N}$ both
\[
a_{n_k}>a_{n_k-1} \text{ and } \gamma a_{n_k}\geq a_{n_k+1}.
\]
\end{lemma}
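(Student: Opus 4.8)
The plan is to mirror the proof of Lemma~\ref{lem:bounded}, replacing the descending records used there with \emph{ascending} records. I would define
\[
M_1=\left\{n\geq 2 \,\middle|\, a_n> a_m \ \text{ for all } m<n\right\},
\]
the set of indices at which the sequence attains a new strict maximum. Since $\{a_n\}$ is unbounded (here necessarily unbounded from above, which is the relevant case: in the application of Proposition~\ref{prop:unbounded} the $a_n$ are suprema of $\left|\varphi\right|$ over spheres, hence nonnegative), the running maxima tend to $+\infty$, so $M_1$ is infinite and $a_n\to+\infty$ along $M_1$; in particular $a_n>0$ for all large $n\in M_1$. For every $n\in M_1$ the first required inequality $a_n>a_{n-1}$ holds automatically, since $n-1<n$.

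I would then set
\[
M_2=\left\{n\in M_1 \,\middle|\, \gamma a_n\geq a_{n+1}\right\},
\]
and take the desired subsequence $\{n_k\}_{k=1}^\infty$ to be an enumeration of $M_2$. It therefore suffices to prove that $M_2$ is infinite. Suppose, for contradiction, that $M_2$ is finite; then there exists $n_0\in\mathbb{N}$ such that $a_{n+1}>\gamma a_n$ for every $n\in M_1$ with $n>n_0$.

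The crux of the argument is a self-propagating cascade. Pick any $n\in M_1$ with $n>n_0$ and $a_n>0$ (such $n$ exists since $a_n\to+\infty$ along $M_1$). Because $\gamma>1$, the inequality $a_{n+1}>\gamma a_n>a_n$ shows that $a_{n+1}$ exceeds every earlier term, so $n+1\in M_1$ as well; since $n+1>n_0$ we may iterate, obtaining $m\in M_1$ and $a_{m+1}>\gamma a_m$ for \emph{every} $m\geq n$. Consequently $a_{n+j}>\gamma^{\,j}a_n$ for all $j\geq0$. This is exactly where the hypothesis $\gamma>r$ enters: since $\gamma^{\,j}a_n=(\gamma/r)^{j}a_n\,r^{\,j}$ and $(\gamma/r)^{j}a_n\to\infty$, we obtain $a_{n+j}>Cr^{\,n+j}$ for all sufficiently large $j$, contradicting \eqref{eq:expbound}. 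Hence $M_2$ is infinite, and enumerating it yields the required subsequence. The only genuinely delicate points are the bookkeeping around positivity and the reading of ``unbounded'' as unbounded from above; once these are settled, the record-propagation step is the heart of the matter, and it is precisely the gap $\gamma>r$ between the permitted growth rate and the target rate that makes the exponential comparison go through.
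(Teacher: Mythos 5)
Your proof is correct and follows essentially the same route as the paper's: define the set of record indices, intersect with the indices where $\gamma a_n\geq a_{n+1}$, and derive a contradiction with \eqref{eq:expbound} from the self-propagating cascade $a_{n+j}>\gamma^{\,j}a_n$ when that intersection is assumed finite. If anything, you are slightly more careful than the paper on the two points you flag yourself (strictness of the record condition, which delivers $a_{n_k}>a_{n_k-1}$ rather than $\geq$, and the positivity of $a_n$ needed for $\gamma a_n>a_n$), so no changes are needed.
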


\begin{proof}[Proof of Proposition~\ref{prop:unbounded}]
Fix $k\in\mathbb{N}$ and define for every $m\in\mathbb{N}$
\begin{equation*}
q_{m}=q_{m}^{\left(k\right)}=\max_{m\left(k-1\right)\leq\left|u\right|<mk}\left|\varphi\left(u\right)\right|.
\end{equation*}
Due to the assumption $q_{m}$ is unbounded. We proceed by assuming that (ii) is not satisfied and proving (i). By the assumption for all $\gamma>1$, $\exists C>0$ so that for all $u\in G$ we have $\left|\varphi(u)\right|<C\cdot \gamma^{|u|}$. Fix some $r>1$, and take $\gamma=r^{1/k}$. We get that, for every $k,m\in\mathbb{N}$,
\[q_m^{(k)}<C_{(k)}\cdot\gamma^{mk}=C_{(k)}\cdot r^m.\]
Fix $k$ and note that by Lemma~\ref{lem:unbounded} we get a subsequence $\left\{q_{m_i}\right\}_{i\in\mathbb{N}}$, on which, for every $\gamma'>r$,
\[
q_{m_i}>q_{m_i-1} \text{ and } \gamma' q_{m_i}\geq q_{m_i+1}.
\]

Choose a vertex satisfying $\left(m_i-1\right)k\leq\left|u_i\right|<m_i k$ and
\[
\left\vert\varphi\left(u_i\right)\right\vert=\max_{\left(m_i-1\right)k\leq\left|u\right|<m_i k}\left|\varphi\left(u\right)\right|\neq0
\]
and denote it by $u_i$.
Denote $\psi_i(u)=\frac{\varphi\left(u\right)}{\left\vert\varphi\left(u_i\right)\right\vert}$.
By compactness we can choose (for each $k$) sub-sequences of $\left\{m_i\right\}_{i\in\mathbb{N}}$ and $\left\{u_i\right\}_{i\in\mathbb{N}}$ (which we denote again by $\left\{m_i\right\}_{i\in\mathbb{N}}$ and $\left\{u_i\right\}_{i\in\mathbb{N}}$) and a set of unitary maps $\left\{f_{i,R}\right\}_{i,R\in\mathbb{N}}$,
so that both $H_{u_i,i}$ and $\mathcal{I}_{f_{i,R}}\psi_i$ converge to an $\mathcal{R}$-limit $\left(L^{\left(k\right)},G_k,v_0^{(k)}\right)$ and a generalized eigenfunction $\varphi^{\left(k\right)}$ defined on $G_k$. Moreover, $\varphi^{\left(k\right)}$ satisfies,
\begin{equation*}
\max_{u\in B_{k}\left(v_0^{(k)}\right)}\left|\varphi^{\left(k\right)}\left(u\right)\right|\leq \gamma'\cdot\left|\varphi^{\left(k\right)}\left(\widetilde{v}_0\right)\right|,
\end{equation*}
We now use {Proposition~\ref{prop:Rlimseq}} with the sequences $\left\{L^{(k)}\right\}_{k\in\mathbb{N}}$, $\left\{\varphi^{(k)}\right\}_{k\in\mathbb{N}}$ and $\lambda_k\equiv\lambda$ to complete the proof and get a bounded generalized eigenfunction of some $\mathcal{R}$-limit of $H$.
\end{proof}

\begin{proof}[Proof of Lemma~\ref{lem:unbounded}]
Define,
\[
N_1=\left\{ n\in\mathbb{N}\,|\,a_{n}\geq\max_{m\leq n}a_{m}\right\} .
\]
Since $a_{n}$ is unbounded,  $\left|N_1\right|=\infty$.
Define further,
\[
N_2=N_1\cap\left\{ n\in\mathbb{N}\,|\,a_{n+1}\leq \gamma a_{n}\right\}.
\]
If $\left|N_2\right|<\infty$, there exists an $n_{0}$ so that for
every $n\geq n_{0}$, $a_{n+1}> \gamma a_{n}$ and in particular $a_{n+1}\geq a_n \geq max_{m\leq n} a_m$. Thus $n+1\in N_1$ and $a_{n+2}>\gamma a_{n+1}>\gamma^2 a_n$, thus also $n+2\in N_1$.
Inductively, for every $n>n_{0}$, $n\in N_1$ and $a_{n}>\gamma^{n-n_{0}}a_{n_{0}}$
contradicting the assumption that $a_n<C r^n$. Thus $\left|N_2\right|=\infty$.
\end{proof}

\subsection{Proof of Theorem~\ref{thm:sigmainfty}}

The proof of Theorem~\ref{thm:sigmainfty} is based on Propositions \ref{prop:bounded} and \ref{prop:unbounded}. Given $\lambda\in\sigma_{\text ess}(H)$ we use the reverse Shnol's property (Theorem~\ref{thm:revSch}) to obtain a sequence of orthogonal generalized eigenfunctions $\left\{\varphi^{(n)}\right\}_{n=1}^\infty$ satisfying
\begin{equation} \label{eq:gefseq}
H\varphi^{(n)}=\lambda_n\varphi^{(n)},
\end{equation}
and $\lambda_n\to\lambda$.
Next, we will have to use the growth property of the graph $G$ to treat the second cases in Propositions~\ref{prop:bounded},~\ref{prop:unbounded} and complete the proof of the theorem.

\begin{proof}[Proof of Theorem~\ref{thm:sigmainfty}]
Let $\lambda\in\sigma_{\text ess}(H)$. Then, either
\begin{itemize}
\item[(a)] There exists a sequence $\left\{\lambda_n\right\}_{n=1}^\infty\subset\sigma(H)$ so that $\lambda_n\to\lambda$. In this case we get from Theorem~\ref{thm:revSch} a sequence of generalized eigenfunctions satisfying both \eqref{eq:gefseq} and \eqref{eq:graphefGrowth}.
\item[(b)] It is an eigenvalue of infinite multiplicity. Thus we have a sequence of orthonormal eigenfunctions satisfying \eqref{eq:gefseq} with $\lambda_n\equiv\lambda$.
\end{itemize}

Notice that by Proposition~\ref{prop:Rlimseq} it is enough to show the existence of a sequence of bounded generalized eigenfunctions of $\mathcal{R}$-limits of $H$ with corresponding ``eigenvalues" converging to $\lambda$.

We begin with case (a) and assume that infinitely many $\varphi^{(n)}$ are unbounded. Since the graph $G$ is of sub-exponential growth, and by \eqref{eq:graphefGrowth}, we get that for any $n\in\mathbb{N}$ also $\varphi^{(n)}$ is sub-exponentially bounded. Thus for any such $\varphi^{(n)}$ the situation (ii) of Proposition~\ref{prop:unbounded} is impossible. Consequently we get a corresponding sequence of bounded generalized eigenfunctions of $\mathcal{R}$-limits of $H$ with "eigenvalues" converging to $\lambda$. Thus, by Proposition~\ref{prop:Rlimseq} there exists an $\mathcal{R}$-limit $(H',G')$ of $H$ so that $\lambda\in\sigma_{\infty}(H')$.

Otherwise, either if only a finite number of $\varphi^{(n)}$ are unbounded or in case (b),  $\varphi^{(n)}\in\ell^\infty(G)$ for any $n>n_0$ and we can use Proposition~\ref{prop:bounded}. If the situation (i) of this proposition occurs infinity many times we get (again using Proposition~\ref{prop:Rlimseq}) a bounded generalized eigenfunction of some $\mathcal{R}$-limit.
The last possible case is: the circumstance (ii) of Proposition~\ref{prop:bounded} is satisfied for any ${n>n_0}$. In this case, since $G$ is of sub-exponential growth, $\varphi^{(n)}$ is an eigenfunction, and we can assume $\left\Vert\varphi^{(n)}\right\Vert_2=1$.
For any $n,m\in\mathbb{N}$ denote
\[p^{(n)}_m=\max_{|u|=m}{\left\vert\varphi^{(n)}(u)\right\vert},\]
and choose a vertex $u^{(n)}_m$ such that $\left\vert u^{(n)}_m\right\vert=m$ and $\left\vert\varphi^{(n)}\left( u^{(n)}_m\right)\right\vert=p^{(n)}_m$. Since $\varphi^{(n)}\in\ell^2(G)$, $\lim_{m\to\infty}p^{(n)}_m=0$ so there exists some $m_n\in\mathbb{N}$ such that $p^{(n)}_{m_n}=\left\Vert\varphi^{(n)}\right\Vert_\infty$, and additionally $p^{(n)}_{m_n}>p^{(n)}_{m}$ for every $m>m_n$.

Now, if $m_n\to\infty$ we can repeat the compactness argument with $\left\{\nicefrac{\varphi^{(n)}}{p_{m_n}^{(n)}}\right\}_{n\in\mathbb{N}}$ along $\left\{u_{m_n}^{(n)}\right\}_{n\in\mathbb{N}}$ to get an $\mathcal{R}$-limit $(H',G')$ with $\lambda\in\sigma_\infty(H')$.
Otherwise we can assume that ${M=\sup_n m_n<\infty}$. We denote
\[C_n=\max_{m\leq M} p_m^{(n)}=\left\Vert\varphi^{(n)}\right\Vert_\infty.\]
Notice that the sequence $\left\{\varphi^{(n)}\right\}$ is a sequence of $\left|\lambda-\lambda_n\right|$-approximate eigenfunctions for $\lambda$. Thus, since $\lambda\in\sigma_{\text ess}(H)$, the eigenfunctions $\varphi^{(n)}$ should converge weekly to zero and consequently $C_n\to0$.
We can now repeat the argument of the proof of Proposition~\ref{prop:bounded} to conclude the existence of an $\mathcal{R}$-limit as required, unless there exist some $M_1, N_1\in\mathbb{N}$ and $\gamma>1$ so that for any $n>N_1, m>M_1$
\[
p_m^{(n)}\leq C_n \gamma^{-(m-M_1)}.
\]
In this case we get by the sub-exponential growth of $G$ that
\[
\left\Vert\varphi^{(n)}\right\Vert_2\leq\sum_{m\geq M_1}\left|p^{(n)}_m\right|^2 S_{v_0}(m)+ M_1 C_n \to 0,
\]
in contradiction to ${\left\Vert\varphi^{(n)}\right\Vert_2=1}$.
\end{proof}

\section[A growth condition for generalized eigenfunctions]{A growth condition for generalized eigenfunctions on graphs}\label{sec:RSproperty}

This section is devoted to investigating the possible generalization of what we call reverse Shnol's Theorem to graphs.
We shall prove Theorem~\ref{thm:revSch}, which we repeat here:

\begin{namedtheorem}[Theorem~\ref{thm:revSch}] Let $(G,v_0)$ be a rooted $($infinite$)$ graph of bounded degree and $H$ a bounded Schr\"odinger operator on $\ell^{2}\left(G\right)$. Assume $\omega\in\ell^2(G)$ is real and positive $($i.e.\ $\omega(v)>0\,\, \forall v\in G)$. Let $\mu$ be a spectral measure for $H$. Then for $\mu$-a.e.\ $\lambda\in\sigma\left(H\right)$ there exists a generalized eigenfunction $\varphi=\varphi\left(v\right)$ satisfying $H\varphi=\lambda\varphi$,
and additionally
\[\varphi(\cdot)\omega(\cdot)\in\ell^2(G).\]
\end{namedtheorem}

We proceed with the proof. We begin by citing a proposition from Poerschke-Stolz-Weidmann \cite{PoStWe89} that will be essential for the proof.
The setting includes a separable Hilbert space $\mathcal{H}$ and a selfadjoint operator $T$ in $\mathcal{H}$ with $T\geq 1$. Denote by $D(T)$ the domain of $T$, and by $\mathcal{H}_+(T)$ the Hilbert space $D(T)$ with the inner product $\langle u,v\rangle_+=\langle Tu,Tv\rangle$. Define further $\mathcal{H}_-(T)$ to be the completion of $\mathcal{H}$ with the inner product $\langle f,g\rangle_-=\langle T^{-1}f,T^{-1}g\rangle$ (defined since $T\geq 1$ and thus $T^{-1}>0$).

Given a selfadjoint operator $K$ on $\mathcal{H}$, and a spectral measure $\mu$ for $K$, then (see e.g., \cite[Lemma~2]{PoStWe89}) there exists a $\mu$-\emph{spectral representation} for $K$, 
\[
U=(U_j):\mathcal{H}\to\oplus_{j=1}^N L^2(\mathcal{M}_j,d\mu),
\]
\[
U\psi=(U_j\psi)_{j=1,\ldots,N},
\]
where $N\in\mathbb{N}\cup\{\infty\}$, the sets $\mathcal{M}_j\subseteq\mathbb{R}$ are $\mu$-measurable with $\mathcal{M}_{j+1}\subset \mathcal{M}_j$ and such that the spectral multiplicity of $K$ on $\mathcal{M}_{j}\backslash \mathcal{M}_{j+1}$ is $j$ (in the case that $\mu\left(\mathcal{M}_{j}\backslash \mathcal{M}_{j+1}\right)>0$, including $j=0$ with $\mathcal{M}_0=\mathbb{R}$). Moreover, 
\[
UKU^{-1}=M_{\text id},
\] 
where $M_{\text id}$ is the multiplication operator with the function ${\text id}(x)=x$ (and $M_g h= g\cdot h$).

\begin{prop}[{\cite[Theorem 1]{PoStWe89}}]\label{prop:PSW}
Let $K$ be a selfadjoint operator in $\mathcal{H}$ and $\mu$ a spectral measure for $K$. Let $U$ be a $\mu$-spectral representation of $K$. Suppose there is a bounded continuous function $\gamma:\mathbb{R}\to\mathbb{C}$ with $|\gamma|>0$ on $\sigma(K)$ such that $\gamma(K)T^{-1}$ is a Hilbert-Schmidt operator. Define, $\mathcal{E}=\left\{f\in \mathcal{H}_+(T):\, Kf\in \mathcal{H}_+(T)\right\}$. Then there exist $\mu$-measurable functions $\varphi_j: \mathcal{M}_j\to \mathcal{H}_-(T),\, j=1,2,\ldots$, such that \\
\begin{itemize}
\item[a)] $(U_j f)(\lambda)=\langle\varphi_j(\lambda),f\rangle$ for $f\in\mathcal{H}_+(T)$ and $\mu$-a.e.\ $\lambda\in \mathcal{M}_j$.
\item[b)] $\langle\varphi_j(\lambda),Kf\rangle=\lambda\langle\varphi_j(\lambda),f\rangle$ for $f\in\mathcal{E}$ and $\mu$-a.e.\ $\lambda\in \mathcal{M}_j$.
\end{itemize}
\end{prop}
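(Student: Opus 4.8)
The plan is to realize the $\varphi_j(\lambda)$ as the ``kernel'' of the spectral transform $U_j$ restricted to $\mathcal{H}_+(T)$, using the Hilbert--Schmidt hypothesis to convert an a priori merely bounded map into one carrying an honest $\mathcal{H}$-valued kernel. First I would record the duality built into the rigging $\mathcal{H}_+(T)\subset\mathcal{H}\subset\mathcal{H}_-(T)$: since $T\geq 1$ is self-adjoint, $T$ extends to unitaries $\mathcal{H}_+(T)\to\mathcal{H}$ and $\mathcal{H}\to\mathcal{H}_-(T)$ (with $T^{-1}$ giving the inverses), and the $\mathcal{H}$-inner product extends to a pairing $\langle w,f\rangle=\langle T^{-1}w,Tf\rangle_{\mathcal{H}}$ on $\mathcal{H}_-(T)\times\mathcal{H}_+(T)$ with $|\langle w,f\rangle|\leq\|w\|_-\|f\|_+$. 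In particular every $f\in\mathcal{H}_+(T)$ is of the form $f=T^{-1}g$ for a unique $g\in\mathcal{H}$.

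The key reduction is to move the spectral parameter out using the intertwining of $U$. Since $UKU^{-1}=M_{\mathrm{id}}$, the functional calculus gives $U_j\,\gamma(K)=M_{\gamma}U_j$ componentwise, so that $(U_jT^{-1}g)(\lambda)=\gamma(\lambda)^{-1}\,(U_j\,\gamma(K)T^{-1}g)(\lambda)$ for $\mu$-a.e.\ $\lambda$; the factor $\gamma(\lambda)^{-1}$ is well defined $\mu$-a.e.\ because $\mu$ is carried by $\sigma(K)$, where $|\gamma|>0$. Because $\gamma(K)T^{-1}$ is Hilbert--Schmidt and each $U_j$ is a contraction (as $U$ is unitary onto the direct sum), the operator $U_j\,\gamma(K)T^{-1}:\mathcal{H}\to L^2(\mathcal{M}_j,d\mu)$ is itself Hilbert--Schmidt.

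I would then invoke the kernel representation of Hilbert--Schmidt operators into $L^2$: a Hilbert--Schmidt map $A:\mathcal{H}\to L^2(X,\mu)$ admits a measurable $x\mapsto k(x)\in\mathcal{H}$ with $\int_X\|k(x)\|_{\mathcal{H}}^2\,d\mu<\infty$ and, for each fixed $g\in\mathcal{H}$, $(Ag)(x)=\langle k(x),g\rangle_{\mathcal{H}}$ for $\mu$-a.e.\ $x$. Applied to $A=U_j\,\gamma(K)T^{-1}$ this yields a measurable kernel $k_j(\cdot)\in\mathcal{H}$, and combining with the previous display gives, for $f=T^{-1}g\in\mathcal{H}_+(T)$,
\[
(U_jf)(\lambda)=\tfrac{1}{\gamma(\lambda)}\langle k_j(\lambda),Tf\rangle_{\mathcal{H}}=\langle\varphi_j(\lambda),f\rangle,\qquad \varphi_j(\lambda):=\overline{\gamma(\lambda)}^{\,-1}\,T\,k_j(\lambda)\in\mathcal{H}_-(T),
\]
where the conjugate is dictated by conjugate-linearity of the pairing in its first slot and the definition $\langle\varphi_j(\lambda),f\rangle=\langle T^{-1}\varphi_j(\lambda),Tf\rangle_{\mathcal{H}}$. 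Measurability of $\lambda\mapsto\varphi_j(\lambda)$ follows from that of $k_j$ together with continuity of $1/\gamma$ and of the unitary $T:\mathcal{H}\to\mathcal{H}_-(T)$; this is exactly assertion (a) (the null set depending on $f$, as the statement permits). Assertion (b) is then formal: for $f\in\mathcal{E}$ both $f$ and $Kf$ lie in $\mathcal{H}_+(T)$, so (a) applies to each, and $U_jK=M_{\mathrm{id}}U_j$ gives $\langle\varphi_j(\lambda),Kf\rangle=(U_jKf)(\lambda)=\lambda\,(U_jf)(\lambda)=\lambda\,\langle\varphi_j(\lambda),f\rangle$ for $\mu$-a.e.\ $\lambda$.

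The main obstacle is the Hilbert--Schmidt kernel step, and this is precisely where the hypothesis that $\gamma(K)T^{-1}$ is Hilbert--Schmidt is indispensable: the bare transform $U_j$ has no pointwise kernel, and only after regularizing by $\gamma(K)T^{-1}$ does one obtain a genuine measurable $\mathcal{H}$-valued function $k_j(\lambda)$ whose associated functional is bounded for a.e.\ $\lambda$, so that ``evaluation at $\lambda$'' becomes a legitimate element of $\mathcal{H}_-(T)$. The remaining bookkeeping---the multiplicity decomposition into the nested sets $\mathcal{M}_j$ and the separate measurable choice of kernel on each component---is handled componentwise and presents no essential difficulty; if one instead wanted a single full-measure set valid for all $f$ simultaneously, a separability argument over a countable $\|\cdot\|_+$-dense subset would be required, but the per-$f$ almost-everywhere formulation above already suffices for the stated conclusions (a) and (b).
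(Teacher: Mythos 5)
This proposition is imported by the thesis from \cite{PoStWe89} without proof, so there is no internal argument to compare against; judged on its own, your proof is correct and is essentially the argument of the cited reference. The chain ``$U_j\gamma(K)T^{-1}$ is Hilbert--Schmidt, hence admits a measurable $\mathcal{H}$-valued kernel $k_j(\lambda)$ with $(Ag)(\lambda)=\langle k_j(\lambda),g\rangle$ for each fixed $g$, then undo the regularization by the $\mu$-a.e.\ nonvanishing factor $\gamma(\lambda)$ and transport the kernel into $\mathcal{H}_-(T)$ via the unitary extension of $T$'' is exactly the Carleman-operator mechanism underlying \cite[Theorem 1]{PoStWe89}, and your derivation of (b) from (a) together with $U_jK=M_{\mathrm{id}}U_j$ on $\mathcal{E}$ is the standard one.
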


\begin{proof}[Proof of Theorem~\ref{thm:revSch}]
Let $\Cc_c(G)\subseteq\ell^2(G)$ denote the linear subspace of functions with finite support.
Define the operator $T:\Cc_c(G)\to\Cc_c(G)$ by $u\mapsto \Vert\omega\Vert_{\infty}\cdot \omega^{-1}\cdot u$. Define further $\|u\|_T:= \sqrt{\|u\|^2+\|Tu\|^2}$, and the associated closed operator on $\overline{\Cc_c(G)}^{\|\cdot\|_T}$ for which we use the same symbol $T$.
Since $\omega^{-1}\geq \frac{1}{\Vert\omega\Vert_{\infty}}$, we have $T\geq1$.
Thus $T^{-1}>0$ exists and is defined on $\ell^2(G)$.
Moreover, we claim that $T$ is selfadjoint. Since $T$ is symmetric and densely defined on $\mathcal{H}$ (e.g.\ on compactly supported functions) it is enough to show that $D(T)=D(T^*)$. Indeed, by Cauchy-Schwartz inequality
\[
|(T\psi,\phi)|^2\leq \Vert T\phi\Vert^2 \,\Vert \psi\Vert^2,
\]
and thus any $\phi\in D(T)$ is also in $D(T^*)$. On the converse, if $\phi\notin D(T)$ the linear functional $\psi\to (T\psi,\phi)$ is not bounded and thus also $\phi\notin D(T^*)$.

We will apply Proposition~\ref{prop:PSW} with the space $\mathcal{H}=\ell^2(G)$ with our Schr\"odinger operator shifted $K=H+\Vert H\Vert +1$, and the operator $T$ defined above.
We have $\Hh_+=D(T)=\overline{\Cc_c(G)}^{\|\cdot\|_T}$  as sets and $\Hh_-:=\overline{\ell^2(G)}^{\|\cdot\|_-}$ where $\|u\|_-:= \|T^{-1}u\|$, and
$$
\Hh_+\subset \ell^2(G)\subset \Hh_- \,.
$$

Let $U=(U_j)$ be a $\mu$-spectral representation of $K$ with measurable $\mathcal{M}_j\subseteq\mathbb{R}$ for $1\leq j\leq N\in\mathbb{N}\cup\{\infty\}$ satisfying $\mathcal{M}_j\supseteq \mathcal{M}_{j+1}$, see \cite{PoStWe89,BdmSt03} for details. Specifically, $U=(U_j):\ell^2(G)\to \oplus_{j=1}^N L^2(\mathcal{M}_j,d\mu)$ satisfies $U g(H) = M_g U$ for every measurable function $g$.

As consequence of Lemma~\ref{lem-HS}, given below, and Proposition~\ref{prop:PSW} (with $\gamma(x)=x$) there exist $\mu$-measurable functions $\varphi_j:\mathcal{M}_j\to\Hh_-$ for $1\leq j\leq N$ such that
\begin{equation}\label{eq:Kpsilambdapsi}
\langle K\psi, \varphi_j(\lambda)\rangle
	= \lambda \langle \psi,\varphi_j(\lambda)\rangle
\end{equation}
for $\mu$-a.e.\ $\lambda\in \mathcal{M}_j$ and all $\psi\in\Ee:=\{ \psi'\in D(K)\cap \Hh_+ \;:\; K\psi'\in\Hh_+\}$.

Note that for every $\lambda\in\sigma(K)$ and $\delta>0$, $\mu\left(\lambda-\delta,\lambda+\delta\right)>0$. Thus the spectral multiplicity of $K$ on $\left(\lambda-\delta,\lambda+\delta\right)$ is greater than zero. Hence $\left(\lambda-\delta,\lambda+\delta\right)\cap \mathcal{M}_1 \neq \emptyset$, and $\lambda\in \overline{\mathcal{M}_1}$. Thus 
\[\sigma(K)=\overline{\mathcal{M}_1}.\]

Thus, the case $j=1$ of \eqref{eq:Kpsilambdapsi} will be sufficient for us. 
As a consequence of Lemma~\ref{lem-OpCor} given below we have that
\[
\langle K\psi, \varphi_1(\lambda)\rangle
	= \lambda \langle \psi,\varphi_1(\lambda)\rangle
\]
for all $\psi\in\Cc_c(G)$  and $\mu$-a.e.\ $\lambda\in\mathcal{M}_1$. Let $A\subseteq \mathcal{M}_1$ be of full measure ($\mu(A)=1$) such that the previous identity holds. Since $\Cc_c(G)\subseteq \ell^2(G)$ is dense, $\varphi_1(\lambda)$ is a generalized eigenfunction of $K$ for $\lambda\in A$. We know that $\varphi_1(\lambda)\in\Hh_-$ for all $\lambda\in A$. Recall that $\Hh_-:=\overline{\ell^2(G)}^{\|\cdot\|_-}$ where $\|u\|_-:= \|T^{-1}u\|$. Hence, $\|T^{-1}\varphi_1(\lambda)\|<\infty$ for $\lambda\in A$. Since
\[
\|T^{-1}\varphi_1(\lambda)\|^2 = \sum_{v\in V} \left(\varphi_1(\lambda)(v)\omega(v)\right)^2\,,
\]
this means $\varphi_1(\lambda)(v) \omega(v)\in\ell^2(G)$ for all $\lambda\in A$. Since $\ell^2\subseteq\ell^{\infty}$ we get that
 there is a constant $c(\lambda)>0$ such that $|\varphi_1(\lambda)(v)|\leq c(\lambda) \omega(v)^{-1}$ for all $\lambda\in A$. Since $\mu(A)=1$, we have proven the desired result.
\end{proof}

Using the notations of Theorem~\ref{thm:revSch}, the following lemmas hold.  
\begin{lemma}
\label{lem-HS}
The operator $K T^{-1}$ is a Hilbert-Schmidt operator.
\end{lemma}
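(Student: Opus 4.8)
The plan is to show directly that $KT^{-1}$ is Hilbert--Schmidt by computing the Hilbert--Schmidt norm in the natural orthonormal basis of $\ell^2(G)$ given by the delta functions $\{\delta_v\}_{v\in V(G)}$, and exploiting both the boundedness of $H$ (hence of $K$) and the assumption $\omega\in\ell^2(G)$.

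First I would recall the definitions. Since $K=H+\|H\|+1$ is bounded and $T$ acts as multiplication by $\|\omega\|_\infty\cdot\omega^{-1}$, its inverse $T^{-1}$ acts as multiplication by $\|\omega\|_\infty^{-1}\cdot\omega$, so that $T^{-1}\delta_v=\|\omega\|_\infty^{-1}\,\omega(v)\,\delta_v$. The Hilbert--Schmidt norm is
\[
\|KT^{-1}\|_{HS}^2=\sum_{v\in V(G)}\|KT^{-1}\delta_v\|^2
=\|\omega\|_\infty^{-2}\sum_{v\in V(G)}\omega(v)^2\,\|K\delta_v\|^2.
\]
The next step is to bound $\|K\delta_v\|$ uniformly in $v$. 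Because $G$ has bounded degree (say degree at most $d$) and $Q$ is bounded, the operator $K$ has a uniformly bounded number of nonzero entries in each column with uniformly bounded size: indeed $K\delta_v$ is supported on $v$ and its neighbours, and each matrix entry is bounded by a constant $M_0$ depending only on $\|H\|$, $\|Q\|_\infty$, $\|H\|$-shift. Concretely $\|K\delta_v\|^2\le (d+1)M_0^2=:M_1$ uniformly in $v$.

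Combining these, I would conclude
\[
\|KT^{-1}\|_{HS}^2\le \|\omega\|_\infty^{-2}\,M_1\sum_{v\in V(G)}\omega(v)^2
=\|\omega\|_\infty^{-2}\,M_1\,\|\omega\|_2^2<\infty,
\]
using the hypothesis $\omega\in\ell^2(G)$. Hence $KT^{-1}$ is Hilbert--Schmidt. I do not expect any genuine obstacle here; the only point requiring a little care is the uniform column bound $\|K\delta_v\|\le M_1^{1/2}$, which is where the bounded-degree assumption on $G$ enters (an unbounded degree would allow arbitrarily many neighbours and spoil the uniform bound). One should also double-check that $T^{-1}$ is genuinely the bounded multiplication operator by $\|\omega\|_\infty^{-1}\omega$ rather than only densely defined --- this is immediate since $\omega$ is bounded (being in $\ell^2$ of a countable set it is in particular a bounded sequence if $G$ is infinite only when $\omega(v)\to 0$, but in any case $\omega\le\|\omega\|_\infty$), so $T^{-1}$ is bounded with norm $\le 1$, consistent with $T\ge 1$.
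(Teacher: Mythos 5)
Your proof is correct and is essentially the computation underlying the paper's argument: the paper simply observes that $T^{-1}$ (multiplication by $\|\omega\|_\infty^{-1}\omega$, a diagonal operator with square-summable entries) is Hilbert--Schmidt and then invokes the ideal property of the Hilbert--Schmidt class under the bounded operator $K$, which is exactly your basis computation with $\|K\delta_v\|\leq\|K\|$ in place of your explicit bounded-degree estimate. Your detour through the column structure of $K$ is harmless but unnecessary --- boundedness of $H$ alone suffices, so the bounded-degree hypothesis is not actually needed at this step.
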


\begin{lemma}
\label{lem-OpCor}
We have $\Cc_c(G)\subseteq \Ee$.
\end{lemma}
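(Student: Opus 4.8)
The plan is to verify directly that every finitely supported function lies in the set $\Ee=\{\psi'\in D(K)\cap\Hh_+:\,K\psi'\in\Hh_+\}$. Recall from the construction that $K=H+\Vert H\Vert+1$ is a \emph{bounded} operator on $\ell^2(G)$, so its domain is all of $\ell^2(G)$; in particular $\Cc_c(G)\subseteq D(K)$ with nothing to check. Likewise, since $T$ was defined on $\Cc_c(G)$ and $\Hh_+=D(T)=\overline{\Cc_c(G)}^{\Vert\cdot\Vert_T}$, every $\psi\in\Cc_c(G)$ lies in $D(T)=\Hh_+$. Thus the only condition requiring an argument is the final one, namely that $K\psi\in\Hh_+$ for $\psi\in\Cc_c(G)$.

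For this I would exploit the locality of the Schr\"odinger operator together with the bounded-degree hypothesis. Fix $\psi\in\Cc_c(G)$ with finite support $S=\operatorname{supp}(\psi)$. Writing $H=\Delta+Q$, the multiplication part $Q\psi$ is supported in $S$, while for the Laplacian $(\Delta\psi)(v)=\sum_{u\sim v}(\psi(u)-\psi(v))$ the value at $v$ can be nonzero only if $v\in S$ or $v$ is adjacent to some vertex of $S$. Since $G$ has bounded degree, the set $\{v:\operatorname{dist}(v,S)\le 1\}$ is finite, so $H\psi\in\Cc_c(G)$. Consequently $K\psi=H\psi+(\Vert H\Vert+1)\psi\in\Cc_c(G)\subseteq D(T)=\Hh_+$, which is exactly the remaining condition. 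This establishes $\psi\in\Ee$ and hence $\Cc_c(G)\subseteq\Ee$.

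I do not anticipate a genuine obstacle here: the statement is essentially a bookkeeping lemma isolating the one point where the definition of $\Ee$ interacts with the graph structure. The only substantive input is the observation that a bounded nearest-neighbor operator on a bounded-degree graph preserves $\Cc_c(G)$; everything else follows from the boundedness of $K$ and the trivial inclusion $\Cc_c(G)\subseteq\Hh_+$. If one wished to be careful, the sole point to emphasize is that bounded degree is what guarantees the one-step neighborhood of a finite set remains finite, so that $H\psi$ truly has finite support rather than merely decaying support.
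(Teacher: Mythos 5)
Your proof is correct and follows essentially the same route as the paper: boundedness of $K$ gives $D(K)=\ell^2(G)$, the inclusion $\Cc_c(G)\subseteq\Hh_+$ is immediate from the construction of $T$, and the key point is that $K$ maps $\Cc_c(G)$ into $\Cc_c(G)\subseteq\Hh_+$. The only difference is that you spell out why $H\psi$ remains finitely supported (locality of $\Delta+Q$ plus bounded degree), a detail the paper asserts without comment; this is a harmless and welcome elaboration.
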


\begin{proof}[Proof of Lemma~\ref{lem-HS}]
It is immediate to see that $T^{-1}$ is a Hilbert-Schmidt operator (it is an integral operator and its kernel is square integrable as $\omega$ is square integrable).
Since $H$ is bounded and the Hilbert-Schmidt operators form an ideal in the bounded operators on $\ell^2(G)$, we get that $K T^{-1}$ is a Hilbert-Schmidt operator.
\end{proof}

\begin{proof}[Proof of lemma~\ref{lem-OpCor}]
Since $K$ is a bounded operator we have $D(K)=\ell^2(G)$. Furthermore, for $\psi'\in\Cc_c(G)$, we have $K\psi'\in\Cc_c(G)\subseteq D(G)=\Hh_+$. Thus, $\Cc_c(G)\subseteq \Ee$ follows.
\end{proof}

\begin{remark}
Since $\overline{\mathcal{M}_1}=\sigma(K)$ we conclude that $\sigma(K)$ is the closure of the set of eigenvalues corresponding to generalized eigenfunctions for $K$ satisfying \eqref{eq:phiomegainell2}.
\end{remark}

\begin{lemma} \label{lem-l2}
Given an infinite rooted graph $(G,v_0)$, denote
\[\omega_G(v):=\frac{1}{\sqrt{S_{v_0}\left(\left|v\right|\right)} (|v|+1)}.\]
Then $\omega_G\in\ell^2(G)$.
\end{lemma}
The proof is straightforward.

\begin{corollary}
For a.e.\ $\lambda\in\sigma\left(H\right)$ there exists a corresponding generalized eigenfunction satisfying for any $v\in G$,
\begin{equation*} \label{eq:graphefGrowthagain}
\left|\varphi\left(v\right)\right|\leq \left(|v|+1\right)\cdot \sqrt{S_{v_0}\left(\left|v\right|\right)}.
\end{equation*}
\end{corollary}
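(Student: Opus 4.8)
The statement to prove is the final Corollary: for $\mu$-a.e.\ $\lambda\in\sigma(H)$ there is a generalized eigenfunction $\varphi$ with $H\varphi=\lambda\varphi$ and $|\varphi(v)|\leq (|v|+1)\sqrt{S_{v_0}(|v|)}$ for all $v\in G$. The plan is to obtain this as an immediate specialization of Theorem~\ref{thm:revSch} applied to the explicit weight $\omega=\omega_G$ supplied by Lemma~\ref{lem-l2}.

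\begin{proof}[Proof of the Corollary]
Let $\omega=\omega_G$, where $\omega_G(v)=\bigl(\sqrt{S_{v_0}(|v|)}\,(|v|+1)\bigr)^{-1}$. By Lemma~\ref{lem-l2} we have $\omega_G\in\ell^2(G)$, and clearly $\omega_G(v)>0$ for every $v\in G$, so $\omega_G$ is real and positive. Hence $\omega_G$ satisfies all the hypotheses required of $\omega$ in Theorem~\ref{thm:revSch}. Applying that theorem (with this choice of weight, and with $\mu$ any fixed spectral measure for $H$), we obtain, for $\mu$-a.e.\ $\lambda\in\sigma(H)$, a generalized eigenfunction $\varphi$ with $H\varphi=\lambda\varphi$ and
\[
\varphi(\cdot)\,\omega_G(\cdot)\in\ell^2(G).
\]
\end{proof}

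The first paragraph above is essentially the whole argument, but it still produces the conclusion in the wrong form: membership of $\varphi\,\omega_G$ in $\ell^2$ is an $\ell^2$ statement, whereas the Corollary asserts a \emph{pointwise} bound. The bridge is exactly the step already used at the end of the proof of Theorem~\ref{thm:revSch}: since $\ell^2(G)\subseteq\ell^\infty(G)$, the function $\varphi\,\omega_G$ is in particular bounded, so there is a constant $c(\lambda)>0$ with $|\varphi(v)\,\omega_G(v)|\leq c(\lambda)$ for all $v$, i.e.\ $|\varphi(v)|\leq c(\lambda)\,\omega_G(v)^{-1}=c(\lambda)(|v|+1)\sqrt{S_{v_0}(|v|)}$. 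Finally I would absorb the multiplicative constant $c(\lambda)$ into $\varphi$ by rescaling: replacing $\varphi$ by $\varphi/c(\lambda)$ (which is still a generalized eigenfunction for the same $\lambda$, since the eigenfunction equation is linear) yields the clean bound $|\varphi(v)|\leq(|v|+1)\sqrt{S_{v_0}(|v|)}$ as stated.

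The only genuine content beyond citing Theorem~\ref{thm:revSch} is Lemma~\ref{lem-l2}, and the paper already declares its proof ``straightforward''; indeed $\sum_{v}\omega_G(v)^2=\sum_{r\ge 0}\sum_{|v|=r}\bigl(S_{v_0}(r)(r+1)^2\bigr)^{-1}=\sum_{r\ge 0}(r+1)^{-2}<\infty$, using that there are exactly $S_{v_0}(r)$ vertices at distance $r$. So I do not expect any real obstacle here: the main subtlety is purely cosmetic, namely converting the $\ell^2$-weighting conclusion of Theorem~\ref{thm:revSch} into the advertised pointwise growth bound via the trivial inclusion $\ell^2\subseteq\ell^\infty$ and a harmless rescaling of $\varphi$. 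The statement is, in effect, a restatement of the remark following Theorem~\ref{thm:revSch} specialized to the distinguished weight $\omega_G$.
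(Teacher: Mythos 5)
Your proposal is correct and follows exactly the route the paper intends: apply Theorem~\ref{thm:revSch} with the weight $\omega_G$ of Lemma~\ref{lem-l2}, convert $\varphi\,\omega_G\in\ell^2\subseteq\ell^\infty$ into the pointwise bound $|\varphi(v)|\leq c(\lambda)\,\omega_G(v)^{-1}$ (a step already written out at the end of the paper's proof of Theorem~\ref{thm:revSch}), and rescale to absorb the constant. Your verification of Lemma~\ref{lem-l2} also matches the paper's (unwritten) ``straightforward'' computation.
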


For example on a $d$-regular tree we get a generalized eigenfunction satisfying
for any $v\in T$,
\begin{equation*} \label{eq:treeGrowth}
\left|\varphi\left(v\right)\right|\leq C\cdot\left(d-1\right)^{\left|v\right|/2}\left(|v|+1\right),
\end{equation*}
where $C>0$ is a constant.

On a graph $G$ of sub-exponential growth we get a generalized eigenfunction satisfying for all $\gamma>1$ and $v\in G$,
\begin{equation*} \label{eq:treeGrowth}
\left|\varphi\left(v\right)\right|\leq C\cdot\gamma^{\left|v\right|/2}\left(|v|+1\right),
\end{equation*}
where $C>0$ is a constant.
This growth rate is also sub-exponential.

\section{Examples}
We shall use the characterization
\begin{equation}\label{eq:sigmaesscharagain}
\bigcup_{L\text{ is an \ensuremath{\mathcal{R}}-limit of \ensuremath{H}}}\sigma\left(L\right)=\sigma_{\text{ess}}\left(H\right),
\end{equation}
which we prove here for graphs of uniform sub-exponential growth, to calculate $\sigma_{\text ess}(H)$ in some examples.

\subsection{Variations of $\mathbb{Z}^n$}

Let $1<n\in\mathbb{N}$. We shall construct a graph which we denote by $Z_{n\times n}$ by the following procedure:

\begin{figure}[ht!]
\centering
\includegraphics[width=100mm]{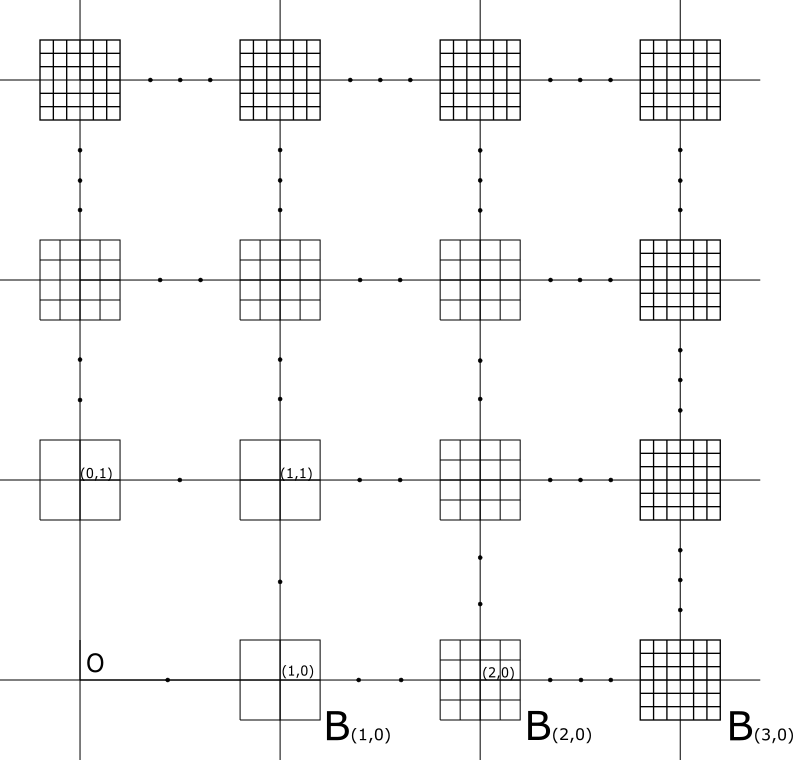}\\
\caption{The graph $Z_{2\times2}$. \label{figure:Z_2times2}}
\end{figure}

\begin{itemize}
\item Denote by $B^n_L$ the box of side length $2L+1$ contained in $\mathbb{Z}^n$, i.e.\ the vertex set is
\[
V\left(B^n_L\right)=[-L,L]^n=\left\{x=(x_1,\ldots,x_n)\in \mathbb{Z}^n\,\Big\vert\, |x_k|\leq L\,\,\forall k\in1,\ldots n \right\}.
\]
\item For each point $x\in\mathbb{Z}^n$ we shall associate the graph $B_x^n\equiv B_{\Vert x \Vert_\infty}^n$.
\item We connect each adjacent pair of boxes $B_x^n$ and $B_{x+e_j}^n$ by a line. The connection is done between the center points of the corresponding boundary surfaces and includes a sequence of vertices and edges of length $\max\left(\Vert x\Vert_\infty,\Vert x+e_j\Vert_\infty\right)$.
\end{itemize}

For example (a portion of) the graph $Z_{2\times 2}$ is drawn in Figure~\ref{figure:Z_2times2}.
Consider the adjacency operator $A$ on $Z_{n\times n}$. The $\mathcal{R}$-limits of $A$ are again the adjacency operator, but this time defined on the following graphs (see Figure~\ref{figure:rlimofZ22} for the case $n=2$):
\begin{enumerate}
\item The line $\mathbb{Z}$.
\item The grid $\mathbb{Z}^n$.
\item For each $1\leq k\leq n$ the sub-grid
\[
\mathbb{Z}^n_{{\scriptscriptstyle(\geq0)}^k}=\left\{ x=(x_1,\ldots,x_n)\in\mathbb{Z}^n\Big\vert x_1,\ldots,x_k\geq 0\right\}.
\]
\item A graph which we denote by
$\widetilde{\mathbb{Z}}^n_{\scriptscriptstyle \geq0}$ which is an half grid $\mathbb{Z}^n_{\scriptscriptstyle \geq0}$ connected to the half line (origin to origin).
\end{enumerate}

\begin{figure}[ht!]
\centering
\includegraphics[width=130mm]{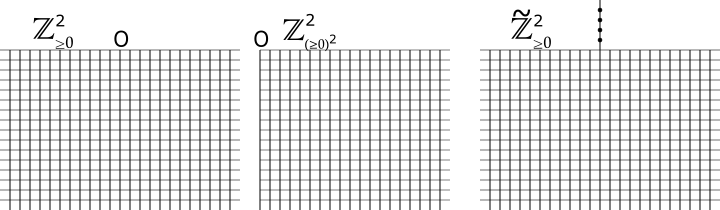}\\
\caption{Some of the $\mathcal{R}$-limits of $Z_{2\times2}$ (beside $\mathbb{Z}$ and $\mathbb{Z}^2$). \label{figure:rlimofZ22}}
\end{figure}

The spectrum of $A_{\mathbb{Z}}$ is $[-2,2]$. For $\mathbb{Z}^n$ and for the sub-grids $\mathbb{Z}^n_{{\scriptscriptstyle(\geq0)}^k}$ the spectrum is $[-2n,2n]$ (the spectrum is contained in $[-2n,2n]$ since for each of these grids we have that the spectral radius  $\rho\left(A_{\mathbb{Z}^n}\right)\leq\left\Vert A_{\mathbb{Z}^n}\right\Vert =2n$. On the other hand, for each $\lambda\in[-2n,2n]$ we can write approximate eigenfunctions by truncations of a function of the form $e^{ik_1 x_1+\ldots+\ldots +ik_n x_n}$, where $k_1,\ldots,k_n\in \mathbb{R}$ satisfying $\lambda=2\cos k_1 +\ldots+\cos k_n$).

As for $\widetilde{A}=A_{\widetilde{\mathbb{Z}}^n_{\scriptscriptstyle \geq0}}$, notice first that this operator is a finite rank perturbation of the operator $A_0=A_{\mathbb{N}}\oplus A_{\mathbb{Z}^n{\scriptscriptstyle \geq0}}$, thus $\sigma_{\text ess}(\widetilde{A})=\sigma_{\text ess}\left(A_0\right)=[-2n,2n]$. On the other hand $\rho(\widetilde A)\leq\Vert \widetilde A\Vert =2n$ and thus also $\sigma(\widetilde A)=[-2n,2n]$.

Thus we conclude from \eqref{eq:sigmaesscharagain} that
\[\sigma_{\text ess}\left(A\right)=[-2n,2n].\]
Moreover, since the spectral radius on $Z_{n\times n}$ is again $\leq 2n$, we get also that $\sigma\left(A\right)=[-2n,2n]$.

\begin{remark}
The spectrum of the adjacency operator on the box $[1,L]^n$ is composed of the set of eigenvalues
\[
\sigma\left(A_{[1,L]^n}\right)=\left\{\sum_{i=1}^n 2 \cos \frac{\pi k_i}{L+1}\, \bigg | \, i=1,\ldots,n,\, k_i=1,\ldots,L\right\}. \]
Thus, it is not surprising we get $\sigma\left(A_{Z_{n\times n}}\right)=[-2n,2n]$, which is the limit of the spectra of boxes as $L\to\infty$ (see e.g.\ \cite[Theorem 4.12]{MoharWoess}).
\end{remark}

\begin{remark}
More generally, let $G$ be a graph such that $A_{\mathbb{Z}^n}$ appears as an $\mathcal{R}$-limit of the adjacency operator $A_G$. If the vertex degree of $G$ is bounded by $2n$ then we can always conclude that
\[
\sigma(A_G)=\sigma_{\text ess}(A_G)=[-2n,2n].
\]
Indeed:
\begin{enumerate}
\item Since the vertex degree is bounded by $2n$ we get that $\rho(A_G)\leq\Vert A_G\Vert \leq 2n$.
\item On the other hand, since $\mathbb{Z}^n$ appears as an $\mathcal{R}$-limit of  $A_G$ we get from $\eqref{eq:sigmaesscharagain}$ that $[-2n,2n]=\sigma\left(A_{\mathbb{Z}^n}\right)\subseteq\sigma_{\text ess}(A_G)$.
\end{enumerate}
\end{remark}

\subsection{Sparse trees with sparse cycles} \label{sec:example2}
We start with a rooted tree $(T,v_0)$ satisfying:
\begin{enumerate}
\item $T$ is spherically homogeneous tree of bounded degree, i.e.\ any vertex $v\in T$ is connected to $\kappa(|v|)$ vertices of distance $|v|+1$ from the root, where $\kappa:\mathbb{N}\to\mathbb{N}$ is a (bounded) function.
\item $T$ is sparse, i.e.\ $\kappa$ satisfies:
\[
\kappa(n)=
\begin{cases} k_n \ \ \ \ n\in {L_n} \\ 1 \ \ \ \ \ \ \text{otherwise}, \end{cases}
\]
where $\left\{k_n\right\}_{n\in\mathbb{N}}$ is a bounded sequence and $\left\{L_n\right\}_{n\in\mathbb{N}}$ is a monotonic sequence satisfying $\lim_{n\to\infty} L_{n+1}-L_n=\infty$ (following e.g.\ \cite{Breuer}).
\end{enumerate}

We construct an example of a graph $G$ which is sparse and spherically homogeneous.
This graph can be seen as a sparse spherically homogeneous tree $T$ with additional edges. The added edges form cycles at distances $C_n$ from the root, where $\{C_n\}_{n\in\mathbb{N}}$ is a sequence satisfying (see Figure~\ref{figure:sparsesym}):
\[
L_n>C_n\geq L_{n-1}, \ \
L_n -C_n \to \infty, \ \text{and} \
C_n - L_{n-1}\to \infty.
\]
More formally, recall the definition
\[
\mathcal{S}(n)=\left\{v\in T\,\Big|\, |v|=n\right\}
\]
for the set vertices of distance $n$ from the root (and let $S(n)=|\mathcal{S}(n)|$).
We shall enumerate the vertices of $\mathcal{S}(n)$ in a natural way, i.e.\ write $\mathcal{S}(n)=(v_1^{(n)},\ldots,v_N^{(n)})$ where $v_1^{(n)}$ is an arbitrary vertex in $\mathcal{S}(n)$ and $v_k^{(n)}$ is defined inductively to be a vertex $v\in \mathcal{S}(n)\big\backslash\{v_1^{(n)},\ldots,v_{k-1}^{(n)}\}$ such that $\text{dist}(v_{k-1}^{(n)},v)$ is minimal. Now, the graph $G$ is defined with $V(G)=V(T)$ and
\[
E(G)=E(T)\cup \left(\bigcup_{n\in\mathbb{N}} E_n\right),
\]
where,
\[
E_n=\left(\bigcup_{k=1}^{S(C_n)-1} \left(v_k^{(C_n)},v_{k+1}^{(C_n)}\right)\right)\cup \left(v_{S(C_n)}^{(C_n)},v_1^{(n)}\right).
\]

\begin{figure}[ht!]
\centering
\includegraphics[width=110mm]{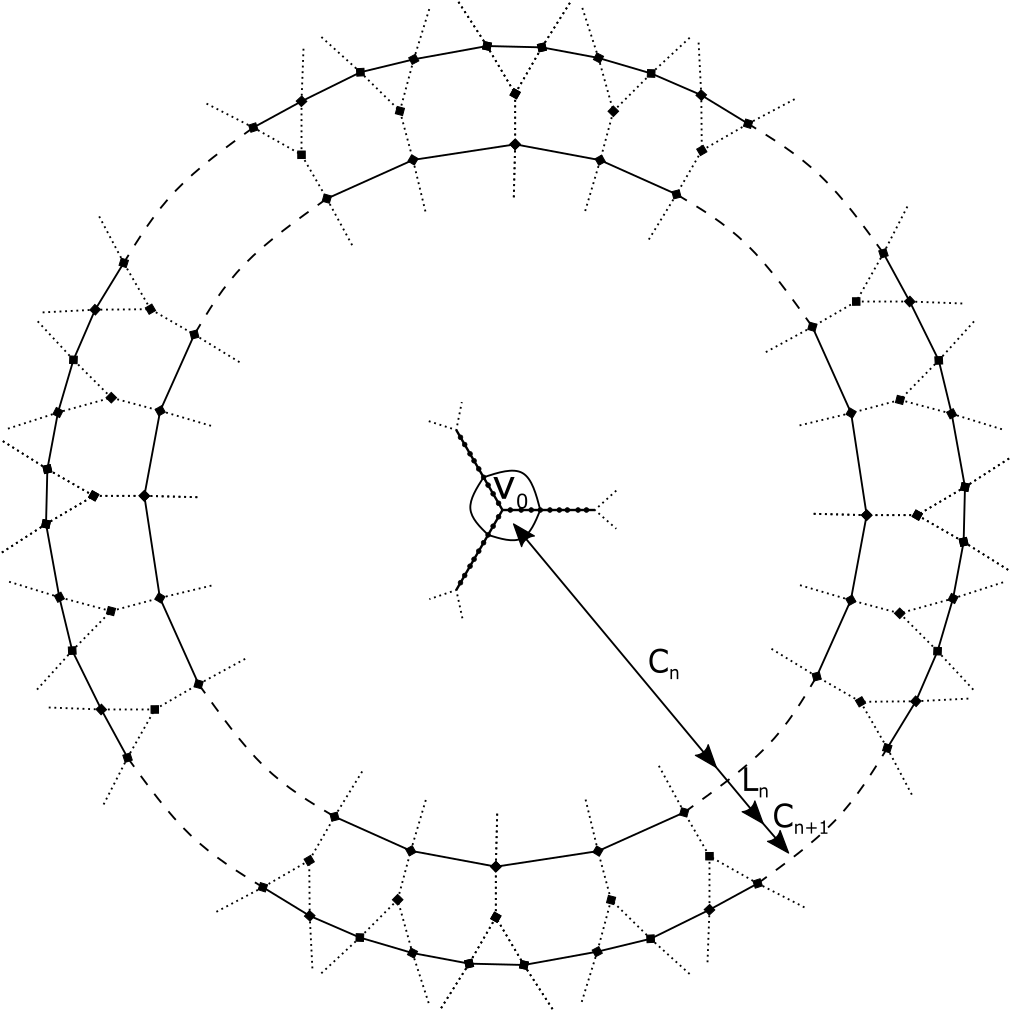}\\
\caption{An example of a sparse tree with sparse cycles. \label{figure:sparsesym}}
\end{figure}

For the sake of this example we take $k_n\equiv2$, and consider the adjacency operator $A=A_G$ defined on this graph.
We choose $\left\{L_n\right\}$ such that the growth rate of $G$ is uniform polynomial, e.g.\ $L_n=2^n$.
The possible $\mathcal{R}$-limits of $A$ are the adjacency operators on the following graphs (see Figure~\ref{figure:rlimsparsesym}):
\begin{itemize}
\item The line $\mathbb{Z}$.
\item A two sided infinite comb graph, denoted by $\mathcal{C}G$ and defined by
\begin{eqnarray*}
V(\mathcal{C}G)=&\left\{v=(k,l)\,\Big|\, k,l\in\mathbb{Z}\right\}, \\
E(\mathcal{C}G)=&\left\{\Big((k,l),(k,l+1)\Big)\,\Big|\, k,l\in\mathbb{Z}\right\}\cup\\ &\cup\left\{\Big((k,0),(k+1,0)\Big)\,\Big|\, k\in\mathbb{Z}\right\}.
\end{eqnarray*}
\item A star graph which is composed of $3$-copies of $\mathbb{N}$ glued together at $0$, denoted by $IS_3$.
\end{itemize}

\begin{figure}[ht!]
\centering
\includegraphics[width=100mm]{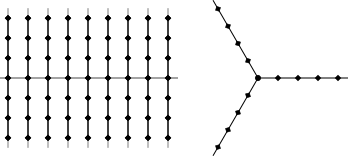}\\
\caption{Some of the $\mathcal{R}$-limits of the sparse symmetric graph (beside $\mathbb{Z}$): the infinite comb graph (left) and the star graph $IS_3$ (right). \label{figure:rlimsparsesym}}
\end{figure}

\begin{remark}
If we allow general (bounded) $\left\{k_n\right\}_{n\in\mathbb{N}}$ we will get additional $\mathcal{R} $-limits.
Denote by $\mathcal{K}$ the set of values which appear infinitely many times in $\{k_n\}_{n\in\mathbb{N}}$. For any $k\in \mathcal{K}$ we will get the graph $IS_{k+1}$  which is composed of $k+1$-copies of $\mathbb{N}$ glued together at $0$.
\end{remark}

\begin{lemma} \label{lem:CGspectra} $\sigma(A_{\mathcal{C}G})=[-2\sqrt{2},2\sqrt{2}]$
\end{lemma}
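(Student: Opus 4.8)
The plan is to exploit the translation symmetry of $\mathcal{C}G$ along its spine and reduce the computation to a one-parameter family of rank-one perturbations of the free line Laplacian, whose spectra are already accessible through the $m$-function machinery recalled in the preliminaries. Identifying $V(\mathcal{C}G)$ with $\mathbb{Z}^2$ via $v=(k,l)$, the adjacency operator acts by
\[
(A\psi)(k,l)=\psi(k,l+1)+\psi(k,l-1)+\delta_{l,0}\bigl(\psi(k+1,0)+\psi(k-1,0)\bigr),
\]
since every column $\{(k,l):l\in\mathbb{Z}\}$ is a two-sided line (a ``tooth'') and horizontal edges occur only along $l=0$. First I would observe that $A$ commutes with the spine shift $(k,l)\mapsto(k+1,l)$. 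Applying the Fourier transform in the $k$-variable, $\mathcal{F}:\ell^2(\mathbb{Z}^2)\to\int_{[-\pi,\pi]}^{\oplus}\ell^2(\mathbb{Z})\,\tfrac{d\theta}{2\pi}$, decomposes $A$ as a direct integral $A\cong\int_{[-\pi,\pi]}^{\oplus}A(\theta)\,\tfrac{d\theta}{2\pi}$, where each fiber acts on $\ell^2(\mathbb{Z})$ (the tooth variable $l$) by
\[
A(\theta)=\Delta_{\mathbb{Z}}+2\cos\theta\,\langle\delta_0,\cdot\rangle\,\delta_0,
\]
the terms $\psi(k\pm1,0)$ contributing the factors $e^{\pm i\theta}$ and hence $2\cos\theta$. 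Since $\theta\mapsto A(\theta)$ is norm-continuous (the fibers differ only in the scalar $2\cos\theta$ multiplying a fixed rank-one operator), the spectrum of the direct integral is $\sigma(A)=\overline{\bigcup_{\theta\in[-\pi,\pi]}\sigma\bigl(A(\theta)\bigr)}$.

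Next I would compute $\sigma(A(\theta))$ for fixed $\theta$. Writing $\alpha=2\cos\theta\in[-2,2]$, the operator $A(\theta)$ is a rank-one perturbation of $\Delta_{\mathbb{Z}}$, so Weyl's theorem gives $\sigma_{\mathrm{ess}}(A(\theta))=\sigma(\Delta_{\mathbb{Z}})=[-2,2]$, and any further spectrum consists of at most one eigenvalue in each of the gaps $(-\infty,-2)$ and $(2,\infty)$ (because $m_{\mathbb{Z}}$ is strictly monotone on each gap). By the basic formula of rank-one perturbations together with $m_{\mathbb{Z}}(z)=\langle\delta_0,(\Delta_{\mathbb{Z}}-z)^{-1}\delta_0\rangle=-\bigl(z^2-4\bigr)^{-1/2}$, such an eigenvalue occurs exactly where $1+\alpha\,m_{\mathbb{Z}}(z)=0$, i.e.\ where $\sqrt{z^2-4}=\alpha$, yielding candidates $z=\pm\sqrt{\alpha^2+4}$. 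Here the branch of the square root must be tracked carefully: for $\alpha>0$ only $z=+\sqrt{\alpha^2+4}>2$ solves the equation (using $m_{\mathbb{Z}}(z)<0$ for $z>2$), while for $\alpha<0$ only $z=-\sqrt{\alpha^2+4}<-2$ does, and for $\alpha=0$ there is no extra eigenvalue. Hence $\sigma(A(\theta))=[-2,2]\cup\{\mathrm{sign}(\alpha)\sqrt{\alpha^2+4}\}$.

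Finally I would assemble the union. As $\theta$ ranges over $[-\pi,\pi]$, $\alpha=2\cos\theta$ sweeps $[-2,2]$, so $\sqrt{\alpha^2+4}$ sweeps $(2,2\sqrt2\,]$ with maximum $2\sqrt2$ attained at $\theta=0$ (and its negative at $\theta=\pi$). Thus $\bigcup_\theta\sigma(A(\theta))=[-2\sqrt2,-2)\cup[-2,2]\cup(2,2\sqrt2\,]=[-2\sqrt2,2\sqrt2]$, which is already closed, giving $\sigma(A_{\mathcal{C}G})=[-2\sqrt2,2\sqrt2]$. I expect the main difficulty to lie not in any single hard estimate but in the careful bookkeeping of two standard points: justifying that the spectrum of the direct integral equals the closed union of fiber spectra (which follows from norm-continuity of $\theta\mapsto A(\theta)$), and selecting the admissible branch of $\sqrt{z^2-4}$ in each spectral gap so that exactly one eigenvalue, of the correct sign, is produced for each $\theta$.
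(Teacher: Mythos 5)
Your proposal is correct and follows essentially the same route as the paper: Fourier transform in the spine variable, direct-integral decomposition into the fibers $A(\theta)=\Delta_{\mathbb{Z}}+2\cos\theta\,\langle\delta_0,\cdot\rangle\delta_0$, and a rank-one perturbation analysis via the $m$-function to locate the single eigenvalue $\mathrm{sign}(\cos\theta)\,2\sqrt{1+\cos^2\theta}$ outside $[-2,2]$. Your branch bookkeeping and the closed union over $\theta$ match the paper's computation of $z_{\pm}=\pm2\sqrt{1+\cos^2\theta}$ exactly.
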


\begin{lemma} \label{lem:SGspectra} $\sigma\left(A_{IS_k}\right)=[-2,2]\cup\left\{\frac{k}{\sqrt{k-1}}\right\}$
\end{lemma}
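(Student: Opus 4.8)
The plan is to diagonalize $A_{IS_k}$ by exploiting the action of the symmetric group $S_k$ that permutes the $k$ rays and fixes the central vertex $v_0$. Since $A_{IS_k}$ commutes with every such permutation, $\ell^2(IS_k)$ splits into the $S_k$-invariant (``symmetric'') subspace and its orthogonal complement, and $A_{IS_k}$ preserves each. First I would record an orthonormal basis of the symmetric subspace by $e_0=\delta_{v_0}$ and $e_n=\tfrac{1}{\sqrt k}\sum_{j=1}^{k}\delta_{(j,n)}$ for $n\ge1$, where $(j,n)$ denotes the $n$-th vertex of the $j$-th ray. A direct computation of $\langle e_m,A_{IS_k}e_n\rangle$ then shows that on this subspace $A_{IS_k}$ is unitarily equivalent to the half-line Jacobi matrix $J_{\mathrm{sym}}$ with zero diagonal and off-diagonal entries $a_1=\sqrt k$ and $a_n=1$ for $n\ge2$. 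On the orthogonal complement, which is spanned by vectors $\sum_j \alpha_j\delta_{(j,n)}$ with $\sum_j\alpha_j=0$ (the standard representation at each level), the central vertex never contributes, so $A_{IS_k}$ acts as $k-1$ copies of the free half-line adjacency operator $\Delta_{\mathbb{N}}$. Since $\sigma(\Delta_{\mathbb{N}})=[-2,2]$, this already gives $\sigma(A_{IS_k})=\sigma(J_{\mathrm{sym}})\cup[-2,2]$.

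It then remains to compute $\sigma(J_{\mathrm{sym}})$. As $J_{\mathrm{sym}}$ is a finite-rank perturbation of the free half-line matrix, Weyl's theorem gives $\sigma_{\mathrm{ess}}(J_{\mathrm{sym}})=[-2,2]$, so I only need the eigenvalues in $\mathbb{R}\setminus[-2,2]$. For such $\lambda=x+x^{-1}$ with $|x|<1$, the recurrence $c_{n-1}+c_{n+1}=\lambda c_n$ (valid for $n\ge2$) forces the $\ell^2$ solution $c_n=Ax^{n}$ for $n\ge1$; substituting into the two boundary equations at $v_0$ and at the first ray-vertex (and using $\lambda-x=x^{-1}$) yields the condition $(k-1)x=x^{-1}$, i.e.\ $x^2=\tfrac{1}{k-1}$. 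Hence $\lambda=x+x^{-1}=\pm\tfrac{k}{\sqrt{k-1}}$, and the associated eigenfunction lies in $\ell^2$ precisely when $k\ge3$ (for $k=2$ one gets $x=\pm1$, and the value $\tfrac{k}{\sqrt{k-1}}=2$ sits at the band edge, producing no eigenvalue, consistent with $IS_2\cong\mathbb{Z}$). A short check that inside $(-2,2)$ both characteristic roots lie on the unit circle, so no solution is $\ell^2$, rules out embedded eigenvalues. Collecting everything gives the discrete spectrum $\{\pm k/\sqrt{k-1}\}$; the positive value $\tfrac{k}{\sqrt{k-1}}$ is the one recorded in the statement, the negative one being forced by the bipartite symmetry $\lambda\mapsto-\lambda$ of the tree $IS_k$.

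As an alternative that stays entirely within the resolvent toolkit already used in Lemma~\ref{lem:Lemma1}, I could instead compute the diagonal Green's function $m(z)=\langle\delta_{v_0},(A_{IS_k}-z)^{-1}\delta_{v_0}\rangle$ directly: deleting $v_0$ decouples $IS_k$ into $k$ independent rays, each contributing the half-line $m$-function $m_{\mathbb{N}}$ from \eqref{eq:mN}, and a junction identity (the resolvent identity, exactly as in the proof of Lemma~\ref{lem:Lemma1}) then expresses $m(z)$ as a rational function of $m_{\mathbb{N}}(z)$ whose poles outside $[-2,2]$ are the sought eigenvalues; locating them reproduces $x^2=1/(k-1)$.

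I expect the main obstacle to be the rigorous justification of the direct-sum decomposition across the weighted junction at $v_0$: in particular, verifying that $\{e_n\}_{n\ge0}$ together with the standard-representation vectors form a complete orthonormal system and that $A_{IS_k}$ genuinely decomposes as claimed, with the single anomalous hopping $a_1=\sqrt k$ correctly encoding the degree $k$ of the center. Once the decomposition is in place, the remaining spectral analysis is the elementary constant-coefficient Jacobi computation above.
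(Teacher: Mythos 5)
Your argument is correct, and it reaches the same intermediate object as the paper --- the half-line Jacobi matrix with $a_1=\sqrt{k}$, $a_n=1$ for $n\ge 2$, $b_n\equiv 0$, plus $k-1$ copies of $A_{\mathbb N}$ --- but by a different route at both stages. For the decomposition the paper simply invokes the spherical decomposition theorem (\cite[Theorem 2.4]{Breuer}), whereas you rederive it from the $S_k$-symmetry of the star; this is more self-contained but equivalent, and the completeness/invariance check you flag as the ``main obstacle'' is routine. For the point spectrum the paper uses coefficient stripping and locates eigenvalues as real zeros of $z+k\,m_{\mathbb N}(z)$ outside $[-2,2]$, while you solve the three-term recurrence directly with the ansatz $c_n=x^n$, $\lambda=x+x^{-1}$, $|x|<1$. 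Your route buys something concrete: it correctly produces \emph{both} roots $x=\pm 1/\sqrt{k-1}$, hence both eigenvalues $\pm k/\sqrt{k-1}$, and the negative one genuinely belongs to $\sigma(A_{IS_k})$ --- $IS_k$ is bipartite, and equivalently the reduced Jacobi matrix has zero diagonal, so conjugation by $\psi(n)\mapsto(-1)^n\psi(n)$ sends it to its negative and forces the spectrum to be symmetric about $0$. The paper's proof asserts that $z_0=k/\sqrt{k-1}$ is the \emph{only} solution of $z+k\,m_{\mathbb N}(z)=0$; this discards the solution $z=-k/\sqrt{k-1}$, an artifact of using the branch $\sqrt{z^2-4}>0$ on all of $\mathbb R\setminus[-2,2]$, which is not the correct branch of the Borel transform on $(-\infty,-2)$ (there $m_{\mathbb N}$ is positive and tends to $0^+$, so $\sqrt{z^2-4}\sim z<0$). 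So the lemma should read $\sigma(A_{IS_k})=[-2,2]\cup\{\pm k/\sqrt{k-1}\}$, exactly as your computation gives; this does not affect the application in Section~4.4, since both points lie in $[-2\sqrt2,2\sqrt2]$ when $k=3$. Your remaining checks (no embedded eigenvalues inside $(-2,2)$, the degenerate case $k=2$ where $x=\pm1$ and no discrete eigenvalue appears) are sound.
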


As a conclusion of \eqref{eq:sigmaesscharagain} we get that
\[
\sigma_{\text ess}(G)=[-2\sqrt{2},2\sqrt{2}].
\]

\begin{proof}[Proof of Lemma \ref{lem:CGspectra}]
We shall use the periodicity of the problem to calculate the spectrum of $A_{\mathcal{C}G}$ (following e.g.\ \cite[Chapter 5]{SimonSz} and \cite[Chapter XIII.16]{ReedSimon4}). 

$A_{\mathcal{C}G}$ is unitary equivalent to a direct integral of the operators
\[
A^{(\theta)}=A_{\mathbb Z}+2\cos\theta \delta_0.
\]
To show this, define a Fourier transform operator
\[
\mathcal{F}:\ell^2(\mathcal{C}G)\to L^2\left(\partial \mathbb{D},\frac{d\theta}{2\pi}; \ell^2(\mathbb{Z})\right)
\]
by ($l\in\mathbb{Z}$)
\[
\left(\mathcal{F}\psi\right)(\theta,l)=\sum_{k=-\infty}^{\infty} \psi(k,l) e^{-ik\theta},
\]
where we define it first for $\psi\in\ell^1$ and extend to $\ell^2$ using
\[
\sum_{l\in\mathbb{Z}}\int_{\partial\mathbb{D}}\left\Vert\mathcal{F}\psi(\cdot,l)\right\Vert^2\frac{d\theta}{2\pi} = \sum_{k,l\in\mathbb{Z}}\left|\psi(k,l)\right|^2.
\]
The inverse of $\mathcal{F}$
\[
\mathcal{F}^{-1}:L^2\left(\partial \mathbb{D},\frac{d\theta}{2\pi}; \ell^2(\mathbb{Z})\right)\to \ell^2(\mathcal{C}g)
\]
is defined by
\[
(\mathcal{F}^{-1}f)(k,l)=\int e^{i k\theta}f(\theta,l)\frac{d\theta}{2\pi}.
\]

It now follows that
\[\left[(\mathcal{F}A\mathcal{F}^{-1})f\right](\theta,l)=\left(A^{(\theta)}f\right)(\theta,l).
\]
Indeed, given $f\in L^2\left(\partial \mathbb{D}; \ell^2(\mathbb{Z})\right)$ we write
\begin{eqnarray*}
&(A\mathcal{F}^{-1}f)(k,l)=\\
&\int\frac{d\theta'}{2\pi}e^{ik\theta'}\left(f(\theta',l-1)+f(\theta',l+1)+\left(e^{-i\theta'}+e^{i\theta'}\right)f(\theta',l)\delta_0(l)\right).
\end{eqnarray*}
Thus,
\begin{eqnarray*}
&\left(\mathcal{F}A\mathcal{F}^{-1}\right)f (\theta,l)=\\
&\sum_{k\in\mathbb{Z}}\int\frac{d\theta'}{2\pi}e^{ik(\theta'-\theta)}\left(f(\theta',l-1)+f(\theta',l+1)+2\cos(\theta')f(\theta',l)\delta_0(l)\right)\\
&=f(\theta,l-1)+f(\theta,l+1)+2\cos(\theta)f(\theta,l)\delta_0(l)=\left(A^{(\theta)}f\right)(\theta,l).
\end{eqnarray*}

Since $\mathcal{F}$ is unitary we have shown the unitary equivalence claimed above. Thus, in order to compute the spectrum of $A_{\mathcal{C}G}$ we need to compute the spectrum of the direct integral. Denote $R=\left(A^{(\theta)}-z\right)^{-1}$, $R_0(z)=\left(A_{\mathbb Z}-z\right)^{-1}$.
By the basic formula of rank-one perturbations we get that
\[
m(z)=\langle\delta_0,R(z)\delta_0\rangle= \frac{m_0(z)}{1+2\cos\theta m_0(z)},
\]
where
\[
m_0(z)=\langle\delta_0,R_0(z)\delta_0\rangle=\frac{1}{\sqrt{z^2-4}}
\]
is the Borel transform corresponding to the adjacency operator on $\mathbb{Z}$ (see e.g\ \cite{SimonSz}).
We know that $\sigma(A_{\mathbb Z})=\sigma_{\textrm{ess}}(A_{\mathbb Z})=[-2,2]$ and therefore $[-2,2]\subseteq \sigma\left(A^{(\theta)}\right)$ for all $\theta$. We can get additional points in the spectrum of $A^{(\theta)}$ if  $1+2\cos\theta m_0(z)$ vanishes. Thus
\[
\sqrt{z^2-4}=-2\cos\theta,
\]
so
\[
z_{\pm}= \pm2\sqrt{1+\cos^2\theta}.
\]
The choice of the $\pm$-branch of the root is determined by requiring that $m(z)=-\nicefrac{1}{z}+O\left(\nicefrac{1}{z^2}\right)$. Notice that the vector $\delta_0$ is not cyclic for $A^{(\theta)}$. However, since the difference $A^{(\theta)}-A_{\mathbb Z}=2\cos\theta(\delta_0,\cdot)\delta_0$ is of rank one we can't get additional points in the spectrum of $A^{(\theta)}$ beside the one found. 
Using continuity in $\theta$ to construct approximate eigenfunctions and integrating over $\theta$ we get that
\[
\sigma\left(A_{\mathcal{C}G}\right)=[-2\sqrt{2},2\sqrt{2}],
\]
where, since the spectrum of $A_{\mathcal{C}G}$ is symmetric both the points $z_{\pm}$ are in the spectrum.
\end{proof}

\begin{proof}[Proof of Lemma \ref{lem:SGspectra}]
We shall calculate the spectrum of the adjacency operator on the graph $IS_k$.
This graph is spherically homogeneous, and thus the operator $A_{IS_k}$ is unitary equivalent to a direct sum of one dimensional Jacobi operators.
Indeed, according to Theorem~2.4 of \cite{Breuer}, 
\[
A_{IS_k}\cong J_0\oplus A_{\mathbb N}\oplus\ldots\oplus A_{\mathbb N},
\]
where $J_0$ is a Jacobi matrix with parameters 
\[
a_n=\begin{cases}
\sqrt{k} & n=1 \\
1 & n>1,
\end{cases}
\]
\[b_n\equiv0,\]
and $A_{\mathbb N}$ appears in the direct sum $(k-1)$-times.
Since $J_0$ is a finite rank perturbation of $A_{\mathbb N}$ we have that $\sigma_{\text ess}(J_0)=\sigma_{\text ess}(A_{\mathbb N})=\sigma(A_{\mathbb N})=[-2,2]$.
Thus we should only calculate the discrete spectrum of $J_0$. By coefficient stripping (Theorem~3.2.4 in \cite{SimonSz}) we get the following relation for the $m$-function of $J_0$
\[
m(z)=\frac{-1}{z+k m_{\mathbb N}(z)},
\]
with
\[
m(z)=\langle\delta_1,(J_0-z)^{-1}\delta_1\rangle.
\]
Notice that $\delta_1$ is a cyclic vector for $J_0$, thus additional points in the spectrum of $J_0$ exist only if
\begin{equation} \label{eq:mISksol}
z+k m_{\mathbb N}(z)=0.
\end{equation}
Using the known expression (see, e.g., \cite{SimonSz})
\begin{equation*}
m_{\mathbb{N}}\left(z\right)=\frac{-z+\sqrt{z^{2}-4}}{2}
\end{equation*}
we get that the only solution of \eqref{eq:mISksol} is $z_0={k\over\sqrt{k-1}}$.
Thus $z_0\in\sigma\left(A_{IS_k}\right)$, and
\[
\sigma\left(A_{IS_k}\right)=[-2,2]\cup\left\{\frac{k}{\sqrt{k-1}}\right\}.
\]

\end{proof}


\begin{thebibliography}{99}

\bibitem{AF} C.~Allard and R.~Froese, {\it A Mourre estimate for a Schr\"odinger operator on a binary tree}, Rev. Math. Phys. {\bf 12} (2000), no.\ 12, 1655--1667.

\bibitem{AmreinMP} W.~O.~Amrein, M.~M\u antoiu and R.~Purice, {\it Propagation properties for Schr\"odinger operators affiliated with certain $C^\ast$-algebras}, Ann. Henri Poincar\'e {\bf 3} (2002), no.\ 6, 1215--1232.

%
\bibitem{Anselone} P.~M.~Anselone, {\it Collectively Compact Operator Approximation Theory and Applications to Integral Equations}, Prentice-Hall, Englewood Cliffs, NJ, 1971.

\bibitem{BP2018} S.~Beckus and Y.~Pinchover, {\it Shnol-type theorem for the Agmon ground state}, J. Spectr. Theory, to appear.

\bibitem{BS} I.~Benjamini and O.~Schramm, {\it Recurrence of distributional limits of finite planar graphs}, Electron. J. Probab. {\bf 6} (2001), no.\ 23, 13.

\bibitem{BoGo} M.~Bonnefont and S.~Golenia, {\it Essential spectrum and Weyl asymptotics for discrete Laplacians}, Ann.\ Fac.\ Sci.\ Toulouse Math.\ (6) {\bf 24} (2015), no.\ 3, 563--624.

\bibitem{BdmLS09} A.~Boutet de Monvel, D.~Lenz and P.~Stollmann, {\it Sch'nol's theorem for strongly local forms}, Israel J. Math. {\bf 173} (2009), 189--211.

\bibitem{BdmSt03} A.~Boutet de Monvel and P.~Stollmann, {\it Eigenfunction expansions for generators of {D}irichlet forms}, J.\ Reine Angew.\ Math. {\bf 561} (2003), 131--144.

\bibitem{Breuer} J.~Breuer,
{\it Singular continuous spectrum for the Laplacian on certain sparse trees}, Commun. Math. Phys. {\bf 269} (2007), no.\ 3, 851--857.


\bibitem{BDE} J.~Breuer, S.~Denisov and L.~Eliaz, {\it On the essential spectrum of Schr\"odinger operators on trees}, Math. Phys. Anal. Geom. {\bf 21} (2018), no.\ 4, Art.\ 33.

\bibitem{BK} J.~Breuer and M.~Keller, {\it  Spectral analysis of certain spherically homogeneous graphs}, Operators and Matrices {\bf 7} (2013), 825--847.

\bibitem{CWL} S.~N.~Chandler-Wilde and M. Lindner, {\it Limit Operators, Collective Compactness, and the Spectral Theory of Infinite Matrices}, Mem. Amer. Math. Soc. {\bf 210} (2011), no.\ 989.

\bibitem{CFKS} H.~L.~Cycon, R.~G.~Froese, W.~Kirsch, B.~Simon, {\it Schr\"odinger operators with application to quantum mechanics and global geometry}, Texts and Monographs in Physics, Springer Study Edition, 1987.

\bibitem{Favard} J.~Favard, {\it Sur les \'equations diff\'erentielles lin\'eaires \`a coefficients presque-p\'eriodiques}, Acta Math. {\bf 51} (1928), no.\ 1, 31--81.

\bibitem{FLW14} R.~L.~Frank, D.~Lenz, D.~Wingert, {\it Intrinsic metrics for non-local symmetric Dirichlet forms and applications to spectral theory}, J. Funct. Anal. {\bf 266} (2014). no.\ 8, 4765--4808.

\bibitem{Fujiwara} K.~Fujiwara, {\it The Laplacian on rapidly branching trees},  Duke Math.\ J.\ {\bf 83} (1996), 191--202.

%
\bibitem{GolGeor} V.~Georgescu, S.~Gol\'enia,
{\it Isometries, Fock spaces, and spectral analysis of Schr\"odinger operators on trees}, J. Funct. Anal. {\bf 227} (2005), no.\ 2, 389--429.

%
\bibitem{GI1} V.~Georgescu and A.~Iftimovici,
{\it Crossed products of $C^*$-algebras and spectral analysis of quantum Hamiltonians},
Commun. Math. Phys. {\bf 228} (2002), no.\ 3,  519--560.

%
\bibitem{GI2} V.~Georgescu and A.~Iftimovici,
{\it $C^*$-algebras of quantum Hamiltonians},
Operator Algebras and Mathematical Physics (Constan\c ta, 2001), pp.~123--167,
Theta, Bucharest, 2003.

%
\bibitem{GI3} V.~Georgescu and A.~Iftimovici,
{\it Riesz-Kolmogorov compactness criterion, Lorentz convergence and Ruelle
theorem on locally compact abelian groups},
Potential Anal. {\bf 20} (2004), no.\ 3,  265--284.

\bibitem{GSrankone} F.~Gesztesy, B.~Simon, {\it Rank-one perturbations at infinite coupling} J. Funct. Anal. {\bf 128} (1995), no.\ 1, 245--252.

%
\bibitem{Golenia} S.~Gol\'enia,
{\it $C^*$-algebras of anisotropic Schr\"odinger operators on trees}, J. Ann. Henri Poincar\'e {\bf 5} (2004), no.\ 6,  1097--1115.

%
\bibitem{HK2011} S.~Haeseler and M.~Keller,
{\it Generalized solutions and spectrum for {D}irichlet forms on
              graphs}, Random walks, boundaries and spectra, Progr. Probab. {\bf 64}, 2011 Birkh\"auser, 181--199.

\bibitem{Han} R.~Han, {\it Shnol's theorem and the spectrum of long range operators}, Proc. Amer. Math. Soc., to appear.

%
\bibitem{Kurbatov} V.~G.~Kurbatov, {\it On the invertibility of almost periodic operators}, Math. USSR Sb. {\bf 67} (1990), 367--377.

\bibitem{LanRab} B.~V.~Lange and V.~S.~Rabinovich, {\it Pseudodifferential operators in ${\bf R}^n$ and limit operators}, Math. Mat. Sb. (N.S.) {\bf 129} (1986), no.\ 2, 175--185 (Russian, English transl. Math. USSR Sb. {\bf 57} (1987), 183--194).

%
\bibitem{LastSimonAC} Y.~Last and B.~Simon,
{\it Eigenfunctions, transfer matrices, and absolutely continuous spectrum
of one-dimensional Schr\"odinger operators},
Invent. Math. {\bf 135} (1999),  329--367.

%
\bibitem{LastSimonEss} Y.~Last and B.~Simon,
{\it The Essential Spectrum of Schr\"odinger,
Jacobi, and CMV Operators},
J. Anal. Math. {\bf 98} (2006),  183--220.

\bibitem{Lavosz} L.~Lov\'{a}sz, {\it Large networks and graph limits},
American Mathematical Society, Providence, RI, 2012. American Mathematical Society Colloquium Publications {\bf 60}.


\bibitem{Leindler} L.~Leindler, {\it Generalization of inequalities of Hardy and Littlewood}, Acta Sci. Math. (Szeged) {\bf 31} (1970), 279--285.
%

\bibitem{LenzTeplyaev} D.~Lenz and A.~Teplyaev, {\it Expansion in generalized eigenfunctions for 		Laplacians on graphs and metric measure spaces}, Trans. Amer. Math. Soc. {\bf 368} (2016), no.\ 7, 4933--4956.


\bibitem{LPS} A.~Lubotzky, R.~Phillips, and P.~Sarnak. {\it Ramanujan graphs}, Combinatorica {\bf 8} (1988), no.\ 3, 261--277.

%
\bibitem{M1} M.~M\u antoiu,
{\it $C^*$-algebras, dynamical systems at infinity and the essential spectrum of generalized Schr\"odinger operators},
J. Reine Angew. Math. {\bf 550} (2002), 211--229.


\bibitem{MPR} M.~M\u antoiu, R.~Purice, and S.~Richard,
{\it Spectral and propagation results for magnetic Schroedinger operators; a $C^*$-algebraic framework}, J. Funct. Anal. {\bf 250} (2007), no.\ 1, 42--67.

\bibitem{MoharWoess} B.~Mohar and W.~Woess,
{\it A survey on spectra of infinite graphs}, {Bull. London Math. Soc.} {\bf 21} (1989), no.\ 3, 209--234.

%
\bibitem{Muham1}
 E.~M.~Muhamadiev,
{\it On invertibility of differential operators in the space of continuous functions bounded on the real axis}, Dokl. Akad. Nauk SSSR {\bf 196} (1971), 47--49 (Russian, English transl. Soviet Math. Dokl. {\bf 12} (1971), 49--52).

%
\bibitem{Muham2} E.~M.~Muhamadiev,
{\it On the invertibility of elliptic partial differential operators}, Dokl. Akad. Nauk SSSR {\bf 205} (1972), 1292--1295 (Russian, English transl. Soviet Math. Dokl. {\bf 13} (1972), 1122--1126).


\bibitem{PoStWe89} T.~Poerschke, G.~Stolz and J.~ Weidmann, {\it Expansions in generalized eigenfunctions of selfadjoint operators}, Math. Z. {\bf 202} (1989), no.\ 3, 397--408.

%
\bibitem{RRS} V.~S.~Rabinovich, S.~Roch and B.~Silbermann, {\it Fredholm theory and finite section method for band-dominated operators}, Integral Equations Operator Theory {\bf 30} (1998), 452--495.

\bibitem{RRSband} V.~S.~Rabinovich, S.~Roch and B.~Silbermann, {\it Band-dominated operators with operator-valued coefficients, their Fredholm properties and finite sections}, Integral Equations Operator Theory {\bf 40} (2001), no.\ 3, 342--381.

%
\bibitem{ReedSimonI} M.~Reed and B.~Simon,
{\it Methods of Modern Mathematical Physics, I. Functional Analysis},
Academic Press, New York, 1980.

%
\bibitem{ReedSimon4} M.~Reed and B.~Simon,
{\it Methods of Modern Mathematical Physics, IV. Analysis of Operators},
Academic Press, New York, 1978.

\bibitem{Remling} C.~Remling, {\it The absolutely continuous spectrum of one-dimensional Schr\"odinger operators}, Math. Phys. Anal. Geom. {\bf 10} (2007), no.\ 4, 359--373.

\bibitem{SeidelSil} M.~Seidel and B.~Silbermann, {\it Banach algebras of operator sequences}, Oper. Matrices {\bf 6} (2012), no.\ 3, 385--432.

\bibitem{Shnol} I.~E.~Shnol, {\it On the behavior of the eigenfunctions of Schr\"odinger's equation}, Mat. Sb. (N.S.) {\bf 42} (84) (1957), 273--286. erratum {\bf 46} (88) (1957), 259.
%
\bibitem{Shubin1} M.~A.~Shubin,
{\it The Favard-Muhamadiev theory and pseudodifferential operators}, Dokl. Akad. Nauk SSSR {\bf 225} (1975), no. 6, 1278--1280 (Russian, English transl. Soviet Math. Dokl. {\bf 16} (1975), 1646--1649).

%
\bibitem{Shubin2} M.~A.~Shubin,
{\it Almost periodic functions and partial differential operators}, Uspehi Mat. Nauk {\bf 33} (1978), no. 2(200), 3--47, 247 (Russian, English transl. Russian Math. Surveys {\bf 33} (1978), 1--52).

\bibitem{SimonEF} B.~Simon, {\it Spectrum and continuum eigenfunctions of Schr\"odinger operators} J. Funct. Anal. {\bf 42} (1981), no.\ 3, 347--355.

\bibitem{SimonSG} B.~Simon, {\it Schr\"odinger semigroups} Bull. Amer. Math. Soc. (N.S.) {\bf 7} (1982), no.\ 3, 447--526.

\bibitem{SimonRankOne} B.~Simon, {\it Spectral analysis of rank one perturbations and applications} Mathematical quantum theory. II. Schr\"odinger operators CRM Proc. Lecture Notes {\bf 8} (Vancouver, BC, 1993), 109--149.

%
\bibitem{SimonSz} B.~Simon,
{\it Szeg\H o's Theorem and its Descendants},
M. B. Porter Lectures, Princeton University Press, Princeton, NJ, 2011. Spectral Theory for $L^2$ Perturbations of Orthogonal Polynomials.

\bibitem{Weyl} H.~Weyl, {\it \"Uber gew\"ohnliche Differentialgleichungen mit Singularit\"aten und die
   zugeh\"origen Entwicklungen willk\"urlicher Funktionen} J Math. Ann. {\bf 68} (1910), no.\ 2, 220-269 (German).

\end{thebibliography}
\end{document}